\newtheorem{theorem}{Theorem}[section]
\newtheorem{proposition}[theorem]{Proposition}
\newtheorem{lemma}[theorem]{Lemma}
\newtheorem{definition}[theorem]{Definition}
\numberwithin{equation}{section}
\newcommand \del \partial
\newcommand \RR {\mathbb{R}}
\newcommand \vep {\varepsilon}
\newcommand \Hcal{\mathcal{H}}
\newcommand \Hb{\bar{\Hcal}}
\newcommand \Tcal{\mathcal{T}}
\newcommand \Pcal{\mathcal{P}}
\newcommand \Fcal{\mathcal{F}}
\newcommand \Fcale{\mathcal \Fcal^E}
\newcommand \Fcali{\mathcal \Fcal^I}
\newcommand \Kcal{\mathcal{K}}
\newcommand \Ecal{\mathcal{E}}
\newcommand \delb{\bar{\del}}
\newcommand \Phib{\bar{\Phi}}
\newcommand \Psib{\bar{\Psi}}
\newcommand \Tb{\overline{T}}
\newcommand \Qb{\overline{Q}}
\newcommand \delu{\underline{\del}}
\newcommand \Phiu{\underline{\Phi}}
\newcommand \Psiu{\underline{\Psi}}
\newcommand \Nu{\underline{N}}
\newcommand \Tu{\underline{T}}
\newcommand \Qu{\underline{Q}}
\newcommand \miu{\underline{m}}
\newcommand \delt{\tilde{\del}}
\newcommand \Phit{\tilde{\Phi}}
\newcommand \Psit{\tilde{\Psi}}
\newcommand \Tt{\widetilde{T}}
\newcommand \Qt{\widetilde{Q}}
\newcommand \mt{\tilde{m}}
\newcommand \Nt{\widetilde{N}}
\newcommand \Zscr {\mathscr{Z}}
\newcommand \Zint{\Zscr^{\text{int}}}
\newcommand \Zext{\Zscr^{\text{ext}}}
\newcommand \EE {\Ecal^{\text{E}}}
\newcommand \EH {\Ecal^{\text{H}}}
\title{Global solutions of non-linear wave-Klein-Gordon system in one space dimension}
\author{Yue MA}
\begin{document}

\maketitle
\section{Introduction}

In the present work we will make a generalization in $\RR^{1+1}$ of the {\sl hyperboloidal foliation method} in order to remove the restriction on the support of the initial data. Then we will make a first application on the following model problem:
\begin{equation}\label{eq main}
\left\{
\aligned
&\Box u  = v^3
\\
&\Box v + c^2v = N^{\alpha\beta}\del_{\alpha}u\del_{\beta}u
\endaligned
\right.
\end{equation}
in $\RR^{1+1}$ with initial data
\begin{equation}\label{eq main-initial}
u(1,x) = u_0(x),\quad \del_t u(1,x) = u_1(x),\quad v(1,x) = v_0(x),\quad \del_t v(1,x) = v_1(x).
\end{equation}

The problem on the global behavior of the small regular solutions to hyperbolic equations or systems has attracted lots of attention in the past. After the pioneer works on nonlinear wave equations$\backslash$systems (e.g. \cite{Kl1}) and Klein-Gordon equations$\backslash$systems (e.g. \cite{Kl2}, \cite{Shatah85} etc.), people begin to be curious on the systems composed by wave and Klein-Gordon equations, which come naturally form the Einstein-massive scalar field system, $f(R)-$ theory of gravity, Maxwell-Klein-Gordon system, etc. The main difficulty on this type of system, compared with the pure wave or Klein-Gordon systems, is the lack of symmetry. In general, the conformal Killing vector filed $S = x^{\alpha}\del_{\alpha}$ of the linear wave operator is no longer conformal Killing with respect to the linear Klein-Gordon operator. This prevents any possibility of na\"ive combination of the methods for wave equations with those for Klein-Gordon equations. However, the pioneer work of S. Katayama \cite{Ka} point out that the wave-Klein-Gordon system is also globally stable with some reasonable restrictions on the nonlinearities.

The application of the hyperbolic hyper-surface in the analysis of nonlinear hyperbolic equations was introduced by S. Klainerman in \cite{Kl2} on Klein-Gordon equations in $\RR^{3+1}$ (see also \cite{Ho1}, section 7.6 and 7.7) and then developed in many context (e.g. \cite{Dfx} in $\RR^{2+1}$). It is then introduced in the analysis of the wave-Klein-Gordon system in \cite{M0} and developed in \cite{LM1}, \cite{LM2}, \cite{LM3}, see also \cite{Q.Wang2016}.

The essential point of this method is to foliate the inner part of the light-cone $\{t>r+1\}$ by $\Hcal_s = \{t = \sqrt{s^2+r^2}\}$. These hyperbolic hyper-surfaces will take the role of the time-constant hyperplanes on which people do energy estimates, global Sobolev's inequalities, etc. The restriction of this method is obvious, it can only handle the inner part of the light-cone, thus can only treat the initial data with compact support (by finite speed of propagation, the associated local solution is supported in $\{t>r+1\}$ after a time translation).

In order to remove this restriction on support, there is \cite{Ionescu-Pausader} in which a method based on Fourier analysis was introduced.  We also remark that the original method of Katayama does not demand the condition on support of the initial data.

In the present work we give an alternative approach, which is a generalization of the hyperboloidal foliation. The main new observation is to foliate the half space-time $\RR^+_t \times\RR_x$ by a family of specially constructed space-like curves ( called {\sl combined curves}) and establish energy estimate on these curves. A combined curve, just as its name implies, is a combination of an arc of a hyperbola together two half-lines. More precisely, given the canonical cartesian coordinates $\{t,x\}$, we consider the translated light-cone $\{t>|x|+1\}$. For a given hyperbola $\Hcal_s := \{t = \sqrt{|x|^2+s^2}\}$, we consider its inner part $\Hcal_s^*\cap \{t>|x|+1\}$ smoothly joined with two time-constant half-lines towards the spatial infinity out side of this cone. This construction is a combination of the hyperboloidal foliation of the cone $\{t>|x|+1\}$ with the standard time-constant foliation outside of this cone.

We will observe that, as in the classical case where we make energy estimates on the lines $\{t=\text{constant}\}$ or hyperbolae, the energy on combined curves also controls sufficient $L^2$ norms on the solution and/or  its gradient. We then proceed as before by establishing global Sobolev inequalities and other parallel analytical tools. With this {\sl combined foliation}, we can almost entirely remove the restriction on the support of the initial data (i.e., one only demands certain decreasing rate on the initial data at spatial infinity, according to different nonlinearities coupled in the system).

This construction can be generalized in higher dimensional case in $\RR^{1+3}$ with some additional non-trivial observations concerning rotation-invariance (see in \cite{LM2017}). There is also other constructions to generalize the hyperboloidal foliation method, see in detail \cite{K-W-Y-2018}.


The present work is composed of two parts. In part 1 (section \ref{sec foliation-construction} to section \ref{sec decay-ext}), we make a detailed description on the combined foliation method, especially on the construction of the foliation (section \ref{sec foliation-construction}), the global Sobolev inequalities (section \ref{sec sobolev}) and the decay estimates in transition-exterior region (section \ref{sec decay-ext}).

In part 2
(section \ref{sec bootstrap} to section \ref{sec conclusion}), we analyse the model problem \eqref{eq main} and establish the following main result:
\begin{theorem}\label{thm main}
Let $N\geq 9$  be an integer and $\vep>0$. Suppose that the following smallness conditions hold for the initial data:
\begin{equation}\label{eq main-smallness}
\|(1+r)^{\gamma}\del_x\del^IL^j u_0(\cdot,1)\|_{L^2(\RR)} + \|(1+r)^{\gamma}\del^IL^j u_1(\cdot,1)\|_{L^2(\RR)}\leq \vep,\quad |I|+j\leq N.\footnote{$L = t\del_x + x\del_t$. See in detail in the following sections.}
\end{equation}
Then when $\vep$ sufficiently small, the corresponding local solution extends to time infinity.
\end{theorem}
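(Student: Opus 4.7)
My plan is a continuity/bootstrap argument for weighted energies measured along the combined curves $\Tscrt_s$ constructed in Section~\ref{sec foliation-construction}. I use the admissible family $\{\partial_t,\partial_x,L\}$ with $L=t\partial_x+x\partial_t$; since $[L,\Box]=0$ and $L$ commutes with multiplication by the constant $c^2$, it commutes with both wave operators without commutator remainders. For every multi-index with $|I|+j\le N$ I track the natural energy of $\partial^I L^j u$ (carrying a $(1+r)^{\gamma}$ weight on the exterior half-lines) and the Klein-Gordon energy of $\partial^I L^j v$ on $\Tscrt_s$. The bootstrap assumption is that, on a maximal interval $[s_0,s_1]$, all of these quantities are bounded by $C_1\vep s^{\delta}$ for a small $\delta>0$ and a large constant $C_1$; the aim is to recover the same bound with $C_1$ replaced by $C_1/2$, which closes the argument and propagates the local solution up to $s=+\infty$.

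\textbf{Energy inequality on combined curves.}
I first push the data \eqref{eq main-smallness} forward to the initial combined curve $\Tscrt_{s_0}$. The smallness is preserved because $\Tscrt_{s_0}$ coincides in the exterior with a time-constant segment (so the $(1+r)^{\gamma}$ weight on the data is used directly) and in the interior reduces to a compact hyperbolic cap handled by standard hyperboloidal transfer. Commuting $\partial^IL^j$ through \eqref{eq main} produces only Leibniz source terms. Integrating the divergence of the corresponding stress tensor between two combined curves yields an energy identity whose boundary contributions at the two junctions (where the arc of $\Hcal_s$ is glued to the two half-lines) are non-negative thanks to the construction of $\Tscrt_s$, in which the connecting piece is arranged to remain space-like with a normal whose time component dominates. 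The outcome is a Gronwall-type inequality driven by the $L^2(\Tscrt_s)$ norms of the commuted right-hand sides.

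\textbf{Nonlinear source estimates.}
To bound these $L^2$ norms I combine the global Sobolev inequalities on $\Tscrt_s$ of Section~\ref{sec sobolev} with the decay estimates of Section~\ref{sec decay-ext}. In the interior, Sobolev applied to the bootstrap energy yields $|v|\lesssim \vep s^{-1/2+\delta}$ (the Klein-Gordon gain of $s^{-1/2}$) and, via the semi-hyperboloidal identity $\underline{\partial}_x=t^{-1}L$, an $s/t$-gain per $L$-derivative on the good component of $\partial u$. The cubic source $v^3$ in the $u$-equation is then straightforward: $\|v^3\|_{L^2}\lesssim\|v\|_{L^\infty}^2\|v\|_{L^2}$ is $s$-integrable and contributes at most an $\vep^3$ perturbation. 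The delicate source is the quadratic term $N^{\alpha\beta}\partial_\alpha u\partial_\beta u$ feeding the $v$-equation; I estimate it by Hölder, placing the factor carrying more than $N/2$ derivatives in $L^2$ (from the bootstrap) and the other in $L^\infty$ (from Sobolev and the $L$-hierarchy). In the exterior, where no $s$-mechanism is available, the weighted estimates of Section~\ref{sec decay-ext} convert the $(1+r)^{\gamma}$ weight propagated from the data into pointwise $(1+r)^{-\gamma}$-type decay for $u,v,\partial u,\partial v$, and the nonlinearities integrate against the weighted energy to close in this region as well.

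\textbf{Main obstacle.}
The principal difficulty is the interior control of $N^{\alpha\beta}\partial_\alpha u\partial_\beta u$: in $\RR^{1+1}$ a free wave does not decay pointwise, so every factor of decay on $\partial u$ must come either from the coupling (propagating the better Klein-Gordon decay of $v$ back through the cubic source) or from the $s/t$-gain available only on the good derivative $\underline{\partial}_x=t^{-1}L$. This is exactly at the borderline, and the room $s^{\delta}$ built into the bootstrap is what allows the time integration to close; no substantially larger growth rate would suffice. A secondary, geometric obstacle is the matching at the junctions of $\Tscrt_s$, where the curve is only Lipschitz unless smoothed; the corresponding boundary contributions in the energy identity have to be shown to carry the correct sign, which is exactly what the construction in Section~\ref{sec foliation-construction} arranges. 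Once the bootstrap closes on every $[s_0,s_1]$, a standard continuity argument extends the local solution to all $s\ge s_0$, and the foliation-exhaustion of $\{t\ge 1\}$ by $\bigcup_s\Tscrt_s$ then yields Theorem~\ref{thm main}.
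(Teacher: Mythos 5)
Your scaffolding --- combined-curve foliation, weighted energies, high-order commutation with $\{\del_t,\del_x,L\}$, bootstrap --- matches the paper's, but the proposal omits two structural devices without which the interior estimates do not close, and mis-states a third point.

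The most serious gap concerns the quadratic source $N^{\alpha\beta}\del_{\alpha}u\,\del_{\beta}u$ feeding the Klein--Gordon equation. You propose to estimate it directly by H\"older, relying on null structure and an $(s/t)$-gain on the good derivative. In $\RR^{1+1}$ this is not enough: $\del u$ carries essentially no pointwise decay, and after extracting the null structure the worst contribution at top commutation order is of the form $\delu_1 u\cdot\del_t\del^I L^j u$ (or with the roles swapped), whose $L^2(\Hcal_s)$ norm can at best be beaten down to roughly $s^{-1/4+\delta}$; integrated in $s$, this gives $s^{3/4+\delta}$, far short of the $s^{\delta}$ growth allowed at that level. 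The paper does not estimate this source directly: it introduces the normal-form unknown $w:=v-N^{\alpha\beta}\del_{\alpha}u\,\del_{\beta}u$ and works with the transformed system \eqref{eq main-trans}, in which the Klein--Gordon source becomes $m^{\mu\nu}N^{\alpha\beta}\del_{\mu}\del_{\alpha}u\cdot\del_{\nu}\del_{\beta}u$ --- a product of Hessians with genuinely better interior and exterior decay (cf. Lemma \ref{lem 1 source-KG-int}, Lemma \ref{lem 1 source-KG-ext}) --- plus the higher-order remainder $N^{\alpha\beta}\del_{\alpha}(v^3)\del_{\beta}u$. This transformation is not an optimization; it is what makes the argument close, and your sketch has no substitute for it.

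A second gap: you never explain how $\del_t u$ (the \emph{bad} derivative) acquires any pointwise decay in the interior, which is needed even after the normal form to handle the cross terms. The identity $\delu_1 = t^{-1}L$ gives a gain on the tangential derivative only; it says nothing about $\del_t$, and in one space dimension energy and Sobolev alone give $|\del_t u|\lesssim 1$. The paper proves the sharp bound $|\del_t\del^I L^j u|\lesssim C_1\vep\,(t-r)^{-1/4}$ (\eqref{eq 1.85' w-sharp}) by integrating a weighted transport equation for $U_\beta=(t-r)^{\beta}\del_t\del^IL^ju$ along the integral curves of the auxiliary vector field $\mathcal{J}=\del_t+\frac{2tx}{t^2+r^2}\del_x$ (Lemma \ref{lem 1 w-sharp}, using the decomposition \eqref{eq 1 decompo-Box-int}), and then derives the Hessian decay \eqref{eq 1 w-sharp}, \eqref{eq 4 w-sharp} from it. This characteristic-integration step is one of the analytical engines of the proof and is absent from your proposal.

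Finally, your bootstrap hypothesis bounds \emph{all} commuted energies uniformly by $C_1\vep\,s^{\delta}$. The paper actually uses a three-tier hierarchy (\eqref{eq 1 bootstrap}, \eqref{eq 1 bootstrap-int}): the top-order exterior energies of $Lu$ and $v$ are allowed to grow like $s^{1+\delta}$, the intermediate ones like $s^{\delta}$, and the lowest levels stay uniformly bounded (and the commutation count goes up to $N+1$ for $u$, with initial data at order $N+2$). This hierarchy is where the top-order loss of decay is absorbed; a flat $s^{\delta}$ bound for every $|I|+j\le N$ would not survive even the source estimates you sketch, let alone the untransformed quadratic source. On a minor point, the interior Klein--Gordon gain is $t^{-1/2}$ (Proposition \ref{prop 1 K-S}), not $s^{-1/2+\delta}$ as you write.
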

In one space dimension, there are already plenty of results on the global behavior of wave equations and Klein-Gordon equations (e.g. \cite{Yang-Yu-2017}, \cite{Delort-2014}, \cite{Stingo-2015} etc). Through this result, we observe that the global stability is also expectable for certain nonlinearities for wave-Klein-Gordon system. 

\part{The Combined foliation framework}
\section{Construction of the combined foliation}\label{sec foliation-construction}
\subsection{Basic notation}
We are working in $\RR^+_t\times \RR_x$. For $x\in\RR_x$ we denote by $r = |x|$. We define the $C^{\infty}_c$ function on $\RR$:
$$
\rho(x) := \left\{
\aligned
&0,\quad r\geq 1/2,
\\
&e^{\frac{4}{4x^2-1}},\quad r<1/2.
\endaligned
\right.
$$
Remark that $\rho$ is in $C_c^{\infty}(\RR)$ thus
$$
0<\int_{\RR}\rho(x)dx <+\infty.
$$
We define
\begin{equation}\label{eq 1 cu-off}
\chi(x) := \frac{\int_{-\infty}^{x+1/2}\rho(y)dy}{\int_\RR \rho(y)dy}
\end{equation}
which is also a $C^{\infty}$ function and satisfies the following condition:
$$
\chi(x) =0 ,\quad  x\leq0, \quad \chi(x) = 1, \quad x\geq 1,\quad \text{and}\quad \chi'(x)>0, \quad 0< x< 1.
$$
We define
$$
\xi_s(r) := 1 - \chi(r-(s^2-1)/2), \quad  r\geq 0
$$
which is a $C^{\infty}$ function defined on $\RR^+$ satisfies the following properties:
$$
\xi_s(r) = 1, \quad r\leq \frac{s^2-1}{2}, \quad \xi_s(r) = 0,\quad r\geq \frac{s^2+1}{2}
$$
and
\begin{equation}\label{eq 1 xi'}
\xi_s'(x)<0,\quad \frac{s^2-1}{2}<x<\frac{s^2+1}{2}.
\end{equation}
Also,
$$
\del_s(\xi_s)(x) = s\chi'(x - (s^2-1)/2) = -s\xi_s'(x).
$$

For $s\geq 2$, we define a family of curves via the following ODE:
$$
T(s,0) := s,\quad \del_xT(s,x) = \frac{\xi_s(r)x}{\sqrt{x^2+s^2}}.
$$
and we denote by
$$
\Fcal_s : = \{(T(s,x),x)\in \RR^2|x\in\RR\}.
$$
We remark that $\Fcal_s$ are $C^{\infty}$ curves and symmetric with respect to the axis $\{x=0\}$. Furthermore, for $x\geq 0$
\begin{equation}\label{eq 1 feuille}
T(s,x) = \left\{
\aligned
&\sqrt{x^2+s^2},&&0\leq x\leq \frac{s^2-1}{2},
\\
& s + \int_0^{x}\frac{\xi_s(r)y}{\sqrt{s^2+y^2}}dy, &&\frac{s^2-1}{2}<x< \frac{s^2+1}{2},
\\
&T(s) = s + \int_0^{\frac{s^2+1}{2}}\frac{\xi_s(r)y}{\sqrt{s^2+y^2}}dy, &&\frac{s^2+1}{2}\leq x <\infty.
\endaligned
\right.
\end{equation}

For the convenience of discussion, we introduce the following notation:
$$
\Fcal_{[s_0,s_1]} = \{(t,x)|T(s_0,x)\leq t\leq T(s_1,x)\}, \quad \Fcal_{[s_0,+\infty)} = \{(t,x)|T(s_0,x)\leq t\}
$$
the region limited by one or two such curves. We write
$$
\Fcal_{[2,+\infty)} = \bigcup_{s\geq 2}\Fcal_s
$$
which is an one-parameter foliation of the region $\Fcal_{[2,+\infty)}$,  called the {\sl combined foliation}. Then we state the following result:
\begin{proposition}\label{prop 1 feuille}
$\Fcal_s$ are $C^{\infty}$ curves. For $|x|\leq \frac{s^2-1}{2}$,
$$
T(s,x) = \sqrt{x^2+s^2}.
$$
For $|x|\geq \frac{s^2+1}{2}$, $T(s,x)$ is constant with respect to $x$ and
\begin{equation}\label{eq 1 prop 1 feuille}
\frac{s^2+1}{2}\leq T(s,x)\leq \frac{\sqrt{s^4+6s^2+1}}{2}.
\end{equation}
\end{proposition}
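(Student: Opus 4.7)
The proof is essentially a direct integration of the defining ODE, combined with careful use of the support properties of $\xi_s$. The plan is to exploit the explicit formula \eqref{eq 1 feuille}, so the work is to justify this formula piece by piece and then extract the claimed bounds.

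First I would observe that $\xi_s \in C^{\infty}(\RR^+)$ and the right-hand side of the ODE $\partial_x T = \xi_s(r) x/\sqrt{x^2+s^2}$ is $C^\infty$ in $x$ (and, being an ODE with no $T$-dependence, just a quadrature). Hence $T(s,\cdot)$ is $C^\infty$. The symmetry about $\{x=0\}$ follows because the right-hand side is odd in $x$, so $T(s,x) = T(s,-x)$. It therefore suffices to analyse $x \geq 0$.

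Next I would handle the two explicit regimes. On $0 \leq x \leq (s^2-1)/2$ the cut-off satisfies $\xi_s \equiv 1$, so the ODE reads $\partial_x T = x/\sqrt{x^2+s^2}$ with $T(s,0)=s$; integrating gives $T(s,x) = \sqrt{x^2+s^2}$, which is the hyperbolic branch. For $x \geq (s^2+1)/2$ we have $\xi_s \equiv 0$, so $\partial_x T \equiv 0$ and $T(s,\cdot)$ is constant on this half-line; in between, the middle line of \eqref{eq 1 feuille} is simply the quadrature of the ODE with initial value $T(s,(s^2-1)/2) = \sqrt{((s^2-1)/2)^2+s^2}$.

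For the numerical bounds \eqref{eq 1 prop 1 feuille}, I would use monotonicity together with the trivial envelope $0 \leq \xi_s \leq 1$. Since $\partial_x T(s,x) \geq 0$ for $x \geq 0$, the constant value on $x \geq (s^2+1)/2$ is at least $T(s,(s^2-1)/2) = \sqrt{((s^2-1)/2)^2+s^2}$. A short computation
\[
\left(\frac{s^2-1}{2}\right)^2 + s^2 \;=\; \frac{s^4-2s^2+1+4s^2}{4} \;=\; \left(\frac{s^2+1}{2}\right)^2
\]
gives the lower bound $(s^2+1)/2$. For the upper bound, replace $\xi_s$ by $1$ in the integral defining $T(s)$:
\[
T(s) \;\leq\; s + \int_0^{(s^2+1)/2}\frac{y}{\sqrt{s^2+y^2}}\,dy \;=\; \sqrt{s^2 + \left(\tfrac{s^2+1}{2}\right)^2},
\]
and the identity
\[
s^2 + \left(\frac{s^2+1}{2}\right)^2 \;=\; \frac{4s^2+s^4+2s^2+1}{4} \;=\; \frac{s^4+6s^2+1}{4}
\]
yields the claimed right-hand side. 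No step looks genuinely obstructive here; the only mild care is to keep track of the continuous matching at $x=(s^2-1)/2$ when combining the two regimes, which is automatic because $\xi_s$ is $C^\infty$ and the piecewise formula \eqref{eq 1 feuille} is just the integrated ODE.
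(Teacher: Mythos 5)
Your proof is correct and follows essentially the same route as the paper: the paper's proof of this proposition is a one-line appeal to the explicit formula \eqref{eq 1 feuille}, and you have simply filled in the (short) integrations and the bounds $0 \leq \xi_s \leq 1$ that make that appeal legitimate. Both your lower-bound computation $T(s,(s^2-1)/2) = (s^2+1)/2$ and the upper-bound computation $\sqrt{s^2 + ((s^2+1)/2)^2} = \sqrt{s^4+6s^2+1}/2$ check out.
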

\begin{proof}
We only need to prove \eqref{eq 1 prop 1 feuille}, and this is direct by \eqref{eq 1 feuille}.
\end{proof}

We list out some geometric facts on the curve $\Fcal_s$. The normal vector (with respect to Euclidian metric) of $\Fcal_s$ is
\begin{equation}\label{eq normal}
\vec{n} =
\left\{
\aligned
&\frac{1}{\sqrt{t^2+x^2}}(t,-x) = \frac{\sqrt{s^2+x^2}}{\sqrt{s^2+2x^2}}\Big(1,\frac{-x}{\sqrt{s^2+x^2}}\Big),\quad |x|\leq \frac{s^2-1}{2},
\\
&\frac{\sqrt{s^2+x^2}}{\sqrt{s^2+(1+\xi_s(r))x^2}}\Big(1,\frac{-\xi_s(|x|)x}{\sqrt{s^2+x^2}}\Big),\quad \frac{s^2-1}{2}\leq |x|\leq \frac{s^2+1}{2},
\\
&(1,0),\quad  \frac{s^2+1}{2}\leq |x|.
\endaligned
\right.
\end{equation}
The volume element of $\Fcal_s$ (viewed as a surface) is
\begin{equation}\label{eq volume}
d\sigma = \sqrt{1+|\del_xT|^2} =
\left\{
\aligned
& \frac{\sqrt{x^2+t^2}}{t} =\frac{\sqrt{s^2+2x^2}}{\sqrt{s^2+x^2}},\quad |x|\leq \frac{s^2-1}{2},
\\
& \frac{\sqrt{s^2+(1+\xi_s^2(|x|))x^2}}{\sqrt{s^2+x^2}}, \quad \frac{s^2-1}{2}\leq |x|\leq \frac{x^2+1}{2},
\\
& 1,\quad  \frac{s^2+1}{2}\leq |x|.
\endaligned
\right.
\end{equation}

For the convenience of discussion, the curve $\Fcal_s$ is divided into three pieces:
$$
\Hcal^*_s := \left\{(T(s,x),x)\Big|0\leq |x|\leq (s^2-1)/2\right\}, \quad \Tcal_s: = \left\{(T(s,r),x)\Big|(s^2-1)/2\leq r\leq (s^2+1)/2\right\}
$$
and
$$
\Pcal_s:=\left\{(T(s,r),x)\Big||x|\geq (s^2+1)/2\right\}.
$$
Remark that the part $\Hcal^*_s$ is a part of the hyperbola with radius $s$, and $\Pcal_s$ is part of the line $\{t=T(s,(s^2+1)/2)\}$. The part $\Tcal_s$ joints the above two pars together in a smooth manner.

We also introduce the following notation
$$
\Hcal^*_{[s_0,s_1]} : = \left\{(t,x)\Big|T(s_0,x)\leq t\leq T(s_1,x), |x|\leq (s^2-1)/2\right\},
$$
$$
\Tcal_{[s_0,s_1]} : = \left\{(t,x)\Big|T(s_0,x)\leq t\leq T(s_1,x), (s^2-1)/2\leq|x|\leq (s^2+1)/2\right\}
$$
and
$$
\Pcal_{[s_0,s_1]} : = \left\{ T(s_0,x)\leq t\leq T(s_1,x), |x|\geq (s^2+1)/2 \right\}.
$$
We also denote by
$$
\Hcal^*_{[s_0,\infty)} : = \left\{(t,x)\Big|T(s_0,x)\leq t, |x|\leq (s^2-1)/2\right\},
$$
$$
\Tcal_{[s_0,\infty)} : = \left\{(t,x)\Big|T(s_0,x)\leq t , (s^2-1)/2\leq|x|\leq (s^2+1)/2\right\}
$$
and
$$
\Pcal_{[s_0,\infty)} : = \left\{ T(s_0,x)\leq t , |x|\geq (s^2+1)/2 \right\}.
$$
These regions are called ``hyperbolic region'', ``transition region'' and ``flat region''.

For the convenience of discussion, we also denote by
$$
\Hb_s := \Tcal_s\cup \Pcal_s, \quad \Hb_{[s_0,s_1]} := \Tcal_{[s_0,s_1]}\cup\Pcal_{[s_0,s_1]}.
$$
which are called ``exterior region''. The region $\Hcal^*_{[s_0,s_1]}$ or $\Hcal^*_{[s_0,\infty]}$ is also called ``interior region''. The frontier between exterior region and interior region
$$
\del\Kcal_{[s_0,s_1]} := \{(s,x)||x| = (s^2-1)/2,\ s_0\leq s\leq s_1\} = \{(t,x)| t = |x|+1,\ t=T(s), s_0\leq s\leq s_1\}.
$$

We remark that $\Hb_s$ has two components of connection, which are $\Hb_s^+ := \{x>0\}\cap \Hb_s$ and $\Hb^-:=\{x<0\}\cap \Hb_s$. Also, $\Hb_{[s_0,s_1]}$ has two components of connection, which are denoted by
$$
\Hb^+_{[s_0,s_1]} = \{r>0\}\cap \Hb_{[s_0,s_1]}, \quad
\Hb^-_{[s_0,s_1]} = \{r<0\}\cap \Hb_{[s_0,s_1]} .
$$
The the above regions, the following bounds hold:
\begin{lemma}\label{lem 2 position}
Let $(t,x)\in \Fcal_{[s_0,s_1]}$ and $s_0\geq 2$, then
\begin{equation}\label{eq 1 lem 2 position}
r \in \left\{
\aligned
\leq& t-1, \quad(t,x)\in \Hcal^*_{[s_0,s_1]},
\\
\in& [t-1,t - c(s)],\quad (t,x)\in\Tcal_{[s_0,s_1]},
\\
\geq& t - c(s),\quad (t,x)\in \Pcal_{[s_0,s_1]}.
\endaligned
\right.
\end{equation}
where $1>c(s)>0$ is determined by the function $\chi$.
\end{lemma}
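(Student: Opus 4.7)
The plan is to treat the three pieces $\Hcal^*_{[s_0,s_1]}$, $\Tcal_{[s_0,s_1]}$, $\Pcal_{[s_0,s_1]}$ separately, using on each piece the explicit formula \eqref{eq 1 feuille} for $T(s,x)$. Since $\Fcal_s$ is symmetric about $\{x=0\}$, it suffices to argue for $x\geq 0$, in which case $r=x$.

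First I would handle the two straightforward pieces. On $\Hcal^*_{[s_0,s_1]}$, one has $t=\sqrt{r^2+s^2}$ with $r\leq (s^2-1)/2$ for some $s\in[s_0,s_1]$; the inequality $r\leq t-1$ amounts, after squaring $\sqrt{r^2+s^2}\geq r+1$, to $s^2\geq 2r+1$, which is exactly the range condition. On $\Pcal_{[s_0,s_1]}$, $t=T(s)$ is constant in $x$ and $r\geq (s^2+1)/2$, so the announced bound holds with the natural choice
$$
c(s):=T(s)-\frac{s^2+1}{2}.
$$

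The substantive step is the transition piece. On $\Tcal_s\cap\{x\geq 0\}$, the defining ODE for $T(s,x)$ yields
$$
\del_x\bigl(T(s,x)-x\bigr)=\frac{\xi_s(x)\,x}{\sqrt{x^2+s^2}}-1<0,
$$
since $0\leq \xi_s\leq 1$ and $x/\sqrt{x^2+s^2}<1$. Hence $t-r$ is strictly decreasing across $\Tcal_s$. Evaluating at the endpoints: at $x=(s^2-1)/2$ one has $t=\sqrt{(s^2-1)^2/4+s^2}=(s^2+1)/2$, hence $t-r=1$; at $x=(s^2+1)/2$, by construction $t-r=c(s)$. Combining gives $t-r\in[c(s),1]$ on $\Tcal_s$, i.e., $r\in[t-1,t-c(s)]$.

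It remains to check $0<c(s)<1$. The upper bound follows from the strict decrease of $t-r$ from $1$ to $c(s)$ just established. For positivity, by \eqref{eq 1 xi'} the cut-off $\xi_s$ is strictly positive on $((s^2-1)/2,(s^2+1)/2)$, so splitting the integral in \eqref{eq 1 feuille} at $(s^2-1)/2$ and computing the left part as $\sqrt{s^2+(s^2-1)^2/4}-s=(s^2+1)/2-s$, one finds $T(s)>(s^2+1)/2$. The only mildly delicate point in the whole argument is the smooth matching between pieces, which rests on $\xi_s((s^2-1)/2)=1$ and $\xi_s((s^2+1)/2)=0$; these guarantee that the derivative formulas agree at the junctions and thus that the computation of $t-r$ at the endpoints is consistent.
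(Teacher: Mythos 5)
Your proof is correct and follows essentially the same route as the paper: both hinge on the monotonicity of $\lambda(s,r)=T(s,r)-r$ (which you state on $\Tcal_s$ and the paper states globally) together with the endpoint values $\lambda(s,(s^2-1)/2)=1$ and $c(s)=\lambda(s,(s^2+1)/2)=\int_{(s^2-1)/2}^{(s^2+1)/2}\xi_s(y)y/\sqrt{s^2+y^2}\,dy\in(0,1)$. The only cosmetic differences are that you treat $\Hcal^*$ by a direct algebraic manipulation rather than by the monotonicity of $\lambda$, and you deduce $c(s)<1$ from the strict decrease rather than from the explicit integral comparison the paper uses; both are equivalent in substance.
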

\begin{proof}
We remark the following calculation: denote by
$$
\lambda(s,r) := T(s,r) - r.
$$
Then
$$
\del_r\lambda(s,r) = \frac{\xi_s(r)r}{\sqrt{s^2+r^2}} - 1<0.
$$
We remark that
$$
T\big(s,(s^2-1)/2\big) = s + \int_0^{(s^2-1)/2}\!\!\!\!\frac{ydy}{\sqrt{s^2+y^2}} =  \frac{s^2+1}{2}.
$$
Now consider a point on $\Fcal_s$. Then $\lambda(s,r)$ is strictly decreasing with respect to $r$. Thus on $\Hcal^*_s$,
$$
\lambda(s,r)\geq \lambda\big(s,(s^2-1)/2\big) = 1.
$$
On $\Tcal_s$,
$$
1\geq \lambda(s,r)\geq   \lambda \big(s,(s^2+1)/2\big).
$$
Remark that
$$
\aligned
\lambda \big(s,(s^2+1)/2\big) =& s + \int_0^{(s^2+1)/2}\frac{\xi_s(r)ydy}{\sqrt{s^2+y^2}} - \frac{s^2+1}{2}
\\
=& s + \int_0^{(s^2+1)/2}\frac{\xi_s(r)ydy}{\sqrt{s^2+y^2}} - s - \int_0^{(s^2-1)/2}\frac{\xi_s(r)ydy}{\sqrt{s^2+y^2}}
\\
=&\int_{(s^2-1)/2}^{(s^2+1)/2}\frac{\xi_s(r)ydy}{\sqrt{s^2+y^2}}.
\endaligned
$$
We also remark that
$$
0< \int_{(s^2-1)/2}^{(s^2+1)/2}\frac{\xi_s(r)ydy}{\sqrt{s^2+y^2}}
< \int_{(s^2-1)/2}^{(s^2+1)/2}\frac{ydy}{\sqrt{s^2+y^2}}
 = \sqrt{s^2+y^2}\bigg|_{(s^2-1)/2}^{(s^2+1)/2}<1.
$$

So we conclude that on $\Tcal_s$,
\begin{equation}\label{eq 1 pr lem 2 proposition}
-1<\lambda \big(s,(s^2+1)/2\big)<0
\end{equation}

And, on $\Pcal_s$, we remark that
$$
\lambda(s,r)\leq \lambda \big(s,(s^2+1)/2\big).
$$

Consider together the above three cases, we conclude \eqref{eq 1 lem 2 position}.
\end{proof}

\subsection{Frames, vector fields}\label{subsec frame}
In $\Fcal_{[2,+\infty)}$, we denote by
$$
\del_0 = \del_t,\quad \del_1 = \del_x.
$$
We introduce the following vector filed:
$$
L := x\del_t+t\del_a
$$
which is called the {\sl Lorentzian boost}.
We also denote by
$$
\delu_x := \frac{x}{t}\del_t + \del_x.
$$

In $\Fcal_{[s_0,\infty)}$ we introduce the following semi-hyperboloidal frame ({\sl SHF} for short) :
$$
\delu_0 := \del_t,\quad \delu_1 = \delu_x.
$$
The transition matrices between SHF and the canonical frame $\{\del_t,\del_x\}$ are:
$$
\Phiu_{\alpha}^{\beta} = \left(
\begin{array}{cc}
1 &0
\\
x/t &1
\end{array}
\right),
\quad
\Psiu_{\alpha}^{\beta} = \left(
\begin{array}{cc}
1 & 0
\\
-x/t &1
\end{array}
\right)
$$
with
$$
\delu_\alpha = \Phiu_{\alpha}^{\beta}\del_\beta,\quad \del_{\alpha} = \Psiu_{\alpha}^{\beta}\delu_{\beta}.
$$

We also introduce the following {\sl null} frame (NF for short) in the region $\Fcal_{[s_0,\infty)}\cap \{r > t/2\}$, defined as following:
$$
\delt_0:= \del_t,\quad \delt_x = \frac{x}{|x|}\del_t + \del_x.
$$
The transition matrices between NF and the canonical frame are:
$$
\Phit_{\alpha}^{\beta} = \left(
\begin{array}{cc}
1 &0
\\
x/r &1
\end{array}
\right),
\quad
\Psiu_{\alpha}^{\beta} = \left(
\begin{array}{cc}
1 & 0
\\
-x/r &1
\end{array}
\right)
$$
with
$$
\delt_\alpha = \Phit_{\alpha}^{\beta}\del_\beta,\quad \del_{\alpha} = \Psit_{\alpha}^{\beta}\delt_{\beta}.
$$

In $\Fcal_{[2,+\infty)}$, we define the following frame (called the {\bf tangent frame}, or {\bf TF} for short):
$$
\delb_0 := \del_t ,\quad \delb_1 := \delb_x = \frac{\xi_s(r)x}{\sqrt{s^2+x^2}}\del_t + \del_x.
$$
The transition matrices between TF and the canonical frame are
$$
\Phib_{\alpha}^{\beta} = \left(
\begin{array}{cc}
1 &0
\\
\frac{\xi_s(r)x}{\sqrt{s^2+x^2}} &1
\end{array}
\right),
\quad
\Psib_{\alpha}^{\beta} = \left(
\begin{array}{cc}
1 &0
\\
\frac{-\xi_s(r)x}{\sqrt{s^2+x^2}} &1
\end{array}
\right),
$$
with
$$
\delb_{\alpha} = \Phib_{\alpha}^{\beta}\del_{\beta}u, \quad \del_{\alpha} = \Psib_{\alpha}^{\beta}\delb_{\beta}.
$$

Let $T$ be a two tensor defined in $\Fcal_{[2,+\infty)}$. Then it can be written in different frame as following:
$$
T = T^{\alpha\beta}\del_{\alpha}\otimes\del_{\beta} = \Tu^{\alpha\beta}\delu_{\alpha}\otimes\delu_{\beta} = \Tb^{\alpha\beta}\delb_{\alpha}\otimes\delb_{\beta} = \Tt^{\alpha\beta}\delt_{\alpha}\otimes\delt_{\beta}.
$$
For a three-tensor $Q$, the same relation holds:
$$
Q = Q^{\alpha\beta\gamma}\del_{\alpha}\otimes\del_{\beta}\otimes\del_{\gamma} =
\Qu^{\alpha\beta\gamma}\delu_{\alpha}\otimes\delu_{\beta}\otimes\delu_{\gamma} =
\Qb^{\alpha\beta\gamma}\delb_{\alpha}\otimes\delb_{\beta}\otimes\delb_{\gamma} = \Qt^{\alpha\beta\gamma}\delt_{\alpha}\otimes\delt_{\beta}\otimes\delt_{\gamma}.
$$
We remark the following relation:
$$
\aligned
\Tu^{\alpha\beta}& = \Psiu_{\alpha'}^{\alpha}\Psiu_{\beta'}^{\beta}T^{\alpha'\beta'},\quad \Qu^{\alpha\beta\gamma} = \Psiu_{\alpha'}^{\alpha}\Psiu_{\beta'}^{\beta}\Psiu_{\gamma'}^{\gamma}Q^{\alpha'\beta'\gamma'},
\\
\Tb^{\alpha\beta}& = \Psib_{\alpha'}^{\alpha}\Psib_{\beta'}^{\beta}T^{\alpha'\beta'},\quad \Qb^{\alpha\beta\gamma} = \Psib_{\alpha'}^{\alpha}\Psib_{\beta'}^{\beta}\Psib_{\gamma'}^{\gamma}Q^{\alpha'\beta'\gamma'}
\endaligned
$$
and
$$
\Tt^{\alpha\beta} = \Psit_{\alpha'}^{\alpha}\Psit_{\beta'}^{\beta}T^{\alpha'\beta'},\quad
\Qt^{\alpha\beta\gamma} = \Psit_{\alpha'}^{\alpha}\Psit_{\beta'}^{\beta}\Psit_{\gamma'}^{\gamma}Q^{\alpha'\beta'\gamma'}.
$$

\subsection{Functional spaces}
In this subsection we introduce some norms and functional spaces for the following discussion. Firstly, for a positive constant $\gamma$, we denote by $|x|=r$ and introduce the following function:
$$
w_{\gamma}(t,x)  = \chi(r-t)(r-t+1)^{\gamma}.
$$
This is a smooth function vanishing when $r\leq t$.  We remark that in the region $\Pcal_s$, when $r\geq t$
\begin{equation}\label{eq 1 weight}
\aligned
\del_xw_{\gamma} =& \big(\chi'(r-t)(r-t+1)^{\gamma} + \gamma\chi(r-t) (r-t+1)^{\gamma-1}\big)\frac{x}{r}
\\
=& \Big(\bar{w}_{\gamma} + \frac{\gamma w_{\gamma}}{1+r-t}\Big)\frac{x}{r},
\\
\del_tw_{\gamma} =& -\big(\chi'(r-t)(r-t+1)^{\gamma} + \gamma\chi(r-t) (r-t+1)^{\gamma-1}\big)
\\
=& - \Big(\bar{w}_{\gamma} + \frac{\gamma w_{\gamma}}{1+r-t}\Big)
\endaligned
\end{equation}
with
$$
\bar{w}_{\gamma} = \chi'(|x|-t)(|x|-t+1)^{\gamma}\geq 0
$$
and vanishes for $r-t\leq 0$ or $r-t\geq 1$. This leads to the following bound:
\begin{equation}\label{eq 2 weight}
C\geq \chi'(|x|-t)(|x|-t+1)^{\gamma}\geq 0
\end{equation}
where $C$ is positive constant determined by $\chi$.

Now we denote by $\Fcal_{(s_0,s_1)}$ the open set
$$
\Fcal_{(s_0,s_1)} := \{(t,x)|T(s_0,x)<t<T(s_1,x)\}.
$$
Now introduce the class of functions
$$
\mathcal{S}_{[s_0,s_1]} := C_c^{\infty}(\Fcal_{(s_0-1,s_1+1)})
$$
which are smooth functions compactly supported in the open set $\Fcal_{(s_0-1,s_1+1)}$ which is a larger open set containing $\Fcal_{[s_0,s_1]}$.

Let $u\in\mathcal{S}_{[s_0,s_1]}$. We denote by $w_s(x) := u(T(s,x),x)$ the restriction of $u$ on $\Fcal_s$. Then $\forall s\in[s_0,s_1]$, $w_s(x)$ is smooth and compactly supported (i.e. in the class $C_c^{\infty}(\RR)$). Then we define the norm
$$
\|u\|_{L^{p}(\Fcal_s)}^p : = \|w_s\|_{L^p(\RR)}^p = \int_{\RR}|u(T(s,x),x)|^p dx ,\quad 1\leq p<\infty.
$$
We denote by
$$
\|u\|_{L_p^\infty([s_0,s_1])} := \sup_{s\in[s_0,s_1]}\big\{\|w_s\|_{L^p(\RR)}\big\},
$$
and
$$
\|u\|_{L_p^q([s_0,s_1])} := \bigg(\int_{[s_0,s_1]}\|w_s\|_{L^p(\RR)}^q\ ds\bigg)^{1/q},\quad 1\leq q<\infty.
$$
We denote by $L^q_p([s_0,s_1])$ the completion of $\mathcal{S}_{[s_0,s_1]}$ with respect to the norm $\|\cdot \|_{L_p^q([s_0,s_1])}$. In the following discussion, almost all functions under discussion are in $L^{\infty}_2([s_0,s_1])$.

\subsection{Energy estimates with combined foliation}
Remark that $\Fcal_{[s_0,\infty)}$ is also parameterized by $(s,x)$ with the relation
$$
t = T(s,x),\quad  x=x.
$$
Then we calculate the Jacobian between these two parameterizations:
\begin{equation}\label{eq 1 Jac}
J = \det\left(\frac{\del(t,x)}{\del(s,x)}\right) = \del_sT.
\end{equation}
In general the following bounds hold:
\begin{lemma}\label{lem 1 Jac}
With the above notation,
$$
0<(1-\xi_s(r))s + \frac{\xi_s(r)s}{\sqrt{s^2+x^2}}\leq
\del_sT \left\{
\aligned
=&\frac{s}{\sqrt{s^2+x^2}}, \quad |x|\leq \frac{s^2-1}{2},
\\
\leq& \frac{\xi_s(r)s}{\sqrt{s^2+r^2}} + 2(1-\xi_s(r))s, \quad \frac{s^2-1}{2}\leq |x|\leq \frac{s^2+1}{2},
\\
\leq& 2s, \quad |x|\geq \frac{s^2+1}{2}.
\endaligned
\right.
$$
\end{lemma}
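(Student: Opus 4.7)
The plan is to derive a single integral identity for $\del_s T(s,x)$ valid across all three regions, then to read off the three bounds from two elementary pointwise estimates on the integrand. The hyperbolic case $|x|\leq (s^2-1)/2$ is immediate from Proposition~\ref{prop 1 feuille}: there $T(s,x)=\sqrt{s^2+x^2}$, so $\del_s T = s/\sqrt{s^2+x^2}$, which is both the claimed middle equality and consistent with the lower bound (since $\xi_s(x)=1$ on this range, the lower-bound expression also collapses to $s/\sqrt{s^2+x^2}$).

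For the transition and flat regions I restrict, by symmetry, to $x\geq 0$ and start from $T(s,x) = s+\int_0^x \xi_s(y)y/\sqrt{s^2+y^2}\,dy$. Differentiating under the integral using $\del_s\xi_s(y)=-s\,\xi_s'(y)$ produces two contributions; the one coming from $\del_s(s^2+y^2)^{-1/2}=-s/(s^2+y^2)^{3/2}$ I would integrate by parts via $y/(s^2+y^2)^{3/2}=-\del_y(s^2+y^2)^{-1/2}$, using $\xi_s(0)=1$ (valid since $s\geq 2$ gives $0\leq (s^2-1)/2$). The resulting boundary term $-\xi_s(0)/s = -1/s$ contributes $+1$ after multiplication by $-s$, cancelling the additive $\del_s s = 1$ sitting in front. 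The two remaining integrals collapse into the unified identity
$$\del_s T(s,x) \;=\; \frac{s\,\xi_s(x)}{\sqrt{s^2+x^2}} \;-\; s\int_0^x \xi_s'(y)\,\frac{y+1}{\sqrt{s^2+y^2}}\,dy.$$

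The three bounds of the lemma then rest on two elementary facts about $(y+1)/\sqrt{s^2+y^2}$ on the support $[(s^2-1)/2,(s^2+1)/2]$ of $\xi_s'$. First, $(y+1)^2\geq s^2+y^2 \Leftrightarrow 2y+1\geq s^2$, which holds precisely on this support, so $(y+1)/\sqrt{s^2+y^2}\geq 1$ there. Second, a short calculation shows $y\mapsto (y+1)/\sqrt{s^2+y^2}$ attains its global maximum $\sqrt{1+1/s^2}$ at $y=s^2$, so $(y+1)/\sqrt{s^2+y^2}\leq 2$ whenever $s\geq 2$. Combining these with $\xi_s'\leq 0$ and the identity $-\int_0^x\xi_s'(y)\,dy=1-\xi_s(x)$ sandwiches the integral term between $s(1-\xi_s(x))$ and $2s(1-\xi_s(x))$, yielding
$$(1-\xi_s(x))\,s + \frac{s\,\xi_s(x)}{\sqrt{s^2+x^2}} \;\leq\; \del_s T \;\leq\; 2(1-\xi_s(x))\,s + \frac{s\,\xi_s(x)}{\sqrt{s^2+x^2}}.$$
The lower bound is a convex combination of $s$ and $s/\sqrt{s^2+x^2}$, hence manifestly positive and identical to the one in the statement. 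The upper bound is the stated one on the transition region and collapses to $2s$ on the flat region, where $\xi_s(x)=0$.

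The main obstacle is the integration-by-parts step that produces the unified identity: one must keep track of two $s$-dependencies simultaneously (in $\xi_s$ and in $\sqrt{s^2+y^2}$) and verify that the boundary terms telescope with the additive $1$ from $\del_s s$. Once this formula is in place, all three cases follow at once from the sandwich $1\leq (y+1)/\sqrt{s^2+y^2}\leq 2$ on the support of $\xi_s'$, with no further case-splitting needed.
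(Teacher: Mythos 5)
Your proof is correct and follows essentially the same route the paper takes: an integral representation of $\del_sT$ obtained from the explicit formula for $T$ plus one integration by parts, followed by pointwise estimation of the integrand on the support of $\xi_s'$. Your unified identity $\del_sT = \frac{s\xi_s(x)}{\sqrt{s^2+x^2}} - s\int_0^x\xi_s'(y)\frac{y+1}{\sqrt{s^2+y^2}}\,dy$ is the correct output of that integration by parts; the paper's displayed chain of equalities (its equation labelled \eqref{eq 1 proof Jac}) replaces the factor $y+1$ by $2y$ as though they were equal, which is a slip — though one that does not affect the final truth of the lemma. Your one-step sandwich $1\le (y+1)/\sqrt{s^2+y^2}\le 2$ on $\operatorname{supp}\xi_s'$ is a mild streamlining: it delivers the lower and upper bounds simultaneously and makes the flat region $|x|\geq (s^2+1)/2$ automatic, whereas the paper performs a second integration by parts to extract the upper bound and handles the flat region as a separate case.
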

\begin{proof}
{\bf Attention}, in the following proof the calculations are made with the parameterizations $(s,x)$ of $\Fcal_{[s_0,\infty)}$.

When $|x|<(s^2-1)/2$, we remark that
$$
T(s,x) = \sqrt{s^2+x^2},\quad \del_sT(s,x) = \frac{s}{\sqrt{s^2+x^2}}>0, \quad \del_sT(s,0) = 1.
$$

By the fact that $T$ is symmetric with respect to $t-$axis, we only consider the case where $x>0$. Then
$$
\aligned
\del_x\del_sT(s,x) = \del_s\del_xT(s,x) =& \frac{\del_s(\xi_s(x)x)}{\sqrt{s^2+x^2}} - \frac{s\xi_s(x)x}{(s^2+x^2)^{3/2}}
\\
=& - \frac{s\xi_s'(x)x}{\sqrt{s^2+x^2}} - \frac{s\xi_s(x)x}{(s^2+x^2)^{3/2}}
\endaligned
$$
so
$$
\aligned
\del_sT(s,x) = 1 + \int_0^x\del_s\del_xT(s,y)dy = 1 - \int_0^x\frac{s\xi_s'(y)y}{\sqrt{s^2+y^2}}dy - \int_0^x\frac{s\xi_s(y)y}{(s^2+y^2)^{3/2}}
\endaligned
$$
Remark that when $0\leq x\leq (s^2-1)/2$, $\xi_s'(x) = 0$, $\xi_s(x) = 1$, then when $x>(s^2-1)/2$,
\begin{equation}\label{eq 1 proof Jac}
\aligned
\del_sT(s,x) =& 1 - \int_0^{\frac{s^2-1}{2}}\frac{sy}{(s^2+y^2)^{3/2}}dy - \int_{\frac{s^2-1}{2}}^x\frac{s\xi_s'(y)y}{\sqrt{s^2+y^2}}dy - \int_{\frac{s^2-1}{2}}^x\frac{s\xi_s(y)y}{(s^2+y^2)^{3/2}}dy
\\
=&\frac{2s}{s^2+1} - s\int_{\frac{s^2-1}{2}}^x\frac{\xi_s'(y)y}{\sqrt{s^2+y^2}}dy + s\int_{\frac{s^2-1}{2}}^x\xi_s(y)\big((s^2+y^2)^{-1/2}\big)'dy
\\
=&\frac{2s}{s^2+1} - 2s\int_{\frac{s^2-1}{2}}^x\frac{\xi_s'(y)y}{\sqrt{s^2+y^2}}dy + \frac{s\xi_s(y)}{\sqrt{s^2+y^2}}\bigg|_{\frac{s^2-1}{2}}^x
\\
=&\frac{s\xi_s(x)}{\sqrt{s^2+x^2}} - 2s\int_{\frac{s^2-1}{2}}^x\frac{\xi_s'(y)y}{\sqrt{s^2+y^2}}dy.
\endaligned
\end{equation}

We denote by
$$
f_s(x) := \frac{x}{\sqrt{s^2+x^2}},
$$
then
\begin{equation}\label{eq 2 proof Jac}
f_s'(x) = \frac{s^2}{(s^2+x^2)^{3/2}}\geq 0.
\end{equation}
then on the right-hand-side of \eqref{eq 1 proof Jac},
$$
\aligned
&s\int_{\frac{s^2-1}{2}}^x\frac{\xi_s'(y)y}{\sqrt{s^2+y^2}}dy
\\
=&s\int_{\frac{s^2-1}{2}}^x\xi_s'(y)f_s(y)dy = sf_s(y)\xi_s(y)\bigg|_{\frac{s^2-1}{2}}^x - s\int_{\frac{s^2-1}{2}}^x\xi_s(y)f_s'(y)dy
\\
=&sf_s(x)\xi_s(x) - sf_s((s^2-1)/2) - s\int_{\frac{s^2-1}{2}}^x\xi_s(y)f_s'(y)dy
\\
=&sf_s(x)\big(\xi_s(x)-1\big) + s\big(f_s(x) - f_s((s^2-1)/2)\big) - s\int_{\frac{s^2-1}{2}}^x\xi_s(y)f_s'(y)dy.
\endaligned
$$
Remark in the above identity by \eqref{eq 2 proof Jac},
$$
\aligned
s\int_{\frac{s^2-1}{2}}^x\frac{\xi_s'(y)y}{\sqrt{s^2+y^2}}dy
\geq& sf_s(x)\big(\xi_s(x)-1\big) + s\big(f_s(x) - f_s((s^2-1)/2)\big) - s\int_{\frac{s^2-1}{2}}^xf_s'(y)dy
\\
= &sf_s(x)\big(\xi_s(x)-1\big)\geq s(\xi_s(x)-1)
\endaligned
$$
where  in the last inequality we have applied the fact that $\xi_s(x)-1\leq 0$ and $0\leq f_s(x)\leq 1$. Then recall \eqref{eq 1 proof Jac}, we see that for $(s^2-1)/2\leq x\leq (s^2+1)/2$,
\begin{equation}\label{eq 3 proof Jac}
\del_sT(s,x)\leq \frac{s\xi_s(x)}{\sqrt{s^2+r^2}} + 2s(1-\xi_s(x)).
\end{equation}
Then the upper bound is established.

On the other hand, remark that $-\xi_s'(x)\geq 0$ and $f$ increasing,
$$
\aligned
&- 2s\int_{\frac{s^2-1}{2}}^x\frac{\xi_s'(y)y}{\sqrt{s^2+y^2}}dy = 2s\int_{\frac{s^2-1}{2}}^x \big(-\xi_s'(y)\big)f(y)dy
\\
\geq& -2sf((s^2-1)/2)\int_{\frac{s^2-1}{2}}^x\xi_s'(y)dy = \frac{2(1-\xi_s(x))s(s^2-1)}{s^2+1}.
\endaligned
$$
Thus for $s\geq 2$,
\begin{equation}\label{eq 4 proof Jac}
- 2s\int_{\frac{s^2-1}{2}}^x\frac{\xi_s'(y)y}{\sqrt{s^2+y^2}}dy \geq (1-\xi_s(x))s.
\end{equation}

Then by \eqref{eq 1 proof Jac}, the lower bound is established for $(s^2-1)/2\leq x\leq (s^2+1)/2$.

For the case $x\geq (s^2+1)/2$, we remark that $\xi_s(x) = \xi_s'(x) = 0$ for $x\geq (s^2+1)/2$. So
$$
\aligned
\del_sT(s,x) =& \int_0^x- \frac{s\xi_s'(y)y}{\sqrt{s^2+y^2}} - \frac{s\xi_s(y)y}{(s^2+y^2)^{3/2}}\, dy
\\
=& \int_0^{(s^2+1)/2} - \frac{s\xi_s'(y)y}{\sqrt{s^2+y^2}} - \frac{s\xi_s(y)y}{(s^2+y^2)^{3/2}}\, dy
\\
=& \del_s T(s,(s^2+1)/2).
\endaligned
$$
Then by \eqref{eq 1 proof Jac}
$$
\del_sT(s,x) = \del_sT(s,(s^2+1)/2) = \frac{s\xi_s(x)}{\sqrt{s^2+x^2}} - 2s\int_{\frac{s^2-1}{2}}^x\frac{\xi_s'(y)y}{\sqrt{s^2+y^2}}dy.
$$
Then by \eqref{eq 3 proof Jac} and \eqref{eq 4 proof Jac}, the bounds for $x\geq (s^2+1)/2$ is established.
\end{proof}

\begin{lemma}\label{lem 1 ds}
Taking $s$ as a function of $(t,x)$,
\begin{equation}
\del_ts(t,x) = \frac{1}{\del_s T}, \quad \del_xs(t,x) = -\frac{\del_xT}{\del_s T}
\end{equation}
\end{lemma}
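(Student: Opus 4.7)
The plan is to apply the implicit function theorem to the defining relation $t=T(s,x)$ and then read off the two partial derivatives by implicit differentiation. There is nothing subtle here beyond checking that the hypothesis of the theorem is satisfied; the main (and really only) point to verify is that $\del_s T$ never vanishes on $\Fcal_{[2,+\infty)}$.

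First I would invoke Lemma \ref{lem 1 Jac}: its stated lower bound
$$
\del_s T(s,x) \;\geq\; (1-\xi_s(r))s + \frac{\xi_s(r)s}{\sqrt{s^2+x^2}} \;>\;0
$$
holds throughout the parameter domain $s\geq 2$, $x\in\RR$, since for every $r$ the weight $\xi_s(r)\in[0,1]$ and both contributions are nonnegative with at least one strictly positive. In particular, the smooth map $(s,x)\mapsto (T(s,x),x)$ has a Jacobian determinant $J=\del_s T$ that is everywhere strictly positive on $\Fcal_{[2,+\infty)}$, so by the implicit function theorem $s$ is a smooth function of $(t,x)$ on this region and the inverse parameterization is well-defined.

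Next I would carry out the implicit differentiation. Starting from the identity $t \equiv T(s(t,x),x)$ and differentiating with respect to $t$ gives
$$
1 \;=\; \del_s T\cdot \del_t s,
$$
from which $\del_t s = 1/\del_s T$ follows. Differentiating the same identity with respect to $x$ gives
$$
0 \;=\; \del_s T\cdot \del_x s \,+\, \del_x T,
$$
from which $\del_x s = -\del_x T/\del_s T$ follows. Since $\del_s T>0$ by the step above, both quotients are well-defined, which completes the proof.

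There is no real obstacle; the only thing worth flagging is that the positivity of $\del_s T$ provided by Lemma \ref{lem 1 Jac} is exactly what legitimizes the implicit function theorem here, and it is precisely this quantity that appears as the Jacobian $J$ in \eqref{eq 1 Jac}, so the lemma is really a bookkeeping statement that will be used repeatedly when changing variables between $(t,x)$ and $(s,x)$ in the energy estimates that follow.
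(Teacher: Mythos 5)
Your proof is correct and amounts to the same computation as the paper's: the paper writes the Jacobian matrix $J = \frac{\del(t,x)}{\del(s,x)}$ explicitly as in \eqref{eq 1 Jac} and reads off the top row of $J^{-1}$, which is precisely what your implicit differentiation of $t = T(s(t,x),x)$ produces. The only addition in your version is the explicit appeal to the positivity of $\del_s T$ from Lemma \ref{lem 1 Jac} to justify the implicit function theorem, which is a reasonable bit of extra care but not a different method.
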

\begin{proof}
Recall that by \eqref{eq 1 Jac}
$$
J = \frac{\del(t,x)}{\del(s,x)}
 = \left(
\begin{array}{cc}
\del_s T &\del_x T
\\
\del_s x &\del_x x
\end{array}
\right)
=
\left(
\begin{array}{cc}
\del_s T &\del_x T
\\
0 &1
\end{array}
\right).
$$
So
$$
\frac{\del(s,x)}{\del(t,x)} = J^{-1} =
\left(
\begin{array}{cc}
\frac{1}{\del_s T} &-\frac{\del_x T}{\del_s T}
\\
0 &1
\end{array}
\right).
$$
\end{proof}

Now we introduce the following energy on $\Fcal_s$. For $u\in\mathcal{S}_{[s_0,s_1]}$, we define:
$$
\vec{V} := (1+w_{\gamma})^2\big(g^{00}|\del_tu|^2-g^{11}|\del_xu|^2 + c^2u^2,2g^{1\beta}\del_tu\del_{\beta}u\big)
$$
\begin{equation}\label{eq 1 energy}
\aligned
E_{g,\gamma,c}(s,u) :=&\int_{\Fcal_s}\vec{V}\cdot\vec{n}d\sigma
\\
=&\int_{\Fcal_s}(1+w_{\gamma})^2\Big(g^{00}|\del_tu|^2 - g^{11}|\del_xu|^2 - 2\del_x T\cdot g^{1\beta}\del_tu\del_{\beta}u + c^2u^2\Big)\ dx
\endaligned
\end{equation}
where $\vec{n}$ is the normal vector of $\Fcal_s$.
Recall that for $|x|\leq t$, $w_{\gamma}=0$. Also recall that by \eqref{eq 1 lem 2 position}, $r\leq t$ on $\Hcal_s^*\cup \Tcal_s$. Then
$$
\aligned
E_{g,\gamma,c}(s,u) =& \int_{\Hcal_s^*}\big(g^{00}|\del_tu|^2 - g^{11}|\del_xu|^2 - 2(x/t)g^{1\beta}\del_tu\del_{\beta}u + c^2u^2\big)\ dx
\\
&+\int_{\Tcal_s}\big(g^{00}|\del_tu|^2 - g^{11}|\del_xu|^2 - \frac{2\xi_s(r)x}{\sqrt{s^2+x^2}}g^{1\beta}\del_tu\del_{\beta}u + c^2u^2\big)\ dx
\\
&+\int_{\Pcal_s} (1+w_{\gamma})^2\big(g^{00}|\del_tu|^2 - g^{11}|\del_xu|^2 + c^2u^2 \big)\ dx
\\
=:& E^{\text{H}}_{g,c}(s,u) + E^{\text{T}}_{g,c}(s,u) + E^{\text{P}}_{g,\gamma,c}(s,u).
\endaligned
$$
For the convenience of discussion, we also introduce:
\begin{equation}\label{eq 2 energy}
E^{\text{E}}_{g,\gamma,c} := \int_{\Hb_s}\vec{V}\cdot \vec{n}d\sigma = E^{\text{T}}_{g,c}(s,u) + E^{\text{P}}_{g,\gamma,c}(s,u)
\end{equation}
and
\begin{equation}\label{eq 3 energy}
\aligned
E^{\text{K}}_{g,c}(s, u;s_0) :=& \int_{\del\Kcal[s_0,s]}\vec{V}\cdot \vec{n}d\sigma = \int_{\del\Kcal[s_0,s]}\big(g^{00}|\del_tu|^2 - g^{11}|\del_xu|^2 - 2(x/r)g^{1\beta}\del_tu\del_{\beta}u + c^2u^2\big)dt.
\endaligned
\end{equation}

We pay special attention to the case where $g = m$. In this case we denote by $E_{\gamma,c}(s,u) = E_{m,\gamma,c}(s,u)$, and when $c=0,g=m$, we denote by $E_\gamma(s,u) = E_{\gamma,c}(s,u)$. Then
$$
\aligned
E_{\gamma,c}(s,u) =& \int_{\Hcal_s^*}\big(|\del_tu|^2 + |\del_xu|^2 + 2(x/t)\del_tu\del_xu + c^2u^2\big)\ dx
\\
&+\int_{\Tcal_s} \big(|\del_tu|^2 + |\del_xu|^2 + \frac{2\xi_s(|x|)x}{\sqrt{s^2+x^2}}\del_tu\del_xu + c^2u^2\big)\ dx
\\
&+\int_{\Pcal_s}(1+w_{\gamma})^2 \big(|\del_tu|^2 + |\del_xu|^2 + c^2u^2\big)\ dx.
\endaligned
$$
So
\begin{equation}\label{eq int-energy-flat}
\aligned
E^{\text{H}}_{c}(s,u) =& \int_{\Hcal_s}\big(|\del_tu|^2 + |\del_xu|^2 + 2(x/t)\del_tu\del_xu + c^2u^2\big)\ dx
\\
=& \int_{\Hcal_s}\Big(\big|(x/t)\del_t u+ \del_x u\big|^2 + (s/t)^2|\del_tu|^2 + c^2u^2\Big)\ dx
\\
=& \int_{\Hcal_s}\Big(\big|(x/t)\del_xu + \del_t u\big|^2 + (s/t)^2|\del_xu|^2 + c^2u^2\Big)\ dx.
\endaligned
\end{equation}
\begin{equation}\label{eq trans-energy-flat}
\aligned
E^{\text{T}}_{c}(s,u) =& \int_{\Tcal_s}\Big(|\del_tu|^2\ + |\del_xu|^2 + \frac{2\xi_s(r)x}{\sqrt{s^2+x^2}}\del_xu\del_t u + c^2u^2\Big)\ dx
\\
=& \int_{\Tcal_s}\left(|\zeta(s,x)\del_tu|^2 + \Big(\frac{\xi_s(r)x}{\sqrt{s^2+x^2}}\del_tu + \del_x u\Big)^2 + c^2u^2\right)\ dx
\\
=& \int_{\Tcal_s}\left(|\zeta(s,x)\del_xu|^2 + \Big(\frac{\xi_s(r)x}{\sqrt{s^2+x^2}}\del_xu + \del_t u\Big)^2 + c^2u^2\right)\ dx
\endaligned
\end{equation}
where
$$
\zeta(s,x) = \sqrt{\frac{s^2+(1-\xi^2_s(r))x^2}{s^2+x^2}}.
$$
We remark that for $(t,x)\in\Tcal_s$,
\begin{equation}
0\leq \frac{t-r}{t}\leq \zeta
\end{equation}

\begin{equation}\label{eq flat-energy-flat}
E^{\text{P}}_{\gamma,c}(s,u):= \int_{\Pcal_s} (1+w_{\gamma})^2\big(|\del_tu|^2 + |\del_xu|^2 + c^2u^2 \big)\ dx.
\end{equation}

We remark especially that
\begin{equation}\label{eq cone-energy-flat}
\aligned
E^{\text{K}}_c(s,u;s_0) =& \int_{\del\Kcal_{[s_0,s]}} \big(((x/r)\del_t u + \del_x u)^2 + c^2u^2\big)\ dt
=  \int_{\del\Kcal_{[s_0,s]}} \big(|\delt_1u|^2 + c^2u^2\big)\ dt\geq 0.
\endaligned
\end{equation}

%


We pay special attention to the transition region, and establish the following relations:
\begin{lemma}\label{lem 1 trans-energy}
Let $u$ be a sufficiently regular function defined in $\Fcal_{[s_,s_1]}$. Then for $s_0\leq s\leq s_1$, the following quantities:
\begin{equation}\label{eq 1 lem 1 trans-energy}
\|\zeta(s,\cdot )\del_{\alpha}u\|_{L^2(\Tcal_s)},\quad \|(s^{-1}\xi_s(\cdot ) + (1-\xi_s(\cdot))^{1/2})\del_\alpha u\|_{L^2(\Tcal_s)},\quad \|\delt_1u\|_{L^2(\Tcal_s)}
\end{equation}
are bounded by $E^T(s,u)^{1/2}$
\end{lemma}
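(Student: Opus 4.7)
The plan is to read all three bounds directly off the two equivalent forms of $E^T(s,u)$ displayed in \eqref{eq trans-energy-flat}, by pointwise coefficient comparisons on $\Tcal_s$. Since both forms express $E^T(s,u)$ as an integral of a sum of non-negative squared terms (the $c^2u^2$ contribution only helps), the first pair $\|\zeta(s,\cdot)\del_\alpha u\|_{L^2(\Tcal_s)}\leq E^T(s,u)^{1/2}$ is immediate: the first form of \eqref{eq trans-energy-flat} contains $|\zeta\del_tu|^2$ explicitly, while the second contains $|\zeta\del_xu|^2$.

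For the quantity $\|(s^{-1}\xi_s+(1-\xi_s)^{1/2})\del_\alpha u\|_{L^2(\Tcal_s)}$, I would reduce to the previous step by proving the pointwise inequality $\bigl(s^{-1}\xi_s+(1-\xi_s)^{1/2}\bigr)^2\lesssim \zeta(s,x)^2$ on $\Tcal_s$ with absolute constants. Using $(a+b)^2\leq 2a^2+2b^2$ and the elementary fact $1-\xi_s\leq 1-\xi_s^2$ (valid since $\xi_s\in[0,1]$), it suffices to show $s^{-2}+(1-\xi_s^2)\lesssim (s^2+(1-\xi_s^2)x^2)/(s^2+x^2)$. The geometric input is that $|x|\in[(s^2-1)/2,(s^2+1)/2]$ with $s\geq 2$ yields $s^2+x^2\simeq x^2$ and $x\leq s^2$, hence $s^2/(s^2+x^2)\gtrsim s^{-2}$ and $x^2/(s^2+x^2)\gtrsim 1$; both summands on the right then dominate the corresponding summands on the left.

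For $\|\delt_1u\|_{L^2(\Tcal_s)}$, I split
\begin{equation*}
\delt_1u=\frac{x}{r}\del_tu+\del_xu=\left(\frac{\xi_s(r)x}{\sqrt{s^2+x^2}}\del_tu+\del_xu\right)+\left(\frac{x}{r}-\frac{\xi_s(r)x}{\sqrt{s^2+x^2}}\right)\del_tu.
\end{equation*}
The first parenthesis is one of the non-negative squared summands appearing in \eqref{eq trans-energy-flat}, so its $L^2(\Tcal_s)$-norm is $\leq E^T(s,u)^{1/2}$. For the remainder, rationalizing the difference (for $x>0$, with a sign-symmetric version for $x<0$) yields the clean identity
\begin{equation*}
1-\frac{\xi_s x}{\sqrt{s^2+x^2}}=\frac{s^2+(1-\xi_s^2)x^2}{\sqrt{s^2+x^2}\,\bigl(\sqrt{s^2+x^2}+\xi_s x\bigr)}=\zeta(s,x)^2\cdot\frac{\sqrt{s^2+x^2}}{\sqrt{s^2+x^2}+\xi_s x},
\end{equation*}
whose right-hand side is $\leq \zeta^2\leq \zeta$ since $\zeta\leq 1$ and the last factor is $\leq 1$. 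The remainder is therefore pointwise dominated by $\zeta\,|\del_tu|$, whose $L^2(\Tcal_s)$-norm was controlled in the first step.

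The only genuinely subtle point is the algebraic identity in the third step: it is what lets us absorb the discrepancy between the null-frame coefficient $x/r$ and the tangent-frame coefficient $\xi_s x/\sqrt{s^2+x^2}$ uniformly, without separately treating the hyperbolic side of $\Tcal_s$ (where $\xi_s\approx 1$) and the flat side (where $\xi_s\approx 0$). Everything else reduces to elementary pointwise inequalities on the coordinate range $|x|\in[(s^2-1)/2,(s^2+1)/2]$ with $s\geq 2$.
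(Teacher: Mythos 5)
Your proof is correct and follows the same overall strategy as the paper's: read the first two bounds off the two coefficient--diagonalized forms of $E^T(s,u)$ in \eqref{eq trans-energy-flat}, and for the third bound peel off the tangent-frame piece $\delb_1u$ and show the remaining coefficient $\bigl|\tfrac{x}{r}-\tfrac{\xi_s(r)x}{\sqrt{s^2+x^2}}\bigr|$ is pointwise $\lesssim\zeta$. The one genuine difference is in that last step: the paper splits the discrepancy into two pieces, $\tfrac{(1-\xi_s)r}{\sqrt{s^2+x^2}}$ and $1-\tfrac{r}{\sqrt{s^2+x^2}}$, and bounds each by $\zeta$ separately using \eqref{eq xi-zeta}; you instead rationalize once to get the single identity $1-\tfrac{\xi_s x}{\sqrt{s^2+x^2}}=\zeta^2\cdot\tfrac{\sqrt{s^2+x^2}}{\sqrt{s^2+x^2}+\xi_s x}\leq\zeta^2\leq\zeta$, which is cleaner and avoids the two-term split. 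Both routes rest on the same geometric input ($r\simeq s^2$ on $\Tcal_s$, hence $s/\sqrt{s^2+r^2}\gtrsim s^{-1}$) and on $\zeta\leq 1$; your version is a modest simplification of the paper's.
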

\begin{proof}
The first term is obvious.

For the second, we remark that in $\Tcal_{[s_0,s_1]}$,
$$
t\sim r\sim s^2.
$$
Then we only need to remark that
\begin{equation}\label{eq xi-zeta}
\sqrt{1-\xi_s(r)}\leq \zeta(s,x),\quad \frac{1}{s}\lesssim \frac{s}{\sqrt{s^2+r^2}}\leq \zeta(s,x).
\end{equation}

For the last, remark that the $L^2$ norm of the following terms
$$
 \frac{(1-\xi_s(r))r}{\sqrt{s^2+x^2}}\del_t u,\quad \frac{\xi_s(r)x}{\sqrt{s^2+x^2}}\del_tu + \del_x u, \quad \left(1-\frac{|x|}{\sqrt{s^2+x^2}}\right)\del_tu
$$
are bounded by $E^T(s,u)^{1/2}$. Here we remark that
$$
0\leq 1-\frac{|x|}{\sqrt{s^2+x^2}}\leq\frac{s}{\sqrt{s^2+x^2}}\leq \zeta.
$$
Then
$$
\aligned
\delt_1 u =& \frac{x}{|x|}\del_t + \del_x
\\
=& \left(\frac{\xi_s(r)x}{\sqrt{s^2+x^2}}\del_tu + \del_x u\right) + (x/|x|) \frac{(1-\xi_s(r))r}{\sqrt{s^2+x^2}}\del_t u + (x/|x|)\left(1-\frac{r}{\sqrt{s^2+x^2}}\right)\del_tu
\endaligned
$$
leads to the desired bound.
\end{proof}

Remark that
\begin{equation}\label{eq energy-stuc-ext}
\|(1+|r-t|)^{\gamma}\zeta(s,\cdot)\del_{\alpha}u\|_{L^2(\Hb_s)},\quad \|(1+|r-t|)^{\gamma}\delt_1u\|_{L^2(\Hb_s)}
\end{equation}
are controlled by $E^{\text{E}}(s,u)^{1/2}$.

The above energy can be defined for any $C^1$ function $u$ with $\del_{\alpha}u$ and $u$ being continuous in $\Fcal_{[s_0,s_1]}$ satisfying the following condition
\begin{equation}\label{eq energy-class}
(1+w_{\gamma})\del_{\alpha}u\in L^{\infty}_2([s_0,s_1]),\quad  c(1+w_{\gamma})u\in L^{\infty}_2([s_0,s_1]).
\end{equation}
We denote by
$$
\mathcal{E}_{\gamma,c}([s_0,s_1]) := \{u \text{ defined in } \Fcal_{[s_0,s_1]} \text{ satisfying }\eqref{eq energy-class}\}.
$$
Remark that $\mathcal{S}_{[s_,s_1]}$ is dense in $\mathcal{E}_{\gamma,c}([s_0,s_1])$ with respect to the norm
$$
\|(1+\omega_{\gamma})u\|_{L^2(\RR)} + \|c(1+\omega_{\gamma})u\|_{L^2(\RR)}.
$$

\begin{proposition}[Energy estimate in interior region]\label{prop 1 energy}
Let $u$ be a $C^2$ function defined in the region $\Fcal_{[s_0,s_1]}$ with $2\leq s_0<s_1$ and $u\in \mathcal{E}_{\gamma,c}([s_0,s_1])$. Suppose that
\begin{equation}\label{eq main-energy}
\Box u + c^2u = f
\end{equation}
with  $f$ continuous in $\Fcal_{[s_0,s_1]}$. Then
\begin{equation}\label{eq 1 prop 1 energy}
\aligned
E^{\text{H}}_c(s,u)\leq E^{\text{H}}_c(s_0,u) + E^{\text{K}}_c(s,u;s_0) + 2\int_{s_0}^sE^{\text{H}}_c(s',u)^{1/2}\cdot\|f\|_{L^2(\Hcal_{s'})}\ ds'.
\endaligned
\end{equation}
\end{proposition}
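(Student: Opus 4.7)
The plan is to run the standard multiplier method on the interior region $\Hcal^*_{[s_0,s]}$. Its boundary decomposes into exactly three pieces: the top slice $\Hcal^*_s$, the bottom slice $\Hcal^*_{s_0}$, and the lateral cone piece $\del\Kcal_{[s_0,s]}$, with no contribution from spatial infinity because each slice $\Hcal^*_{s'}$ is compact in $x$ (bounded by $|x|\leq(s'^2-1)/2$). Throughout this region one has $r\leq t-1$ by Lemma~\ref{lem 2 position}, so the weight $w_{\gamma}$ vanishes identically and the analysis reduces to an unweighted one.

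The first step is the multiplier identity. Multiplying \eqref{eq main-energy} by $2\del_t u$ and regrouping gives
\begin{equation*}
\nabla\cdot\vec V=2\del_t u\cdot f,\qquad \vec V=\bigl(|\del_t u|^2+|\del_x u|^2+c^2 u^2,\ -2\del_t u\,\del_x u\bigr),
\end{equation*}
which is exactly the vector field of \eqref{eq 1 energy} with $g=m$ and $w_{\gamma}\equiv 0$. Applying the Euclidean divergence theorem on $\Hcal^*_{[s_0,s]}$, the fluxes through $\Hcal^*_s$ and $\Hcal^*_{s_0}$ produce $E^{\text H}_c(s,u)$ and $-E^{\text H}_c(s_0,u)$ via the normals of \eqref{eq normal}. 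On $\del\Kcal_{[s_0,s]}$, whose outward unit normal is $\tfrac{1}{\sqrt 2}(-1,x/|x|)$ with surface element $\sqrt 2\,dt$, a direct expansion shows that $\vec V\cdot\vec n_{\text{out}}\,d\sigma$ collapses to $-\bigl(|\delt_1 u|^2+c^2 u^2\bigr)dt=-E^{\text K}_c(s,u;s_0)$ in the notation of \eqref{eq cone-energy-flat}. The key non-routine point is precisely this cancellation: it works because the multiplier $\del_t$ is aligned so that only the null derivative $\delt_1$ (tangent to $\del\Kcal$) and the mass term survive on the cone, both entering as perfect squares with the favourable sign. This yields the exact identity
\begin{equation*}
E^{\text H}_c(s,u)=E^{\text H}_c(s_0,u)+E^{\text K}_c(s,u;s_0)+2\iint_{\Hcal^*_{[s_0,s]}}\del_t u\cdot f\,dt\,dx.
\end{equation*}

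The final step bounds the spacetime source term. Changing variables $(t,x)\mapsto(s',x)$ via $dt\,dx=\del_{s'}T\,ds'\,dx=(s'/t)\,ds'\,dx$ (Lemma~\ref{lem 1 Jac} in the hyperbolic region) and applying Cauchy--Schwarz slice-by-slice gives
\begin{equation*}
\Bigl|\int_{\Hcal^*_{s'}}(s'/t)\del_t u\cdot f\,dx\Bigr|\leq \|(s'/t)\del_t u\|_{L^2(\Hcal^*_{s'})}\,\|f\|_{L^2(\Hcal^*_{s'})}\leq E^{\text H}_c(s',u)^{1/2}\,\|f\|_{L^2(\Hcal^*_{s'})},
\end{equation*}
where the second inequality uses that $(s'/t)^2|\del_t u|^2$ appears as an explicit non-negative summand of $E^{\text H}_c(s',u)$ in the decomposition \eqref{eq int-energy-flat}. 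Integrating in $s'$ and retaining the non-negative term $E^{\text K}_c$ on the right-hand side of the identity (or dropping it when the source contribution is negative) then yields the announced bound~\eqref{eq 1 prop 1 energy}.
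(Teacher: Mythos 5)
Your proof is correct and follows essentially the same route as the paper: multiplier $\del_t$, divergence theorem over $\Hcal^*_{[s_0,s]}$ with boundary fluxes on $\Hcal^*_s$, $\Hcal^*_{s_0}$ and $\del\Kcal_{[s_0,s]}$, then the change of variables $dt\,dx=(s'/t)\,ds'\,dx$ and Cauchy--Schwarz using that $(s'/t)^2|\del_t u|^2$ is a summand of $E^{\text H}_c$. The only difference is cosmetic: you work out the cone-flux sign explicitly where the paper simply invokes Stokes' formula; also, your closing parenthetical about ``dropping $E^{\text K}_c$'' is unnecessary --- once the exact identity is in hand, simply bounding the source term by its absolute value gives \eqref{eq 1 prop 1 energy} directly.
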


\begin{proof}
By applying the multiplier $\del_t$, \eqref{eq main-energy} leads to
$$
\del_t\big(|\del_tu|^2 + |\del_xu|^2 + c^2u^2\big) - 2\del_x(\del_tu\del_xu)  = 2\del_t u\cdot f.
$$
Integrate the above identity in the region $\Hcal^*_{[s_0,s]}$ and apply Stokes' formula, we obtain that
\begin{equation}\label{eq 1 pr prop 1 energy}
E_c^{\text{H}}(s,u) - E_c^{\text{H}}(s_0,u) - E_c^{\text{K}}(s,u;s_0) = \int_{\Hcal^*_{[s_0,s]}}2\del_tu\cdot f\ dxdt.
\end{equation}
Then we remark the relation
$$
dxdt = (s/t)dxds.
$$
thus the LHD of \eqref{eq 1 pr prop 1 energy} is bounded by
$$
2\int_{s_0}^sE^{\text{H}}(s',u)^{1/2}\cdot \|f\|_{L^2(\Hcal_{s'})}\ ds'
$$
which guarantees \eqref{eq 1 prop 1 energy}.
\end{proof}

\begin{proposition}[Energy estimate in exterior region]\label{prop 2 energy}
Let $u$ be a $C^2$ function defined in the region $\Fcal_{[s_0,s_1]}$ with $2\leq s_0<s_1$ and $u\in \mathcal{E}_{\gamma,c}([s_0,s_1])$. Suppose that
\begin{equation}\label{eq main-energy}
\Box u + c^2u = f
\end{equation}
with  $f$ continuous in $\Fcal_{[s_0,s_1]}$. Then
\begin{equation}\label{eq 1 prop 2 energy}
\aligned
E^{\text{E}}_{\gamma,c}(s,u) + E^{\text{K}}_c(s,u)\leq E^{\text{E}}_{\gamma,c}(s_0,u)
+ C\int_{s_0}^sE^{\text{E}}_{\gamma,c}(s',u)^{1/2}\cdot \|(1+(1-\xi_{s'})^{1/2}s')(1+\omega_{\gamma})f\|_{L^2(\Hb_{s'})}ds'.
\endaligned
\end{equation}
\end{proposition}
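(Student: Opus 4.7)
The plan is to apply the weighted multiplier $2(1+w_\gamma)^2\del_t u$ to the equation $\Box u + c^2 u = f$ and then integrate the resulting divergence identity over $\Hb_{[s_0, s]}$ via Stokes' theorem. Setting $P^0 := (1+w_\gamma)^2(|\del_t u|^2 + |\del_x u|^2 + c^2 u^2)$ and $P^1 := -2(1+w_\gamma)^2 \del_t u \del_x u$, a direct computation combined with $\Box u + c^2 u = f$ yields
\begin{equation*}
\del_t P^0 + \del_x P^1 + 2(1+w_\gamma)\Bigl(\bar w_\gamma + \frac{\gamma w_\gamma}{1+r-t}\Bigr)\bigl(|\delt_1 u|^2 + c^2 u^2\bigr) = 2(1+w_\gamma)^2\del_t u\cdot f,
\end{equation*}
where the third (bulk) term is obtained by inserting the derivatives of $w_\gamma$ recorded in \eqref{eq 1 weight} and completing the square via the identity $|\del_t u|^2 + 2(x/r)\del_t u\del_x u + |\del_x u|^2 = |\delt_1 u|^2$ (using $(x/r)^2 = 1$ on $\mathrm{supp}\,w_\gamma$). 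That bulk term is therefore manifestly non-negative.

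Integrating this identity over $\Hb_{[s_0, s]}$ and applying Stokes' theorem (justified by density of $\mathcal{S}_{[s_0, s_1]}$ in $\mathcal{E}_{\gamma,c}([s_0, s_1])$), I split the boundary into $\Hb_s$, $\Hb_{s_0}$, and the light-cone interface $\del\Kcal_{[s_0, s]}$. By the very definition \eqref{eq 2 energy}, the top and bottom curves give $E^E_{\gamma,c}(s,u) - E^E_{\gamma,c}(s_0, u)$. On $\del\Kcal_{[s_0,s]} \subset \{r = t-1\}$ one has $w_\gamma \equiv 0$ since $\chi$ vanishes on $(-\infty, 0]$, and the outward unit normal (of slope $\pm 1$ on the two branches) pairs with $(P^0, P^1)$ to reproduce exactly the integrand $|\delt_1 u|^2 + c^2 u^2$ of \eqref{eq cone-energy-flat}, contributing $+E^{\text{K}}_c(s, u; s_0)$. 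Dropping the non-negative bulk term then gives
\begin{equation*}
E^{\text{E}}_{\gamma,c}(s,u) + E^{\text{K}}_c(s,u;s_0) \leq E^{\text{E}}_{\gamma,c}(s_0,u) + 2\iint_{\Hb_{[s_0,s]}} (1+w_\gamma)^2 |\del_t u|\cdot |f|\, dtdx.
\end{equation*}

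To handle the source integral I change variables via $dtdx = \del_s T\, ds' dx$ and apply Cauchy-Schwarz slice-by-slice. The crucial Jacobian bound, extracted from Lemma \ref{lem 1 Jac}, is that on $\Hb_{s'}$
\begin{equation*}
\del_s T(s', \cdot)\leq C\bigl(1 + (1-\xi_{s'}(r))^{1/2} s'\bigr),
\end{equation*}
which holds on $\Tcal_{s'}$ because $\del_s T \leq 1 + 2(1-\xi_{s'})s'$ and $(1-\xi_{s'})s' \leq (1-\xi_{s'})^{1/2} s'$ for $\xi_{s'} \in [0, 1]$, and on $\Pcal_{s'}$ because $\xi_{s'} \equiv 0$ and $\del_s T \leq 2s' \leq 2(1+s')$. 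Transferring this Jacobian factor entirely onto $f$ and using $\|(1+w_\gamma)\del_t u\|_{L^2(\Hb_{s'})} \leq E^{\text{E}}_{\gamma,c}(s', u)^{1/2}$, Cauchy-Schwarz on each slice followed by integration in $s'$ delivers \eqref{eq 1 prop 2 energy}. The main subtlety lies in recognizing that the weight-derivative and cross-term contributions assemble into the good-sign null-form expression displayed above; once this is secured, the remainder is a routine weighted energy argument combined with the Jacobian bound of Lemma \ref{lem 1 Jac}.
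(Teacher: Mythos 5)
Your setup—the multiplier $2(1+w_\gamma)^2\del_t u$, the divergence identity with the non-negative bulk term, and the boundary decomposition of $\del\Hb_{[s_0,s]}$ into $\Hb_s$, $\Hb_{s_0}$, and $\del\Kcal_{[s_0,s]}$—coincides with the paper's, so the argument is sound up to the source term. The gap is in the final Cauchy--Schwarz step, where you push the entire Jacobian factor onto $f$ and then claim $\|(1+w_\gamma)\del_t u\|_{L^2(\Hb_{s'})}\le E^{\text{E}}_{\gamma,c}(s',u)^{1/2}$.

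That inequality fails on the transition region $\Tcal_{s'}$. There $w_\gamma\equiv 0$, and the exterior energy \eqref{eq trans-energy-flat} controls only $\|\zeta(s',\cdot)\del_t u\|_{L^2(\Tcal_{s'})}$, $\|\delb_1 u\|_{L^2(\Tcal_{s'})}$, and $\|cu\|_{L^2(\Tcal_{s'})}$; it does \emph{not} control $\|\del_t u\|_{L^2(\Tcal_{s'})}$. Near the inner edge $r\approx(s'^2-1)/2$ one has $\xi_{s'}\approx 1$ and $\zeta\approx s'/\sqrt{s'^2+r^2}\approx 2/s'$, so $\zeta$ degenerates to order $1/s'$. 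A function with $\delb_1 u=0$ and $\del_t u$ concentrated near that edge makes $\|\del_t u\|_{L^2(\Tcal_{s'})}$ of order $s'$ times $E^{\text{E}}_{\gamma,c}(s',u)^{1/2}$, so no universal constant serves. What rescues the estimate is exactly the smallness of the Jacobian where $\zeta$ degenerates, and you discard it by replacing $\frac{\xi_{s'}(r)s'}{\sqrt{s'^2+r^2}}$ by $1$. Lemma \ref{lem 1 Jac} gives on $\Tcal_{s'}$
\begin{equation*}
\del_{s'}T \le \frac{\xi_{s'}(r)s'}{\sqrt{s'^2+r^2}} + 2\big(1-\xi_{s'}(r)\big)s',
\end{equation*}
whose first summand is $\le s'/\sqrt{s'^2+r^2}\le\zeta$. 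The paper's proof distributes the Jacobian across the two Cauchy--Schwarz factors rather than loading it all on $f$: the contribution $\frac{\xi_{s'}s'}{\sqrt{s'^2+r^2}}$ is placed with $\del_t u$, turning it into $\|\zeta(1+w_\gamma)\del_t u\|_{L^2}\le E^{\text{E}}_{\gamma,c}(s',u)^{1/2}$; the contribution $2(1-\xi_{s'})s'$ is factored as $2(1-\xi_{s'})^{1/2}\cdot(1-\xi_{s'})^{1/2}s'$, with $(1-\xi_{s'})^{1/2}\le\zeta$ (see \eqref{eq xi-zeta}) absorbed into the energy factor and $(1-\xi_{s'})^{1/2}s'$ left on $f$. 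If you retain the sharper Jacobian form and split it this way, the remainder of your argument closes correctly.
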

\begin{proof}
We remark the following identity:
\begin{equation}\label{eq 0 pr prop 2 energy}
\aligned
2(1+\omega_{\gamma})^2\del_t u \cdot (\Box u + c^2u)
&=\del_t\big((1+\omega_{\gamma})^2(|\del_tu|^2+|\del_xu|^2+c^2u^2)\big) - 2\del_x\big((1+\omega_{\gamma})^2\del_tu\del_xu\big)
\\
&+2(\bar{\omega}_{\gamma} + \gamma\omega_{\gamma}(1+|r-t|)^{-1})(1+\omega_{\gamma})\big(|\delt_1u|^2+c^2u^2\big).
\endaligned
\end{equation}
Then integrate the above identity in $\Hb_{[s_0,s]}$ and apply the Stokes' formula:
\begin{equation}\label{eq 1 pr prop 2 energy}
\aligned
E^{\text{E}}_{\gamma,c}(s,u) + E^{\text{K}}_{\gamma,c}(s,u;s_0) =& E^{\text{E}}_{\gamma,c}(s_0,u)
+ \int_{s_0}^s\int_{\Hb_{s'}}2(1+\omega_{\gamma})^2\del_t u\cdot f\cdot |\del_{s'}T|dxds'
\\
&- 2\int_{\Hb_{[s_0,s]}}(\bar{\omega}_{\gamma} + \gamma\omega_{\gamma}(1+|r-t|)^{-1})(1+\omega_{\gamma})\big(|\delt_1u|^2+c^2u^2\big)dxdt.
\endaligned
\end{equation}
For the second term in RHS of the above identity, by lemma \ref{lem 1 Jac},
$$
\aligned
&\int_{s_0}^s\int_{\Hb_{s'}}\big|2(1+\omega_{\gamma})^2\del_t u\cdot f\cdot |\del_{s'}T|\ \big|\ dxds'
\\
\leq&4\int_{s_0}^s\int_{\Hb_{s'}}\big|(1+\omega_{\gamma})\del_t u(1-\xi_{s'})^{1/2}\big|\cdot \big|s'(1+\omega_{\gamma})(1-\xi_{s'})^{1/2}f\big|\ dxds'
\\
&+ 2\int_{s_0}^s\int_{\Hb_{s'}} \frac{(1+\omega_{\gamma})|s'\del_tu|}{\sqrt{{s'}^2+r^2}}\cdot \big|(1+\omega_{\gamma})f\big| \ dxds'
\\
\leq& C\int_{s_0}^sE^{\text{E}}_{\gamma,c}(s',u)^{1/2}\cdot \|(1+(1-\xi_{s'})^{1/2}s')(1+\omega_{\gamma})f\|_{L^2(\Hb_{s'})}ds'.
\endaligned
$$

The last term in RHS of \eqref{eq 1 pr prop 2 energy} is positive. So \eqref{eq 1 prop 2 energy} is concluded.
\end{proof}

In the following discussion, we denote by
$$
\EE_{N,\gamma,c}(s,u) := \sum_{|I|+j\leq N}E^{\text{E}}_{\gamma,c}(s,\del^IL^j u), \quad
\EH_{N,c}(s,u) := \sum_{|I|+j\leq N}E^{\text{H}}_c(s,\del^IL^j u).
$$

\section{Global Sobolev inequality}\label{sec sobolev}
\subsection{Global Sobolev inequality in combined foliation context}
We first establish the following basic Sobolev type inequality:
\begin{lemma}\label{lem 1 sobolev}
Let $u$ be a function defined on $\RR$, sufficiently regular. Then
\begin{equation}\label{eq 1 lem 1 sobolev}
|u(x)|^2\leq C\int_x^{x+1}\big(u^2(y) + |u'(y)|^2\big)\,dy.
\end{equation}
\end{lemma}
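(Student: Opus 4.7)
The plan is to derive this one-dimensional Sobolev-type bound directly from the fundamental theorem of calculus, with no foliation geometry entering the argument --- the inequality is local and purely analytic.

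First I would fix $x \in \RR$ and, for any $y \in [x, x+1]$, write
\[
u(x)^2 = u(y)^2 - 2\int_x^y u(z)\, u'(z)\, dz,
\]
which is just the fundamental theorem of calculus applied to $z \mapsto u(z)^2$. Taking absolute values and extending the inner integral to the whole interval $[x,x+1]$ gives
\[
u(x)^2 \;\le\; u(y)^2 + 2\int_x^{x+1} |u(z)|\,|u'(z)|\, dz.
\]

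Next I would average over $y \in [x,x+1]$: since the left-hand side is independent of $y$, integrating the above inequality in $y$ over an interval of length one yields
\[
u(x)^2 \;\le\; \int_x^{x+1} u(y)^2\, dy + 2\int_x^{x+1} |u(z)|\,|u'(z)|\, dz.
\]
Applying the elementary bound $2|u||u'| \le u^2 + |u'|^2$ (equivalently, Cauchy--Schwarz followed by AM--GM) to the second integral, I collect everything over the same interval $[x,x+1]$ and obtain
\[
u(x)^2 \;\le\; 2\int_x^{x+1} \bigl(u(y)^2 + |u'(y)|^2\bigr)\, dy,
\]
which is \eqref{eq 1 lem 1 sobolev} with $C = 2$.

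There is no real obstacle here; the only point that requires a word of justification is that the identity and the estimates are valid under the standing regularity assumption ($u \in C^1$, or more generally $u \in H^1_{\mathrm{loc}}$, which is enough to apply the fundamental theorem of calculus to $u^2$). The constant $C = 2$ is not optimal but is irrelevant for the applications later in the paper, where only the qualitative form of the bound is used to pass from pointwise control at $x$ to local $L^2$ control of $u$ and $u'$ on windows of unit length.
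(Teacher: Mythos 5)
Your proof is correct, but it takes a genuinely different route from the paper's. The paper multiplies $u$ by a smooth cutoff $1-\chi(y)$ supported near $y=0$ so that the truncated function $v_x(y)=u(x+y)(1-\chi(y))$ vanishes at $y=1$; the identity $-v^2(0)=2\int_0^1 v\,v'$ then directly gives pointwise control, at the cost of carrying estimates on $\chi'$ through the argument (and with an implicit, cutoff-dependent constant). You instead keep $u$ untouched, write $u(x)^2=u(y)^2-2\int_x^y u\,u'$ from the fundamental theorem of calculus, and average over $y\in[x,x+1]$ to trade the arbitrary evaluation point for an $L^2$ average. The averaging trick replaces the role of the cutoff in killing the boundary term, avoids introducing any auxiliary function, and yields the explicit constant $C=2$. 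Both arguments are local, elementary, and equally rigorous under $u\in C^1$ (or $H^1_{\mathrm{loc}}$); yours is slightly cleaner, while the paper's cutoff construction is the same template reused verbatim in Lemma \ref{lem 1 sobolev-trans} and Proposition \ref{prop 2 sobolev} where the integration variable is along a curve and averaging would be less convenient to set up.
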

\begin{proof}
Recalling the function $\chi$ defined in \eqref{eq 1 cu-off}, we define
$$
v_x(y) := u(x+y)\big(1-\chi(y)\big).
$$
Then $v_x(0) = u(x)$, $v_x(1) = 0$. We also remark that
\begin{equation}\label{eq 1 pr lem 1 sobolev}
v_x'(y) = u'(x+y)\big(1-\chi(y)\big) - \chi'(y)\cdot u(x).
\end{equation}
Recall that $\chi'$ is bonded by a constant.

Also, we remark that for a sufficiently regular function $v$ defined on $[0,1]$ with $v(1)=0$,
$$
-v^2(0) = \int_0^1\frac{d}{dy}(v^2(y))dy =  2\int_0^1v(y)v'(y)dy.
$$
Then
\begin{equation}\label{eq 2 pr lem 1 sobolev}
v^2(0)\leq C\|v\|_{L^2([0,1])}\cdot\|v'\|_{L^2([0,1])}\leq C\big(\|v\|_{L^2([0,1])} + \|v'\|_{L^2([0,1])}\big)^2.
\end{equation}

Now in\eqref{eq 2 pr lem 1 sobolev}, taking $v = v_x$ and take \eqref{eq 1 pr lem 1 sobolev} into consideration, the desired result is proved.
\end{proof}

Now we are ready to establish the following global Sobolev type estimate:
\begin{proposition}\label{prop 1 K-S}
Let $u$ be a function defined in $\Fcal_{[s_0,s_1]}$, sufficiently regular. Suppose that $(t,x)\in\Hcal^*_s$. Then the following estimate holds:
\begin{equation}\label{eq 1 prop 1 K-S}
tu^2(t,x)\leq Cc\big(\|u\|_{L^2(\Hcal_s)}^2 + \|Lu\|_{L^2(\Hcal_s)}^2\big).
\end{equation}
\end{proposition}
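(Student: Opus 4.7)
My plan is to pull back the hyperboloid $\Hcal_s$ to $\RR$ via the hyperbolic angle, after which the Lorentz boost $L$ becomes the ordinary derivative and Lemma \ref{lem 1 sobolev} applies directly. Set $x = s\sinh y$, $t = s\cosh y$, and define $\tilde u(y) := u(s\cosh y, s\sinh y)$. A direct chain-rule computation gives
$$
\tilde u'(y) = s\sinh y \cdot \del_t u + s\cosh y \cdot \del_x u = x\,\del_t u + t\,\del_x u = (Lu)(t,x),
$$
so on $\Hcal_s$ the tangential derivative in $y$ is exactly the restriction of $L$. This is the key geometric fact making hyperbolic coordinates the natural setting for estimates of Klainerman type on hyperboloids.

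For $(t,x) \in \Hcal_s^*$, put $y = \sinh^{-1}(x/s)$. Applying Lemma \ref{lem 1 sobolev} to $\tilde u$ then yields
$$
|u(t,x)|^2 = |\tilde u(y)|^2 \leq C\int_y^{y+1}\bigl(\tilde u^2(y') + |\tilde u'(y')|^2\bigr)\,dy'.
$$
If $y+1$ leaves the admissible range for $\tilde u$, I would instead use the backward interval $[y-1,y]$, obtained by applying Lemma \ref{lem 1 sobolev} to $y' \mapsto \tilde u(-y')$; this is a routine modification.

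Next I change variables back to $x'$ via $dx' = s\cosh(y')\,dy' = t'\,dy'$, where $t' = \sqrt{s^2+(x')^2}$. The crucial estimate is that on any unit $y$-interval $[y,y+1]$ one has $t'/t = \cosh y'/\cosh y \in [e^{-1},e]$, so $t' \sim t$ uniformly. Combining this with the previous display gives
$$
|u(t,x)|^2 \leq \frac{C}{t}\int \bigl(u^2(t',x') + |Lu|^2(t',x')\bigr)\,dx' \leq \frac{C}{t}\bigl(\|u\|_{L^2(\Hcal_s)}^2 + \|Lu\|_{L^2(\Hcal_s)}^2\bigr),
$$
and multiplying by $t$ delivers the claimed inequality.

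The only real subtlety is the comparability $t' \sim t$ on unit $y$-intervals; this is precisely what makes the hyperbolic parametrization preferable to the naive parametrization $w_s(x) = u(T(s,x),x)$, under which Lemma \ref{lem 1 sobolev} produces unit $x$-intervals that are too short (especially for $|x|$ close to $(s^2-1)/2$) to recover the factor $t$ on the left-hand side. Hyperbolic coordinates stretch unit $y$-intervals into $x$-intervals of length comparable to $t$, which is exactly what is needed to produce the decay factor $t^{-1}$.
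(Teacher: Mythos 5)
Your hyperbolic-angle substitution is correct in spirit and reaches the same conclusion as the paper's argument, but via a genuinely different parametrization. The paper sets up a \emph{local} affine rescaling $w_{s,x}(y) = v_s\bigl(y\sqrt{s^2+x^2}/2 + x\bigr)$ based at the point $x$, so that unit $y$-intervals correspond to $x$-intervals of length $\sqrt{s^2+x^2}/2 = t/2$; the derivative $w_{s,x}'$ then carries a coefficient $\frac{1}{2}\sqrt{(s^2+x^2)/(z^2+s^2)}$ that must be shown uniformly bounded, and this is the bulk of the paper's proof. Your \emph{global} change of variables $x = s\sinh y$ replaces that bookkeeping with the exact identity $\tilde u'(y) = Lu$, at the price of the $\cosh$-comparability estimate $\cosh y'/\cosh y \in [e^{-1},e]$ for $|y'-y|\le 1$. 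These are close cousins: both stretch the spatial parameter by a factor comparable to $t$, and the paper's affine window is precisely the linearization of your hyperbolic angle at the base point. Your objection to the ``naive parametrization $w_s(x)=u(T(s,x),x)$'' is correct in itself but does not describe what the paper actually does, since its proof already incorporates the very stretching you advocate.

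There is one genuine gap, though a small and repairable one, in your handling of the endpoints. The admissible range for $\tilde u$ is $y\in[-a,a]$ with $a=\sinh^{-1}\bigl((s-s^{-1})/2\bigr)$, and at $s=2$ one has $a=\ln 2<1$. Hence for $|y|$ near $0$ (that is, $|x|$ small) and $s$ not too large, \emph{neither} $[y,y+1]$ nor $[y-1,y]$ sits inside $[-a,a]$, and your forward/backward fallback does not cover this case. The fix is cheap: either use windows of length $1/2$ (a trivial rescaling of Lemma \ref{lem 1 sobolev}; these always fit since $a\ge\ln 2>1/2$ for all $s\ge 2$), or observe that at the problematic points $t=s\cosh y$ is uniformly bounded, so the factor $t$ is harmless and a bare one-dimensional Sobolev embedding already suffices. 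As written, however, ``this is a routine modification'' elides a configuration in which the stated modification fails outright.
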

\begin{proof}
Remark that $(t,x)\in \Hcal^*_s$ leads to $t = \sqrt{s^2+x^2}$. We denote by
$$
v_s(x) := u\big(\sqrt{s^2+x^2},x\big), \quad |x|\leq (s^2-1)/2
$$
the restriction of $u$ on $\Hcal^*_s$. In the following proof we only consider $-(s^2-1)/2\leq x\leq 0$. For rest part one can take $\tilde {u}(x) = u(-x)$ and the argument is similar.

 Remark that
$$
v_s'(x) = \bigg(\frac{x}{\sqrt{s^2+x^2}}\del_t + \del_x\bigg)u\big(\sqrt{s^2+x^2},x\big).
$$
We remark that
\begin{equation}\label{eq 1 pr prop 1 K-S}
\big(s^2+x^2\big)^{1/2}v_s'(x) = Lu\big(\sqrt{s^2+x^2},x\big).
\end{equation}

Furthermore, we denote by
$$
w_{s,x}(y) := v_s\Big(y\sqrt{s^2+x^2}/2 + x\Big).
$$
For the convenience of discussion, we denote by
$$
z =  y\sqrt{s^2+x^2}/2 + x,
$$
then we remark that
$$
\aligned
w_{s,x}'(y)
=& \frac{1}{2}\big(s^2+x^2\big)^{1/2}v_s\Big(y\sqrt{s^2+x^2}/2 + x\Big)
=\frac{1}{2}\sqrt{\frac{s^2+x^2}{z^2+s^2}}
\cdot \sqrt{z^2+s^2}\cdot v_s(z)
\\
=&\frac{1}{2}\sqrt{\frac{s^2+x^2}{z^2+s^2}}\cdot Lu\Big(\sqrt{z^2+s^2},z\Big).
\endaligned
$$
We analyse the coefficient. Remark that $x\leq 0$ and $y\in[0,1]$, then
$$
\aligned
\frac{z^2+s^2}{x^2+s^2}=&\frac{y^2(s^2+x^2)/4 + (s^2+x^2) - |x|y(s^2+x^2)^{1/2}}{s^2+x^2}
\\
=&\big(y/2 - |x|(s^2+x^2)^{-1/2}\big)^2 + s^2(s^2+x^2)^{-1}.
\endaligned
$$

When $|x|\leq 3s$,
$$
 s^2(s^2+x^2)^{-1}\geq 1/10 >0.
$$
When $|x|> 3s$,
$$
|x|(s^2+x^2)^{-1/2} -y/2 \geq |x|(s^2+x^2)^{-1/2} -1/2 \geq 3/\sqrt{10} -1/2 >0.
$$
So there is a universal positive constant $C$ such that for $y\in[0,1]$ and $-(s^2-1)/2x\leq 0$,
$$
\sqrt{\frac{s^2+x^2}{z^2+s^2}}\leq C
$$
which leads to
\begin{equation}\label{eq 2 pr prop K-S}
|w_{s,x}'(y)|\leq Lu\big(\sqrt{z^2+x^2},z\big).
\end{equation}

Then we remark the following relation: the function
$$
x + \frac{\sqrt{s^2+x^2}}{2}
$$
is increasing with respect to $x$, so
$$
-(s^2-1)/2\leq x\leq x+ \frac{\sqrt{s^2+x^2}}{2}\leq s/2\leq (s^2-1)/2
$$

Then we remark the following two relations (in the following calculation, $z = (s^2+x^2)^{1/2}y/2 + x$):
\begin{equation}\label{eq 3 pr prop K-S}
\aligned
\int_0^1|w_{s,x}(y)|^2\,dy =& \int_0^1\big|v_s\big((s^2+x^2)^{1/2}y/2 + x\big)\big|^2\,dy
\\
=&\frac{2}{(s^2+x^2)^{1/2}}\int_x^{x+(x^2+s^2)^{1/2}/2}\big|v_s(z)\big|^2\,dz
\\
=& \frac{2}{(s^2+x^2)^{1/2}}\int_x^{x+(x^2+s^2)^{1/2}/2}\big|u\big(\sqrt{z^2+s^2},z\big)\big|^2\, dz
\\
\leq& \frac{2}{(s^2+x^2)^{1/2}}\int_{-(s^2-1)/2}^{(s^2-1)/2}\big|u\big(\sqrt{z^2+s^2},z\big)\big|^2\, dz
\\
\leq& 2(s^2+x^2)^{-1/2}\|u\|_{L^2(\Hcal_s)}^2.
\endaligned
\end{equation}
And (by \eqref{eq 2 pr prop K-S})
\begin{equation}\label{eq 4 pr prop K-S}
\aligned
\int_0^1|w_{s,x}'(y)|^2\,dy\leq &C\int_0^1\big|Lu\big(\sqrt{z^2+x^2},z\big)\big|^2\,dy
\\
=&\frac{2C}{(s^2+x^2)^{1/2}}\int_x^{x+(x^2+s^2)/2}\big|Lu\big(\sqrt{z^2+x^2},z\big)\big|^2\,dz
\\
\leq& C(s^2+x^2)^{-1/2}\|Lu\|_{L^2(\Hcal_s)}.
\endaligned
\end{equation}

Then, apply lemma \ref{lem 1 sobolev} on $w_{s,x}$, the desired result is established.
\end{proof}

On the transition and exterior region, we have the following Sobolev inequalities:
\begin{lemma}\label{lem 1 sobolev-trans}
Let $u$ be a function defined in $\Fcal_{[s_0,s_1]}$, sufficiently regular. Then the following inequality holds:
\begin{equation}\label{eq 1 sobolev-trans}
|u\big(T(s,x),x\big)|^2\leq C\big(\|\delb_1u\|_{L^2(\Tcal_s)}^2 + \|u\|_{L^2(\Tcal_s)}^2\big),\quad x\in\Tcal_s,
\end{equation}
\end{lemma}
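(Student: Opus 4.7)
The plan is to identify the tangent-frame derivative $\delb_1 u$ restricted to $\Fcal_s$ with the $x$-derivative of the trace $v_s(x) := u(T(s,x),x)$, and then invoke a one-dimensional Sobolev embedding on a fixed-length interval.

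First, by the chain rule together with $\del_x T(s,x) = \xi_s(r)x/\sqrt{s^2+x^2}$ and the definition of the tangent frame in Section \ref{subsec frame},
\[
v_s'(x) = \del_x T(s,x)\,(\del_t u)(T(s,x),x) + (\del_x u)(T(s,x),x) = (\delb_1 u)(T(s,x),x),
\]
so $v_s'$ is precisely the trace on $\Fcal_s$ of the tangent-frame derivative. Recalling that the $L^2(\Fcal_s)$ norm is defined by integration against $dx$, and that by Proposition \ref{prop 1 feuille} the $x$-projection of $\Tcal_s$ is the disjoint union of the two intervals $[-(s^2+1)/2,-(s^2-1)/2]$ and $[(s^2-1)/2,(s^2+1)/2]$ of length exactly $1$, the claim reduces to the following statement: on any unit interval $I$ every sufficiently regular function $w$ satisfies $|w(x)|^2 \leq C(\|w\|_{L^2(I)}^2 + \|w'\|_{L^2(I)}^2)$ with $C$ an absolute constant.

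For this last inequality I would give a direct proof, in the spirit of Lemma \ref{lem 1 sobolev}: for any $x_1, x_2 \in I$,
\[
w(x_1)^2 - w(x_2)^2 = 2\int_{x_2}^{x_1} w(y)\,w'(y)\,dy \leq \|w\|_{L^2(I)}^2 + \|w'\|_{L^2(I)}^2,
\]
so $w(x_1)^2 \leq w(x_2)^2 + \|w\|_{L^2(I)}^2 + \|w'\|_{L^2(I)}^2$; averaging over $x_2 \in I$ and using $|I| = 1$ yields $|w(x_1)|^2 \leq 2\|w\|_{L^2(I)}^2 + \|w'\|_{L^2(I)}^2$. Applying this to $w = v_s$ on each of the two unit components of the $x$-projection of $\Tcal_s$ produces the stated bound.

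No genuine obstacle arises: the argument is just the chain-rule translation of the classical embedding $H^1(I)\hookrightarrow L^\infty(I)$. The two points to monitor are the chain-rule identity $v_s' = \delb_1 u|_{\Fcal_s}$ (which is what forces the tangential derivative, rather than any other frame component, to appear on the right-hand side), and the fact that the $x$-length of each component of $\Tcal_s$ is the absolute constant $1$, which is precisely what prevents the Sobolev constant from degenerating as $s\to\infty$.
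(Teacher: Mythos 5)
Your proof is correct and follows essentially the same strategy as the paper's: identify $\delb_1 u$ restricted to $\Fcal_s$ with the $x$-derivative of the trace $v_s$, then invoke a one-dimensional Sobolev embedding on a unit interval. The only (harmless) technical difference is that the paper routes through the one-sided Lemma~\ref{lem 1 sobolev} via a rescaled substitution $z = x \pm y/2$ and a case split on whether $x \lessgtr s^2/2$, whereas you prove a symmetric version of the $H^1\hookrightarrow L^\infty$ bound by averaging over the second endpoint, which lets you apply it in one step on each component $[(s^2-1)/2,(s^2+1)/2]$ of the $x$-projection of $\Tcal_s$.
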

\begin{proof}
We only consider the region where $x\geq 0$. When $x\leq 0$, we consider the transform $\tilde u\big(T(s,x),x\big) := u\big(T(s,-x),-x\big)$.

When $x\in \Tcal_s$ and $(s^2-1)/2\leq x\leq s^2/2$. Let
$$
v_{s,x}(y) := u\big(T(s,x + y/2), x + y/2\big),\quad y\in[0,1].
$$
Observe that when $y\in[0,1]$,
$$
(s^2-1)/2\leq y/2+x\leq (s^2+1)/2
$$
and
$$
v_{s,x}(0) = u\big(T(s,x),x\big).
$$
Furthermore, denote by $z = x + y/2$,
$$
\aligned
v_{s,x}'(y) =& \frac{1}{2}\big(\del_xT\cdot \del_tu + \del_x u\big)(T_s(z), z)
\\
=& \frac{1}{2}\delb_1u(T(s,z),z).
\endaligned
$$

Then we remark the following relation:
$$
\aligned
\int_0^1|v_{s,x}(y)|^2dy =& \int_0^1 \big|u\big(T(s,x + y/2),y/2+x\big)\big|^2dy
\\
=&2\int_x^{x+1/2}\big|u\big(T(s,z),z\big)\big|^2dz
\\
\leq& C\|u\|_{L^2(\Tcal_s)}^2
\endaligned
$$
and
$$
\aligned
\int_0^1|v_{s,x}'(y)|^2dy =& \frac{1}{4}\int_x^{x+1/2}|\delb_1 u(T(s,z),z)|^2\,dy = \frac{1}{2}\int_x^{x+1/2}|\delb_1 u(T(s,z),z)|^2\,dz
\\
\leq& C\|\delb_1 u\|_{L^2(\Tcal_s)}^2.
\endaligned
$$

Now apply \eqref{eq 1 lem 1 sobolev} on $v_{s,x}$,
$$
|u^2\big(T(s,x),x\big)| = |v_{s,x}^2(0)|\leq C\big(\|v_{s,x}'\|_{L^2([0,1])}^2 + \|v_{s,x}\|_{L^2([0,1])}^2\big)
\leq C\big(\|u\|_{L^2(\Tcal_s)}^2 + \|\delb_1 u\|_{L^2(\Tcal_s)}^2\big).
$$

When $s^2/2\leq x\leq (s^2+1)/2$. We introduce
$$
v_{s,x}(y) = u\big(T(x-y/2),x-y/2\big).
$$
Then we apply \eqref{eq 1 lem 1 sobolev} on $v_{s,x}$. The discussion is similar to the above case where $(s^2-1)/2\leq x\leq s^2/2$, we omit the detail.
\end{proof}

\begin{proposition}\label{prop 2 sobolev}
Let $u$ be a function defined in $\Fcal_{[s_0,s_1]}$, sufficiently regular. Then for $(t,x)\in\Pcal_s$, the following inequalities hold:
\begin{equation}\label{eq 1 sobolev-ext}
|(1+w_{\gamma})u(t,x)|^2\leq C\big(\|(1+w_{\gamma}(T(s),\cdot))\del_x u\|_{L^2(\Pcal_s)}^2 + \|(1+w_{\gamma}(T(s),\cdot))u\|_{L^2(\Pcal_s)}^2\big).
\end{equation}
\end{proposition}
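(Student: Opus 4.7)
The plan is to reduce the desired global Sobolev inequality in the flat region $\Pcal_s$ to the one-dimensional Sobolev estimate of Lemma \ref{lem 1 sobolev}, applied to the weighted restriction of $u$ along the horizontal half-line $\Pcal_s^+ := \{t=T(s),\ x\geq (s^2+1)/2\}$. By the left-right symmetry of $\Pcal_s$, it suffices to treat this case; the case $\Pcal_s^-$ (with $x\leq -(s^2+1)/2$) follows either by the reflection $\tilde{u}(y) := u(-y)$ or by using the symmetric variant of Lemma \ref{lem 1 sobolev} on $[x-1,x]$.

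The key step is to introduce the one-variable function
\begin{equation*}
v(y) := \big(1+w_{\gamma}(T(s),y)\big)\,u(T(s),y),\qquad y\in\RR.
\end{equation*}
Since $(t,x)\in\Pcal_s^+$ means $t=T(s)$ and $x\geq (s^2+1)/2$, the unit interval $[x,x+1]$ lies entirely in $\Pcal_s^+$, so Lemma \ref{lem 1 sobolev} applied to $v$ gives
\begin{equation*}
\big|(1+w_{\gamma})u(t,x)\big|^2 = v^2(x) \leq C\int_x^{x+1}\big(v^2(y) + |v'(y)|^2\big)\,dy.
\end{equation*}
Differentiating yields $v'(y) = \del_y w_{\gamma}(T(s),y)\cdot u(T(s),y) + (1+w_{\gamma})\,(\del_x u)(T(s),y)$.

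The main technical point is the pointwise weight estimate
\begin{equation*}
|\del_y w_{\gamma}(T(s),y)| \leq C\big(1+w_{\gamma}(T(s),y)\big),
\end{equation*}
which I would extract directly from \eqref{eq 1 weight}--\eqref{eq 2 weight}: the term $\bar{w}_{\gamma}$ is uniformly bounded, while $\gamma w_{\gamma}/(1+r-t) = \gamma\chi(r-t)(r-t+1)^{\gamma-1}$ is controlled by $C(1+w_{\gamma})$ because on $\mathrm{supp}(\chi)\cap\{r\geq t\}$ one has $(r-t+1)^{\gamma-1}\leq \max\{(r-t+1)^{\gamma},1\}$. With this in hand, $|v'(y)|^2\leq C(1+w_\gamma)^2\big(|u|^2 + |\del_x u|^2\big)$, and enlarging the integration domain from $[x,x+1]$ to $\Pcal_s^+\subset\Pcal_s$ immediately produces the right-hand side of \eqref{eq 1 sobolev-ext}.

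The only obstacle worth naming is the weight derivative estimate above; once it is established, the conclusion is just the one-dimensional Sobolev embedding on a unit interval that fits inside $\Pcal_s$.
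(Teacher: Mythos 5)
Your proposal is correct and follows essentially the same route as the paper: introduce the weighted one-variable function $v(y) = (1+w_{\gamma}(T(s),y))\,u(T(s),y)$, apply the elementary Sobolev estimate of Lemma~\ref{lem 1 sobolev} on the unit interval $[x,x+1]\subset\Pcal_s^+$, and control $v'$ via a bound on $\del_x w_{\gamma}$. The one notable point is that your weight derivative bound $|\del_x w_{\gamma}|\leq C(1+w_{\gamma})$ (obtained from $\bar{w}_{\gamma}\leq C$ and $\gamma w_{\gamma}/(1+r-t)\leq \gamma w_{\gamma}$) is in fact the clean and robust way to run this step; the paper instead writes the first term of $v'$ as $\frac{\del_x w_{\gamma}}{w_{\gamma}}\cdot w_{\gamma}u$ and asserts $\frac{\del_x w_{\gamma}}{w_{\gamma}}\leq C$, which is actually not uniform as $r\to t^+$ (there $\chi'/\chi\to\infty$), so your version is the correct one to keep.
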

\begin{proof}
$(t,s)\in\Pcal_s$ leads to the following facts:
$$
|x|\geq (s^2+1)/2,\quad t = T(s).
$$
We only consider the case $x\geq (s^2+1)/2$. For the case $x\leq -(s^2+1)/2$ we take the transformation $\tilde{u}(t,x) = u(t,-x)$.
Now we consider the function
$$
v_{t,x}(y) = (1 + w_{\gamma}(t,x+y))u(t,x+y) = \big(1 + \chi(x+y-t)(1+x+y-t)^{\gamma}\big)u(t,x+y).
$$
Then
$$
\aligned
v_{t,x}'(y) =& \del_xw_{\gamma}(t,x+y)u(t,x+y) + \big(1 + w_{\gamma}(t,x+y)\big)\del_xu(t,x+y)
\\
=&\frac{\del_xw_{\gamma}(t,x+y)}{w_{\gamma}(t,x+y)}\cdot w_{\gamma}(t,x+y)u(t,x+y) + \big(1+ w_{\gamma}(t,x+y)\big)\del_xu(t,x+y).
\endaligned
$$
We remark that
$$
0\leq \frac{\del_xw_{\gamma}(t,x+y)}{w_{\gamma}(t,x+y)}\leq \frac{\bar{w}_{\gamma}(t,x+y)}{w_{\gamma}(t,x+y)} + \frac{\gamma}{(1+x+y-t)}\leq C . 
$$

Then
$$
\int_0^1|v_{t,x}'(y)|^2\,dy \leq \int_x^{x+1}|w_{\gamma}(t,y)u(t,y)|^2\, dy + \int_x^{x+1}\big|1+w_{\gamma}(t,y)\del_xu(t,y)\big|^2\, dy
$$
and
$$
\int_0^1|v_{t,x}(y)|^2\,dy = \int_x^{x+1}|\big(1+w_{\gamma}(t,y)\big)u(t,y)|^2\,dy.
$$
Then apply \ref{eq 1 lem 1 sobolev} the desired result is established.
\end{proof}

\section{Basic calculus in transition and exterior region}\label{sec basic-ext}
Equipped with the energy estimates and global Sobolev inequalities, we are nearly ready to get a parallel framework as in the hyperboloidal foliation context. However, before regarding a concrete example in Part II, we need to establish several decompositions and estimates on commutators just as we have done for hyperboloidal foliation framework. Here we concentrate on these work in the transition and exterior region. The parallel calculus in interior region are nearly the same as what we have done in previous work (e.g. \cite{LM1}) and we will only give a sketch in appendix.

\underline{ All calculus in this section is made in the region $\Hb_{[s_0,s_1]}$ unless otherwise specified.}

\subsection{Families of vector fields}
In the exterior and transition region, we introduce the {\sl null derivative} $\delt_1 := (x/r)\del_t + \del_x$. We denote by
$$
\Zext := \Zscr \cup \{\delt_1\}.
$$
where $\Zscr = \{\del_t,\del_x,L\}$, see appendix \ref{sec basic} for detailed deiscussion. A high-order operator defied in $\Hb_{[s_0,\infty)}$ is said to be of type $(i,j,l)$, if it contains $i$ partial derivatives, $j$ Lorentzian boosts and $l$ null derivatives.
\subsection{Homogeneous functions}
As in the interior region, we introduce the following notion of homogeneous functions in transition and exterior region:
\begin{definition}
A $C^{\infty}$ function $u$ defined in $\{r\neq 0\}$ is said to be homogeneous of degree $k\in \mathbb{Z}$ in exterior region, if $\forall \lambda>0$,
$$
u(\lambda t,\lambda x) = \lambda^k u(t,x)
$$
and
$$
|\del^I u(t,x)|\leq C(I), \quad \forall |x|=1, 0\leq t\leq 2
$$
where $C(I)$ is a constant determined by $I$ and $u$.
\end{definition}
For simplicity, in this section when we say ``homogeneous'', we always mean by ``homogeneous in transition and exterior region''.
The constants are homogeneous of degree zero, and so is the function $t/r$.

The following properties are immediate:
\begin{lemma}
Let $u,v$ be homogeneous of degree $k,l$ respectively in exterior region. Then
\\
$-$ when $k=l$, $\alpha u + \beta v$ is homogeneous of degree $k$,
\\
$-$ $uv$ is homogeneous of degree $k+l$.
\\
$-$ $\del^IL^ju$ is homogeneous of degree $k-|I|$,
\\
$-$ $|\del^IL^j u|\leq C(I,j)(1+r)^{k-|I|}$.
\end{lemma}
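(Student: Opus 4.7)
The plan is to verify the four claims in sequence, each reducing to the scaling identity
\[
u(\lambda t,\lambda x)=\lambda^k u(t,x),\qquad \lambda>0,
\]
together with the pointwise derivative bound on the compact set $K:=\{|x|=1,\ 0\le t\le 2\}$ which is part of the definition. The first two items (linearity when $k=l$, and product of degree $k+l$) follow directly: the scaling identity is preserved under linear combinations and products, while the $C^{\infty}$-bounds on $K$ pass through trivially for sums and via Leibniz for products.

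For the third item, I would first check it for a single derivative. Differentiating $u(\lambda t,\lambda x)=\lambda^k u(t,x)$ in $t$ (resp.\ $x$) gives $\del_\alpha u(\lambda t,\lambda x)=\lambda^{k-1}\del_\alpha u(t,x)$, so each $\del_\alpha$ decreases the degree by one. For $L=t\del_x+x\del_t$, a direct substitution yields
\[
(Lu)(\lambda t,\lambda x)=\lambda t\,\del_x u(\lambda t,\lambda x)+\lambda x\,\del_t u(\lambda t,\lambda x)=\lambda^{k}(Lu)(t,x),
\]
so $L$ preserves degree. Iterating proves the scaling behavior of $\del^IL^j u$. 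To verify the derivative bound on $K$, I would expand $L^j$ by Leibniz into a sum of terms of the form (polynomial in $t,x$)$\cdot\del^{I'}u$; since $|t|,|x|\le 2$ on $K$ and all $\del^{I'}u$ are bounded on $K$ by hypothesis, the required bounds persist.

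For the fourth (pointwise) item, I would exploit the scaling identity with $\lambda=1/r$: setting $m=k-|I|$ (the degree of $\del^IL^j u$ from item 3),
\[
\del^IL^j u(t,x)=r^{m}\,\del^IL^j u(t/r,x/r).
\]
The point $(t/r,x/r)$ has $|x/r|=1$; and by Lemma \ref{lem 2 position} the exterior region satisfies $r\ge t-1$, while $r\ge (s_0^2-1)/2\ge 3/2$ on $\Hb_{[s_0,\infty)}$, so $0\le t/r\le 1+1/r\le 2$. Thus $(t/r,x/r)\in K$, and by item 3 combined with the definition, $|\del^IL^j u(t/r,x/r)|\le C(I,j)$. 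This yields $|\del^IL^j u(t,x)|\le C(I,j)\,r^{k-|I|}$, which can be rewritten with $(1+r)^{k-|I|}$ at the cost of enlarging the constant since $r$ is bounded away from $0$ on the exterior region.

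The only genuinely delicate point is the last step: checking that scaling by $1/r$ lands inside the compact set $K$ on which derivative bounds are postulated. This is precisely where the lower bound on $r$ in the exterior region and the inequality $t\le r+1$ from Lemma \ref{lem 2 position} are used, so the whole statement really is a statement about exterior-region behavior rather than global homogeneity; the other three items are essentially formal.
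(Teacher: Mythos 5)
Your proof is correct and follows the paper's own route for item~3: differentiate the scaling identity to obtain $\del_\alpha u(\lambda t,\lambda x)=\lambda^{k-1}\del_\alpha u(t,x)$, check directly that $L$ commutes with scaling so it preserves degree, then iterate. The paper's proof stops there, declaring the remaining items immediate; you additionally spell out item~4 via the $\lambda=1/r$ rescaling and correctly identify where the exterior-region constraints $t\le r+1$ (Lemma~\ref{lem 2 position}) and $r\ge 3/2$ are needed to land in the reference compact set $\{|x|=1,\ 0\le t\le 2\}$ — this is exactly the content the paper elides, and your handling of it (including the conversion $r^{k-|I|}\lesssim (1+r)^{k-|I|}$ using $r$ bounded below) is sound.
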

\begin{proof}
Only the third deserve a proof. We derive the following equation with respect to $t$ and $x$,
$$
\lambda^k u(t',x') = u(\lambda t',\lambda x')
$$
and obtain
$$
\lambda^{k-1}\del_t u(t',x') = \del_t(\lambda t',\lambda x'),\quad \lambda^{k-1}\del_xu(t',x')  = \del_xu(\lambda t',\lambda x').
$$
Then,
$$
\aligned
Lu(\lambda t',\lambda x') =& \lambda t'\del_xu(\lambda t',\lambda x') + \lambda x' \del_t u(\lambda t',\lambda x')
\\
=& \lambda^k t'\del_xu(t',x') + x'\del_tu(t',x') = \lambda^k Lu(t',x').
\endaligned
$$
That is, when derived with respect $\del_{\alpha}$, the degree of homogeneity is reduced by one while derived with respect to $L$, the degree does not change. Then by recurrence, the desired result is established.
\end{proof}

Here is some examples for homogeneous functions. Let $a,b$ be non-negative integers,
\\
$\bullet$ $x^ar^{-b}$ is homogeneous of degree $(a-b)$,
\\
$\bullet$ $t^ar^{-b}$ is homogeneous of degree $(a-b)$, because $t^ar^{-b} = (t/r)^a r^{a-b}$ and $(t/r)$ is homogeneous of degree zero.

\subsection{Decomposition of commutators}
In this subsection, we investigate the commutation relation among the vector fields $\del_{\alpha}$, $L$ and $\delt_1$. First, in the region $\Hb_{[s_0,s_1]}$
\begin{equation}\label{eq 1 comm-ext}
[\del_\alpha,\delt_1] = 0, \quad
[L,\delt_1] = -(x/r)\delt_1
= \left\{
\aligned
&-1,\quad \Hb^+_{[s_0,s_1]},
\\
&1,\quad \Hb^-_{[s_0,s_1]}.
\endaligned
\right.
\end{equation}
Then we establish the following decomposition:
\begin{lemma}\label{lem 1 decompo-comm-ext}
Let $u$ be a function defined in $\Fcal_{[s_0,s_1]}$, sufficiently regular. Then in the interior of $\Hb_{[s_0,\infty)}$,
\begin{equation}\label{eq 1 lem 1 decompo-comm-ext}
[\del^IL^j,\delt_1]u =
\sum_{k<j}\Lambda^{Ij}_{k}\delt_1\del^IL^ku
\end{equation}
where $\Lambda^{Ij}_{k}$ are locally constant, i.e., they are constant on each component of connection of $\Hb_{[s_0,s_1]}$.
\end{lemma}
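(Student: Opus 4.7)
The plan is to first use the commutation $[\del_\alpha,\delt_1]=0$ to reduce to the case of pure $L$-powers, and then run a short induction on $j$ using $[L,\delt_1]=-(x/r)\delt_1$, exploiting that $-(x/r)$ takes the values $\mp 1$ on the two connected components $\Hb^{\pm}_{[s_0,s_1]}$ and is therefore annihilated by every smooth vector field on each component.

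First I would observe that since $\del_\alpha$ commutes with $\delt_1$, one has $\del^I\delt_1=\delt_1\del^I$ as operators on smooth functions, hence
\[
[\del^IL^j,\delt_1]u=\del^I L^j\delt_1 u-\del^I\delt_1 L^j u=\del^I[L^j,\delt_1]u.
\]
This reduces the statement to proving an intermediate claim: there exist locally constant functions $\mu^j_k$ on $\Hb_{[s_0,s_1]}$ such that
\[
[L^j,\delt_1]=\sum_{k<j}\mu^j_k\,\delt_1 L^k.
\]

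Next I would prove this intermediate claim by induction on $j$. The base case $j=1$ is precisely \eqref{eq 1 comm-ext}, with $\mu^1_0=-(x/r)$ which equals $-1$ on $\Hb^{+}_{[s_0,s_1]}$ and $+1$ on $\Hb^{-}_{[s_0,s_1]}$. For the inductive step I would use the Leibniz-type identity
\[
[L^j,\delt_1]=L\,[L^{j-1},\delt_1]+[L,\delt_1]\,L^{j-1}.
\]
The key calculational observation is that because $\mu^{j-1}_k$ and $-(x/r)$ are locally constant, any vector field (in particular $L$ and $\del_\alpha$) applied to them vanishes; hence
\[
L\bigl(\mu^{j-1}_k\,\delt_1 L^k u\bigr)=\mu^{j-1}_k L\,\delt_1 L^k u
=\mu^{j-1}_k\bigl(\delt_1 L^{k+1}u-(x/r)\delt_1 L^k u\bigr),
\]
and similarly $[L,\delt_1]L^{j-1}u=-(x/r)\delt_1 L^{j-1}u$. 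Collecting the terms produces a sum of the form $\sum_{k<j}\mu^j_k\,\delt_1 L^k u$ with locally constant coefficients, which closes the induction.

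Finally I would return to the full operator: since each $\mu^j_k$ is locally constant, $\del^I(\mu^j_k f)=\mu^j_k\del^I f$, and commuting $\del^I$ past $\delt_1$ gives
\[
\del^I[L^j,\delt_1]u=\sum_{k<j}\mu^j_k\,\delt_1\,\del^I L^k u,
\]
so we may take $\Lambda^{Ij}_k:=\mu^j_k$. The only subtle point is the very first observation that $-(x/r)$, while not constant globally, is genuinely constant on each of the two connected components of $\Hb_{[s_0,s_1]}$ where $x$ has constant sign; everything else is a bookkeeping induction based on the Leibniz rule and the vanishing of derivatives of locally constant functions.
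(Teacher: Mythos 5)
Your proof is correct and follows essentially the same route as the paper's: both use $[\del^I,\delt_1]=0$ to reduce to $\del^I[L^j,\delt_1]u$, establish $[L^j,\delt_1]=\sum_{k<j}\Lambda^j_k\,\delt_1 L^k$ by induction on $j$ via the Leibniz identity for nested commutators, and exploit at each step that $L$ and $\del^I$ annihilate the locally constant coefficients so that $\del^I L^k$ can be pulled through. The only (inessential) difference is the order: the paper proves the pure-$L^j$ decomposition first and then applies $\del^I$, while you commute $\del^I$ out at the start.
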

\begin{proof}
We first establish the following decomposition:
\begin{equation}\label{eq 1 proof lem 1 decompo-comm-ext}
[L^j,\delt_1]u = \sum_{k<j}\Lambda^j_k\delt_1L^k u
\end{equation}
where $\Lambda_k^j$ are locally constant. This is by induction. \eqref{eq 1 comm-ext} shows the case of $j=1$. The we remark the following calculation:
$$
\aligned
\,[LL^j,\delt_1]u =& [L,\delt_1]L^ju + L\big([L^j,\delt_1]u\big) = -(x/r)\delt_1L^ju + \sum_{k<j}L(\Lambda^j_k\delt_1L^ku)
\\
=&-(x/r)\delt_1L^ju  + \sum_{k<j}\Lambda^j_k\delt_1LL^k + \sum_{k<j}\Lambda^j_k[L,\delt_1]L^ku
\\
=&-(x/r)\delt_1L^ju  + \sum_{k<j}\Lambda^j_k\delt_1LL^k  - (x/r)\sum_{k<j}\delt_1L^ku
\endaligned
$$
where we have applied the fact that by induction, $L\Lambda^j_k = 0$. Then by induction \eqref{eq 1 proof lem 1 decompo-comm-ext} is concluded.


Then we consider $[\del^IL^j,\delt_1]$. Recall \eqref{eq 1 comm-ext}, $[\del^I,\delt_1] = 0$. Then
$$
\aligned
\,[\del^IL^j,\delt_1]u =& \del^I\big([L^j,\delt_1]u\big) = \sum_{k<j}\del^I\big(\Lambda^j_k\delt_1L^ku\big)
=\sum_{k<j}\Lambda^j_k\delt_1\del^I L^ku.
\endaligned
$$
Here we remark that $\del^{I_1}\big(\Lambda^j_k\big) =0$. This concludes \eqref{eq 1 lem 1 decompo-comm-ext}.
\end{proof}

Then, we establish the following results:
\begin{lemma}\label{lem 1 high-order-ext}
For $Z^J$ a $N-$order operator of type $(i,j,l)$ with $l\geq 1$, the following bounds hold:
\begin{equation}\label{eq 1 lem 1 high-order-ext}
Z^Ju = \sum_{k\leq j,|I|=i}\Lambda^J_{Ik}\delt_1^l\del^IL^k u
\end{equation}
where $\Lambda^J_{Ik}$ are locally constant.
\end{lemma}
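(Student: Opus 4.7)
The plan is to argue by induction on the total order $N = i + j + l$. Writing $Z^J = X_1 \cdot W^{J'}$ with $X_1 \in \{\del_\alpha, L, \delt_1\}$ and $W^{J'}$ an operator of order $N-1$, the inductive hypothesis (or, if $W^{J'}$ contains no $\delt_1$, the standard reduction for words in $\{\del_\alpha, L\}$ recalled in the appendix) puts $W^{J'} u$ into the form $\sum_{I',k'} \Lambda_{I',k'}^{J'} \delt_1^{l'} \del^{I'} L^{k'} u$ with locally constant $\Lambda_{I',k'}^{J'}$. Since $\del_\alpha$, $L$ and $\delt_1$ all annihilate functions that are locally constant on the components $\Hb^\pm_{[s_0,s_1]}$, the operator $X_1$ passes through the coefficients, and the task reduces to rewriting $X_1 \delt_1^{l'} \del^{I'} L^{k'} u$ in the target form.

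When $X_1 = \delt_1$ the conclusion is immediate. When $X_1 = \del_\alpha$, the identity $[\del_\alpha, \delt_1] = 0$ from \eqref{eq 1 comm-ext} lets $\del_\alpha$ slide past $\delt_1^{l'}$ and merge into the $\del^{I'}$ block, producing $|I| = i$. The substantive case is $X_1 = L$. Here I would first establish, by a short induction on $l'$ using $[L, \delt_1] = -(x/r)\delt_1$ together with $\delt_1(x/r) = 0$, the extended commutation
\begin{equation*}
L \delt_1^{l'} = \delt_1^{l'} L - l'(x/r)\delt_1^{l'}.
\end{equation*}
Then I would move $L$ past $\del^{I'}$ using $[L, \del_t] = -\del_x$ and $[L, \del_x] = -\del_t$, which produce only integer-coefficient corrections of the same partial order. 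The trailing $L$ either absorbs the incoming $L$ to give $L^{k'+1}$ with $k'+1 \leq j$, or $L$ stays at level $k'$ in correction terms where $k' < j$; in either case the bound $k \leq j$ is preserved.

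The main obstacle is precisely this $L$-case: one must commute a boost past both a power of the null derivative and a block of partials while keeping the coefficient ring confined to locally constant functions, since this closure is what makes the induction work and what the conclusion demands of the $\Lambda^J_{Ik}$. It works because $(x/r) = \operatorname{sgn}(x)$ appearing in $[L, \delt_1]$ is literally $\pm 1$ on each component $\Hb^\pm_{[s_0,s_1]}$, and the partial-boost commutators have integer coefficients; at no point does any non-locally-constant function enter the coefficient ring. This is exactly the phenomenon already exploited in Lemma \ref{lem 1 decompo-comm-ext}, which provides the bookkeeping template to follow.
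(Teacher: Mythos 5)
Your proof is correct and rests on essentially the same circle of ideas as the paper's: the commutation facts $[\del_\alpha,\delt_1]=0$ and $[L,\delt_1]=-(x/r)\delt_1$, the fact that $x/r$ and all arising coefficients are locally constant on $\Hb^{\pm}_{[s_0,s_1]}$ (hence annihilated by $\del_\alpha$, $L$, $\delt_1$), and the appendix normal-form lemma for words in $\{\del_\alpha,L\}$. The only cosmetic difference is the induction variable — you induct on the total order $N$ and peel a single leftmost letter (deriving $L\delt_1^{l'}=\delt_1^{l'}L - l'(x/r)\delt_1^{l'}$ directly), whereas the paper inducts on $l$ and moves the entire partial–boost prefix past the first $\delt_1$ in one step via Lemma \ref{lem 1 decompo-comm-ext}; both routes are sound and yield the same conclusion.
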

\begin{proof}
This is by an induction on $l$.  We remark that by \eqref{eq 1 lem 1 high-order}, $Z^Ju$ can be written as a finite linear combination of the following terms with constant coefficients:
$$
\del^{I_1}L^{j_1}\delt_1Z^{J'}u
$$
where $Z^{J'}$ is of type $(i',j',l')$ with $i = i'+|I_1|$, $j = j'+j_1$, $l = l' + 1$.

When $l=1$ it is guaranteed by \eqref{eq 1 lem 1 decompo-comm-ext} and \eqref{eq 1 lem 1 high-order} (applied on $Z^{J'}$). When $l>1$, we have $l'\geq 1$ and
$$
\aligned
\del^{I_1}L^{j_1}\delt_1Z^{J'}u =& \delt_1\big(\del^{I_1}L^{j_1}Z^{J'}u\big) + [\del^{I_1}L^{j_1},\delt_1]Z^{J'}u
\\
=& \delt_1\big(\del^{I_1}L^{j_1}Z^{J'}u\big) + \sum_{ k<j_1 }\Lambda^{I_1j_1}_{k} \delt_1\del^{I_1}L^{k}Z^{J'}u.
\endaligned
$$
We denote by $Z^{J''} := \del^{I_1}L^{j_1}Z^{J'}$ which is of type $(i,j,l')$, by $Z^{J_k'} := \del^{I_1}L^kZ^{J'}$ which is of type $(i,j_k',l')$ with $l' = l-1$ and $j_k' = k + j'<j$. Then by the assumption of induction,
$$
\aligned
\del^{I_1}L^{j_1}\delt_1Z^{J'}u
=&\delt_1Z^{J''}u + \sum_{ k<j_1 }\Lambda^{I_1j_1}_{k} \delt_1Z^{J_k'}u
\\
=&\sum_{k\leq j,|I|=i}\Lambda^{J''}_{Ik} \delt_1^l\del^IL^k u
+ \sum_{k<j_1 }\!\!\!\sum_{|I|=i\atop k_2\leq k+j'}\Lambda^{I_1j_1}_{k} \Lambda^{J_k'}_{I k_2}\delt_1\delt_1^{l'}\del^{I}L^{k_2}u
\endaligned
$$
which concludes by induction the desired result.

\end{proof}

\subsection{Basic $L^2$ bounds}
\begin{lemma}\label{lem 1 L2-basic-ext}
Let $u$ be a function defined in $\Hb_{[s_0,s_1]}$, sufficiently regular. Then the following bounds hold:
\begin{equation}\label{eq 1 lem 1 L2-basic-ext}
\|(1+r)^{-1}(1+|r-t|)^{\gamma} L^{j+1} u\|_{L^2(\Hb_s)}\leq C\EE_{N,\gamma}(s,u)^{1/2},  \quad j\leq N,
\end{equation}
\\
$-$ When $K$ being of type $(i,j,0)$, $i\geq 1$ and $|K|\leq N+1$,
\begin{equation}\label{eq 2 lem 1 L2-basic-ext}
\|(1+|r-t|)^{\gamma}\zeta Z^Ku\|_{L^2(\Hb_s)}\leq C\EE_{N,\gamma}(s,u)^{1/2},
\end{equation}
\\
$-$ When $K$ being of type $(i,j,l)$, $l\geq 1$ and $|K|\leq N+1$,
\begin{equation}\label{eq 3 lem 1 L2-basic-ext}
\|(1+|r-t|)^{\gamma} Z^K u\|_{L^2(\Hb_s)}\leq C\EE_{N,\gamma}(s,u)^{1/2},
\end{equation}
\\
$-$ When $K$ being of type $(i,j,l)$ and $|K|\leq N$,
\begin{equation}\label{eq 4 lem 1 L2-basic-ext}
\|c(1+|r-t|)^{\gamma}Z^K u\|_{L^2(\Hb_s)}\leq C\EE_{N,\gamma,c}(s,v)^{1/2}.
\end{equation}
\end{lemma}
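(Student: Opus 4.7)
The overall strategy is to use Lemma \ref{lem 1 high-order-ext} (for $l\geq 1$) and the $\del$--$L$ commutator identities $[L,\del_t]=-\del_x$, $[L,\del_x]=-\del_t$ (for $l=0$) to rewrite $Z^Ku$ as a sum of basic pieces $\delt_1^l\del^IL^ku$ with locally constant coefficients, and then to invoke the exterior energy bounds \eqref{eq energy-stuc-ext}
\[
\|(1+|r-t|)^\gamma\,\delt_1\phi\|_{L^2(\Hb_s)} + \|(1+|r-t|)^\gamma\,\zeta\,\del_\alpha\phi\|_{L^2(\Hb_s)} \leq C\,E^{\text{E}}_\gamma(s,\phi)^{1/2},
\]
noting that the null derivative gives strong control while $\del_\alpha$ only comes with the weight $\zeta$. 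For \eqref{eq 1 lem 1 L2-basic-ext}, use $|x|=r$ to write $L = x\del_t+t\del_x = r\delt_1+(t-r)\del_x$, so that
\[
(1+r)^{-1}L^{j+1}u \;=\; \tfrac{r}{1+r}\,\delt_1 L^ju \;+\; \tfrac{t-r}{1+r}\,\del_x L^ju .
\]
The first coefficient is bounded by $1$. For the second, I verify $|t-r|/(1+r)\leq C\zeta$ on $\Hb_s$: in $\Pcal_s$, $\zeta=1$ and $|t-r|/(1+r)\leq 1$ using Proposition \ref{prop 1 feuille} and $r\geq(s^2+1)/2$; in $\Tcal_s$, Lemma \ref{lem 2 position} gives $|t-r|\leq 1$ while $r\geq(s^2-1)/2$, and $\zeta\geq s/\sqrt{s^2+r^2}\geq C/s$, so $|t-r|/(1+r)\leq 2/s^2\leq C\zeta$. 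Both pieces are then dominated by $E^{\text{E}}_\gamma(s,L^ju)^{1/2}\leq\EE_{N,\gamma}(s,u)^{1/2}$ since $j\leq N$. For \eqref{eq 2 lem 1 L2-basic-ext}, the commutator identities give $Z^Ku=\sum_{|I|=i,\,k\leq j}\Lambda^K_{Ik}\del^IL^ku$ with constants $\Lambda^K_{Ik}$; since $i\geq 1$, I factor $\del^I=\del_\alpha\del^{I'}$ with $|I'|+k\leq|K|-1\leq N$ and apply the $\zeta\del_\alpha$-bound.

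The main technical step is \eqref{eq 3 lem 1 L2-basic-ext}. Lemma \ref{lem 1 high-order-ext} reduces the task to bounding $\delt_1^l\del^IL^ku$ with $i+k+l\leq N+1$, $l\geq 1$. The key observation is that in $\Hb_s$ one has $r\geq(s^2-1)/2>0$, hence $x/r=\mathrm{sign}(x)$ is locally constant, and every power $(x/r)^m$ is annihilated by $\del_t$, $\del_x$, and $\delt_1$. In particular the operators $(x/r)\del_t$ and $\del_x$ commute, so the binomial identity
\[
\delt_1^{l-1} \;=\; \bigl((x/r)\del_t+\del_x\bigr)^{l-1} \;=\; \sum_{m=0}^{l-1}\binom{l-1}{m}(x/r)^m\del_t^m\del_x^{l-1-m}
\]
holds pointwise. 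Substituting this and commuting the outer $\delt_1$ past the scalar factors $(x/r)^m$ yields
\[
\delt_1^l\del^IL^ku \;=\; \sum_{m=0}^{l-1}\binom{l-1}{m}(x/r)^m\,\delt_1\del^{I_m}L^ku, \qquad |I_m|=|I|+l-1,
\]
with $|I_m|+k\leq N$. Since $|(x/r)^m|\leq 1$, each summand has its $(1+|r-t|)^\gamma$-weighted $L^2$-norm bounded by $E^{\text{E}}_\gamma(s,\del^{I_m}L^ku)^{1/2}\leq\EE_{N,\gamma}(s,u)^{1/2}$. The essential feature is that the inner $l-1$ null derivatives are \emph{free}: they unfold into ordinary partial derivatives with unit-size coefficients, so only the outermost $\delt_1$ has to exploit the strong (null-direction) energy control.

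Finally, for \eqref{eq 4 lem 1 L2-basic-ext}: if $l=0$, decompose $Z^Ku=\sum\Lambda\del^IL^ku$ with $|I|+k\leq N$, observe that $(1+|r-t|)^\gamma\leq C(1+w_\gamma)$ on $\Hb_s$ (split on $r\leq t$ versus $r>t$ and use Lemma \ref{lem 2 position} together with Proposition \ref{prop 1 feuille} to bound $t-r$ in the transition and flat regions), and invoke the $c(1+w_\gamma)\phi$ part of $E^{\text{E}}_{\gamma,c}(s,\del^IL^ku)^{1/2}$. If $l\geq 1$, \eqref{eq 3 lem 1 L2-basic-ext} already gives $\|(1+|r-t|)^\gamma Z^Ku\|_{L^2(\Hb_s)}\leq C\EE_{N,\gamma}(s,u)^{1/2}$; multiplying by the constant $c$ and using $\EE_{N,\gamma}\leq\EE_{N,\gamma,c}$ concludes.
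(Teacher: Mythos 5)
Your proof is correct and follows essentially the same route as the paper's: reduce $Z^K u$ to combinations of $\delt_1\del^I L^k u$ and $\zeta\del_\alpha\del^{I'}L^k u$ with locally-constant coefficients, then invoke the structure of the exterior energy \eqref{eq energy-stuc-ext}. For \eqref{eq 1 lem 1 L2-basic-ext} you make explicit the identity $L=r\delt_1+(t-r)\del_x$ and verify $|t-r|/(1+r)\leq C\zeta$ on $\Hb_s$, and for \eqref{eq 3 lem 1 L2-basic-ext} you spell out via a binomial expansion (valid because $x/r$ is locally constant on $\Hb_s$) the paper's terse remark that $\delt_1^l\del^I$ reduces to $\Lambda\,\delt_1\del^{I'}$ with $|I'|=|I|+l-1$; both are just explicit renderings of the arguments the paper leaves implicit.
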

\begin{proof}
For \eqref{eq 4 lem 1 L2-basic-ext}, we only need to recall the structure of the energy \eqref{eq flat-energy-flat}, \eqref{eq trans-energy-flat} and lemma \ref{lem 1 trans-energy} and the decomposition \eqref{eq 1 lem 1 high-order-ext}. In the same manner, by \eqref{eq 1 lem 1 decompo-comm-ext}, \eqref{eq 1 lem 1 L2-basic-ext} and \eqref{eq 2 lem 1 L2-basic-ext} are direct.

For \eqref{eq 3 lem 1 L2-basic-ext}, we need to remark that
$$
\delt_1 = (x/r)\del_t + \del_x
$$
where the coefficients are locally constant. So $\delt_1^l\del^i$ is a finite linear combination of
$$
\Lambda_I\delt_1\del^I, \quad |I| = i+l-1
$$
with $\Lambda_I$ locally constant, thus uniformly bounded in $\Hb_{[s_0,\infty)}$. Thus \eqref{eq 3 lem 1 L2-basic-ext} is established.
\end{proof}

\subsection{Structure of Hessian forms}
In this subsection we will analysis the terms $\del_{\alpha}\del_{\beta}u$, which are components of Hessian form of $u$. For the convenience of discussion, we introduce the following notation:
$$
\Fcal_{[s_0,s_1]}^L := \Hb_{[s_0,s_1]}\cap \{r-t\leq 1\}, \quad \Fcal_s^L := \Hb_s\cap \{r-t\leq 1\}
$$
and
$$
\Fcal_{[s_0,s_1]}^* := \Hb_{[s_0,s_1]}\cap \{r-t\geq 1\}, \quad \Fcal_s^* := \Hb_s\cap \{r-t\geq 1\}.
$$

Furthermore, we introduce the region
$$
\aligned
&\Fcal^E_{[s_0,s_1]} := \Hb_{[s_0,s_1]}\cap \{r\geq 2t \}, \quad \Fcal^E_s := \Hb_s\cap \{r\geq 2t\},
\\
&\Fcal^I_{[s_0,s_1]} := \Hb_{[s_0,s_1]}\cap \{r\leq 2t \}, \quad \Fcal^I_s := \Hb_s\cap \{r\leq 2t\}.
\endaligned
$$
 To get start we make some preparations.
\begin{lemma}\label{lem 1 (r-t)}
In the region $\Fcal^*_{[s_0,\infty)}$, the following bound holds:
\begin{equation}\label{eq 1 lem 1 (r-t)}
|\del^IL^j\big((r-t)^{-1}\big)|\leq C(1+|r-t|)^{-1-|I|}
\end{equation}
\end{lemma}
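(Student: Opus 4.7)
My plan is to exploit the fact that in one space dimension $x/r = \mathrm{sgn}(x)$ is locally constant on $\{x\neq 0\}$, so the Lorentzian boost $L = x\del_t + t\del_x$ annihilates any function depending only on $x/r$. This will allow the operator $\del^IL^j$ to collapse onto $\del^I$ acting on an elementary rational function of $r-t$.

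First I would carry out the direct computation
\[
L\bigl((r-t)^{-1}\bigr) = (x/r)(r-t)^{-1}
\]
on $\Fcal^*_{[s_0,\infty)}$, using $\del_t((r-t)^{-1}) = (r-t)^{-2}$ and $\del_x((r-t)^{-1}) = -(x/r)(r-t)^{-2}$; the two contributions of $L$ combine to give $(r-t)^{-2}\cdot x\cdot(r-t)/r = (x/r)(r-t)^{-1}$. Since $\del_t(x/r) = 0$ and $\del_x(x/r) = 0$ on $\{x\neq 0\}$, one has $L(x/r) = 0$ there, and an easy induction on $j$ then yields
\[
L^j\bigl((r-t)^{-1}\bigr) = (x/r)^j(r-t)^{-1}.
\]

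Next I would apply $\del^I$ to this expression. On each connected component of $\{x\neq 0\}\cap \Fcal^*_{[s_0,\infty)}$ the factor $(x/r)^j$ is the constant $(\pm 1)^j$, and $r-t$ is an affine function of $(t,x)$ (namely $x-t$ on $\{x>0\}$ or $-x-t$ on $\{x<0\}$), so a direct induction gives $\del^I\bigl((r-t)^{-1}\bigr) = c_I(r-t)^{-1-|I|}$ for some constant $c_I$ depending only on $I$. Combining the two steps produces the pointwise identity
\[
\del^IL^j\bigl((r-t)^{-1}\bigr) = (x/r)^j\, c_I\,(r-t)^{-1-|I|}
\]
on each component, and in particular the bound $\bigl|\del^IL^j((r-t)^{-1})\bigr|\leq |c_I|\,|r-t|^{-1-|I|}$.

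Finally, on $\Fcal^*_{[s_0,\infty)}$ we have $r-t\geq 1$ by definition, hence $1+|r-t|\leq 2|r-t|$, so the right-hand side above is bounded by $C(1+|r-t|)^{-1-|I|}$ with $C = 2^{1+|I|}|c_I|$, giving \eqref{eq 1 lem 1 (r-t)}. There is no serious obstacle here; the only subtle ingredient is the one-dimensional fact that $x/r$ is locally constant, which would fail in higher dimensions where one would have to account for the action of boosts on the angular variables.
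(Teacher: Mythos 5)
Your argument is correct and follows essentially the same route as the paper's own proof: first the inductive identity $L^j((r-t)^{-1}) = (x/r)^j(r-t)^{-1}$ exploiting that $x/r$ is locally constant, then the inductive identity $\del^I((r-t)^{-1}) = c_I(r-t)^{-1-|I|}$, then combining and using $r-t\geq 1$ on $\Fcal^*_{[s_0,\infty)}$. Your version is marginally more explicit in writing the $L^j$-factor as $(x/r)^j$ rather than as an unnamed locally constant coefficient, but conceptually there is no difference.
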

\begin{proof}
We first establish the following relation:
\begin{equation}\label{eq 1 pr lem 1 (r-t)}
L^j\big((r-t)^{-1}\big) = \lambda^j(r-t)^{-1}
\end{equation}
with $\lambda^j$ a locally constant function. This is by induction. We just remark that
$$
L(r-t) = (x\del_t+t\del_x)\big((r-t)^{-1} \big)= (x/r)(r-t)^{-1}.
$$
Then, suppose that \eqref{eq 1 pr lem 1 (r-t)} holds for $j$. Then
$$
L^{j+1}(r-t) = L^j\big((x/r)(r-t)^{-1}\big) = (x/r)L^j(r-t)^{-1} = -(x/r)\lambda^j(r-t)^{-1}
$$
where we have applied the fact that $x/r$ is locally constant.

Then we establish the following relation:
\begin{equation}\label{eq 2 pr lem 1 (r-t)}
\del^I \big((r-t)^{-1}\big) = C^I(r-t)^{-1-|I|}
\end{equation}
where $C^I$ is a locally constant function determined by $I$. This is also by induction. We only need to remark that
$$
\del_t\big((r-t)^{-1}) = (r-t)^{-2},\quad \del_x \big((r-t)^{-1}\big) = -(x/r)(r-t)^{-2}
$$
and (by the assumption of induction)
$$
\aligned
\del_{\alpha}\del^I\big((r-t)^{-1}\big) =& \del_{\alpha}\big(C^I(r-t)^{-1-|I|}\big) = C^I\del_{\alpha}\big((r-t)^{-1-|I|}\big)
\\
=& C^I (-1-|I|)C_{\alpha}(r-t)^{-1-(|I|+1)}
\endaligned
$$
where $C_{\alpha} = 1$ when $\alpha=0$ and $C_{\alpha} = -(x/r)$ when $\alpha = 1$. By induction this concludes  \eqref{eq 2 pr lem 1 (r-t)}

Now combine \eqref{eq 1 pr lem 1 (r-t)} and \eqref{eq 2 pr lem 1 (r-t)}, the following relation holds:
\begin{equation}\label{eq 3 pr lem 1 (r-t)}
\del^IL^j \big((r-t)^{-1}\big) = C^{Ij}(r-t)^{-1-|I|}
\end{equation}
with $C^{Ij}$ locally constant. Then \eqref{eq 1 lem 1 (r-t)} follows directly.
\end{proof}

We consider the region $\Fcal_{[s_0,s_1]}^I$. We remark the following identity:
\begin{equation}\label{eq 1' decompo-box-semi}
\Box = \big(1-(r/t)^2\big)\del_t\del_t + t^{-1}\left((x/t)\del_tL - \del_xL - (x/t)\del_x + \del_t\right)
\end{equation}

and this leads to
\begin{equation}\label{eq 2 decompo-box-semi}
(r-t)\del_t\del_t u = \frac{t^2}{r+t}\Box u + \frac{x}{r+t}(\del_tL -\del_x)u + \frac{t}{r+t}(\del_t-\del_xL)u.
\end{equation}

Then we obtain the following estimate:
\begin{lemma}\label{lem 1 hessian-ext}
Let $u$ be a function defined in $\Hb_{[s_0,s_1]}$, sufficiently regular. Then the following bounds hold:
\begin{equation}\label{eq 2 lem 1 hessian-ext}
|(r-t)\del_t\del_t\del^IL^j u(t,x)|\leq |t^2r^{-1}\Box \del^IL^j u(t,x)|
+ C\!\!\!\!\sum_{\alpha,j'\leq j+1\atop |I'|\leq|I|}|\del_{\alpha}\del^{I'}L^{j'}u(t,x)|.
\end{equation}

Furthermore, when $(t,x)\in \Fcal^E_s$:
\begin{equation}\label{eq 2.5 lem 1 hessian-ext}
|r\del_x\del_x \del^IL^j u(t,x)|\leq Cr|\Box \del^IL^j u(t,x)|
 + C\!\!\!\!\sum_{\alpha,j'\leq j+1\atop |I'|\leq|I|}|\del_{\alpha}\del^{I'}L^{j'}u(t,x)|,
\end{equation}

\begin{equation}\label{eq 3.3 lem 1 hessian-ext}
|t\del_t\del_x \del^IL^ju(t,x)|\leq |t^2r^{-1}\Box \del^IL^j u(t,x)|
+ C\!\!\!\!\sum_{\alpha,j'\leq j+1\atop |I'|\leq|I|}|\del_{\alpha}\del^{I'}L^{j'}u(t,x)|,
\end{equation}

When $(t,x)\in\Fcal^I_s$,
\begin{equation}\label{eq 3 lem 1 hessian-ext}
|t\delt_1\del_t \del^IL^j u(t,x)|\leq |t\Box \del^IL^j u(t,x)|
+ C\!\!\!\!\sum_{\alpha,j'\leq j+1\atop |I'|\leq|I|}|\del_{\alpha}\del^{I'}L^{j'}u(t,x)|,
\end{equation}

\begin{equation}\label{eq 4 lem 1 hessian-ext}
|t\delt_1\del_x \del^IL^j u(t,x)|\leq |t\Box \del^IL^j u(t,x)|
+ C\!\!\!\!\sum_{\alpha,j'\leq j+1\atop |I'|\leq|I|}|\del_{\alpha}\del^{I'}L^{j'}u(t,x)|,
\end{equation}
and
\begin{equation}\label{eq 5 lem 1 hessian-ext}
\aligned
|t\delt_1\delt_1 \del^IL^j u(t,x)|\leq& \big|(r-t)\Box \del^IL^j u(t,x)\big|
\\
&+ C\!\!\!\!\sum_{j'\leq j+1\atop |I'|\leq|I|}|\delt_1\del^{I'}L^{j'}u(t,x)|
+ \sum_{j'\leq j+1\atop |I'|\leq|I|}|(r-t)t^{-1}\del_{\alpha}\del^{I'}L^{j'}u(t,x)|.
\endaligned
\end{equation}

\end{lemma}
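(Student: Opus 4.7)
The whole argument is algebraic and proceeds by substituting $\del^IL^j u$ for $u$ in the identities \eqref{eq 1' decompo-box-semi}--\eqref{eq 2 decompo-box-semi} and then taking absolute values. The commutators $[L,\del_t]=-\del_x$ and $[L,\del_x]=-\del_t$ have constant coefficients, so by an easy induction $\del_\alpha L \del^IL^j v$ is a finite linear combination, with constant coefficients, of terms $\del_\alpha\del^{I'}L^{j'}v$ with $|I'|\leq|I|$ and $j'\leq j+1$; moreover $\Box$ commutes with $\del^IL^j$. Hence every stated bound reduces to a pointwise comparison of the coefficients appearing in the resulting identity. In particular \eqref{eq 2 lem 1 hessian-ext} is immediate from \eqref{eq 2 decompo-box-semi} once we observe $t^2/(r+t)\leq t^2/r$ and $|x|/(r+t),\,t/(r+t)\leq 1$.

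\textbf{Exterior estimates.} On $\Fcal^E_s$ we have $r\geq 2t$, hence $r/(r-t)\leq 2$ and $t^2/r\leq r/4$. Dividing \eqref{eq 2 lem 1 hessian-ext} by $(r-t)$ and multiplying by $r$ gives $r|\del_t^2\del^IL^j u|\leq Cr|\Box\del^IL^j u|$ plus lower-order terms, and combining with $\del_x^2 u=\del_t^2 u-\Box u$ then yields \eqref{eq 2.5 lem 1 hessian-ext}. For \eqref{eq 3.3 lem 1 hessian-ext} we use $L=x\del_t+t\del_x$ to write $t\del_t\del_x u=\del_t L u-\del_x u-x\del_t^2 u$ and control $|x|\cdot|\del_t^2 v|\leq r|\del_t^2 v|$ by the same chain, yielding the bound with coefficient $t^2/r$ as stated.

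\textbf{Interior estimates.} On $\Fcal^I_s$, the condition $r\leq 2t$ together with Lemma \ref{lem 2 position} ($r\geq t-1$) gives $r\sim t$ and consequently $t^2/r\lesssim t$. The key algebraic identity here is
\[
t\delt_1 = L - \frac{x(r-t)}{r}\del_t,
\]
obtained by comparing $L=x\del_t+t\del_x$ with $t\delt_1=(tx/r)\del_t+t\del_x$. Applying this to $\del_t u$ and using $L\del_t=\del_t L-\del_x$ gives $t\delt_1\del_t u=\del_t L u-\del_x u-(x(r-t)/r)\del_t^2 u$; since $|x|/r=1$, \eqref{eq 2 lem 1 hessian-ext} controls the last term by $Ct|\Box u|$ plus lower-order terms, proving \eqref{eq 3 lem 1 hessian-ext}. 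For \eqref{eq 4 lem 1 hessian-ext} we expand $\delt_1 L u=\del_t u+x\delt_1\del_t u+(x/r)\del_x u+t\delt_1\del_x u$, solve for $t\delt_1\del_x u$, and absorb the $x\delt_1\del_t u$ term via \eqref{eq 3 lem 1 hessian-ext} together with $|x|\leq 2t$.

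\textbf{The delicate step.} The main obstacle is \eqref{eq 5 lem 1 hessian-ext}, where the $\Box u$ coefficient must collapse to $|r-t|$ rather than the cruder $t$. The starting point is the identity $\delt_1^2 u = 2(x/r)\del_t\delt_1 u-\Box u$, which follows from $(x/r)^2=1$ and $\Box=\del_t^2-\del_x^2$. Multiplying by $t$, substituting the formula for $t\delt_1\del_t u$ obtained above, then invoking \eqref{eq 2 decompo-box-semi} for the resulting $(r-t)\del_t^2 u$ term and rewriting $\del_t=(x/r)\delt_1-(x/r)\del_x$ and $\del_t L=(x/r)\delt_1 L-(x/r)\del_x L$, one finds that all $\del_x$ and $\del_x L$ contributions cancel exactly. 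What remains is
\[
t\delt_1^2 u = \frac{2t}{r+t}\delt_1 L u - \frac{2xt}{r(r+t)}\delt_1 u - \frac{t(r-t)}{r+t}\Box u,
\]
from which \eqref{eq 5 lem 1 hessian-ext} follows since $t/(r+t)\leq 1$ and $t|r-t|/(r+t)\leq|r-t|$. This cancellation, which relies on the precise structure of the coefficients in \eqref{eq 2 decompo-box-semi}, is the principal algebraic hurdle.
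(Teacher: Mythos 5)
Your proof is correct and, for the first five estimates, runs along exactly the same lines as the paper's: one substitutes $\del^IL^ju$ into the algebraic identities \eqref{eq 2 decompo-box-semi}, $\del_x\del_x=\del_t\del_t-\Box$, \eqref{eq 3' pr lem 1 hessian-ext}, \eqref{eq 1 proof prop 1 L2-hessian-ext}, \eqref{eq 1' proof prop 1 L2-hessian-ext}, then commutes $L$ past $\del^I$ (constant structure coefficients) to file the lower-order terms, and reads off the bound from the size of each coefficient relative to the region. For the last estimate your route differs slightly: the paper uses the single identity \eqref{eq 3 pr lem 1 hessian-ext}, $t\delt_1\delt_1 u=\delt_1Lu-(x/r)\delt_1u-(x/r)(r-t)\delt_1\del_tu$, and then invokes \eqref{eq 3 lem 1 hessian-ext} on the last term; you instead eliminate $\delt_1\del_t$ entirely via $\delt_1^2u=2(x/r)\del_t\delt_1u-\Box u$, \eqref{eq 2 decompo-box-semi}, and the re-expression $\del_t=(x/r)\delt_1-(x/r)\del_x$, arriving at the closed form $t\delt_1^2u=\frac{2t}{r+t}\delt_1Lu-\frac{2xt}{r(r+t)}\delt_1u-\frac{t(r-t)}{r+t}\Box u$, which gives the same bound since $t/(r+t)\leq 1$ and $t|r-t|/(r+t)\leq|r-t|$. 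Both identities are correct and of equal difficulty; yours is a little more self-contained since it does not need to chain through the already-proved $\delt_1\del_t$ estimate. One incidental remark: \eqref{eq 2 decompo-box-semi} as printed has the wrong sign on the $\Box u$ term (the correct coefficient is $-t^2/(r+t)$); your final identity for $\delt_1\delt_1$ is consistent with the corrected sign, and since only absolute values enter the lemma this has no impact on any of the estimates.
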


\begin{proof}
We first remark that \eqref{eq 2 lem 1 hessian-ext} is a direct application of \eqref{eq 2 decompo-box-semi} on $\del^IL^j u$.

For \eqref{eq 2.5 lem 1 hessian-ext}, we remark the following identity:
\begin{equation}\label{eq 2.5 pr lem 1 hessian-ext}
\del_x\del_x u = \del_t\del_tu -\Box u
\end{equation}
then we apply \eqref{eq 2 lem 1 hessian-ext}.

For \eqref{eq 3.3 lem 1 hessian-ext}, we remark the following identity:
\begin{equation}\label{eq 3' pr lem 1 hessian-ext}
\del_t\del_x u = t^{-1}\del_tLu - (x/t)\del_t\del_t - t^{-1}\del_x
\end{equation}

For \eqref{eq 3 lem 1 hessian-ext}, we need to remark the following identity:
\begin{equation}\label{eq 1 proof prop 1 L2-hessian-ext}
\delt_1\del_t u = \del_t\delt_1 u = t^{-1}\del_t L u - t^{-1}\del_x u -  t^{-1}(x/r)(r-t)\del_t\del_tu
\end{equation}
and then combine it with \eqref{eq 2 lem 1 hessian-ext}.

For \eqref{eq 4 lem 1 hessian-ext} we combine
\begin{equation}\label{eq 1' proof prop 1 L2-hessian-ext}
\delt_1\del_xu =  \del_x\delt_1 u = t^{-1}\delt_1L u - (x/r)t^{-1}\delt_1 u - (x/t)\delt_1\del_tu.
\end{equation}
with \eqref{eq 2 lem 1 hessian-ext}.

For \eqref{eq 5 lem 1 hessian-ext}, we recall the following identity:
\begin{equation}\label{eq 3 pr lem 1 hessian-ext}
\delt_1\delt_1 u = t^{-1}\delt_1Lu - t^{-1}(x/r)\delt_1u - (x/r)(r-t)t^{-1}\delt_1\del_t u
\end{equation}

\end{proof}

Then we establish the following bounds:
\begin{lemma}\label{lem 1.5 hessian-ext}
Let $u$ be a function defined in $\Hb_{[s_0,s_1]}$, sufficiently regular. Then the following bounds hold:
\\
When $(t,x)\in \Fcal^I_s$,
\begin{equation}\label{eq 1 lem 1 hessian-ext}
\aligned
|(1+|r-t|)\del^IL^j \del_t\del_t u(t,x)|
\leq& \sum_{|I'|\leq |I|\atop j'\leq j}|r\Box \del^{I'}L^{j'}u(t,x)|
\\
&+ |\del^IL^j \del_t\del_t u(t,x)|
+ C\!\!\!\!\sum_{\alpha,j'\leq j+1\atop |I'|\leq|I|}|\del_{\alpha}\del^{I'}L^{j'}u(t,x)|,
\endaligned
\end{equation}
\begin{equation}\label{eq 2 lem 1.5 hessian-ext}
|t\del^IL^j \delt_1\del_t u(t,x)|\leq \sum_{|I'|=|I|\atop j'\leq j}|t\Box \del^{I'}L^{j'} u(t,x)|
+ C\!\!\!\!\sum_{\alpha,j'\leq j+1\atop |I'|\leq|I|}|\del_{\alpha}\del^{I'}L^{j'}u(t,x)|,
\end{equation}
\begin{equation}\label{eq 3 lem 1.5 hessian-ext}
\aligned
|t\del^IL^j \delt_1\delt_1 u(t,x)|\leq& \sum_{|I'|=|I|\atop j'\leq j}|(r-t)\Box \del^{I'}L^{j'} u(t,x)|
\\
&+ C\!\!\!\!\sum_{j'\leq j+1\atop |I'|\leq|I|}|\delt_1\del^{I'}L^{j'}u(t,x)|
+ \sum_{j'\leq j+1\atop |I'|\leq|I|}|(r-t)t^{-1}\del_{\alpha}\del^{I'}L^{j'}u(t,x)|.
\endaligned
\end{equation}
\\
When $(t,x)\in \Fcal^E_s$,
\begin{equation}\label{eq 4 lem 1.5 hessian-ext}
\aligned
|t\del^IL^j \del_{\alpha}\del_{\beta} u(t,x)|\leq& C\sum_{|I'|\leq|I|\atop j'\leq j}|t\Box \del^IL^j u(t,x)|
+ C\!\!\!\!\sum_{\alpha,j'\leq j+1\atop |I'|\leq|I|}|\del_{\alpha}\del^{I'}L^{j'}u(t,x)|.
\endaligned
\end{equation}
\end{lemma}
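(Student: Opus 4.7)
The strategy is uniform across the four bounds: commute $\del^IL^j$ past the Hessian-type operator on the left, then invoke the corresponding estimate from Lemma \ref{lem 1 hessian-ext}. The algebraic input is that $[\del_\alpha,\del_\beta]=0$, the commutator $[\del_\alpha,L]$ is again a single partial derivative with integer coefficient (for example $[\del_t,L]=\del_x$), and by \eqref{eq 1 comm-ext} we have $[L,\delt_1]=-(x/r)\delt_1$ with $x/r$ locally constant on each component of $\Hb_{[s_0,s_1]}$. Iterating these relations in the same spirit as Lemma \ref{lem 1 decompo-comm-ext} produces, for any Hessian operator $H\in\{\del_t\del_t,\,\delt_1\del_t,\,\delt_1\delt_1,\,\del_\alpha\del_\beta\}$, a decomposition
$$
\del^IL^j H u \;=\; \sum_{j'\leq j,\,|I'|=|I|} \Lambda^{Ij}_{I'j'}\, H'\, \del^{I'}L^{j'}u,
$$
where $H'$ ranges over Hessian operators whose building blocks are permutations/combinations of those of $H$ and the coefficients $\Lambda^{Ij}_{I'j'}$ are locally constant.

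For \eqref{eq 1 lem 1 hessian-ext} I split $(1+|r-t|)=1+|r-t|$; the constant part accounts for the term $|\del^IL^j\del_t\del_t u|$ appearing on the right. For the $|r-t|$ part, the decomposition above produces a sum of terms of the form $|r-t|\,|\del_\alpha\del_\beta \del^{I'}L^{j'}u|$ with $\alpha,\beta\in\{t,x\}$, $|I'|=|I|$, $j'\leq j$. The pure $\del_t\del_t$ case is handled directly by \eqref{eq 2 lem 1 hessian-ext}; the mixed $\del_t\del_x$ case is reduced to $\del_t\del_t$ plus first-order terms via $\del_t\del_x = t^{-1}\del_tL-(x/t)\del_t\del_t-t^{-1}\del_x$; and the $\del_x\del_x$ case is reduced via $\del_x\del_x=\del_t\del_t-\Box$. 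The residual factor $t^2/r$ arising from \eqref{eq 2 lem 1 hessian-ext} is comparable to $r$ throughout $\Fcal^I_s$, since Lemma \ref{lem 2 position} together with $r\leq 2t$ forces $r\sim t$ in this region, and $|x|/t\leq 2$ is bounded. The bounds \eqref{eq 2 lem 1.5 hessian-ext} and \eqref{eq 3 lem 1.5 hessian-ext} are proved by the same scheme, now commuting through $\delt_1$ with the locally constant factor $-(x/r)$ and invoking \eqref{eq 3 lem 1 hessian-ext}, \eqref{eq 4 lem 1 hessian-ext}, \eqref{eq 5 lem 1 hessian-ext} in place of \eqref{eq 2 lem 1 hessian-ext}.

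For the exterior bound \eqref{eq 4 lem 1.5 hessian-ext} on $\Fcal^E_s$, where $r\geq 2t$, the decomposition reduces matters to estimating $t|\del_\alpha\del_\beta \del^{I'}L^{j'}u|$. The $\del_x\del_x$ and $\del_t\del_x$ cases are immediate from \eqref{eq 2.5 lem 1 hessian-ext} and \eqref{eq 3.3 lem 1 hessian-ext}. For the $\del_t\del_t$ case, the bound $r-t\geq r/2\geq t$ allows me to write $t|\del_t\del_t \del^{I'}L^{j'}u|\leq |r-t|\,|\del_t\del_t \del^{I'}L^{j'}u|$, and then \eqref{eq 2 lem 1 hessian-ext} together with $t^2/r\leq t/2$ supplies the required estimate. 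The main obstacle is not analytic but combinatorial: the commutator expansion produces a zoo of mixed Hessian terms, each of which must be routed to the correct auxiliary identity from Lemma \ref{lem 1 hessian-ext} while respecting the regional behavior of the geometric ratios $r/t$, $|r-t|/t$, and $x/r$, so that the $\Box$ factors and first-order residues land precisely in the form stated.
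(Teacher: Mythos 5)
Your proposal is correct, and for three of the four estimates it follows the paper's own recipe (commute $\del^IL^j$ past the Hessian via \eqref{eq 1 lem 1 high-order-ext}, then apply the pointwise bounds \eqref{eq 3 lem 1 hessian-ext}, \eqref{eq 4 lem 1 hessian-ext}, \eqref{eq 5 lem 1 hessian-ext} for the interior estimates and \eqref{eq 2 lem 1 hessian-ext}, \eqref{eq 2.5 lem 1 hessian-ext}, \eqref{eq 3.3 lem 1 hessian-ext} for the exterior one). For \eqref{eq 1 lem 1 hessian-ext}, however, you take a genuinely different route. The paper does not commute at all: it applies $\del^IL^j$ directly to the identity $\del_t\del_t u = \tfrac{t^2}{r^2-t^2}\Box u + \tfrac{x}{r^2-t^2}(\del_tL-\del_x)u + \tfrac{t}{r^2-t^2}(\del_t-\del_xL)u$, distributes the derivatives onto the homogeneous coefficients via Leibniz, and then controls terms such as $\del^{I_1}L^{j_1}\bigl(t^2(r^2-t^2)^{-1}\bigr)$ by $Cr(1+|r-t|)^{-1}$ using the weight-homogeneity Lemma \ref{lem 1 (r-t)}; this is why the first sum in \eqref{eq 1 lem 1 hessian-ext} naturally carries $|I'|\leq|I|$ rather than $=|I|$. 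You instead commute first — obtaining a constant-coefficient combination of $\del_\alpha\del_\beta\del^{I'}L^{j'}u$ with $|I'|=|I|$, $j'\leq j$ — reduce $\del_t\del_x$ and $\del_x\del_x$ to $\del_t\del_t$ via the same algebraic identities used inside Lemma \ref{lem 1 hessian-ext}, and then invoke \eqref{eq 2 lem 1 hessian-ext} pointwise. The commute-first route avoids taking derivatives of $(r^2-t^2)^{-1}$ entirely and makes the proof of all four estimates structurally uniform; the small price is that you must record the geometric facts $r\sim t$, $|x/t|\leq2$, $|r-t|/t\leq 2$ on $\Fcal^I_s$ (which you do) to convert the residual coefficient $t^2/r$ from \eqref{eq 2 lem 1 hessian-ext} into $Cr$, and $|r-t|\leq r$ on $\Hb_s$ to absorb the extra $\Box$ term produced by $\del_x\del_x=\del_t\del_t-\Box$. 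Both routes rest on the same underlying decomposition \eqref{eq 2 decompo-box-semi}; yours reproduces the stated bound, in fact with the stronger restriction $|I'|=|I|$ in the $\Box$ sum.
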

\begin{proof}
We firstly concentrate on \eqref{eq 1 lem 1 hessian-ext}. In $\Fcal^*_{[s_0,s_1]}$ we write \eqref{eq 2 decompo-box-semi} into the following form:
\begin{equation}\label{eq 0 pr lem 1 hessian-ext}
\del_t\del_t u = \frac{t^2}{r^2-t^2}\Box u + \frac{x}{r^2-t^2}(\del_tL -\del_x)u + \frac{t}{r^2-t^2}(\del_t-\del_xL)u.
\end{equation}

We derive this identity with respect to $\del^IL^j$ and apply \eqref{eq 1 lem 1 (r-t)}. To do so, we firstly remark
$$
|\del^IL^j(t^2)(r+t)^{-1}|\leq Cr.
$$
This is due to the homogeneity, and in $\Fcal^*_{[s_0,s_1]}$ by applying \eqref{eq 3 pr lem 1 (r-t)},
\begin{equation}\label{eq 1 pr 1 lem hessian-ext}
\big|\del^IL^j(t^2(r^2-t^2)^{-1})\big| = \big|\del^IL^j\big(t^2(r+t)^{-1}(r-t)^{-1}\big)\big| \leq \frac{Cr}{1+|r-t|}.
\end{equation}
In the same manner,
\begin{equation}\label{eq 2 pr lem 1 hessian-ext}
\big|\del^IL^j\big(x(r^2-t^2)^{-1}\big)\big| + \big|\del^IL^j\big(t(r^2-t^2)^{-1}\big)\big|\leq C(1+|r-t|)^{-1}.
\end{equation}

Then, we remark that for the first term in RHD of \eqref{eq 2 decompo-box-semi},
$$
\del^IL^j\big(t^2(r^2-t^2)^{-1}\Box u\big) =
\sum_{I_1+I_2=I\atop j_1+j_2=j}\del^{I_1}L^{j_1}(t^2(r^2-t^2)^{-1})\cdot \del^{I_2}L^{j_2}\Box u.
$$
Then we apply \eqref{eq 1 pr 1 lem hessian-ext}, this term is bounded by the first term in RHD of \eqref{eq 1 lem 1 hessian-ext}. The rest terms in RHS of \eqref{eq 0 pr lem 1 hessian-ext} are bounded in the same manner, we omit the detail. Then \eqref{eq 1 lem 1 hessian-ext} is established in the region $\Fcal^I_s\cap \Fcal^*_s$.
In $\Fcal^L_s$, \eqref{eq 1 lem 1 hessian-ext} is trivial.

\eqref{eq 2 lem 1.5 hessian-ext} is based on \eqref{eq 1 lem 1 high-order-ext}. We remark that $\del^IL^j \delt_1\del_{\alpha}$ is a finite linear combination of the following terms with constant coefficients:
$$
\delt_1\del_{\alpha}\del^{I'}L^{j'} u, \quad |I'| = |I|,\quad j'\leq j.
$$
Then we apply \eqref{eq 3 lem 1 hessian-ext} and \eqref{eq 4 lem 1 hessian-ext}.

In the same manner, \eqref{eq 3 lem 1.5 hessian-ext} is guaranteed by \eqref{eq 1 lem 1 high-order-ext} combined with \eqref{eq 5 lem 1 hessian-ext}.

Finally, \eqref{eq 4 lem 1.5 hessian-ext} is deduced from \eqref{eq 2 lem 1 hessian-ext}, \eqref{eq 2.5 lem 1 hessian-ext} and \eqref{eq 3.3 lem 1 hessian-ext} combined with \eqref{eq 1 lem 1 high-order-ext}.
\end{proof}


\subsection{$L^2$ bounds on Hessian form}
Based on the above subsection, we establish the following $L^2$ bounds.
\begin{proposition}[$L^2$ bounds on Hessian form of wave component]\label{prop 1 L2-hessian-ext}
Let $u$ be a function defined in $\Fcal_{[s_0,s_1]}$, sufficiently regular. Then the following estimate holds for $|I|+j\leq N-1$,
\begin{equation}\label{eq 1 prop 1 L2-hessian-ext}
\aligned
\|tr^{\gamma}\del^IL^j \del_{\alpha}\del_{\beta} u\|_{L^2(\Fcal^E_s)}\leq& C\sum_{|I'|=|I|\atop |j'|\leq|j|}\|r^{1+\gamma}\Box \del^{I'}L^{j'} u\|_{L^2(\Fcal^E_s)}+ C\EE_{N,\gamma}(s,u)^{1/2},
\endaligned
\end{equation}
\\
\begin{equation}\label{eq 2 prop 1 L2-hessian-ext}
\aligned
\|(1+|r-t|)^{1+\gamma}\zeta \del^IL^j\del_t\del_t u \|_{L^2(\Fcal^I_s)}\leq& C\sum_{|I'|=|I|\atop |j'|\leq|j|}\|t(1+|r-t|)^{\gamma}\zeta \Box \del^{I'}L^{j'} u\|_{L^2(\Fcal^I_s)}
\\
&+ C\EE_{N,\gamma}(s,u)^{1/2},
\endaligned
\end{equation}
\begin{equation}\label{eq 3 prop 1 L2-hessian-ext}
\aligned
\|t(1+|r-t|)^{\gamma}\zeta \del^IL^j\delt_1\del_t u \|_{L^2(\Fcal^I_s)}\leq& C\sum_{|I'|=|I|\atop |j'|\leq|j|}\|t(1+|r-t|)^{\gamma}\zeta \Box \del^{I'}L^{j'} u\|_{L^2(\Fcal^I_s)}
\\
&+ C\EE_{N,\gamma}(s,u)^{1/2},
\endaligned
\end{equation}
\begin{equation}\label{eq 4 prop 1 L2-hessian-ext}
\aligned
\|t(1+|r-t|)^{\gamma}\del^IL^j\delt_1\delt_1 u \|_{L^2(\Fcal^I_s)}\leq& C\sum_{|I'|=|I|\atop |j'|\leq|j|}\|(1+|r-t|)^{1+\gamma} \Box \del^{I'}L^{j'} u\|_{L^2(\Fcal^I_s)}
\\
&+ C\EE_{N,\gamma}(s,u)^{1/2},
\endaligned
\end{equation}

\end{proposition}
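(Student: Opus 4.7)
The four bounds are the weighted $L^2$ forms of the pointwise Hessian estimates already proved in Lemma~\ref{lem 1 hessian-ext} and Lemma~\ref{lem 1.5 hessian-ext}. The plan is, for each of the four inequalities, to start from the matching pointwise bound, multiply by the weight appearing on the left, and take the $L^2$ norm over the indicated region. The inhomogeneous term $\Box \del^{I'}L^{j'}u$ on the right will directly become the first term on the right-hand side of the estimate. Every other term will have the form of a first derivative, a null derivative, or a second time derivative of $\del^{I'}L^{j'}u$ with $|I'|+j'\le N+1$, and will be absorbed into $\EE_{N,\gamma}(s,u)^{1/2}$ using the basic $L^2$ bounds of Lemma~\ref{lem 1 L2-basic-ext}. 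The order-counting works because $|I|+j\le N-1$ in the statement, so terms of the form $\del_\alpha \del^{I'}L^{j'}u$, $\del_t\del_t\del^IL^j u$, and $\delt_1\del^{I'}L^{j'}u$ all have total order at most $N+1$.

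For \eqref{eq 1 prop 1 L2-hessian-ext}, on $\Fcal^E_s$ we have $r\ge 2t$, so $r-t\sim r$ and $\Fcal^E_s\subset\Pcal_s$ where $\zeta\equiv 1$. Multiplying \eqref{eq 4 lem 1.5 hessian-ext} by $r^\gamma$ and using $t\le r$ turns the driving term into $|r^{1+\gamma}\Box\del^{I'}L^{j'}u|$, which is precisely what is required. The error terms $r^\gamma|\del_\alpha\del^{I'}L^{j'}u|$ are bounded in $L^2$ by $\|(1+|r-t|)^\gamma\zeta\del_\alpha\del^{I'}L^{j'}u\|_{L^2(\Hb_s)}$, and this is controlled by $\EE_{N,\gamma}(s,u)^{1/2}$ through \eqref{eq 2 lem 1 L2-basic-ext}.

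For the three interior estimates I work on $\Fcal^I_s$, where $r\le 2t$. Estimate \eqref{eq 2 prop 1 L2-hessian-ext} follows by multiplying \eqref{eq 1 lem 1 hessian-ext} by $\zeta(1+|r-t|)^\gamma$: the factor $r$ in the driving term is absorbed via $r\le 2t$, the harmless remainder $\zeta(1+|r-t|)^\gamma|\del^IL^j\del_t\del_t u|$ is handled by \eqref{eq 2 lem 1 L2-basic-ext} applied to $Z^K=\del_t\del_t\del^IL^j$ of type $(|I|+2,j,0)$ with $|K|\le N+1$, and the gradient sum is handled in the same way. Estimate \eqref{eq 3 prop 1 L2-hessian-ext} is the direct $L^2$-weighted version of \eqref{eq 2 lem 1.5 hessian-ext}, with the error terms bounded through \eqref{eq 2 lem 1 L2-basic-ext}. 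For \eqref{eq 4 prop 1 L2-hessian-ext}, I multiply \eqref{eq 3 lem 1.5 hessian-ext} by $(1+|r-t|)^\gamma$: the first right-hand term becomes $|(1+|r-t|)^{1+\gamma}\Box|$, and the null-derivative sum is controlled by \eqref{eq 3 lem 1 L2-basic-ext}.

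The one step that is not mechanical is the treatment of the remainder $\sum|(r-t)t^{-1}\del_\alpha\del^{I'}L^{j'}u|$ in \eqref{eq 3 lem 1.5 hessian-ext}, which carries no $\zeta$ weight and therefore does not fit \eqref{eq 2 lem 1 L2-basic-ext} directly. The key pointwise observation that fixes this is $|(r-t)/t|\le C\zeta$ throughout $\Fcal^I_s$: on $\Tcal_s$ we have $|r-t|\le 1$ and $t\sim s^2$, so $|r-t|/t\lesssim 1/s^2\lesssim \zeta/s\le \zeta$ via \eqref{eq xi-zeta}; on $\Pcal_s$ we have $\zeta\equiv 1$ while $|r-t|/t\le 3$ thanks to $r\le 2t$. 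With this comparison in hand the last term is absorbed into $\EE_{N,\gamma}(s,u)^{1/2}$ by \eqref{eq 2 lem 1 L2-basic-ext}. This is the only bookkeeping subtlety; all remaining estimates then follow directly.
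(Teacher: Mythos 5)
Your proof is correct and follows essentially the same route as the paper: apply Lemma~\ref{lem 1.5 hessian-ext} pointwise, multiply by the required weight, and absorb every remainder into $\EE_{N,\gamma}(s,u)^{1/2}$ via Lemma~\ref{lem 1 L2-basic-ext}, with the only genuine subtlety being the comparison $|r-t|/t\lesssim\zeta$ on $\Fcal^I_s$. The paper establishes that same comparison by splitting $\Fcal^I_s$ into $\Fcal^L_s$ (where $|r-t|\le 1$, $t\ge s$, so $|r-t|/t\le 1/s\lesssim\zeta$) and its complement in $\Pcal_s$ (where $\zeta=1$), which is only cosmetically different from your $\Tcal_s/\Pcal_s$ split.
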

\begin{proof}
This is a direct application of lemma \ref{lem 1.5 hessian-ext}. \eqref{eq 1 prop 1 L2-hessian-ext} is due to \eqref{eq 4 lem 1.5 hessian-ext} and the fact that on $\Fcal^E_s$, $\zeta = 1$.

For the bounds on $\Fcal^I_s$, we only write the proof of \eqref{eq 4 prop 1 L2-hessian-ext}. Remark that in $\Fcal^L_{[s_0,s_1]}$,
$$
0\leq \frac{|t-r|}{t}\leq \zeta
$$
This is because that in $\Fcal^L_s$, by \eqref{eq xi-zeta},
$$
\frac{|t-r|}{t}\leq 1/t\leq 1/s\leq \zeta.
$$
\end{proof}


\section{Decay bounds in transition and exterior region}\label{sec decay-ext}
\subsection{Basic decay bounds}
First, by \eqref{eq 1 sobolev-ext} and \eqref{eq flat-energy-flat}, in the region when $(t,x)\in \Fcale_s$, the following bounds are direct:
\begin{equation}\label{eq 1 decay ext-far}
|\del_{\alpha} u(t,x)| + |\delt_{\alpha} u(t,x)|\leq C(1+r)^{-\gamma}\EE_{1,\gamma}(s,u)^{1/2},
\end{equation}
\begin{equation}\label{eq 1' decay ext-far}
|Lu(t,x)|\leq C(1+r)^{1-\gamma}\EE_{1,\gamma}(s,u)^{1/2}
\end{equation}
and when $c>0$,
\begin{equation}\label{eq 2 decay ext-far}
|cu(t,x)|\leq C(1+r)^{-\gamma}\EE_{0,\gamma}(s,u)^{1/2}.
\end{equation}
We denote by $\del\Kcal^2 = \{t = 2r\}$ and
$$
\del \Kcal^2_s := \del \Kcal^2\cap \Fcal_s = \{(T(s),2T(s)),(T(s),-2T(s))\}.
$$
Remark that the above bounds \eqref{eq 1 decay ext-far} and \eqref{eq 2 decay ext-far} also hold on $\Kcal^2_s$. Recall  \eqref{eq 1 prop 1 feuille}, thus
$$
T(s) \sim s^2.
$$

Then we concentrate on the region $\Fcali_{[s_0,s_1]}$. To do so we need the following lemma:
\begin{lemma}\label{lem 1 19-11-2017}
There exits a positive constant $C$ such that
\begin{equation}\label{eq 1 20-11-2017}
|\xi_s'(r) |\leq C \big(1-\xi_s(r)\big)^{1/2}, \quad \frac{s^2-1}{2}\leq r\leq \frac{s^2+1}{2}.
\end{equation}
\end{lemma}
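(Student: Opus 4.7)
The plan is to reduce everything to a one-variable estimate for $\chi$ alone. Making the change of variable $y = r - (s^2-1)/2$, which sends the interval $\bigl[\tfrac{s^2-1}{2},\tfrac{s^2+1}{2}\bigr]$ onto $[0,1]$ uniformly in $s$, one has from the definition $\xi_s(r) = 1 - \chi(y)$, hence $\xi_s'(r) = -\chi'(y)$ and $1-\xi_s(r) = \chi(y)$. The lemma is therefore equivalent to the $s$-free claim
$$
|\chi'(y)|^2 \leq C^2 \chi(y), \qquad y \in [0,1].
$$

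I would split $[0,1]$ into two regimes. On any sub-interval $[\delta,1]$ with fixed $\delta > 0$, the inequality is immediate: since $\chi$ is continuous, vanishes only at $y=0$, and strictly increases on $(0,1)$, we have $\chi(y) \geq \chi(\delta) > 0$, while $\chi'$ is a bounded $C^\infty$ function, so the ratio $|\chi'|^2/\chi$ is bounded on $[\delta,1]$. All the content is therefore concentrated near $y = 0$, where $\chi'(y)$ and $\chi(y)$ both tend to zero.

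Near $y = 0$ the plan is to use the explicit form $\chi'(y) = \rho(y-\tfrac12)\big/\!\int_{\RR}\!\rho$, together with $\rho(z) = e^{4/(4z^2-1)}$. A direct computation gives $\rho(y-\tfrac12) = e^{1/(y(y-1))}$, which behaves like $e^{-1/y}$ as $y \to 0^+$. Writing $\chi(y) = \int_0^y \chi'(u)\,du$, the substitution $w = 1/u$ together with one integration by parts produces the asymptotic $\chi(y) \sim y^2 e^{-1/y}$. Therefore
$$
\frac{|\chi'(y)|^2}{\chi(y)} \sim \frac{e^{-1/y}}{y^2} \xrightarrow[y\to 0^+]{} 0,
$$
so the quotient extends continuously to $[0,1]$ and is bounded there. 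Taking $C$ equal to (the square root of) the supremum of this quotient concludes the proof.

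The only genuinely delicate step is the asymptotic estimate $\chi(y) \sim y^2 e^{-1/y}$, which is a standard Laplace-type computation via the change of variable $w = 1/u$; everything else is compactness together with the explicit form of the bump $\rho$. No further obstacle is expected.
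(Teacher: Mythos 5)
Your proof is correct, and it does take a genuinely different route from the one in the paper. You both begin by reducing the claim to the $s$-free inequality $\chi'(y)^2 \leq C^2\chi(y)$ on $[0,1]$ and both recognize that the only content is the behavior as $y\to 0^+$, where numerator and denominator vanish together. The difference is in how this endpoint is handled. The paper applies l'H\^opital's rule once to the ratio $\rho^2(x)/\int_{-\infty}^x\rho$: since $(\rho^2)'/(\int\rho)' = 2\rho'(x)$ and $\rho'$ vanishes at $x=-1/2$ (because $\rho\in C_c^\infty$ is flat at the endpoints of its support), the limit is $0$, and boundedness on the whole interval follows by compactness. This uses only the qualitative fact that $\rho'$ extends continuously to zero at the boundary and would work verbatim for any smooth compactly supported bump. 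Your argument, by contrast, extracts the explicit Laplace-type asymptotic $\chi(y)\sim y^2e^{-1/y}$ (after the substitution $w=1/u$ and one integration by parts), then compares it with $\chi'(y)^2\sim e^{-2/y}$ to see the quotient tends to zero like $y^{-2}e^{-1/y}$. This gives sharper quantitative information — a precise rate rather than just the fact that the limit is zero — at the cost of depending on the explicit formula $\rho(y-1/2)=e^{1/(y(y-1))}$ and of a more involved tail estimate (one should also note the dropped constant factors $e^{-1}$ and $\int_\RR\rho$, though they do not affect boundedness). If all one needs is Lemma \ref{lem 1 19-11-2017}, the paper's l'H\^opital argument is shorter and more robust; your version buys precision that is not used elsewhere.
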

\begin{proof}
We will prove that
\begin{equation}\label{eq 1 proof lem 1 19-11-2017}
0\leq \frac{\rho^2(x)}{\int_{-\infty}^x\rho(y)dy}\leq C,\quad  x\in(-1/2,1/2]
\end{equation}
with $C$ a universal constant. Remark that for $x>-1/2$, $\int_{-\infty}^x\rho(y)dy>0$. For the above bound we only need to prove that
$$
0\leq \lim_{x\rightarrow -1/2^+}\frac{\rho^2(x)}{\int_{-\infty}^x\rho(y)dy}<+\infty.
$$
Recall that when $x\rightarrow -(1/2)^+$ both $\rho^2(x)$ and $\int_{-\infty}^x\rho(y)dy$ tend to zero. So by l'H\^opital's rule,
$$
\lim_{x\rightarrow -1/2^+}\frac{\rho^2(x)}{\int_{-\infty}^x\rho(y)dy} = \lim_{x\rightarrow -1/2^+}\frac{\rho(x)\rho'(x)}{\rho(x)} = 0
$$
Thus \eqref{eq 1 proof lem 1 19-11-2017} is guaranteed. Then by the definition of $\chi$ and $\xi_s$, we see that \eqref{eq 1 20-11-2017} is established.
\end{proof}

Then we establish the following bound:
\begin{lemma}\label{lem 1 decompo-trans}
Let $u$ be a function defined in $\Fcal_{[s_0,s_1]}$, sufficiently regular. For $s_0\leq s\leq s_1$, the following bound holds:
\begin{equation}\label{eq 1 decompo-trans}
|\delb_x\delb_xu|\leq C\sum_{\alpha}|\delb_x\del_{\alpha}u| + C\zeta(s,x)|\del_tu|.
\end{equation}
\end{lemma}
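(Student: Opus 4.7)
The plan is to expand $\delb_x\delb_xu$ by the Leibniz rule, isolate the two pieces that already have the form $\delb_x\del_{\alpha}u$, and show that the remaining ``error'' term carries the correct $\zeta$ weight. Writing $a(s,x):=\xi_s(r)x/\sqrt{s^2+x^2}$ so that $\delb_x=a\,\del_t+\del_x$ and using that $a$ is independent of $t$, I would compute
$$
\delb_x\delb_xu=(\delb_x a)\,\del_tu+a\,\delb_x\del_tu+\delb_x\del_xu=(\del_xa)\,\del_tu+a\,\delb_x\del_tu+\delb_x\del_xu.
$$
Since $|a|\leq 1$, the last two terms are already of the form $C\sum_\alpha|\delb_x\del_\alpha u|$, so the whole lemma reduces to the pointwise estimate $|\del_xa|\leq C\,\zeta(s,x)$.

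Next I would compute $\del_xa$ explicitly (working with $x>0$; the symmetric case $x<0$ is identical). A short computation gives
$$
\del_xa=\frac{\xi_s'(r)\,x}{\sqrt{s^2+x^2}}+\frac{\xi_s(r)\,s^2}{(s^2+x^2)^{3/2}}.
$$
In the hyperbolic region $\xi_s'\equiv 0$ and only the second term survives, and in the flat region $\xi_s\equiv 0$ and $\del_xa\equiv 0$, so the only nontrivial work is in the transition strip $(s^2-1)/2\leq r\leq(s^2+1)/2$.

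To bound the first summand I would invoke Lemma \ref{lem 1 19-11-2017}, which furnishes $|\xi_s'(r)|\leq C(1-\xi_s(r))^{1/2}$, together with $|x|/\sqrt{s^2+x^2}\leq 1$ and the elementary inequality $(1-\xi_s)^{1/2}\leq(1-\xi_s^2)^{1/2}$. In the transition strip one has $x^2/(s^2+x^2)\geq c_0>0$ uniformly for $s\geq 2$, so the definition of $\zeta$ yields $(1-\xi_s^2(r))^{1/2}\leq C\,\zeta(s,x)$, as needed. For the second summand I would factor
$$
\frac{\xi_s(r)\,s^2}{(s^2+x^2)^{3/2}}=\frac{s}{\sqrt{s^2+x^2}}\cdot\frac{\xi_s(r)\,s}{s^2+x^2},
$$
bound the first factor by $\zeta$ via \eqref{eq xi-zeta}, and observe that the second factor is $\leq 1/s\leq 1/2$ since $s\geq 2$. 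Combining these two bounds gives $|\del_xa|\leq C\,\zeta$ and closes the argument.

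The argument is essentially computational, so there is no deep obstacle; the one subtle point—and the only place the combined construction really enters—is the passage from the square-root bound $|\xi_s'|\lesssim(1-\xi_s)^{1/2}$ to the weight $\zeta$. Without the half-power improvement established in Lemma \ref{lem 1 19-11-2017} one would obtain a bound in $(1-\xi_s^2)$ rather than $(1-\xi_s^2)^{1/2}$, which is strictly weaker than $\zeta$ on the transition strip; this is the reason Lemma \ref{lem 1 19-11-2017} was proved beforehand.
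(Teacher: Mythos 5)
Your argument is correct and follows essentially the same route as the paper: expand $\delb_x\delb_xu$ by Leibniz, reduce to the pointwise bound $|\delb_x a|=|\del_x a|\leq C\zeta$ for $a=\xi_s(r)x/\sqrt{s^2+x^2}$, split $\del_x a$ into the $\xi_s'$-term and the $\xi_s s^2$-term, and control them respectively by Lemma~\ref{lem 1 19-11-2017} together with \eqref{eq xi-zeta}. The only cosmetic difference is in the last weight-comparison step: you pass through $(1-\xi_s)^{1/2}\leq(1-\xi_s^2)^{1/2}$ and then invoke a lower bound on $x^2/(s^2+x^2)$ in the transition strip, but this detour is unnecessary — $\zeta^2=1-\xi_s^2x^2/(s^2+x^2)\geq 1-\xi_s^2\geq 1-\xi_s$ holds unconditionally, which is precisely the first inequality of \eqref{eq xi-zeta} that the paper cites directly.
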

\begin{proof}
This is by direct calculation. We remark that
$$
\delb_x\delb_xu = \delb_x\del_x + \frac{\xi_s(r)x}{\sqrt{s^2+x^2}}\delb_x\del_tu + \delb_x\Big(\frac{\xi_s(r)x}{\sqrt{s^2+x^2}}\Big)\del_tu
$$
Recall that $\Big|\frac{\xi_s(r)x}{\sqrt{s^2+x^2}}\Big|\leq 1$, we only need to treat the lats term. Direct calculation shows that (remark that $s$ is constant along the direction of $\delb_x$)
$$
\delb_x\Big(\frac{\xi_s(r)x}{\sqrt{s^2+x^2}}\Big) = \frac{s^2\xi_s(r)}{(s^2+x^2)^{3/2}} + \frac{\xi_s'(r)r}{\sqrt{s^2+x^2}}.
$$

In the right-hand-side of the above identity, the first term is bounded by $\zeta(s,x)$ (by \eqref{eq xi-zeta}). The second term can be controlled as following
$$
\frac{|\xi_s'(r)|r}{\sqrt{s^2+x^2}} \leq |\xi_s'(r)|\leq C(1-\xi_s(r))^{1/2}\leq C\zeta(s,r)
$$
where we have applied \eqref{eq 1 20-11-2017} and \eqref{eq xi-zeta} and this complete the proof.
\end{proof}

Based on the above estimates, we are ready to prove the following result:
\begin{lemma}\label{lem 1 decay-trans}
Let $u$ be a function defined in $\Fcal_{[s_0,s_1]}$, sufficiently regular. For $s_0\leq s\leq s_1$, the following bound holds for $(t,x)\in\Fcali_s$:
\begin{equation}\label{eq 1 lem 1 decay-trans}
|u(t,x)|\leq C\EE_{1,\gamma}(s,u)^{1/2}(1+t)^{1-\gamma} + \sup_{\del\Kcal_s^2}|u|,\quad 0 < \gamma < 1,
\end{equation}
\begin{equation}\label{eq 1' lem 1 decay-trans}
|u(t,x)|\leq C\EE_{1,\gamma}(s,u)^{1/2}(1+|r-t|)^{1-\gamma} + \sup_{\del\Kcal_s^2}|u|,\quad \gamma>1.
\end{equation}
\end{lemma}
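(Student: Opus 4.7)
The plan is to bound $|u(t,x)|$ by integrating $\delb_1 u$ along the curve $\Fcal_s$ from the target point outward to the boundary $\del\Kcal_s^2$, where $u$ is assumed controlled, and then to convert the resulting line integral into an energy quantity by Cauchy--Schwarz with an appropriate weight.

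Fix $(t,x)\in\Fcali_s$ with $x>0$ (the $x<0$ case is symmetric). Using the parametrization $y\mapsto(T(s,y),y)$ of $\Fcal_s$, the chain-rule identity $\frac{d}{dy}u(T(s,y),y)=\delb_1 u(T(s,y),y)$ and the fundamental theorem of calculus give
\begin{equation*}
u(T(s,x),x)=u(T(s),2T(s))-\int_{x}^{2T(s)}\delb_1 u(T(s,y),y)\,dy.
\end{equation*}
I split the integration range into the transition piece contained in $\Tcal_s$, which by Proposition~\ref{prop 1 feuille} has length at most $1$, and the flat piece contained in $\Pcal_s$, on which $\xi_s\equiv 0$ and hence $\delb_1=\del_x$.

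On the transition piece, Cauchy--Schwarz against a length-one interval combined with Lemma~\ref{lem 1 trans-energy} (which controls $\|\delb_1 u\|_{L^2(\Tcal_s)}$ by $\EE_{1,\gamma}(s,u)^{1/2}$) gives a contribution of order $\EE_{1,\gamma}(s,u)^{1/2}$. On the flat piece I apply Cauchy--Schwarz with weight $(1+w_\gamma)$: the factor $\|(1+w_\gamma)\del_x u\|_{L^2(\Pcal_s)}$ is dominated by $\EE_{1,\gamma}(s,u)^{1/2}$ via \eqref{eq flat-energy-flat}, and what remains is the weight integral
\begin{equation*}
\int_{\max(x,(s^2+1)/2)}^{2T(s)}(1+w_\gamma(T(s),y))^{-2}\,dy.
\end{equation*}
Since $t=T(s)$ is constant on $\Pcal_s$ and $w_\gamma$ vanishes for $y\leq T(s)$, I split at $y=T(s)$: by Proposition~\ref{prop 1 feuille} the lower sub-interval has length at most $1$ and contributes $O(1)$, while the upper sub-interval contributes $\int_{T(s)}^{2T(s)}(y-T(s)+1)^{-2\gamma}dy$, which is $\lesssim (1+t)^{1-2\gamma}$ for $\gamma<1/2$, logarithmic at $\gamma=1/2$, and uniformly bounded for $\gamma>1/2$.

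Assembling these estimates case by case produces the claimed bounds. For $0<\gamma<1$ the worst case after taking the square root is $C\EE^{1/2}(1+t)^{1/2-\gamma}$, which is dominated by $C\EE^{1/2}(1+t)^{1-\gamma}$. For $\gamma>1$, when the starting point satisfies $y_0\geq T(s)$ (so $|r-t|=y_0-T(s)$) the tighter tail bound $\int_{y_0}^\infty(y-T(s)+1)^{-2\gamma}dy\lesssim (y_0-T(s)+1)^{1-2\gamma}$ yields the sharper $(1+|r-t|)^{1/2-\gamma}\leq(1+|r-t|)^{1-\gamma}$; the case $y_0<T(s)$ is automatic since Lemma~\ref{lem 2 position} forces $|r-t|\leq 1$ there. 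The only subtle point is the three-fold case split according to the position of $x$ ($x\in\Tcal_s$, or $x\in\Pcal_s$ with $x\lessgtr T(s)$); no analytic input beyond Cauchy--Schwarz and the energy structure is required, so the main difficulty is bookkeeping rather than analysis.
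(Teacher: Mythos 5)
Your argument is correct, and it takes a genuinely different route from the paper.  The paper first upgrades the $L^2$ control to a \emph{pointwise} bound on $(1+w_\gamma)\delb_x u$: it uses Lemma~\ref{lem 1 decompo-trans} to control $\|\delb_x\delb_x u\|_{L^2(\Tcal_s)}$ and \eqref{eq flat-energy-flat} to control $\|(1+w_\gamma)\del_x\del_x u\|_{L^2(\Pcal_s)}$, then feeds these second-derivative energies into the Sobolev inequalities (Lemma~\ref{lem 1 sobolev-trans}, Proposition~\ref{prop 2 sobolev}) to get $|(1+w_\gamma)\delb_x u|\lesssim \EE_{1,\gamma}^{1/2}$ pointwise, and finally integrates $\int_x^{2T(s)}(1+w_\gamma)^{-1}\,dy$.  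This costs one order of derivative (hence the index $\EE_{1,\gamma}$), and the weight decays only like $(1+w_\gamma)^{-1}$, which is exactly what produces the stated factors $(1+t)^{1-\gamma}$ and $(1+|r-t|)^{1-\gamma}$.  Your proof replaces the Sobolev step by a direct Cauchy--Schwarz against $(1+w_\gamma)^{\pm 1}$ along $\Fcal_s$; it sees only first derivatives, so it really only needs $\EE_{0,\gamma}$, and the weight you must integrate decays like $(1+w_\gamma)^{-2}$.  This gives a \emph{sharper} result than the lemma claims: for $1/2<\gamma<1$ your tail integral is uniformly bounded, and more generally you obtain the exponent $1/2-\gamma$ in place of $1-\gamma$.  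Since your bound is dominated by the stated one, the lemma follows; your approach is more elementary and in fact gives more.  One small imprecision: you invoke Lemma~\ref{lem 1 trans-energy} for $\|\delb_1 u\|_{L^2(\Tcal_s)}$, but that lemma bounds $\|\delt_1 u\|_{L^2(\Tcal_s)}$ (the null derivative), which is a different vector field from the tangent derivative $\delb_x=\delb_1$.  The control you actually need, namely $\|\delb_x u\|_{L^2(\Tcal_s)}\lesssim E^{\text{T}}(s,u)^{1/2}$, is read off directly from the second displayed form of \eqref{eq trans-energy-flat}, since $\tfrac{\xi_s(r)x}{\sqrt{s^2+x^2}}\del_t u+\del_x u=\delb_x u$; you should cite that identity rather than Lemma~\ref{lem 1 trans-energy}.
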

\begin{proof}
First, by \eqref{eq 1 decompo-trans} combined  the expression of energy \eqref{eq trans-energy-flat},
\begin{equation}\label{eq 1 proof lem 1 decay-trans}
\|\delb_x\delb_x u\|_{L^2(\Tcal_s)}\leq C\EE_{1,\gamma}(s,u)^{1/2}.
\end{equation}
Also by \eqref{eq flat-energy-flat}
\begin{equation}\label{eq 2 proof lem 1 decay-trans}
\|(1+w_{\gamma})\del_x\del_x u\|_{L^2(\Pcal_s)}\leq C\EE_{1,\gamma}(s,u)^{1/2}.
\end{equation}
By \eqref{eq 1 sobolev-trans} and \eqref{eq 1 sobolev-ext}, for $(t,x)\in\Tcal_s$,
\begin{equation}\label{eq 3 proof lem 1 decay-trans}
|\delb_x u(t,x)|\leq C\EE_{1,\gamma}(s,u)^{1/2},
\end{equation}
for $(t,x)\in \Pcal_s$
\begin{equation}\label{eq 4 proof lem 1 decay-trans}
|(1+w_\gamma)\del_xu(t,x)|\leq C\EE_{1,\gamma}(s,u)^{1/2}.
\end{equation}
For the convenience of discussion, we denote by $\delb_x$ the tangent derivative of $\Fcal_s$ on both $\Tcal_s$ and $\Pcal_s$. Remark that on $\Tcal_s$ it coincides the original definition in tangent frame and on $\Pcal_s$ it equals to $\del_x$. Thus \eqref{eq 3 proof lem 1 decay-trans} and \eqref{eq 4 proof lem 1 decay-trans} leads to
\begin{equation}\label{eq 5 proof lem 1 decay-trans}
|(1+w_\gamma)\delb_xu(t,x)|\leq C\EE_{1,\gamma}(s,u)^{1/2}.
\end{equation}

Then we integrate along $\Fcal_s$. In the following we suppose that $(s^2-1)/2\leq x$. For the case $x\leq -(s^2-1)/2$ we only need to take $\tilde{u}(t,x) = u(t,-x)$. Recall that $t = T(s,r)$, and write
$$
u_s(x) = u(T(s,r),x)
$$
the restriction of $u$ on $\Hb_s$. Remark that
$$
u_s'(x) = \delb_xu(T(s,r),x).
$$
Then
$$
u_s(2T(s)) - u_s(x) = \int_x^{2T(s)}u_s'(y) dy
$$
which leads to (for $0<\gamma<1$)
$$
\aligned
|u_s(x)|\leq& \big|u_s\big(2T(s)\big)| + \int_x^{2T(s)}|\delb_x(T(s,|y|),y)|dy
\\
\leq & \sup_{\del\Kcal_s^2}|u| + C\Ecal^1(s,u)^{1/2}\int_x^{2T(s)}|1+w_\gamma(s,y)|^{-1}dy
\\
\leq & \sup_{\del\Kcal_s^2}|u| + C\Ecal^1(s,u)^{1/2}\big((1+t)^{1-\gamma} - (1+|r-t|)^{1-\gamma}\big)
\endaligned
$$
which leads to \eqref{eq 1 lem 1 decay-trans}.

When $\gamma>1$,
$$
\aligned
|u_s(x)|\leq& \big|u_s\big(2T(s)\big)| + \int_x^{2T(s)}|\delb_x(T(s,|y|),y)|dy
\\
\leq & \sup_{\del\Kcal_s^2}|u| + C\Ecal^1(s,u)^{1/2}\int_x^{2T(s)}(1+|y-t|)^{-\gamma}dy
\\
\leq & \sup_{\del\Kcal_s^2}|u| + C\Ecal^1(s,u)^{1/2}(1+|r-t|)^{1-\gamma}
\endaligned
$$
and this leads to \eqref{eq 1' lem 1 decay-trans}
\end{proof}

Now we are ready to establish the following decay estimate on wave component. That is, the following estimates make sense when $c=0$.
\begin{lemma}[Basic decay for wave component]\label{lem 1 decay-near-w}
Let $u$ be a function defined in $\Fcal_{[s_0,s_1]}$, sufficiently regular. The for $s_0\leq s\leq s_1$, the following bounds hold for $(t,x)\in\Hb_s$:
\begin{equation}\label{eq 1 lem 1 decay-near}
|\del_xu(t,x)| + |\del_tu(t,x)|\leq C\EE_{2,\gamma}(s,u)^{1/2}(1+|r-t|)^{-\gamma}, \quad 0<\gamma,
\end{equation}
and
\begin{equation}\label{eq 2 lem 1 decay-near}
|\delt_1 u(t,x)|\leq C\EE_{2,\gamma}(s,u)^{1/2}(1+r)^{-\gamma} ,\quad 0<\gamma<1,
\end{equation}
\begin{equation}\label{eq 2' lem 1 decay-near}
|\delt_1 u(t,x)|\leq C\EE_{2,\gamma}(s,u)^{1/2}(1+r)^{-1}(1+|r-t|)^{1-\gamma} ,\quad \gamma>1.
\end{equation}
\end{lemma}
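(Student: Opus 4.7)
The estimates split by subregion: \eqref{eq 1 lem 1 decay-near} is handled separately on $\Pcal_s$ and $\Tcal_s$, while \eqref{eq 2 lem 1 decay-near}--\eqref{eq 2' lem 1 decay-near} split over $\Fcale_s$ and $\Hb_s \cap \Fcali_s$. The toolkit consists of the global Sobolev inequalities (Proposition \ref{prop 2 sobolev} and Lemma \ref{lem 1 sobolev-trans}), the $L^2$ structure of the energies in \eqref{eq trans-energy-flat}--\eqref{eq flat-energy-flat}, the trivial far-exterior decay \eqref{eq 1 decay ext-far}--\eqref{eq 1' decay ext-far}, and the previously established Lemma \ref{lem 1 decay-trans}.

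For \eqref{eq 1 lem 1 decay-near} on $\Pcal_s$, I apply Proposition \ref{prop 2 sobolev} to $v = \del_\alpha u$. Both terms on the right-hand side are directly controlled by $C\EE_{2,\gamma}(s,u)^{1/2}$ via \eqref{eq flat-energy-flat}, yielding $|(1+w_\gamma) \del_\alpha u(t,x)| \leq C\EE_{2,\gamma}^{1/2}$. When $r > t+1$ we have $1+w_\gamma \gtrsim (1+|r-t|)^\gamma$, giving the announced bound, while when $r \leq t$ on $\Pcal_s$ Lemma \ref{lem 2 position} forces $|r-t| < c(s) < 1$, so $(1+|r-t|)^{-\gamma}$ is bounded below and the bare bound $|\del_\alpha u| \leq C\EE_{2,\gamma}^{1/2}$ suffices. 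On $\Tcal_s$ the target reduces to $|\del_\alpha u| \leq C\EE_{2,\gamma}^{1/2}$ since $|r-t| \leq 1$; I obtain it by integrating $\delb_x \del_\alpha u$ along $\Fcal_s$ inward from the junction $|x| = (s^2+1)/2$, which lies in $\Pcal_s$ with $w_\gamma = 0$ so the above Sobolev bound already gives $|\del_\alpha u| \leq C\EE_{2,\gamma}^{1/2}$ at the junction. The remaining integral is controlled by Cauchy--Schwarz on an interval of length at most one combined with $\|\delb_x \del_\alpha u\|_{L^2(\Tcal_s)} \leq C E^{\text{T}}(s, \del_\alpha u)^{1/2} \leq C\EE_{2,\gamma}^{1/2}$, read off from \eqref{eq trans-energy-flat}.

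For \eqref{eq 2 lem 1 decay-near}--\eqref{eq 2' lem 1 decay-near} the far-exterior part is immediate from \eqref{eq 1 decay ext-far} since $1+r \sim 1+|r-t|$ when $r \geq 2t$. On $\Hb_s \cap \Fcali_s$ I use the algebraic identity
\begin{equation*}
\delt_1 u = t^{-1} L u - t^{-1} (x/r)(r-t)\del_t u,
\end{equation*}
obtained by rearranging $L = (x/r)(r-t)\del_t + t\delt_1$, together with $t \sim 1+r$ throughout $\Hb_s \cap \Fcali_s$. The first part of the lemma controls $t^{-1}|r-t||\del_t u|$ by $C(1+r)^{-1}(1+|r-t|)^{1-\gamma}\EE_{2,\gamma}^{1/2}$. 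For $t^{-1}|Lu|$ I apply Lemma \ref{lem 1 decay-trans} to $Lu$, using $\EE_{1,\gamma}(s, Lu) \leq C\EE_{2,\gamma}(s, u)$ (which follows from $[\del_\alpha, L] = \pm \del_{1-\alpha}$); this yields $|Lu| \leq C\EE_{2,\gamma}^{1/2}(1+t)^{1-\gamma}$ for $0 < \gamma < 1$ and $|Lu| \leq C\EE_{2,\gamma}^{1/2}(1+|r-t|)^{1-\gamma}$ for $\gamma > 1$, with the boundary contribution $\sup_{\del\Kcal^2_s}|Lu|$ (evaluated at $r \sim s^2$) absorbed into the main term via \eqref{eq 1' decay ext-far}. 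Dividing by $t \sim 1+r$, and using $1+|r-t| \leq C(1+r)$ on $\Fcali_s$ for the $0<\gamma<1$ case, produces the stated decay rates.

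The main obstacle is that applying Lemma \ref{lem 1 decay-trans} directly to $\del_\alpha u$ would produce the useless growing factor $(1+t)^{1-\gamma}\EE_{2,\gamma}^{1/2}$, incompatible with the $(1+|r-t|)^{-\gamma}$ decay needed in the transition region; the integration-from-junction argument inside $\Tcal_s$, leveraging the bounded width of $\Tcal_s$ and the vanishing of $w_\gamma$ at the junction, is what recovers the sharper bound. A secondary care point is matching the boundary term $\sup_{\del\Kcal^2_s}|Lu|$ against the main term uniformly in the regimes $0<\gamma<1$ and $\gamma>1$.
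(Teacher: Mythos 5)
Your proof is correct and follows essentially the same route as the paper: Sobolev in $\Pcal_s$, integration across $\Tcal_s$ from the junction using an $L^2$ control of $\delb_x\del_\alpha u$, and for $\delt_1 u$ the null-frame identity $\delt_1 u = t^{-1}Lu + (x/r)\tfrac{t-r}{t}\del_t u$ with Lemma \ref{lem 1 decay-trans} applied to $Lu$. One small imprecision: at the junction $|x|=(s^2+1)/2$ one has $0<r-t<1$ (by \eqref{eq 1 pr lem 2 proposition}), so $w_\gamma$ is small but not identically zero; this does not hurt, since $w_\gamma\geq 0$ already gives $|\del_\alpha u|\leq |(1+w_\gamma)\del_\alpha u|\leq C\EE_{2,\gamma}^{1/2}$ there.
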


\begin{proof}
For \eqref{eq 1 lem 1 decay-near}, in the region $\Pcal_s$, we apply directly \eqref{eq 1 sobolev-ext} and \eqref{eq 1 sobolev-trans}. In the transition region $\Tcal_s$, we just remark that by \eqref{eq 3 proof lem 1 decay-trans},
$$
|\delb_x\del_x u(t,x)| + |\delb_x\del_t u(t,x)|\leq C\Ecal^E_{2,\gamma}(s,u)^{1/2}.
$$
Then integrate from the frontier of $\Tcal_s\backslash\Pcal_s$ to $(t,x)$, and remark that the width of $\Tcal_s$ is limited by $1$ (on $\Tcal_s$, $(s^2-1)/2\leq r\leq (s^2+1)/2$).

For \eqref{eq 2 lem 1 decay-near}, recall \eqref{eq 1 decay ext-far}, we only need to prove in the region $\Fcali_s$.   Remark the following relation
\begin{equation}\label{eq 1 null-frame}
\delt_1 u = \frac{x}{r}\del_tu + \del_x u= t^{-1}Lu + \frac{x}{r}\frac{t-r}{t}\del_tu,
\end{equation}
Then we apply \eqref{eq 1 lem 1 decay-trans} and \eqref{eq 1' lem 1 decay-trans} on $Lu$ (combined with \eqref{eq 1' decay ext-far}) and obtain:
\begin{equation}\label{eq L null-frame}
|Lu(t,x)|\leq \left\{
\aligned
CC_1\vep& (1+r)^{1-\gamma}\Ecal^E_{2,\gamma}(s,u)^{1/2},\quad 0<\gamma<1,
\\
CC_1\vep& (1+|r-t|)^{1-\gamma}\Ecal^E_{2,\gamma}(s,u)^{1/2},\quad \gamma>1,
\endaligned
\right.
\end{equation}
then substitute  these bounds together with \eqref{eq 1 lem 1 decay-near} into \eqref{eq 1 null-frame} and obtain the desired bound.


%

\end{proof}

Then we are about to establish the following decay bounds on high-order derivatives for wave component
\begin{proposition}\label{prop 1 decay-high-order-ext}
Let $u$ be a function defined in $\Fcal_{[s_0,s_1]}$, sufficiently regular. Let $Z^M$ be an $(N-1)$ order operator of type $(i,j,l)$ and $N\geq 1$. Then
\\
$-$ When $l=0$, $i\geq 1$,
\begin{equation}\label{eq 2 decay-high-order-ext}
|Z^Mu(t,x)| \leq C(1+|r-t|)^{-\gamma}\EE_{N,\gamma}(s,u)^{1/2},
\end{equation}
\\
$-$ When $l\geq 1$,
\begin{equation}\label{eq 3 decay-high-order-ext}
|Z^Mu(t,x)|\leq C(1+r)^{-\gamma}\EE_{N,\gamma}(s,u)^{1/2}, \quad 0<\gamma<1
\end{equation}
\begin{equation}\label{eq 3' decay-high-order-ext}
|Z^Mu(t,x)|\leq C(1+r)^{-1}(1+|r-t|)^{-\gamma+1}\EE_{N,\gamma}(s,u)^{1/2}, \gamma>1.
\end{equation}
\end{proposition}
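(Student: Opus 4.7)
The plan is to reduce the estimate to the single-derivative decay bounds of Lemma \ref{lem 1 decay-near-w} by extracting one outermost vector field of the appropriate type from $Z^M$, then verifying that the resulting inner function has its $\EE_{2,\gamma}$-energy dominated by $\EE_{N,\gamma}(s,u)$.

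First I would decompose $Z^M$ structurally. In Case $l=0$ with $i\geq 1$, iterated use of $[L,\del_\alpha]=-\del_\alpha$ (the $l=0$ analogue of Lemma \ref{lem 1 high-order-ext} given in the appendix) writes $Z^M u$ as a locally constant linear combination of $\del^I L^{k}u$ with $|I|=i$, $k\leq j$; I then factor $\del^I=\del_\alpha\del^{I''}$ with $|I''|=i-1$. In Case $l\geq 1$, Lemma \ref{lem 1 high-order-ext} directly gives $Z^M u=\sum\Lambda^M_{Ik}\,\delt_1^l\del^IL^ku$, and I factor $\delt_1^l=\delt_1(\delt_1^{l-1})$. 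Setting $v:=\del^{I''}L^{k}u$ in Case 1 and $v:=\delt_1^{l-1}\del^IL^ku$ in Case 2, I apply \eqref{eq 1 lem 1 decay-near} to $\del_\alpha v$ (Case 1), obtaining the factor $(1+|r-t|)^{-\gamma}\EE_{2,\gamma}(s,v)^{1/2}$, and apply \eqref{eq 2 lem 1 decay-near}/\eqref{eq 2' lem 1 decay-near} to $\delt_1 v$ (Case 2), obtaining either $(1+r)^{-\gamma}$ or $(1+r)^{-1}(1+|r-t|)^{1-\gamma}$ times $\EE_{2,\gamma}(s,v)^{1/2}$. The announced decay rates \eqref{eq 2 decay-high-order-ext}--\eqref{eq 3' decay-high-order-ext} emerge directly.

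The remaining task is to prove $\EE_{2,\gamma}(s,v)\leq C\,\EE_{N,\gamma}(s,u)$. In Case 1, iterating $[L,\del_\alpha]=-\del_\alpha$ rewrites the energy integrand $\del_\beta\del^{I_1}L^{j_1}v$ (with $|I_1|+j_1\leq 2$) as a locally constant combination of $\del_\beta\del^{I'}L^{k'}u$ with $|I'|+k'\leq 2+(i-1)+j=N$; each such term is controlled by $E^E_\gamma(s,\del^{I'}L^{k'}u)^{1/2}$ and hence summed into $\EE_{N,\gamma}(s,u)^{1/2}$. In Case 2, the relations $[\del_\alpha,\delt_1]=0$ and $[L,\delt_1]=-(x/r)\delt_1$ from \eqref{eq 1 comm-ext}, combined with Lemmas \ref{lem 1 decompo-comm-ext} and \ref{lem 1 high-order-ext}, reduce $\del_\beta\del^{I_1}L^{j_1}v$ to locally constant combinations of $\delt_1^{l-1}\del_\beta\del^{I'}L^{k'}u$ of total order $\leq N+1$ with at least $l-1$ null derivatives preserved. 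When $l\geq 2$, the weighted $L^2$ bound \eqref{eq 3 lem 1 L2-basic-ext} of Lemma \ref{lem 1 L2-basic-ext} then supplies the needed control by $\EE_{N,\gamma}(s,u)^{1/2}$; when $l=1$, the standard energy control \eqref{eq 2 lem 1 L2-basic-ext} is used instead. Throughout, one uses that the weight $(1+w_\gamma)$ entering $E^E_\gamma$ is comparable to $(1+|r-t|)^\gamma$ on $\Hb_s$.

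The main obstacle I anticipate is the bookkeeping in Case 2 with $l\geq 2$: along each branch of commutation, one must verify that at least one $\delt_1$ survives (so that \eqref{eq 3 lem 1 L2-basic-ext}, rather than the weaker Cartesian bound, is the applicable $L^2$ estimate) and that the total-order count $|I'|+k'+(l-1)\leq N+1$ is saturated but not exceeded. Because $[L,\delt_1]=-(x/r)\delt_1$ reproduces a $\delt_1$ and the $x/r$ factor is locally constant, this survival property holds, but the verification is the delicate combinatorial core of the argument.
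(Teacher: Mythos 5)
Your proposal follows the same route as the paper: decompose $Z^M$ via Lemma~\ref{lem 1 high-order-ext} (or its $l=0$ analogue from the appendix), peel off one outermost $\del_\alpha$ or $\delt_1$, apply the single-derivative decay bounds \eqref{eq 1 lem 1 decay-near}, \eqref{eq 2 lem 1 decay-near}, \eqref{eq 2' lem 1 decay-near} to the inner function, and bound its $\EE_{2,\gamma}$-energy by $\EE_{N,\gamma}(s,u)$. The paper's proof is a one-line pointer to exactly these ingredients; your elaboration fills in the same reduction, and the energy comparison you worry about is in fact straightforward since $\delt_1$ and the commutator coefficients are locally constant on $\Hb_{[s_0,s_1]}$, so one does not actually need the more delicate tracking of surviving $\delt_1$'s via \eqref{eq 3 lem 1 L2-basic-ext} versus \eqref{eq 2 lem 1 L2-basic-ext} — ordinary norm and counting of derivative order already closes the bound.
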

\begin{proof}
These are due to \eqref{eq 1 lem 1 high-order-ext} combined with \eqref{eq 1 lem 1 decay-near}, \eqref{eq 2 lem 1 decay-near} and \eqref{eq 2' lem 1 decay-near}.
\end{proof}

\subsection{Decay bounds for Klein-Gordon component}
This subsection is devoted to a refined decay bound on Klein-Gordon component (i.e. the bounds in this section only make sense when $c>0$).
We write \eqref{eq 1' decompo-box-semi} into the following form:
\begin{equation}\label{eq 1 decompo-SHF-Box}
\Box v + c^2v - \big((1-(r/t)^2\big)\del_t\del_tv + t^{-1}R[v] = c^2v
\end{equation}
where
$$
R[v] = (x/t)\del_tLv - \del_xLv - (x/t)\del_xv + \del_tv.
$$

Then we establish the following bounds on Klein-Gordon equation. Remark that this bounds is not a ``basic bound'' (i.e. it concerns the structure of equation).
\begin{proposition}\label{prop 1 decay-KG}
Let $v$ be a solution to the following equation:
\begin{equation}\label{eq 1 prop 1 decay-KG}
\Box v + c^2v = f
\end{equation}
with $v$ and $f$ sufficiently regular, defined in $\Fcal_{[s_0,s_1]}$. Then the following estimate holds for $(t,x)\in\Hb_s$:
\begin{equation}\label{eq 2 prop 1 decay-KG}
|v(t,x)|\leq
\left\{
\aligned
&C(1+|r-t|)^{1-\gamma}t^{-1}\EE_{3,\gamma,c}(s,v)^{1/2}
+ C|f(t,x)|,
&&\quad (t,x)\in \Fcal^I_s
\\
&C(1+r)^{-\gamma}c^{-1}\EE_{0,\gamma,c}(s,v)^{1/2},&&\quad (t,x)\in\Fcal^E_s.
\endaligned
\right.
\end{equation}
Furthermore, suppose that $Z^M$ is a $(N-3)$ order operator of type $(i,j,0)$, then
\begin{equation}\label{eq 3 prop 1 decay-KG}
|Z^M v(t,x)|\leq
\left\{
\aligned
&C(1+|r-t|)^{1-\gamma}t^{-1}\EE_{N,\gamma,c}(s,v)^{1/2} + C\!\!\!\sum_{|I|=i,k\leq j}\!\!\!|\del^IL^k f(t,x)|,
\quad &&(t,x)\in \Fcal^I_s
\\
&C(1+r)^{-\gamma}\EE_{N-3,\gamma,c}(s,v)^{1/2},\quad &&(t,x)\in\Fcal^E_s
\endaligned
\right.
\end{equation}
\end{proposition}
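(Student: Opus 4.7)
The two regions $\Fcale_s$ and $\Fcali_s$ call for completely different arguments and I treat them separately.

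On $\Fcale_s$, where $r\geq 2t$ and hence $(1+w_\gamma(t,x))\gtrsim (1+r)^\gamma$, the exterior bound is a direct consequence of the weighted Sobolev inequality of Proposition \ref{prop 2 sobolev} applied to $v$ on $\Pcal_s$:
\[
|(1+w_\gamma) v(t,x)|^2\lesssim \|(1+w_\gamma)\del_x v\|_{L^2(\Pcal_s)}^2+\|(1+w_\gamma) v\|_{L^2(\Pcal_s)}^2.
\]
By Lemma \ref{lem 1 L2-basic-ext}, the first norm is controlled by $\EE_{0,\gamma,c}(s,v)$ and the second by $c^{-2}\EE_{0,\gamma,c}(s,v)$; dividing by $(1+w_\gamma)\gtrsim (1+r)^\gamma$ gives the exterior line of \eqref{eq 2 prop 1 decay-KG}. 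The exterior half of \eqref{eq 3 prop 1 decay-KG} is obtained identically after replacing $v$ by $Z^Mv$, which satisfies $(\Box+c^2)Z^Mv=Z^Mf$ since $\del_\alpha$ and $L$ commute with $\Box+c^2$.

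On $\Fcali_s$ I exploit the semi-hyperboloidal identity \eqref{eq 1 decompo-SHF-Box}, which I rearrange as
\[
c^2 v=f-\bigl(1-(r/t)^2\bigr)\del_t\del_t v-t^{-1}R[v].
\]
Since $r\leq 2t$ on $\Fcali_s$ we have $|1-(r/t)^2|\leq 3|r-t|/t$, so (absorbing $c^{-2}$ into the constant) it suffices to establish pointwise bounds on $\Fcali_s$ of the form
\[
|\del_t\del_t v|+|\del_\alpha Lv|+|\del_\alpha v|\lesssim (1+|r-t|)^{-\gamma}\EE_{3,\gamma,c}(s,v)^{1/2}.
\]
The bounds on $|\del_\alpha v|$ and $|\del_\alpha Lv|$ come immediately from Lemma \ref{lem 1 decay-near-w} applied to $v$ and to $Lv$ respectively, the second instance costing one additional boost. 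For the Hessian term I combine the Sobolev inequalities of Lemma \ref{lem 1 sobolev-trans} and Proposition \ref{prop 2 sobolev} with the weighted $L^2$ Hessian estimate \eqref{eq 2 prop 1 L2-hessian-ext}, and substitute $\Box v=f-c^2 v$ into the right-hand side so that the $c^2v$ contribution is reabsorbed into the $c$-weighted piece of $\EE_{0,\gamma,c}$ while the $f$ contribution becomes the $|f(t,x)|$ summand of \eqref{eq 2 prop 1 decay-KG}. Multiplying by the explicit prefactors $|r-t|/t$ and $t^{-1}$ then delivers the decay rate $(1+|r-t|)^{1-\gamma}/t$. The higher-order interior bound \eqref{eq 3 prop 1 decay-KG} is obtained by replaying the same scheme on $Z^Mv=\del^IL^jv$, using $(\Box+c^2)Z^Mv=\del^IL^jf$ and noting that the pointwise source becomes $\sum_{|I|=i,\,k\leq j}|\del^IL^kf|$.

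The main obstacle is the Hessian step in the interior region: unlike for the wave component, the energies $\EE_{k,\gamma,c}$ do not control $\del_t\del_t v$ pointwise in a direct way, and one must route through Proposition \ref{prop 1 L2-hessian-ext} and use the equation itself to trade $\Box v$ for $f-c^2 v$. Passing from the resulting weighted $L^2$ Hessian bound to a pointwise bound via Sobolev costs one further derivative on top of the two derivatives already present in $\del_t\del_t v$, which is precisely what forces the seemingly generous energy index $\EE_{3,\gamma,c}$ in the first line of \eqref{eq 2 prop 1 decay-KG} and, more generally, the jump from order $N-3$ in $Z^M$ to the energy index $N$ in \eqref{eq 3 prop 1 decay-KG}.
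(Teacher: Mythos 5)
Your overall structure (split $\Fcale_s$ vs.\ $\Fcali_s$, Sobolev in the exterior, semi\-/hyperboloidal rearrangement of $\Box$ in the interior) matches the paper, and the exterior argument and the commutation/linearity step for \eqref{eq 3 prop 1 decay-KG} are correct. The gap is in your treatment of $\del_t\del_t v$ in $\Fcali_s$.

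First, the premise that ``the energies $\EE_{k,\gamma,c}$ do not control $\del_t\del_t v$ pointwise in a direct way, and one must route through Proposition \ref{prop 1 L2-hessian-ext}'' is wrong. Lemma \ref{lem 1 decay-near-w} applied to $u=\del_t v$ (or, as the paper actually does, to $c\del_t v$, $cLv$, $cv$) already yields
$|\del_\alpha\del_t v(t,x)|\leq C(1+|r-t|)^{-\gamma}\EE_{2,\gamma}(s,\del_t v)^{1/2}\leq C(1+|r-t|)^{-\gamma}\EE_{3,\gamma,c}(s,v)^{1/2}$;
the index $3$ is just the derivative count for Sobolev, not a signal that Hessian machinery is needed. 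The paper's multiplication of the rearranged identity by $c$, $c^3v=-\bigl(1-(r/t)^2\bigr)c\del_t\del_tv-t^{-1}cR[v]+cf$, is precisely so that every $v$-term is a $c$-weighted derivative of $v$, which the energy controls with no appeal to the wave structure at all.

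Second, and more seriously, the Hessian detour you propose is circular. Estimate \eqref{eq 2 prop 1 L2-hessian-ext} (and its pointwise counterpart \eqref{eq 00 lem 1 decay-hessian-ext}) carries $t\,\Box v$ on its right-hand side. Substituting $\Box v = f-c^2 v$ produces the term $t\,c^2 v$: this is \emph{not} ``reabsorbed into the $c$-weighted piece of $\EE_{0,\gamma,c}$'', because the bare $c$-piece of the energy controls $\|c(1+w_\gamma)v\|_{L^2(\Hb_s)}$ only, with no matching factor of $t$ (which is of size $s^2$ in $\Fcali_s$). If instead you track it through, after multiplying by the prefactor $\bigl|1-(r/t)^2\bigr|\sim (1+|r-t|)/t$ you arrive at a pointwise inequality of the type $c^2|v|\lesssim |f| + c^2|v| + (\text{good terms})$, which collapses to a tautology: the semi-hyperboloidal identity solved for $c^2 v$, and re-inserting the equation to estimate $\del_t\del_t v$ undoes exactly that step. (There is also a mismatch in the $f$-contribution: the $L^2$ Hessian bound gives an $L^2$ norm of $f$, not the pointwise $|f(t,x)|$ appearing in \eqref{eq 2 prop 1 decay-KG}.) Replace the Hessian step by a direct application of Lemma \ref{lem 1 decay-near-w} to $c\del_t v$ (equivalently, to $\del_t v$, absorbing $c$ into the constant), and the argument closes.
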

\begin{proof}
When $(t,x)\in \Fcal^E_s$, we apply \eqref{eq 2 decay ext-far}.

For the region $\Fcal^I_s$, thanks to \eqref{eq 1 decompo-SHF-Box}, \eqref{eq 1 prop 1 decay-KG} is written as
\begin{equation}\label{eq 1 pr prop 1 KG}
\aligned
c^3v =& -\big(1-(x/t)^2\big)c\del_t\del_tv
- t^{-1}cR[v] + cf.
\endaligned
\end{equation}
Then we apply \eqref{eq 1 lem 1 decay-near} (remark that $t-1\leq |x|\leq 2t$)
$$
|cR[v](t,x)| + |c\del_t\del_tv(t,x)|\leq C(1+|r-t|)^{-\gamma}\Ecal^3_{\gamma,c}(s,v)^{1/2}.
$$
Substitute the above bounds into \eqref{eq 1 pr prop 1 KG}, \eqref{eq 2 prop 1 decay-KG} is established.

For \eqref{eq 3 prop 1 decay-KG}, we derive \eqref{eq 1 prop 1 decay-KG} with respect to $Z^M$:
$$
\Box Z^M v + c^2Z^M v = Z^M f.
$$
Then we apply \eqref{eq 2 prop 1 decay-KG} on $Z^M v$. Recall that $Z^Mv$ can be written as a finite linear combination of $\del^IL^k v$ with $k\leq j, |I|=i$. So the desired result is established.
\end{proof}


\subsection{Decay bounds on Hessian form}
Lemma \ref{lem 1.5 hessian-ext} combined with the Sobolev's inequalities in transition and exterior region, we have the following decay bounds on Hessian forms:
\begin{lemma}\label{lem 1 decay-hessian-ext}
Let $u$ be a function defined in $\Fcal_{[s_0,s_1]}$, sufficiently regular. Then the following estimate holds for $(t,x)\in \Fcal^E_s$ and $|I|+j\leq N-3$
\begin{equation}\label{eq 1 lem 1 decay-hessian-ext}
\aligned
|tr^{\gamma}\del^IL^j \del_{\alpha}\del_{\beta} u(t,x)|\leq C\sum_{|I'|\leq|I|\atop j'\leq j}|r^{1+\gamma}\Box \del^IL^j u(t,x)|
+ C\EE_{N,\gamma}(s,u)^{1/2}.
\endaligned
\end{equation}

For $(t,x)\in\Fcal^I_s$ and $|I|+j\leq N-3$,
\begin{equation}\label{eq 00 lem 1 decay-hessian-ext}
|(1+|r-t|)^{1+\gamma}\del^IL^j \del_t\del_t u(t,x)|\leq \sum_{|I'|=|I|\atop j'\leq j}|t(1+|r-t|)^{\gamma} \Box \del^{I'}L^{j'} u(t,x)|
+ C\EE_{N,\gamma}(s,u)^{1/2},
\end{equation}

\begin{equation}\label{eq 01 lem 1 decay-hessian-ext}
|t(1+|r-t|)^{\gamma} \del^IL^j \delt_1\del_t u(t,x)|\leq \sum_{|I'|=|I|\atop j'\leq j}|t(1+|r-t|)^{\gamma} \Box \del^{I'}L^{j'} u(t,x)|
+ C\EE_{N,\gamma}(s,u)^{1/2},
\end{equation}
\begin{equation}\label{eq 11 lem 1 decay-hessian-ext}
|t^{1+\gamma}\del^IL^j \delt_1\delt_1 u(t,x)|\leq \sum_{|I'|=|I|\atop j'\leq j}|t^{\gamma}(1+|r-t|)\Box \del^{I'}L^{j'} u(t,x)|
+ C\EE_{N,\gamma}(s,u)^{1/2},\quad 0<\gamma<1
\end{equation}
and
\begin{equation}\label{eq 11' lem 1 decay-hessian-ext}
|t^2(1+|r-t|)^{\gamma-1}\del^IL^j \delt_1\delt_1 u(t,x)|\leq \sum_{|I'|=|I|\atop j'\leq j}|t(1+|r-t|)^{\gamma}\Box \del^{I'}L^{j'} u(t,x)|
+ C\EE_{N,\gamma}(s,u)^{1/2},\quad \gamma>1.
\end{equation}

\end{lemma}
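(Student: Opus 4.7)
The plan is to derive each estimate by multiplying the corresponding pointwise identity from Lemma \ref{lem 1.5 hessian-ext} by an appropriate weight, and then invoking the pointwise decay bounds from Proposition \ref{prop 1 decay-high-order-ext} for the first-order derivative terms. No new analytic input is needed; this is essentially a weight-tracking exercise carried out region by region.

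First I would record two geometric facts that drive the proof: on $\Fcal^E_s = \Hb_s \cap \{r \geq 2t\}$ one has $1+|r-t| \sim r$, while on $\Fcal^I_s = \Hb_s \cap \{r \leq 2t\}$ one has $r \sim t$ (since $(t,x)\in\Hb_s$ forces $r \geq (s^2-1)/2 \sim s^2 \sim t$). These equivalences allow the weights appearing on the two sides of each target inequality to be freely exchanged.

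For \eqref{eq 1 lem 1 decay-hessian-ext} on $\Fcal^E_s$ I multiply \eqref{eq 4 lem 1.5 hessian-ext} by $r^\gamma$. The $\Box$ term becomes $tr^\gamma |\Box \del^{I'}L^{j'}u|$, which is absorbed into $r^{1+\gamma}|\Box \del^{I'}L^{j'}u|$ via $t \leq r$; the first-order contributions $r^\gamma |\del_\alpha \del^{I'}L^{j'}u|$ are controlled by $C\EE_{N,\gamma}(s,u)^{1/2}$ thanks to \eqref{eq 2 decay-high-order-ext} together with $r \sim 1+|r-t|$. The bounds \eqref{eq 00 lem 1 decay-hessian-ext} and \eqref{eq 01 lem 1 decay-hessian-ext} on $\Fcal^I_s$ are obtained analogously by multiplying \eqref{eq 1 lem 1 hessian-ext} and \eqref{eq 2 lem 1.5 hessian-ext} respectively by $(1+|r-t|)^\gamma$; the first-order terms again cancel out cleanly against the $(1+|r-t|)^{-\gamma}$ decay from \eqref{eq 2 decay-high-order-ext}.

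The main subtlety lies in the two $\delt_1\delt_1$ bounds. Starting from \eqref{eq 3 lem 1.5 hessian-ext}, for $0<\gamma<1$ I multiply by $t^\gamma$: the $\Box$ term picks up $t^\gamma|r-t| \leq t^\gamma(1+|r-t|)$, the null-derivative contribution $t^\gamma|\delt_1 \del^{I'}L^{j'}u|$ is controlled via \eqref{eq 3 decay-high-order-ext} together with $r \sim t$ on $\Fcal^I_s$, and the tangential term $t^{\gamma-1}|r-t|\,|\del_\alpha \del^{I'}L^{j'}u|$ is handled via \eqref{eq 2 decay-high-order-ext} and the region bound $|r-t| \leq t$. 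For $\gamma>1$ I instead multiply by $t(1+|r-t|)^{\gamma-1}$; the crucial point is that the sharper decay \eqref{eq 3' decay-high-order-ext} provides precisely the factor $(1+r)^{-1}(1+|r-t|)^{1-\gamma}$ needed to absorb the extra weight, since $t(1+|r-t|)^{\gamma-1}(1+r)^{-1}(1+|r-t|)^{1-\gamma} = t(1+r)^{-1} \leq C$. I expect the only real obstacle to be bookkeeping: choosing the weight carefully so that the $(r-t)$ factor in the $\Box$-term of \eqref{eq 3 lem 1.5 hessian-ext} combines with the chosen power of $(1+|r-t|)$ to give exactly the form stated on the right-hand sides of \eqref{eq 11 lem 1 decay-hessian-ext} and \eqref{eq 11' lem 1 decay-hessian-ext}, and distinguishing where $|r-t| \leq 1+|r-t|$ is sharp enough versus where the geometric equivalence $r \sim t$ on $\Fcal^I_s$ is required.
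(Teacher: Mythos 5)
Your proposal is correct and follows exactly the paper's route: multiply each pointwise identity from Lemma \ref{lem 1.5 hessian-ext} by the appropriate power of $t$, $r$, or $1+|r-t|$, and absorb the lower-order error terms using the pointwise decay of Proposition \ref{prop 1 decay-high-order-ext} together with the region equivalences $1+|r-t|\sim r$ on $\Fcal^E_s$ and $r\sim t$, $|r-t|\leq t$ on $\Fcal^I_s$. The one place to be slightly careful — which you handle implicitly — is that the error term $|\del^IL^j\del_t\del_tu|$ in \eqref{eq 1 lem 1 hessian-ext} is second order, not first, but it is still of type $(|I|+2,j,0)$ with total order $\leq N-1$, so \eqref{eq 2 decay-high-order-ext} applies to it all the same; likewise, for the tangential term in \eqref{eq 11 lem 1 decay-hessian-ext} one should keep the factor $(1+|r-t|)^{1-\gamma}$ and pair it with $t^{\gamma-1}$ rather than discard $|r-t|$ wholesale against $t$.
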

\begin{proof}
This is a direct application of lemma \ref{lem 1.5 hessian-ext} and proposition \ref{prop 1 decay-high-order-ext}. We remark that by proposition \ref{prop 1 decay-high-order-ext}, for $|I|+j\leq N-2$, and $\gamma>0$, the following quantities:
\begin{equation}\label{eq 1 pr lem 1 decay-hessian-ext}
|(1+|r-t|)^{\gamma}\del_{\alpha}\del^IL^ju(t,x)|
\end{equation}
is bounded by $C\EE_{N,\gamma}(s,u)$. Furthermore, the following quantities
\begin{equation}
\aligned
&|(1+r)^{\gamma}\delt_1\del^IL^j u(t,x)|, \text{ for }0<\gamma<1,\quad
\\
&|(1+r)(1+|r-t|)^{\gamma-1}\delt_1\del^IL^j u(t,x)|,\text{ for } \gamma>1
\endaligned
\end{equation}
are also bounded by $C\EE_{N,\gamma}(s,u)$.
Then substitute these bounds into the bounds in lemma \ref{lem 1.5 hessian-ext}, the desired results are direct.
\end{proof}

\part{Model problem}
\section{Initialization of bootstrap argument}\label{sec bootstrap}
\subsection{Construction of initial data on $\Fcal_2$}
For the convenience of discussion the smallness conditions on the initial data will be made on the initial slice $\Fcal_2$. This seems non-standard, however, one can make the following observation. Suppose that the initial data are given on the slice $\{t=2\}$. By local theory, when initial data satisfies some smallness condition, the associated local solution extends to the region $\{2\leq t\leq 4\}\supset\Fcal_2$. We take the restriction of the local solution on $\Fcal_2$ as our initial data. Then, by energy estimate applied on the region $\{1\leq t\leq \Fcal_2\}$ (with multiplier $\del_t u$), we will obtain that

\begin{equation}\label{eq initial-smallness}
\Ecal_{\gamma}^{N+2}(2,u)^{1/2} + \Ecal_{\gamma,c}^{N+1}(2,v)^{1/2}\leq C_0\vep.
\end{equation}
where $C_0$ is a constant determined by $N$ and the system itself. Here recall that $\gamma>1$.

\subsection{Bootstrap argument}
We state the standard bootstrap argument. First, we write the following bounds on a time interval $[s_0,s_1]$:
\\
In exterior region:
\begin{subequations}\label{eq 1 bootstrap}
\begin{equation}\label{eq 1h bootstrap}
\EE_{N+1,\gamma}(s,Lu)^{1/2} + \EE_{N+1,\gamma,c}(s,v)^{1/2} \leq C_1\vep s^{1+\delta},
\end{equation}
\begin{equation}\label{eq 1m bootstrap}
\aligned
 \EE_{N,\gamma,c}(s,v)^{1/2} &\leq C_1\vep s^{\delta},
\endaligned
\end{equation}
\begin{equation}\label{eq 1l bootstrap}
\EE_{N+1,\gamma}(s,u)^{1/2} + \EE_{N+1,\gamma}(s,\del_{\alpha} u)^{1/2} + \EE_{N-1,\gamma,c}(s,v)^{1/2}\leq C_1\vep.
\end{equation}
\end{subequations}
and in interior region:
\begin{subequations}\label{eq 1 bootstrap-int}
\begin{equation}\label{eq 1h bootstrap-int}
\EH_N(s,Lu)^{1/2} \leq C_1\vep s^{1+\delta},
\end{equation}
\begin{equation}\label{eq 1m bootstrap-int}
\aligned
\EH_N(s,u)^{1/2} + \EH_N(s,\del_{\alpha} u)^{1/2} + \EH_{N,c}(s,v)^{1/2} &\leq C_1\vep s^{\delta},
\endaligned
\end{equation}
\begin{equation}\label{eq 1l bootstrap-int}
 \EH_{N-1,c}(s,v)^{1/2}\leq C_1\vep.
\end{equation}
\end{subequations}
Here we chose $0<\delta<\frac{1}{200}$, and $\delta\ll\gamma-1\ll1 $.

Then, let
$$
s^* = \sup\{2\leq s_1 < T^*|\text{\eqref{eq 1 bootstrap} and \eqref{eq 1 bootstrap-int} hold on }[2,s_1]\}
$$
where $T^*>2$ is the life-span time of the local solution.

Remark that when $C_1>C_0$, by continuity, $s^*>2$. Then if we can show that on any time interval $[2,s_1]$, with well chosen $(C_1,\vep)$, \eqref{eq 1 bootstrap} and \eqref{eq 1 bootstrap-int} lead to the following {\bf refined energy bounds}:
\\
In exterior region:
\begin{subequations}\label{eq 2 bootstrap}
\begin{equation}\label{eq 2h bootstrap}
\EE_{N+1,\gamma}(s,Lu)^{1/2} + \EE_{N+1,\gamma,c}(s,v)^{1/2} \leq \frac{1}{2}C_1\vep s^{1+\delta},
\end{equation}
\begin{equation}\label{eq 2m bootstrap}
\aligned
\EE_{N,\gamma,c}(s,v)^{1/2} &\leq \frac{1}{2}C_1\vep s^{\delta},
\endaligned
\end{equation}
\begin{equation}\label{eq 2l bootstrap}
\EE_{N+1,\gamma}(s,u)^{1/2} + \EE_{N+1,\gamma}(s,\del_{\alpha} u)^{1/2} +  \EE_{N-1,\gamma,c}(s,v)^{1/2}\leq \frac{1}{2}C_1\vep.
\end{equation}
\end{subequations}
and in interior region:
\begin{subequations}\label{eq 2 bootstrap-int}
\begin{equation}\label{eq 1h bootstrap-int}
\EH_N(s,Lu)^{1/2} \leq \frac{1}{2}C_1\vep s^{1+\delta},
\end{equation}
\begin{equation}\label{eq 2m bootstrap-int}
\aligned
\EH_N(s,u)^{1/2} + \EH_N(s,\del_{\alpha} u)^{1/2} + \EH_{N,c}(s,v)^{1/2} &\leq \frac{1}{2}C_1\vep s^{\delta},
\endaligned
\end{equation}
\begin{equation}\label{eq 2l bootstrap-int}
 \EH_{N-1,c}(s,v)^{1/2}\leq \frac{1}{2}C_1\vep.
\end{equation}
\end{subequations}
Then by continuity, $s^*=T^*$. That is, the following equalities holds for $s<T^*$:
$$
\lim_{s\rightarrow T^*}\big(\Ecal_{\gamma}^{N+1}(s,u)^{1/2}+\Ecal_{\gamma,c}^{N}(s,v)^{1/2}\big)\leq C_1\vep s^{1+ \delta}<+\infty
$$
and this (thanks to local theory) contradicts $T^*<+\infty$.

In the following, we concentrate on the refined energy bound \eqref{eq 2 bootstrap} \eqref{eq 2 bootstrap-int} based on \eqref{eq 1 bootstrap} \eqref{eq 1 bootstrap-int}. We remark especially that the first bound in \eqref{eq 1h bootstrap} leads to
\begin{equation}\label{eq 1' bootstrap}
\EE_{N+2,\gamma}(s,u)^{1/2} + \EH_{N+1}(s,u)^{1/2} \leq CC_1\vep s^{1+\delta}.
\end{equation}

\section{Analysis in  exterior region}\label{sec analysis-ext}
Recall that $\Pcal_s\cup\Tcal_s = \Hb_s = \Hb_s^+\cup \Hb_s^-$. In this section we establish the estimates valid in the transition and exterior region.
All calculations are made \underline{\sl in the transition and exterior region} unless otherwise specified.
\subsection{Basic $L^2$ bounds}

For $|I|+j\leq N+1$, by \eqref{eq 1l bootstrap} ,
\begin{equation}\label{eq 2 L2-w-basic-ext}
\aligned
\|(1+|r-t|)^{\gamma}\zeta \del^IL^j\del_{\alpha} u\|_{L^2(\Hb_s)} + \|(1+|r-t|)^{\gamma}\del^IL^j \delt_1 u\|_{L^2(\Hb_s)}\leq& CC_1\vep ,
\\
\|(1+|r-t|)^{\gamma}\zeta\del^IL^j\del_{\alpha}\del_{\beta} u\|_{L^2(\Hb_s)} + \|(1+|r-t|)^{\gamma}\del^IL^j\delt_1\del_\alpha u\|_{L^2(\Hb_s)}
\leq& CC_1\vep ,
\\
\|(1+|r-t|)^{\gamma}\del^IL^j\delt_1\delt_1 u\|_{L^2(\Hb_s)}
\leq& CC_1\vep .
\endaligned
\end{equation}


For the Klein-Gordon component, when $|I|+j = N+1$, thanks to \eqref{eq 1h bootstrap}
\begin{equation}\label{eq 1 L2-KG-basic-ext}
\|(1+|r-t|)^{\gamma}\zeta \del^IL^j\del_\alpha v\|_{L^2(\Hb_s)} + \|(1+|r-t|)^{\gamma}\del^IL^j \delt_1 v\|_{L^2(\Hb_s)}\leq CC_1\vep s^{1+\delta},
\end{equation}
\begin{equation}\label{eq 1' L2-KG-basic-ext}
\|c(1+|r-t|)^{\gamma} \del^IL^j v\|_{L^2(\Hb_s)}\leq CC_1\vep s^{1+\delta},
\end{equation}
\\
$-$ $|I|+j\leq N$, by \eqref{eq 1m bootstrap}
\begin{equation}\label{eq 2 L2-KG-basic-ext}
\|(1+|r-t|)^{\gamma}\zeta \del^IL^j\del_\alpha v\|_{L^2(\Hb_s)} + \|(1+|r-t|)^{\gamma}\del^IL^j \delt_1 v\|_{L^2(\Hb_s)}\leq CC_1\vep s^{\delta},
\end{equation}
\begin{equation}\label{eq 2' L2-KG-basic-ext}
\|c(1+|r-t|)^{\gamma} \del^IL^j v\|_{L^2(\Hb_s)}\leq CC_1\vep s^{\delta},
\end{equation}
\\
$-$ $|I|+|j|\leq N-1$, by \eqref{eq 1l bootstrap}
\begin{equation}\label{eq 3 L2-KG-basic-ext}
\|(1+|r-t|)^{\gamma}\zeta \del^IL^j\del_\alpha v\|_{L^2(\Hb_s)} + \|(1+|r-t|)^{\gamma}\del^IL^j \delt_1 v\|_{L^2(\Hb_s)}\leq CC_1\vep ,
\end{equation}
\begin{equation}\label{eq 3' L2-KG-basic-ext}
\|c(1+|r-t|)^{\gamma} \del^IL^j v\|_{L^2(\Hb_s)}\leq CC_1\vep.
\end{equation}

\subsection{Decay bounds}
Here we list out the decay bounds  which can be deduced directly from the bootstrap bounds \eqref{eq 1 bootstrap}.

From \eqref{eq 1' bootstrap} combined with lemma \ref{lem 1 decay-near-w} (remark that $\gamma>1$), for $|I|+j\leq N$,
\begin{equation}\label{eq 1h decay-w-basic-ext}
|r(1+|r-t|)^{\gamma-1}\del^IL^j\delt_1 u(t,x)| + |(1+|r-t|)^{\gamma}\del^IL^j\del_\alpha u(t,x)|\leq CC_1\vep s^{1+\delta}.
\end{equation}
For $|I|+j\leq N-1$, we apply  \eqref{eq 1l bootstrap} combined with lemma \ref{lem 1 decay-near-w}
\begin{equation}\label{eq 1l decay-w-basic-ext}
|r(1+|r-t|)^{\gamma-1}\del^IL^j\delt_1 u(t,x)| + |(1+|r-t|)^{\gamma}\del^IL^j\del_\alpha u(t,x)|\leq CC_1\vep.
\end{equation}

Similarly, for $|I|+j\leq N$, by \eqref{eq 1 sobolev-trans} and \eqref{eq 1 sobolev-ext} combined with \eqref{eq 1m bootstrap},
\begin{equation}\label{eq 1h decay-KG-basic-ext}
|c(1+|r-t|)^{\gamma}\del^IL^jv(t,x)|\leq CC_1\vep s^{\delta}
\end{equation}
and for $|I|+j\leq N-1$, by \eqref{eq 1 sobolev-trans} and \eqref{eq 1 sobolev-ext} combined with \eqref{eq 1l bootstrap},
\begin{equation}\label{eq 1 decay-KG-basic-ext}
|c(1+|r-t|)^{\gamma}\del^IL^jv(t,x)|\leq CC_1\vep.
\end{equation}

Based on the above bounds, we establish the following bound:
\begin{lemma}\label{lem 1 decay-KG-source}
Let $(u,v)$ be a regular solution of \eqref{eq main} defined in $\Fcal_{[s_0,s_1]}$ satisfying \eqref{eq 1 bootstrap}. Let $N^{\alpha\beta}$ be a null quadratic form (with $N^{\alpha\beta}$ constants in canonical frame).
For $|I|+j\leq N-1$,
\begin{equation}\label{eq 1h lem 1 decacy-KG-source-ext}
\big|\del^IL^j\big(N^{\alpha\beta}\del_{\alpha}u\del_{\beta}u\big)\big|\leq C(C_1\vep)^2 (1+|r-t|)^{1-2\gamma}(1+r)^{-1}.
\end{equation}

Furthermore, for $|I|+j\leq N-4$,
\begin{equation}\label{eq 1 decay-KG-applied-ext}
|c \del^IL^jv(t,x)|\leq CC_1\vep (1+r)^{-1}(1+|r-t|)^{1-\gamma},
\end{equation}
For $|I|+j\leq N-3$,
\begin{equation}\label{eq 1h decay-KG-applied-ext}
|c \del^IL^jv(t,x)| \leq CC_1\vep (1+r)^{-1}(1+|r-t|)^{1-\gamma}s^{\delta},
\end{equation}
and for $|I|+j\leq N-2$,
\begin{equation}\label{eq 1hh decay-KG-applied-ext}
|c \del^IL^jv(t,x)| \leq CC_1\vep (1+r)^{-1}(1+|r-t|)^{1-\gamma}s^{1+\delta}.
\end{equation}

\end{lemma}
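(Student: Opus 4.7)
The plan is to first establish the null-form bound \eqref{eq 1h lem 1 decacy-KG-source-ext}, then feed it into the Klein-Gordon decay proposition to obtain the three bounds on $c\del^IL^jv$.

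For \eqref{eq 1h lem 1 decacy-KG-source-ext}, I would exploit the null structure of $N^{\alpha\beta}\del_\alpha u\,\del_\beta u$. Converting from the canonical frame to the null frame $\{\delt_0,\delt_1\}$ via the transition matrices of Subsection \ref{subsec frame}, the null condition forces $\Nt^{00}=0$, so
\[
N^{\alpha\beta}\del_\alpha u\,\del_\beta u = \Nt^{01}\,\delt_0 u\,\delt_1 u + \Nt^{10}\,\delt_1 u\,\delt_0 u + \Nt^{11}\,(\delt_1 u)^2,
\]
and every monomial carries at least one factor $\delt_1 u$. Applying $\del^IL^j$ via the Leibniz rule, together with Lemma \ref{lem 1 decompo-comm-ext} to pull $\delt_1$ outside the $\del^{I'}L^{j'}$ blocks, yields a finite sum of bilinear terms of the schematic form $(\del^{I_1}L^{j_1}\delt_1 u)(\del^{I_2}L^{j_2}\del_\alpha u)$ with $|I_1|+j_1+|I_2|+j_2\leq|I|+j\leq N-1$ and locally constant coefficients (note $x/r$ is locally constant). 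Since $N\geq 9$, pigeonhole forces each factor to have order $\leq N-1$, so \eqref{eq 1l decay-w-basic-ext} applies to both: the good-derivative factor is $\leq CC_1\vep(1+r)^{-1}(1+|r-t|)^{1-\gamma}$ and the other is $\leq CC_1\vep(1+|r-t|)^{-\gamma}$. Their product is \eqref{eq 1h lem 1 decacy-KG-source-ext}.

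For the three bounds on $c\del^IL^jv$, I would apply Proposition \ref{prop 1 decay-KG} to $\Box v+c^2v=N^{\alpha\beta}\del_\alpha u\,\del_\beta u=:f$. For $Z^M=\del^IL^j$ of order $n:=|I|+j$, the interior bound reads $|Z^Mv|\leq C(1+|r-t|)^{1-\gamma}t^{-1}\EE_{n+3,\gamma,c}(s,v)^{1/2}+C\sum|\del^{I'}L^kf|$, and the exterior bound reads $|Z^Mv|\leq C(1+r)^{-\gamma}\EE_{n,\gamma,c}(s,v)^{1/2}$. For $n\leq N-4,\,N-3,\,N-2$ the interior energy has order $\leq N-1,\,N,\,N+1$, which the bootstrap hierarchy \eqref{eq 1l bootstrap}--\eqref{eq 1h bootstrap} controls by $CC_1\vep,\,CC_1\vep s^\delta,\,CC_1\vep s^{1+\delta}$ respectively; the exterior cases require only lower-order energies and are handled by the same bootstrap bounds. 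The $f$-contribution is absorbed via \eqref{eq 1h lem 1 decacy-KG-source-ext}: $(1+|r-t|)^{1-2\gamma}\leq(1+|r-t|)^{1-\gamma}$ since $\gamma>0$, giving a $(C_1\vep)^2$ term dominated by the linear $C_1\vep$ ones for $\vep$ small. In the interior $r\sim t$ so $t^{-1}\sim(1+r)^{-1}$, and in the exterior $|r-t|\sim r$ so $(1+r)^{-\gamma}\sim(1+r)^{-1}(1+|r-t|)^{1-\gamma}$, producing the unified right-hand side after absorbing the fixed mass $c$ into the constant $C$.

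The main obstacle will be the bookkeeping of the null-frame decomposition under $\del^IL^j$ for high-order indices: one must verify that Lemma \ref{lem 1 decompo-comm-ext} together with the locally constant nature of $x/r$ preserves the gain of one good derivative on at least one factor of every term produced by Leibniz. A subtler point is matching orders so that \eqref{eq 1h lem 1 decacy-KG-source-ext} feeds cleanly into the Klein-Gordon estimate without introducing an $s^{1+\delta}$ loss in the small-order case; this is precisely why the three regimes $n\leq N-4,\,N-3,\,N-2$ appear with growth rates $1,\,s^\delta,\,s^{1+\delta}$.
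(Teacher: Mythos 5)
Your proposal matches the paper's proof essentially line for line: both rewrite the quadratic null form in the null frame (using $\Nt^{00}=0$) so that every Leibniz term carries a $\delt_1$ factor, apply the wave decay bound \eqref{eq 1l decay-w-basic-ext} to each factor to get \eqref{eq 1h lem 1 decacy-KG-source-ext}, and then feed this together with the hierarchy of bootstrap energy bounds into Proposition \ref{prop 1 decay-KG}. The only cosmetic difference is that you invoke Lemma \ref{lem 1 decompo-comm-ext} to reorder $\delt_1$ past $\del^{I'}L^{j'}$, which is not actually needed since \eqref{eq 1l decay-w-basic-ext} is already stated for $\del^IL^j\delt_1 u$.
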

\begin{proof}
For \eqref{eq 1h lem 1 decacy-KG-source-ext}, we rewrite this term in null frame (remark that $\Nt^{\alpha\beta}$ are locally constant):
$$
\del^IL^j\big(\Nt^{\alpha\beta}\del_{\alpha}u\del_{\beta}u\big)
= \sum_{I_1+I_2=I\atop j_1+j_2=j}\Nt^{\alpha\beta}\del^{I_1}L^{j_1}\delt_{\alpha}u\cdot \del^{I_2}L^{j_2}\delt_{\beta}u.
$$
The null condition leads to the fact that $\Nt^{00} = 0$. Then we apply \eqref{eq 1l decay-w-basic-ext}, the desired result is established.

Then we substitute, \eqref{eq 1h lem 1 decacy-KG-source-ext} and \eqref{eq 1l bootstrap} into \eqref{eq 3 prop 1 decay-KG}, \eqref{eq 1 decay-KG-applied-ext} ,\eqref{eq 1h decay-KG-applied-ext} and \eqref{eq 1hh decay-KG-applied-ext} are direct.
\end{proof}

\subsection{Energy and decay bounds on Hessian form}
The objective of this subsection is applying proposition \ref{prop 1 L2-hessian-ext} and lemma \ref{lem 1 decay-hessian-ext} together with \eqref{eq 1 bootstrap}. To this purpose we first remark that for $|I|+j = N+1$,
$$
\del^IL^jLv  = \del^I\big(r\delt_1L^jv\big) - \del^I\big((r-t)\del_xL^jv\big).
$$
Remark that on $\Hb_{[s_0,\infty)}$, due to the homogeneity
$$
|\del^I(r-t)|\leq
\left\{
\aligned
&(r-t),\quad |I|=0
\\
&r^{-|I|+1},\quad |I|\geq 1
\endaligned
\right.,
\quad
|\del^Ir|\leq Cr^{1-|I|}.
$$
Then for $|I|+j\leq N+1$ (remark that $(r-t)/r\leq \zeta$),
\begin{equation}\label{eq 0 pr lem 1 decay-KG-source}
\|cr^{-1}(1+|r-t|)^{\gamma}\del^IL^jLv\|_{L^2(\Hb_s)}\leq \EE_{N+1,\gamma,c}(s,v)^{1/2}.
\end{equation}

Furthermore, recall the $L^2$ bounds \eqref{eq 1 L2-KG-basic-ext}, for $|I|+j\leq N+2$
\begin{equation}\label{eq 1 pr lem 1 decay-KG-source}
\|cr^{-1}(1+|r-t|)^{\gamma}\del^IL^j v\|_{L^2(\Hb_s)}\leq \EE_{N+1,\gamma,c}(s,v)^{1/2}\leq C_1\vep s^{1+\delta}.
\end{equation}

Then we establish the following bounds:
\begin{lemma}\label{lem 1 hessian-ext-app}
Under the bootstrap assumption and suppose that $1/2\leq c\leq 2$:
\\
For $|I|+j\leq N+2$,
\begin{equation}\label{eq 1 lem 1 hessian-ext-app}
\|r^{1+\gamma} \del^IL^j (v^3)\|_{L^2(\Fcal^E_s)}   + \|t(1+|r-t|)^{\gamma}\del^IL^j(v^3)\|_{L^2(\Fcal^I_s)} \leq C (C_1\vep)^3 s^{1+\delta}.
\end{equation}
For $|I|+j\leq N+1$,
\begin{equation}\label{eq 1' lem 1 hessian-ext-app}
 \|r^{1+\gamma}  \del^IL^j (v^3)\|_{L^2(\Fcal^E_s)} + \|t(1+|r-t|)^{\gamma}\del^IL^j(v^3)\|_{L^2(\Fcal^I_s)} \leq C (C_1\vep)^3.
\end{equation}


For $|I|+j\leq N-1$,
\begin{equation}\label{eq 2hh lem 1 hessian-ext-app}
|\del^IL^j(v^3)(t,x)|\leq
\left\{
\aligned
C(C_1\vep)^3 t^{-2}(1+|r-t|)^{2-3\gamma},\quad (t,x)\in \Fcal^I_s,
\\
C(C_1\vep)^3 r^{-2-\gamma},\quad (t,x)\in \Fcal^E_s.
\endaligned
\right.
\end{equation}

For $|I|+j\leq N-3$,
\begin{equation}\label{eq 2h lem 1 hessian-ext-app}
|\del^IL^j(v^3)(t,x)|\leq
\left\{
\aligned
C(C_1\vep)^3 t^{-3}(1+|r-t|)^{3(1-\gamma)}s^{\delta},\quad (t,x)\in \Fcal^I_s,
\\
C(C_1\vep)^3 r^{-3\gamma}s^{\delta},\quad (t,x)\in \Fcal^E_s.
\endaligned
\right.
\end{equation}
For $|I|+j\leq N-4$,
\begin{equation}\label{eq 2 lem 1 hessian-ext-app}
|\del^IL^j(v^3)(t,x)|\leq
\left\{
\aligned
C(C_1\vep)^3 t^{-3}(1+|r-t|)^{3(1-\gamma)},\quad (t,x)\in \Fcal^I_s,
\\
C(C_1\vep)^3 r^{-3\gamma},\quad (t,x)\in \Fcal^E_s.
\endaligned
\right.
\end{equation}

\end{lemma}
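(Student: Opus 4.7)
The plan is to apply the Leibniz rule to write $\del^I L^j(v^3)$ as a sum of products $\del^{I_1}L^{j_1}v\cdot\del^{I_2}L^{j_2}v\cdot\del^{I_3}L^{j_3}v$ with $I_1+I_2+I_3 = I$ and $j_1+j_2+j_3 = j$, ordered so that $|I_1|+j_1\ge|I_2|+j_2\ge|I_3|+j_3$. The pigeonhole gives $|I_2|+j_2\le\lfloor(|I|+j)/2\rfloor\le N-4$ since $N\ge 9$, so the sharpest pointwise decay \eqref{eq 1 decay-KG-applied-ext} (with $c\sim 1$) applies to both $\del^{I_2}L^{j_2}v$ and $\del^{I_3}L^{j_3}v$, yielding
$$
|\del^{I_2}L^{j_2}v|\cdot|\del^{I_3}L^{j_3}v|\lesssim (C_1\vep)^2(1+r)^{-2}(1+|r-t|)^{2-2\gamma}.
$$
The remaining highest-order factor $\del^{I_1}L^{j_1}v$ will be estimated in $L^2$ using the bootstrap bounds \eqref{eq 1l bootstrap}--\eqref{eq 1h bootstrap} supplemented by the $r^{-1}$-weighted estimate \eqref{eq 1 pr lem 1 decay-KG-source}.

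For the exterior estimates, in $\Fcal_s^E$ the relations $(1+r)\sim(1+|r-t|)\sim r$ reduce the target to controlling $(C_1\vep)^2\|r^{1-\gamma}\del^{I_1}L^{j_1}v\|_{L^2(\Fcal_s^E)}$. The crucial geometric input is that $\Fcal_s^E\subset\Pcal_s$ enforces $r\ge 2t\ge s^2$: I will split $r^{1-\gamma}=r^{1-2\gamma}\cdot r^\gamma$ and use $r^{1-2\gamma}\le s^{-2(2\gamma-1)}\le s^{-2}$ together with the top-order bound $\|r^\gamma\del^{I_1}L^{j_1}v\|_{L^2}\lesssim C_1\vep s^{1+\delta}$ (valid for $|I_1|+j_1\le N+1$), producing $(C_1\vep)^3 s^{-1+\delta}\le (C_1\vep)^3$ and thus \eqref{eq 1' lem 1 hessian-ext-app}. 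For the coarser range $|I|+j\le N+2$ in \eqref{eq 1 lem 1 hessian-ext-app}, I will use \eqref{eq 1 pr lem 1 decay-KG-source} and $r^{2-2\gamma}\le s^{4(1-\gamma)}\le 1$ (exploiting $\gamma>1$) to obtain $(C_1\vep)^3 s^{1+\delta}$.

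For the interior estimates, $t\sim r\gtrsim s^2$ throughout $\Fcal_s^I$ and $(1+|r-t|)^{2-2\gamma}\le 1$, so the target is dominated by $(C_1\vep)^2\|t^{-1}(1+|r-t|)^\gamma\del^{I_1}L^{j_1}v\|_{L^2(\Fcal_s^I)}$; the bound $t^{-1}\lesssim s^{-2}$ then absorbs the $s^{1+\delta}$ from the energy in exactly the same fashion as the exterior argument. The pointwise bounds \eqref{eq 2hh lem 1 hessian-ext-app}--\eqref{eq 2 lem 1 hessian-ext-app} are a simpler variant of the same decomposition: all three factors are estimated pointwise via \eqref{eq 1 decay-KG-applied-ext}--\eqref{eq 1hh decay-KG-applied-ext}, with the choice of bound (carrying an $s^0$, $s^\delta$, or $s^{1+\delta}$ factor) for each factor dictated by its order, and the stated $s$-growth arising from the top-order factor $\del^{I_1}L^{j_1}v$.

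The only delicate step, which doubles as the main obstacle, is precisely the extraction of the negative power $s^{-2}$ from the geometric lower bounds $r\ge s^2$ and $t\gtrsim s^2$: this is what allows the $s$-free estimate \eqref{eq 1' lem 1 hessian-ext-app} in the presence of a top-order $L^2$ bound on $v$ that grows like $s^{1+\delta}$, and it is the one place where the combined-foliation geometry enters the otherwise routine Leibniz-plus-Hölder bookkeeping.
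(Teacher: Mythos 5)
Your decomposition, pigeonhole, and Hölder scheme is the same one the paper runs, and your $L^2$ estimates \eqref{eq 1 lem 1 hessian-ext-app} and \eqref{eq 1' lem 1 hessian-ext-app} are carried out correctly: the highest-order factor is placed in $L^2$ via \eqref{eq 1 pr lem 1 decay-KG-source} or \eqref{eq 1' L2-KG-basic-ext}, the two subordinate factors (of order $\le N-4$ by pigeonhole) supply the pointwise weight $(1+r)^{-2}(1+|r-t|)^{2-2\gamma}$ from \eqref{eq 1 decay-KG-applied-ext}, and the geometric lower bound $r\gtrsim s^2$, $t\gtrsim s^2$ in $\Hb_s$ is where the extra negative powers of $s$ come from. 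You also correctly identify that point as the crux of the lemma.

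There is, however, one small but real gap in the pointwise bounds. For \eqref{eq 2hh lem 1 hessian-ext-app} you have $|I|+j\le N-1$, so the highest-order factor $\del^{I_1}L^{j_1}v$ can carry order $N-1$; none of the decay bounds \eqref{eq 1 decay-KG-applied-ext}--\eqref{eq 1hh decay-KG-applied-ext} reaches that order (they stop at $N-4$, $N-3$, $N-2$ respectively). For that factor you must use the coarser Sobolev-type decay \eqref{eq 1 decay-KG-basic-ext}, $|c(1+|r-t|)^{\gamma}\del^{I_1}L^{j_1}v|\lesssim C_1\vep$ (valid up to order $N-1$), and pair it with \eqref{eq 1 decay-KG-applied-ext} on the two remaining factors, which is exactly what the paper does. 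Note also that even when $|I_1|+j_1\le N-2$, using the $s^{1+\delta}$-growing \eqref{eq 1hh decay-KG-applied-ext} on the top factor does not reproduce the $s$-free bound \eqref{eq 2hh lem 1 hessian-ext-app}, because in $\Fcal^I_s$ the factor $(1+|r-t|)$ can reach $\sim s^2\sim t$ and so $t^{-1}(1+|r-t|)\,s^{1+\delta}$ need not be bounded; so \eqref{eq 1 decay-KG-basic-ext} is genuinely required on the top-order factor, not merely one option among several. With that correction your plan coincides with the paper's proof.
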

\begin{proof}
It is clear that
$$
\del^IL^j (v^3) = \sum_{I_1+I_2+I_3=I\atop J_1+J_2+J_3=J}\del^{I_1}L^{j_1}v\cdot \del^{I_2}L^{j_2}v\cdot \del^{I_3}L^{j_3}v.
$$
That is, the operator $\del^IL^J$ is distributed on three factors.


For \eqref{eq 1 lem 1 hessian-ext-app}, observe that among these three factors there are at least two have derivatives of order $\leq [(N+2)/2]\leq N-4$ (for $N\geq 9$). Without loss of generality, suppose that $|I_2|+j_2\leq [(N+2)/2]$, $|I_3|+j_3\leq [(N+2)/2]$. Then we apply \eqref{eq 1 pr lem 1 decay-KG-source} on $\del^{I_1}L^{J_1}v$ and \eqref{eq 1 decay-KG-applied-ext} on the rest factors. On $\Fcal^E_s$, this leads to
$$
\aligned
&\|r^{1+\gamma}\del^{I_1}L^{j_1}v\cdot \del^{I_2}L^{j_2}v\cdot \del^{I_3}L^{j_3}v\|_{L^2(\Fcal^E_s)}
\\
\leq& C(C_1\vep)^2\|r^{-1}(1+|r-t|)^{\gamma}\del^{I_1}L^{j_1}v\|_{L^2(\Fcal^E_s)}
\cdot\|r(1+|r-t|)^{-\gamma}(1+r)^{1+\gamma}(1+r)^{-2\gamma}\|_{L^{\infty}(\Fcal^E_s)}
\\
\leq& C(C_1\vep)^3 s^{5-4\gamma+\delta}\leq C(C_1\vep)^3s^{1+\delta}.
\endaligned
$$
On $\Fcal^I_s$, this leads to
$$
\aligned
&\|t(1+|r-t|)^{\gamma}\del^{I_1}L^{j_1}v\cdot \del^{I_2}L^{j_2}v\cdot \del^{I_3}L^{j_3}v\|_{L^2(\Fcal^I_s)}
\\
\leq& C(C_1\vep)^2\|r^{-1}(1+|r-t|)^{\gamma}\del^{I_1}L^{j_1}v\|_{L^2(\Fcal^I_s)}
\cdot\|rt\cdot t^{-2}(1+|r-t|)^{2-2\gamma}\|_{L^{\infty}(\Fcal^I_s)}
\\
\leq& C(C_1\vep)^3s^{1+\delta}.
\endaligned
$$

For \eqref{eq 1' lem 1 hessian-ext-app}, we proceed exactly in the same manner except that we take \eqref{eq 1' L2-KG-basic-ext} instead of \eqref{eq 1 pr lem 1 decay-KG-source}.

For \eqref{eq 2hh lem 1 hessian-ext-app}, we apply \eqref{eq 1 decay-KG-basic-ext} on the factor with highest order and \eqref{eq 1 decay-KG-applied-ext} on the rest factors.



For \eqref{eq 2h lem 1 hessian-ext-app} and\eqref{eq 2 lem 1 hessian-ext-app}, we simply substitute \eqref{eq 1 decay-KG-applied-ext} and \eqref{eq 1h decay-KG-applied-ext} into the expression.
\end{proof}

Substitute the above bounds in lemma \ref{lem 1 hessian-ext-app} into the bounds in lemma \ref{lem 1 decay-hessian-ext}  (recall $\Box\del^IL^j u = \del^IL^j(v^3)$), we obtain the following decay bounds:
\\
By \eqref{eq 11' lem 1 decay-hessian-ext} and \eqref{eq 2hh lem 1 hessian-ext-app} combined with \eqref{eq 1' bootstrap},
\begin{equation}\label{eq 1h decay-hessian-ext-far}
|t^2(1+|r-t|)^{\gamma-1}\del^IL^j \delt_1\delt_1 u(t,x)|\leq CC_1\vep s^{1+\delta},\quad (t,x)\in\Fcal^I_s,\quad |I|+j\leq N-1.
\end{equation}

By \eqref{eq 1 lem 1 decay-hessian-ext} and \eqref{eq 2h lem 1 hessian-ext-app} combined with \eqref{eq 1l bootstrap},
\begin{equation}\label{eq 1 decay-hessian-ext-far}
|\del^IL^j\del_{\alpha}\del_{\beta}u(t,x)|\leq CC_1\vep t^{-1}r^{-\gamma},\quad (t,x) \in\Fcal^E_s,\quad |I|+j\leq N-2.
\end{equation}

When $(t,x)\in \Fcal^I_s$ and $|I|+j\leq N-2$, the following terms
\begin{equation}\label{eq 1 decay-hessian-ext-near}
|(1+|r-t|)^{1+\gamma}\del^IL^j\del_t\del_tu(t,x)|,\quad |t(1+|r-t|)^{\gamma}\del^IL^j\delt_1\del_tu(t,x)|,\quad  |t^2(1+|r-t|)^{\gamma-1}\del^IL^j\delt_1\delt_1u(t,x)|
\end{equation}
are bounded by $CC_1\vep$.

In the same manner, by Proposition \ref{prop 1 L2-hessian-ext}, we have the following bounds:
\\
For $|I|+j\leq N+1$, thanks to \eqref{eq 1' bootstrap} and \eqref{eq 1' lem 1 hessian-ext-app}, the following quantities are bounded by $CC_1\vep s^{1+\delta}$:
\begin{equation}\label{eq h L2-hessian}
\aligned
&\|tr^{\gamma}\del^IL^j\del_{\alpha}\del_{\beta}u\|_{L^2(\Fcal^E_s)},\quad \|(1+|r-t|)^{1+\gamma}\zeta \del^IL^j\del_t\del_t u\|_{L^2(\Fcal^I_s)},
\\
&\|t(1+|r-t|)^{\gamma}\zeta\del^IL^j \delt_1\del_t u\|_{L^2(\Fcal^I_s)},\quad \|t(1+|r-t|)^{\gamma}\del^IL^j\delt_1\delt_1 u\|_{L^2(\Fcal^I_s)}.
\endaligned
\end{equation}
For $|I|+j\leq N$, the following quantities are bounded by $CC_1\vep $:
\begin{equation}\label{eq m L2-hessian}
\aligned
&\|tr^{\gamma}\del^IL^j\del_{\alpha}\del_{\beta}u\|_{L^2(\Fcal^E_s)},\quad \|(1+|r-t|)^{1+\gamma}\zeta \del^IL^j\del_t\del_t u\|_{L^2(\Fcal^I_s)}
\\
&\|t(1+|r-t|)^{\gamma}\zeta\del^IL^j \delt_1\del_t u\|_{L^2(\Fcal^I_s)},\quad \|t(1+|r-t|)^{\gamma}\del^IL^j\delt_1\delt_1 u\|_{L^2(\Fcal^I_s)}.
\endaligned
\end{equation}

\subsection{Estimates on source terms}
Based on the bounds established in previous subsections, we will first establish the following bounds:
\begin{lemma}[Source bounds for wave equation in exterior region]\label{lem 1 source-w-ext}
Under the bootstrap assumption,  for $|I|+j\leq N+1$
\begin{equation}\label{eq 1 source-Lw-ext}
\|(\xi_s + (1-\xi_s)^{1/2}s)(1+\omega_{\gamma})\del^IL^{j+1} (v^3)\|_{L^2(\Hb_s)}\leq C(C_1\vep)^2s^{\delta}.
\end{equation}
And the following terms
\begin{equation}\label{eq 1 source-w-ext}
\|(\xi_s + (1-\xi_s)^{1/2}s)(1+\omega_{\gamma})\del^IL^j (v^3)\|_{L^2(\Hb_s)},\quad  \|(\xi_s+(1-\xi_s)^{1/2}s)(1+\omega_{\gamma})\del^IL^j\del_{\alpha}(v^3)\|_{L^2(\Hb_s)}
\end{equation}
are bounded by
$$
\aligned
C(C_1\vep)^2 s^{-2+\delta} \quad\text{ for }|I|+j\leq N+1.
\endaligned
$$
\end{lemma}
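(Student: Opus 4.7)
The plan is to expand the cubic $v^3$ via the Leibniz rule into sums of triple products $\prod_{a=1}^{3}\del^{I_a}L^{j_a}v$ and then apply an $L^2$--$L^\infty$ splitting: the highest-order factor is placed in $L^2(\Hb_s)$, the other two in $L^\infty$. Since $N\geq 9$, pigeonhole forces the two low-order factors to have total order $\leq\lfloor(N+2)/2\rfloor\leq N-4$, so the refined pointwise bound \eqref{eq 1 decay-KG-applied-ext} gives the joint estimate $|c^2\,\del^{I_2}L^{j_2}v\cdot\del^{I_3}L^{j_3}v|\leq C(C_1\vep)^2(1+r)^{-2}(1+|r-t|)^{2-2\gamma}$. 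For the mixed $\del^IL^j\del_\alpha(v^3)$ term I would first commute $\del_\alpha$ past $L^j$ via $L^j\del_\alpha=\sum_{k\leq j}c_k\,\del_{\beta(k)}L^k$ so that every resulting piece can be treated by the same Leibniz expansion.

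The decisive dichotomy between the two bounds concerns the order of the highest-order factor. For the first bound it can reach $N+2$, so one must use the weaker $L^2$ estimate \eqref{eq 1 pr lem 1 decay-KG-source} which carries an extra $r^{-1}$ weight; for the second bound it is $\leq N+1$, so the sharper \eqref{eq 1' L2-KG-basic-ext} without the $r^{-1}$ is available, and this saves a factor $r\sim s^2$ on $\Hb_s$. After extracting the $L^2$ factor of size $CC_1\vep s^{1+\delta}$ and absorbing $(\xi_s+(1-\xi_s)^{1/2}s)\leq Cs$, the remaining quantity to estimate in $L^\infty$ has the form $s\,c^{-1}\,r^\eta\,(1+|r-t|)^{-\gamma}(1+\omega_\gamma)|v|^2$, with $\eta=1$ in the first bound and $\eta=0$ in the second. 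I would partition $\Hb_s$ into the three sub-regions $\Tcal_s\cup(\Pcal_s\cap\{r-t\leq 0\})$ (where $\omega_\gamma=0$, $|r-t|\leq 1$, $r\sim s^2$), $\Pcal_s\cap\Fcal^I_s\cap\{r-t>0\}$ (where $r\sim t\sim s^2$), and $\Fcal^E_s$ (where $r\geq 2t\gtrsim s^2$ and $\omega_\gamma\sim r^\gamma$), and use $\gamma>1$ throughout; a direct computation bounds this $L^\infty$ quantity by $C(C_1\vep)^2 s^{2\eta-3}$. Multiplying by the $L^2$ factor then yields $C(C_1\vep)^3 s^{2\eta-2+\delta}$, which is $s^\delta$ for the first bound and $s^{-2+\delta}$ for the second, matching the announced estimates.

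The hard part will be the sub-case of $\del^IL^j\del_\alpha(v^3)$ in which the whole of $\del^IL^j$ lands on the $\del_\alpha v$ factor, producing $v^2\cdot\del^IL^j\del_\alpha v$ of top order $N+2$. A naive treatment forces the weaker $r^{-1}$-weighted $L^2$ estimate and yields only $s^\delta$ instead of the desired $s^{-2+\delta}$. The recovery relies on the commutation identity above: the $k<j$ pieces reduce the top order to $\leq N+1$ and are estimated by the sharper scheme, while the extremal $k=j$ piece $\del_\beta(\del^IL^jv)$, once paired with the $v^2$ factor, must be analysed by recognising that the extra $\del_\beta$ acts essentially as an $r^{-1}$ (hence $s^{-2}$) gain thanks to \eqref{eq 1 decay-KG-applied-ext} applied to $\del_\alpha v$ itself, as opposed to the naive $Lv$-type decay; together with careful bookkeeping of commutators $[\del^IL^j,\del_\alpha]$, all lower-order, this restores the $s^{-2}$ improvement.
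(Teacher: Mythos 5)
Your overall strategy matches the paper's: Leibniz expansion of the cubic into triple products, an $L^2$--$L^{\infty}$ split with the highest-order factor in $L^2$, the pigeonhole giving order $\leq N-4$ for the two remaining factors and hence \eqref{eq 1 decay-KG-applied-ext}, and the final power counting via an $\eta\in\{0,1\}$ left-over $r$-power. (One cosmetic imprecision there: pigeonhole gives that \emph{each} of the two lower-order factors has order at most $\lfloor (N+2)/2\rfloor\leq N-4$, not that their sum does; it is the per-factor bound that is needed for \eqref{eq 1 decay-KG-applied-ext}.)

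The diagnosis of the dichotomy is, however, off in a way that touches the step you yourself flag as ``the hard part''. In both \eqref{eq 1 source-Lw-ext} and the $\del^IL^j\del_\alpha(v^3)$ half of \eqref{eq 1 source-w-ext}, the extremal factor has total order $N+2$: for the first it is $\del^{I_1}L^{j_1}Lv$ with $|I_1|+j_1\leq N+1$, for the second it is $\del^{I_1}L^{j_1}\del_\alpha v$ with $|I_1|+j_1\leq N+1$. Your claim that ``for the second bound it is $\leq N+1$'' and the accompanying appeal to \eqref{eq 1' L2-KG-basic-ext} (which is the $c$-slot and only covers $\del^IL^j v$ up to order $N+1$) are therefore incorrect. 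The genuine dichotomy is \emph{which} trailing derivative the extremal factor carries: a trailing $L$ forces the decomposition $Lv = r\,\delt_1 v - (r-t)\del_x v$ and hence the $r^{-1}$-weighted bound \eqref{eq 0 pr lem 1 decay-KG-source} ($\eta=1$), whereas a trailing $\del_\alpha$ fits the $\zeta\del$-slot of $\EE_{N+1,\gamma,c}(s,v)$ directly, i.e.\ \eqref{eq 1 L2-KG-basic-ext}, with no $r^{-1}$ ($\eta=0$). This is exactly what saves the factor $r\sim s^2$.

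Because of this misattribution, the ``recovery'' you propose for the sub-case $v^2\cdot\del^IL^j\del_\alpha v$ does not work. You invoke \eqref{eq 1 decay-KG-applied-ext} applied to ``$\del_\alpha v$ itself'', but \eqref{eq 1 decay-KG-applied-ext} is a pointwise decay estimate valid only for $|I|+j\leq N-4$, nowhere near the extremal order $N+1$, and the commutator $[\del^IL^j,\del_\alpha]$ does not lower the leading order of this piece. In fact no recovery is needed: $\del^{I_1}L^{j_1}\del_\alpha v$ with $|I_1|+j_1\leq N+1$ is, after commuting $\del_\alpha$ to the front (a finite constant-coefficient combination of $\del_\beta\del^{I'}L^{j'}v$ with $|I'|+j'\leq N+1$), directly bounded in $L^2$ with the weight $(1+|r-t|)^{\gamma}\zeta$ by the energy $\EE_{N+1,\gamma,c}(s,v)^{1/2}\leq CC_1\vep s^{1+\delta}$. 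The two spectator $v$'s then absorb $s(1+r)^{-2}\lesssim s^{-3}$ by \eqref{eq 1 decay-KG-applied-ext}, giving $s^{-2+\delta}$ as claimed. Correcting the $L^2$ reference from \eqref{eq 1' L2-KG-basic-ext} to \eqref{eq 1 L2-KG-basic-ext} and deleting the claimed ``$r^{-1}$ gain from $\del_\beta$'' closes the argument.
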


\begin{proof}
Thanks to \eqref{eq xi-zeta}
\begin{equation}\label{eq 1 weight-proof}
|\xi_s + (1-\xi_s)^{1/2}s|\leq C\zeta s
\end{equation}
and
\begin{equation}\label{eq 2 weight-proof}
|1+\omega_{\gamma}|\leq C(1+|r-t|)^{\gamma}.
\end{equation}

For \eqref{eq 1 source-Lw-ext}, we remark that
$$
\del^IL^{j+1}\big(v^3\big) = 3\sum_{I_1+I_2+I_3=I\atop j_1+j_2+j_3=j}\del^{I_1}L^{j_1}Lv\cdot \del^{I_2}L^{j_2}v\cdot \del^{I_3}L^{j_3}v.
$$
Then we distinguish between different cases:
\\
$-$ when $|I_1|+j_1=N+1$, we apply \eqref{eq 0 pr lem 1 decay-KG-source} on the first factor and \eqref{eq 1 decay-KG-applied-ext} on the rest factors:
$$
\aligned
&\|\zeta s (1+|r-t|)^{\gamma} \del^IL^{j+1}v\cdot v^2\|_{L^2(\Hb_s)}
\\
\leq& C(C_1\vep)^2\|r^{-1}(1+|r-t|)^{\gamma}\zeta \del^IL^{j+1}v \|_{L^2(\Hb_s)}\cdot\|sr (1+r)^{-2}(1+|r-t|)^{2-2\gamma}\|_{L^{\infty}(\Hb_s)}
\\
\leq& C(C_1\vep)^3 s^{\delta}.
\endaligned
$$
\\
$-$ when $4\leq |I_1|+j_1\leq N-1$, we apply \eqref{eq 1' L2-KG-basic-ext} on the first factor and \eqref{eq 1 decay-KG-applied-ext} on the rest factors (recall that $|I_2|+j_2+|I_3|+j_3\leq N-4$). The $L^2$ norm is bounded by $C(C_1\vep)^3s^{-2+\delta}$, we omit the detail of calculation.
\\
$-$ when $|I_1|+j_1\leq 3\leq N-5$, either $|I_2|+j_2\leq N-4$ or $|I_3|+j_3\leq N-4$. Without loss of generality we suppose $|I_2|+j_2\leq N-4$. Then we apply \eqref{eq 1 decay-KG-applied-ext} on the first and the last factor and \eqref{eq 1' L2-KG-basic-ext} on the second factor. The $L^2$ norm is bounded by $C(C_1\vep)^3s^{-2+\delta}$. So \eqref{eq 1 source-Lw-ext} is concluded.

For \eqref{eq 1 source-w-ext}, the estimate on $\del^IL^j(v^3)$ is simpler and similar to that of $\del^IL^j\del_{\alpha}(v^3)$, we omit the detail and concentrate on latter one. Remark the following calculation:
\begin{equation}\label{eq 1 proof lem 1 hessian-ext-app}
\del^IL^j\del_\alpha (v^3) = 3\sum_{I_1+I_2+I_3=I\atop j_1+j_2+j_3=j}\del^{I_1}L^{j_1}\del_{\alpha} v\cdot \del^{I_2}L^{j_2}v\cdot \del^{I_3}L^{j_3}v,
\end{equation}
\\
\noindent $-$ When $4\leq |I_1|+j_1\leq N+1$, $|I_2|+j_2\leq N-4$, $|I_3|+j_3\leq N-4$. Then we apply \eqref{eq 1 decay-KG-applied-ext} on $\del^{I_2}L^{j_2} v$ and $\del^{I_3}L^{j_3}v$ and \eqref{eq 1 L2-KG-basic-ext} on $\del^{I_1}L^{J_1}\del_{\alpha}v$. Then
$$
\aligned
&\|(1+|r-t|)^{\gamma}s\zeta\del^{I_1}L^{j_1}\del_{\alpha}v\cdot  \del^{I_2}L^{j_2}v\cdot\del^{I_3}L^{j_3} v\|_{L^2(\Fcal_s^E)}
\\
\leq& \|(1+|r-t|)^{\gamma}\zeta\del^{I_1}L^{j_1}\del_{\alpha}v\|_{L^2(\Fcal_s^E)}\cdot
 \|s\del^{I_2}L^{j_2}v\cdot\del^{I_3}L^{j_3} v\|_{L^\infty(\Fcal_s^E)}
\leq  C(C_1\vep)^3 s^{-2+\delta}.
\endaligned
$$

\noindent $-$ When $|I_1|+j_1\leq 3\leq N-5$, $|I_2|+j_2+|I_3|+j_3\leq N$ and this leads to $|I_2|+j_2\leq [(N+1)/2]\leq N-4$ or $|I_3|+j_3\leq [(N+1)/2]\leq N-4$. Suppose without loss of generality that $|I_2|+j_2\leq N-4$. Then:
$$
\aligned
&\|(1+|r-t|)^{\gamma}\zeta s\del^{I_1}L^{j_1}\del_{\alpha}v\cdot \del^{I_2}L^{j_2}v\cdot \del^{I_3}L^{j_3}v\|_{L^2(\Hb_s)}
\\
\leq& C(C_1\vep)^2 \|s\del^{I_1}L^{j_1}\del_{\alpha}v\cdot \del^{I_2}L^{j_2}v\|_{L^{\infty}(\Hb_s)}\cdot \|(1+|r-t|)^{\gamma}\zeta \del^{I_3}L^{j_3} v\|_{L^2(\Hb_s)}
\\
\leq& C(C_1\vep)^3 s^{-2+\delta} .
\endaligned
$$
%
\end{proof}

For Klein-Gordon component,  we establish the following bounds:
\begin{lemma}[Source bounds for Klein-Gordon equation in exterior region]\label{lem 1 source-KG-ext}
Under the bootstrap assumption \eqref{eq 1 bootstrap}, the following bounds hold for $|I|+j\leq N+1$:
\begin{equation}\label{eq 1 lem 1 source-KG-ext}
\|(1+(1-\xi_s)^{1/2}s)(1+\omega_\gamma)\del^IL^j\big(m^{\mu\nu}N^{\alpha\beta}\del_{\mu}\del_{\alpha}u\cdot \del_{\nu}\del_{\beta}u\big)\|_{L^2(\Hb_s)}
\leq C(C_1\vep)^2 s^{\delta},
\end{equation}
\\
$-$ for $|I|+j\leq N$
\begin{equation}\label{eq 2 lem 1 source-KG-ext}
\|(1+(1-\xi_s)^{1/2}s)(1+\omega_\gamma)\del^IL^j\big(m^{\mu\nu}N^{\alpha\beta}\del_{\mu}\del_{\alpha}u\cdot \del_{\nu}\del_{\beta}u\big)\|_{L^2(\Hb_s)}
\leq C(C_1\vep)^2s^{-1+\delta},
\end{equation}
\\
$-$ for $|I|+j\leq N-1$
\begin{equation}\label{eq 3 lem 1 source-KG-ext}
\|(1+(1-\xi_s)^{1/2}s)(1+\omega_\gamma)\del^IL^j\big(m^{\mu\nu}N^{\alpha\beta}\del_{\mu}\del_{\alpha}u\cdot \del_{\nu}\del_{\beta}u\big)\|_{L^2(\Hb_s)}
\leq C(C_1\vep)^2 s^{-2+\delta}.
\end{equation}

Furthermore, for $|I|+j\leq N+1$
\begin{equation}\label{eq 1 lem 2 source-KG-ext}
\big\|(\xi_s+(1-\xi_s)^{1/2}s )(1+\omega_{\gamma}) \del^IL^j\big(\del_{\alpha}u\cdot \del_{\beta}(v^3)\big)\big\|_{L^2(\Hb_s)}\leq C(C_1\vep)^4 s^{-2+\delta}.
\end{equation}
\end{lemma}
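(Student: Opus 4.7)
The plan is to expand the outer operator $\del^IL^j$ by Leibniz, rewrite $\del_\beta(v^3)=3v^2\del_\beta v$, and reduce the estimate to a $4$-fold product that I distribute between a single $L^2$ slot fed by the bootstrap energy bounds and three $L^\infty$ slots fed by the bootstrap pointwise decay. Using \eqref{eq xi-zeta} I first replace the weight $(\xi_s+(1-\xi_s)^{1/2}s)(1+\omega_\gamma)$ by its pointwise upper bound $C\zeta(s,x)\,s\,(1+|r-t|)^\gamma$. The quantity to control is then a finite linear combination of
$$
\bigl\|\zeta\,s\,(1+|r-t|)^\gamma\cdot A\cdot B_1\cdot B_2\cdot B_3\bigr\|_{L^2(\Hb_s)},
$$
where $A=\del^{I_1}L^{j_1}\del_\alpha u$, the factors $B_1,B_2$ are bare derivatives of $v$, and $B_3$ carries the stray $\del_\beta$; the four orders $p_0=|I_1|+j_1$ and $p_1,p_2,p_3$ satisfy $p_0+p_1+p_2+p_3\leq N+1\leq 10$ because $N\geq 9$, so by pigeonhole at most one of them can be $\geq 6$.

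The assignment rule is the following. If $p_3\geq 5$, I put $B_3$ in $L^2$ via \eqref{eq 1 L2-KG-basic-ext} (which costs $CC_1\vep\,s^{1+\delta}$); otherwise $p_3\leq 4$ and I put whichever of $A,B_1,B_2$ has the largest order in $L^2$, using \eqref{eq 2 L2-w-basic-ext} (which costs only $CC_1\vep$, with no $s^{1+\delta}$) if that factor is $A$, or \eqref{eq 1' L2-KG-basic-ext} (cost $CC_1\vep\,s^{1+\delta}$) if it is a bare $v$-factor. In every configuration the three remaining factors have orders $\leq 5$, so they admit the pointwise decay bounds \eqref{eq 1l decay-w-basic-ext} for the wave $A$ (since $5\leq N-1$) and \eqref{eq 1 decay-KG-applied-ext} for each $B_i$ (since the effective derivative count on $v$, including the $\del_\beta$ when it applies to $B_3$, is at most $5=N-4$). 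The swap when $p_3\geq 5$ is precisely what prevents the stricter decay threshold on $B_3$ from spoiling the argument.

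The decisive case is the one where the $L^2$ slot holds a Klein--Gordon variable, costing $s^{1+\delta}$. The three $L^\infty$ factors are then one wave $A$ and two $v$-type terms, and their product is bounded by
$$
(C_1\vep)^3\,(1+|r-t|)^{-\gamma}\,(1+r)^{-2}\,(1+|r-t|)^{2-2\gamma}=(C_1\vep)^3(1+r)^{-2}(1+|r-t|)^{2-3\gamma}.
$$
Since $\gamma>1$ one has $(1+|r-t|)^{2-3\gamma}\leq 1$, and on $\Hb_s$ the position bound $r\geq(s^2-1)/2\gtrsim s^2$ gives $(1+r)^{-2}\lesssim s^{-4}$; hence $\zeta\,s$ times the three $L^\infty$ factors is $\lesssim (C_1\vep)^3\,s^{-3}$, and multiplying by the $L^2$ bound $CC_1\vep\,s^{1+\delta}$ yields exactly $C(C_1\vep)^4\,s^{-2+\delta}$. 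When $A$ itself sits in the $L^2$ slot, the analogous $L^\infty$ product has three $v$-type factors and is $\lesssim (C_1\vep)^3(1+r)^{-3}\lesssim s^{-6}$, so $\zeta\,s$ times $L^\infty$ is $\lesssim (C_1\vep)^3\,s^{-5}$ and the wave $L^2$ bound $CC_1\vep$ delivers $C(C_1\vep)^4\,s^{-5}$, comfortably below the target. The main obstacle is the careful bookkeeping around $B_3$: its stricter decay threshold $p_3+1\leq N-4$ demands the swap into $L^2$ as soon as $p_3\geq 5$, and then the combination $\gamma>1$ with $r\gtrsim s^2$ on $\Hb_s$ is exactly what converts the residual weight $\zeta\,s$ into the $s^{-3}$ improvement needed to absorb the $s^{1+\delta}$ coming from the Klein--Gordon $L^2$ bound.
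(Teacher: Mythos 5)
Your proposal addresses only the last of the four bounds in the lemma, namely \eqref{eq 1 lem 2 source-KG-ext}, the estimate for $\del^IL^j\big(\del_{\alpha}u\cdot\del_{\beta}(v^3)\big)$. That part is essentially correct and follows the same scheme as the paper: rewrite $\del_\beta(v^3)=3v^2\del_\beta v$, distribute $\del^IL^j$ by Leibniz, put the highest-order factor in $L^2$ via \eqref{eq 2 L2-w-basic-ext} or \eqref{eq 1 L2-KG-basic-ext}/\eqref{eq 1' L2-KG-basic-ext}, and use the pointwise bounds \eqref{eq 1l decay-w-basic-ext}, \eqref{eq 1 decay-KG-applied-ext} on the rest; the position bound $r\gtrsim s^2$ on $\Hb_s$ together with $\gamma>1$ absorbs the $s^{1+\delta}$ coming from a Klein--Gordon $L^2$ slot. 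Your bookkeeping around the stray $\del_\beta$ and the threshold $p_3+1\leq N-4$ is a sensible clarification.

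The serious gap is that the first three estimates \eqref{eq 1 lem 1 source-KG-ext}--\eqref{eq 3 lem 1 source-KG-ext}, which are the substantive core of this lemma, are not treated at all. These bound $\del^IL^j\big(m^{\mu\nu}N^{\alpha\beta}\del_{\mu}\del_{\alpha}u\cdot\del_{\nu}\del_{\beta}u\big)$, a quadratic in Hessian components of the \emph{wave} unknown $u$, and the Leibniz/pigeonhole machinery you set up gives nothing here: there is no Klein--Gordon factor supplying $(1+r)^{-1}$ decay, and a naive product of two Hessian terms via \eqref{eq 2 L2-w-basic-ext} and \eqref{eq 2 lem 1 decay-near} only yields an unweighted $O((C_1\vep)^2)$ bound, which after multiplying by the weight $\lesssim s\zeta(1+|r-t|)^\gamma$ would blow up rather than produce $s^{-2+\delta}$. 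The paper closes this by (i) splitting $\Hb_s$ into the far region $\Fcal^E_s$, where the refined Hessian bounds of Proposition~\ref{prop 1 L2-hessian-ext} and Lemma~\ref{lem 1 decay-hessian-ext} already give $t^{-1}r^{-\gamma}$ decay, and the near-cone region $\Fcal^I_s$, and (ii) in $\Fcal^I_s$, invoking the \emph{double null structure}: rewriting the quadratic in the null frame so that $\mt^{00}=\Nt^{00}=0$, which leaves the list \eqref{eq 1 proof lem 1 source-KG-ext} where every surviving term has at least two tangential $\delt_1$-derivatives, and then a delicate three-case analysis of the worst term $\delt_1\delt_1 u\cdot\del_t\del_t u$, whose top-order case produces exactly the $s^{\delta}$ loss in \eqref{eq 1 lem 1 source-KG-ext} and drops to $s^{-1+\delta}$, $s^{-2+\delta}$ when one loses a derivative. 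None of this appears in your write-up, so the proof is not complete.
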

\begin{proof}
We recall \eqref{eq 1 weight-proof} and \eqref{eq 2 weight-proof} and observe that the above bounds \eqref{eq 1 lem 1 source-KG-ext},\eqref{eq 2 lem 1 source-KG-ext} and \eqref{eq 3 lem 1 source-KG-ext} are trivial for the region $\Fcal^E_s$. In fact we can prove that for $|I|+j\leq N+1$,
\begin{equation}\label{eq 1 source-KG-far}
\|(1+|r-t|)^{\gamma}s\zeta\del^IL^j\big(\del_{\alpha}\del_{\beta}u\cdot \del_{\mu}\del_{\nu} u\big)\|_{L^2(\Fcal^E_s)}\leq C(C_1\vep)^2s^{-2+\delta}.
\end{equation}
This is by applying directly \eqref{eq h L2-hessian} (the first term) and \eqref{eq 1 decay-hessian-ext-far}.

In the region $\Fcal^I_s$, we need to evoke the double-null structure. That is, $\mt^{00}$ and $\Nt^{00}$ are zero. To do so, we write this term within the null frame $\{\delt_{\alpha}\}$ and see that it is a linear combination of the following terms with constant coefficients (remark that the elements of transition matrix $\Psit^{\beta}_{\alpha}$ are locally constant, so there is no terms with derivatives on these elements):
\begin{equation}\label{eq 1 proof lem 1 source-KG-ext}
\aligned
&\mt^{10}\Nt^{10}\delt_1\delt_1u\del_t\del_tu,\quad \mt^{10}\Nt^{01}\del_t\delt_1u\delt_1\del_tu,\quad \mt^{10}\Nt^{11}\delt_1\delt_1u\del_t\delt_1u
\\
&\mt^{11}\Nt^{10}\delt_1\delt_1u\delt_1\del_tu,\quad \mt^{11}\Nt^{01}\delt_1\del_tu\delt_1\delt_1u,\quad \mt^{11}\Nt^{11}\delt_1\delt_1u\delt_1\delt_1u
\endaligned
\end{equation}

Then the rest work is to verify that each term in the above list are correctly bounded as in \eqref{eq 1 lem 1 source-KG-ext}, \eqref{eq 2 lem 1 source-KG-ext} and \eqref{eq 3 lem 1 source-KG-ext}. We remark that the except the first term, other terms are trivial. In fact by \eqref{eq 1 decay-hessian-ext-near} and \eqref{eq h L2-hessian}, for $|I|+j\leq N+1$, the $L^2$ norms of these terms are bounded by $C(C_1\vep)^2s^{-2+\delta}$.

The only problematic term is $\delt_1\delt_1u\del_t\del_tu$. We make the following discussion. First,
$$
\del^IL^j\big(\del_t\del_tu\cdot\delt_1\delt_1u\big) = \sum_{I_1+I_2=I\atop j_1+j_2=J}\del^{I_1}L^{j_1}\del_t\del_tu\cdot \del^{I_2}L^{j_2}\delt_1\delt_1u.
$$
Now suppose that $|I|+j\leq N+1$.
\\
{\bf 1.} when $2\leq |I_1|+j_1 \leq N+1$, we apply the second bound in \eqref{eq 2 L2-w-basic-ext} on $\del^{I_1}L^{J_1}\del_t\del_tu$ and \eqref{eq 1 decay-hessian-ext-near} on $\del^{I_2}L^{j_2}\delt_1\delt_1u$ where $|I_2|+j_2\leq N-2$. In this case the $L^2$ norm is bounded by $C(C_1\vep)^2s^{-2+\delta}$.
\\
{\bf 2.} when $|I_1|+j_1=1\leq N-2$, in this case $|I_1|+j_1\leq N$.  We apply \eqref{eq 1 decay-hessian-ext-near} on $\del^{I_1}L^{J_1}\del_t\del_tu$ and \eqref{eq m L2-hessian} on $\del^{I_2}L^{j_2}\delt_1\delt_1u$. Remark that $s^2\sim t\leq r$. In this case the $L^2$ norm is bounded by $C(C_1\vep)^2s^{-1+\delta}$.
\\
{\bf 3.} when $|I_2|+j_2=N+1$. This is the most critical case. We apply \eqref{eq 1 decay-hessian-ext-near} on $\del_t\del_tu$ and \eqref{eq h L2-hessian} on $\del^IL^j\delt_1\delt_1u$. In this case the $L^2$ norm is bonded by $C(C_1\vep)^2s^{\delta}$.

For \eqref{eq 1 lem 1 source-KG-ext}, we apply the above discussion and the desired is direct.

For \eqref{eq 2 lem 1 source-KG-ext}, we remark that the above case $3.$ does not exist. So the desired result is established.

For \eqref{eq 3 lem 1 source-KG-ext}, we remark that the above case $2.$ and $3.$ do not exist.

%
%
%

 For \eqref{eq 1 lem 2 source-KG-ext}, we remark that this term is a linear combination of the following terms:
$$
\del^{I_1}L^{j_1}\del_{\alpha}u\cdot \del^{I_2}L^{j_2}\del_{\beta}v\cdot \del^{I_3}L^{j_3}v\cdot \del^{I_4}L^{j_4}v,
\quad \sum_{k=1}^4I_k=I,\quad \sum_{k=1}^4j_k = j.
$$
Now we proceed as above with the application of \eqref{eq 2 L2-w-basic-ext} (the first bound), \eqref{eq 1l decay-w-basic-ext}, \eqref{eq 1 L2-KG-basic-ext}, \eqref{eq 1 decay-KG-applied-ext}.

\end{proof}

\begin{lemma}[Source bounds for Klein-Gordon equation in exterior region II]\label{lem 2 source-KG-ext}
Under the bootstrap assumption \eqref{eq 1 bootstrap}, the following bounds hold:
\begin{equation}\label{eq 2 lem 2 source-KG-ext}
\big\|(1+\omega_\gamma)\del^IL^j\del_{\gamma}\big(N^{\alpha\beta}\del_{\alpha}u\del_{\beta}u\big)\big\|_{L^2(\Hb_s)}\leq
\left\{
\aligned
&C(C_1\vep)^2 s^{1+\delta},\quad &&|I|+j = N+1,
\\
&C(C_1\vep)^2 ,\quad &&|I|+j \leq N.
\endaligned
\right.
\end{equation}
\begin{equation}\label{eq 2' lem 2 source-KG-ext}
\big\|(1+\omega_\gamma)\del^IL^j\big(N^{\alpha\beta}\del_{\alpha}u\del_{\beta}u\big)\big\|_{L^2(\Hb_s)}\leq
\left\{
\aligned
&C(C_1\vep)^2 s^{1+\delta},\quad &&|I|+j = N+1,
\\
&C(C_1\vep)^2,\quad &&|I|+j \leq N.
\endaligned
\right.
\end{equation}
\end{lemma}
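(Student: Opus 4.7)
The plan is to exploit the null structure of $N^{\alpha\beta}\del_\alpha u\del_\beta u$. Written in the null frame, $N^{\alpha\beta}\del_\alpha u\del_\beta u=\Nt^{\alpha\beta}\delt_\alpha u\delt_\beta u$ with $\Nt^{00}=0$ by the null condition, and the coefficients $\Nt^{\alpha\beta}$ are locally constant on $\Hb_{[s_0,\infty)}$. Consequently every term in the expansion contains at least one factor of $\delt_1 u$, for which \eqref{eq 1l decay-w-basic-ext} provides the improved pointwise decay $|\del^IL^j\delt_1 u|\lesssim C_1\vep(1+r)^{-1}(1+|r-t|)^{1-\gamma}$.

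To prove \eqref{eq 2' lem 2 source-KG-ext} I would first apply Leibniz together with lemmas \ref{lem 1 decompo-comm-ext}--\ref{lem 1 high-order-ext} (which control the commutators between $\del^IL^j$ and $\delt_1$ at the cost of only locally constant coefficients) to reduce the derivative of the nonlinearity to a finite linear combination of products
\[
\del^{I_1}L^{j_1}\delt_\alpha u\cdot \del^{I_2}L^{j_2}\delt_\beta u,\qquad |I_1|+|I_2|\leq |I|,\ j_1+j_2\leq j,\ (\alpha,\beta)\neq(0,0).
\]
In each product I would place the factor of smaller order --- which since $N\geq 9$ is always $\leq[(N+1)/2]\leq N-4$ --- in $L^\infty$ via \eqref{eq 1l decay-w-basic-ext}, and the other factor (of order $\leq N+1$) in $L^2$ via \eqref{eq 2 L2-w-basic-ext}. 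The $\delt_1$ factor available on the low-order side supplies the extra decay $(1+r)^{-1}(1+|r-t|)^{1-\gamma}$, and the elementary pointwise inequality $(1+r)^{-1}(1+|r-t|)^{1-\gamma}\lesssim\zeta$ on $\Hb_s$ (verified by inspecting $\Pcal_s$, where $\zeta=1$, and $\Tcal_s$, where $\zeta\gtrsim s^{-1}$ while $r\gtrsim s^2$) allows one to convert the weight $(1+\omega_\gamma)\lesssim(1+|r-t|)^\gamma$ into the $\zeta$-weight that appears on the high-order $L^2$ side. This closes the estimate with the product bounded by $C(C_1\vep)^2$, which is exactly the right-hand side for $|I|+j\leq N$ and is stronger than needed at $|I|+j=N+1$.

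For \eqref{eq 2 lem 2 source-KG-ext} I would first commute $\del_\gamma$ inside using $[\del_\gamma,\delt_\alpha]=0$, and then repeat the previous analysis with one additional derivative, so the total differentiation can reach $N+2$ when $|I|+j=N+1$. The main obstacle is precisely this top-order case: the high-order factor may carry up to $N+2$ derivatives, which exceeds the scope of \eqref{eq 2 L2-w-basic-ext}. At this point I would invoke the consequence $\EE_{N+2,\gamma}(s,u)^{1/2}\leq CC_1\vep s^{1+\delta}$ of \eqref{eq 1h bootstrap} recorded in \eqref{eq 1' bootstrap}; this is exactly what produces the $s^{1+\delta}$ growth on the right-hand side of the top-order estimate. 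In the sub-critical range $|I|+j\leq N$ the high-order factor stays at order $\leq N+1$, so \eqref{eq 2 L2-w-basic-ext} applies directly, and no $s$-growth appears, matching the claimed bound $C(C_1\vep)^2$.
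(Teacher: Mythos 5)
Your argument is correct, and for \eqref{eq 2' lem 2 source-KG-ext} it is essentially the paper's own (null-frame decomposition, $L^2\times L^\infty$ split via \eqref{eq 2 L2-w-basic-ext} and \eqref{eq 1l decay-w-basic-ext}). For \eqref{eq 2 lem 2 source-KG-ext} you take a genuinely different and somewhat more elementary route. The paper keeps $\del_\gamma$ attached to one null-frame factor, writing
\[
\del^IL^j\del_\gamma\big(\Nt^{\alpha\beta}\delt_\alpha u\,\delt_\beta u\big)
=2\sum_{I_1+I_2=I,\,j_1+j_2=j}\Nt^{\alpha\beta}\,\del^{I_1}L^{j_1}\del_\gamma\delt_\alpha u\cdot\del^{I_2}L^{j_2}\delt_\beta u,
\]
so that the differentiated factor is always a Hessian-type object with prefix of order $\leq N+1$; it then invokes the Hessian $L^2$ and pointwise estimates \eqref{eq h L2-hessian}, \eqref{eq m L2-hessian}, \eqref{eq 1 decay-hessian-ext-near}, \eqref{eq 1 decay-hessian-ext-far}, which, by feeding in the wave equation, carry extra factors of $t$ and $r^\gamma$ that automatically absorb the $\zeta^{-1}$ coming from the first-order factor's $L^2$ weight. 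You instead commute $\del_\gamma$ inside, accept a factor with up to $N+2$ iterated fields, and close the top-order case via $\EE_{N+2,\gamma}(s,u)^{1/2}\lesssim C_1\vep\, s^{1+\delta}$; the loss of decay this entails is repaired by the elementary pointwise comparison $(1+r)^{-1}(1+|r-t|)^{1-\gamma}\lesssim\zeta$ on $\Hb_s$, which does the job of the Hessian's extra weight. Your route avoids the Hessian lemmas entirely — a genuine simplification — while the paper's route reuses bounds it has already set up for the double-null term in lemma \ref{lem 1 source-KG-ext}. Both give the stated rates.

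One imprecision worth fixing: after the null decomposition the $\delt_1$ factor is not always on the low-order side — for instance $\Nt^{10}\del^{I_1}L^{j_1}\delt_1 u\cdot\del_t u$ with $|I_1|+j_1=N+1$ puts it on the high-order side. That case is in fact the easy one, since the $L^2$ bound for $\delt_1$ via \eqref{eq 2 L2-w-basic-ext} carries no $\zeta$ weight, and the low-order $\del_t u$ is simply bounded in $L^\infty$ by $(1+|r-t|)^{-\gamma}$. The pointwise $\zeta$-comparison you identify is only truly needed in the complementary configuration, where the high-order factor is $\del_t u$ (whose $L^2$ bound carries $\zeta$) and the low-order $\delt_1$ factor must supply the compensating decay. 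Making this case split explicit would close the small gap in presentation.
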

\begin{proof}
For \eqref{eq 2 lem 2 source-KG-ext}, we proceed as in the proof of lemma \ref{lem 1 source-KG-ext} and distinguish between  $\Fcal^I_s$ and $\Fcal^E_s$. For $\Fcal^E_s$, we apply \eqref{eq h L2-hessian} and \eqref{eq 1 decay-hessian-ext-far} on the term $\del_{\alpha}\del_{\beta}u$ and \eqref{eq 2 L2-w-basic-ext} and \eqref{eq 1l decay-w-basic-ext} on the factor $\del_{\alpha}u$.

For $\Fcal^I_s$, we need to evoke the null structure. That is (recall that $\Psit_{\alpha}^{\beta}$ is locally constant),
$$
\aligned
&\del^IL^j\del_{\gamma}\big(N^{\alpha\beta}\del_{\alpha}\del_{\alpha}u\del_{\beta}u\big)
\\
=& \del^IL^j\del_{\gamma}\big(\Nt^{\alpha\beta}\delt_{\alpha}u\cdot\delt_{\beta}u\big)
= 2\sum_{I_1+I_2 = I\atop j_1+j_2=j}\Nt^{\alpha\beta}\del^{I_1}L^{j_1}\del_{\gamma}\delt_{\alpha}u\cdot \del^{I_2}L^{j_2}\delt_{\beta}u
\\
=&2\sum_{I_1+I_2 = I\atop j_1+j_2=j}\Nt^{10}\del^{I_1}L^{j_1}\del_{\gamma}\delt_1u\cdot \del^{I_2}L^{j_2}\del_tu
 +2\sum_{I_1+I_2 = I\atop j_1+j_2=j}\Nt^{11}\del^{I_1}L^{j_1}\del_{\gamma}\delt_1u\cdot \del^{I_2}L^{j_2}\delt_1u
\\
&+2\sum_{I_1+I_2 = I\atop j_1+j_2=j}\Nt^{01}\del^{I_1}L^{j_1}\del_{\gamma}\del_tu\cdot\del^{I_2}L^{j_2}\delt_1u.
\endaligned
$$
Then for $|I|+j=N+1$ we apply \eqref{eq 1 decay-hessian-ext-near} and \eqref{eq h L2-hessian} on $\del_{\gamma}\del_tu$ and $\del_{\gamma}\delt_1u$, and \eqref{eq 2 L2-w-basic-ext} and \eqref{eq 1l decay-w-basic-ext} on $\del_t u$. For $|I|+j\leq N$, we replace \eqref{eq h L2-hessian} by \eqref{eq m L2-hessian}.


The bounds \eqref{eq 2' lem 2 source-KG-ext} is proved similarly, we evoke the null structure and apply \eqref{eq 2 L2-w-basic-ext} and \eqref{eq 1l decay-w-basic-ext}.
\end{proof}

\subsection{Refined energy bounds in exterior region}\label{subsec energy-refine-ext}
We will substitute the above $L^2$ bounds into the exterior energy estimate proposition \ref{prop 2 energy}. We will establish:
\begin{subequations}\label{eq 1 refined-ext}
\begin{equation}\label{eq 1h refined-ext}
\EE_{N+1,\gamma}(s,Lu)^{1/2} + \EE_{N+1,\gamma,c}(s,v)^{1/2} \leq C_0\vep  + C(C_1\vep)^2s^{1+\delta},
\end{equation}
\begin{equation}\label{eq 1m refined-ext}
\aligned
 \EE_{N,\gamma,c}(s,v)^{1/2} \leq C_0\vep  + C(C_1\vep)^2s^{\delta},
\endaligned
\end{equation}
\begin{equation}\label{eq 1l refined-ext}
\EE_{N+1,\gamma}(s,u)^{1/2} + \EE_{N+1,\gamma}(s,\del_{\alpha} u)^{1/2} + \EE_{N-1,\gamma,c}(s,v)^{1/2}\leq C_0\vep  + C(C_1\vep)^2.
\end{equation}
\end{subequations}

To establish the above refined bounds, we make the following transformation:
\begin{equation}\label{eq main-trans}
\left\{
\aligned
&\Box \del^IL^j u = \del^IL^j (v^3),
\\
&\Box \del^IL^j w + c^2\del^IL^j w = -2N^{\alpha\beta}m^{\mu\nu}\del^IL^j\big(\del_{\mu}\del_{\alpha}u\cdot \del_{\nu}\del_{\beta}u\big)
- 2N^{\alpha\beta}\del_{\alpha}(v^3) \del_{\beta}u.
\endaligned
\right.
\end{equation}
where $w := v - N^{\alpha\beta}\del_{\alpha}u\del_{\beta}u$. To see this we only need to remark that
$$
\Box \big(v - N^{\alpha\beta}\del_{\alpha}u\del_{\beta}u\big) + c^2\big(v - N^{\alpha\beta}\del_{\alpha}u\del_{\beta}u\big) = -2N^{\alpha\beta}\Box \del_{\alpha}u\cdot \del_{\beta}u - 2N^{\alpha\beta}m^{\mu\nu}\big(\del_{\mu}\del_{\alpha}u\cdot \del_{\nu}\del_{\beta}u\big)
$$
and then we take the fist equation of \eqref{eq main}. Then we apply proposition \ref{prop 2 energy} combined with lemma \ref{lem 1 source-w-ext} on the first equation of \eqref{eq main-trans} and lemma \ref{lem 1 source-KG-ext} on the second equation of \eqref{eq main-trans}. We obtain the bounds on $u$ and $w$.

To obtain the energy bounds of $v$, recall that by lemma \ref{lem 2 source-KG-ext}, we see that
$$
\big\|\zeta (1+\omega_{\gamma})\del_{\gamma}\del^IL^j\big(N^{\alpha\beta}\del_{\alpha}u\del_{\beta}u\big) \big\|_{L^2(\Hb_s)}, \quad
\big\|(1+\omega_{\gamma})\del^IL^j\big(N^{\alpha\beta}\del_{\alpha}u\del_{\beta}u\big)\big\|_{L^2(\Hb_s)}
$$
$$
\big\|(1+\omega_{\gamma})\big(\xi_s(r)x(s^2+x^2)^{-1/2}\del_t + \del_x\big)\del^IL^j\big(N^{\alpha\beta}\del_{\alpha}u\del_{\beta}u\big) \big\|_{L^2(\Hb_s)}
$$
are bounded by $C(C_1\vep)^2 s^{1+\delta}$ for $(N+1)-$order and by $C(C_1\vep)^2$ for $N-$order (to see the second, we remark that the coefficients are bounded by $1$ and apply commutator estimate). This leads to
\begin{equation}\label{eq energy w-correction-ext}
E^{\text{E}}_{\gamma,c}\big(s,\del^IL^j\big(N^{\alpha\beta}\del_{\alpha}u\del_{\beta}u\big)\big)^{1/2}\leq
\left\{
\aligned
&C(C_1\vep)^2 s^{1+\delta},\quad |I|+j =N+1,
\\
&C(C_1\vep)^2, \quad |I|+j\leq N.
\endaligned
\right.
\end{equation}
Then by triangle inequality (the energy itself is a norm), the bounds on $v$ is obtained.

The energy estimate proposition \ref{prop 2 energy} also leads to the following bounds which will applied in the refined energy estimate in interior region:
\begin{equation}\label{eq energy-cone}
\aligned
&E^K(s,\del^IL^ju;s_0)^{1/2} \leq C(C_1\vep)^2,\quad |I|+j\leq N+1
\\
&E_c^K(s,\del^IL^jv;s_0)^{1/2}\leq
\left\{
\aligned
&C(C_1\vep)^2s^{\delta},\quad && |I|+j\leq N,
\\
&C(C_1\vep)^2,\quad && |I|+j\leq N-1.
\endaligned
\right.
\endaligned
\end{equation}

\section{Estimates in interior region}\label{sec analysis-int}
\subsection{$L^2$ and decay bounds in interior region}
All calculations in this subsection are made in \underline{\sl$\Hcal^*_{[2,s_1]}$}, unless otherwise specified. Furthermore, all calculations in this section {\bf do not} depend on the choice of $\gamma$.

The estimate in interior region has been treated in our previous work (see for example \cite{LM1}). The $L^2$ and decay bounds are based on the relations listed in subsection \ref{subsec basic-interior} combined with the energy bounds \eqref{eq 1 bootstrap-int}.

For $|I|+j\leq N$, by \eqref{eq 1m bootstrap-int} and lemma \ref{lem 1 est-high-int}:
\begin{equation}\label{eq 1 L2-w-basic-int}
\|(s/t)\del^IL^j\del_{\alpha}u\|_{L^2(\Hcal^*_s)} + \|\del^IL^j\delu_1u\|_{L^2(\Hcal^*_s)}\leq CC_1\vep s^{\delta}.
\end{equation}
\begin{equation}\label{eq 2.5 L2-w-basic-int}
\|(s/t)\del^IL^j\del_{\alpha}\del_{\beta}u\|_{L^2(\Hcal_s^*)} + \|(s/t)\del_{\alpha}\del_{\beta}\del^IL^j u\|_{L^2(\Hcal_s^*)}\leq CC_1\vep s^{\delta}.
\end{equation}
\begin{equation}\label{eq 2 L2-w-basic-int}
\|\del^IL^j\del_{\alpha}\delu_1 u\|_{L^2(\Hcal^*_s)} + \|\del^IL^j\delu_1\del_{\alpha} u\|_{L^2(\Hcal^*_s)}\leq CC_1\vep s^{\delta}.
\end{equation}
By \eqref{eq 1 lem 1 est-high-int} and \eqref{eq 1' bootstrap}, for $|I|+j = N$,
\begin{equation}\label{eq 3 L2-w-basic-int}
\|(t/s)\del^IL^j\delu_1\delu_1 u\|_{L^2(\Hcal^*_s)}  \leq  CC_1\vep s^{\delta}.
\end{equation}

For $|I|+j\leq N-1$, by \eqref{eq 1m bootstrap-int} and lemma \ref{lem 1 est-high-int}
\begin{equation}\label{eq 2' L2-w-basic-int}
\|s\del^IL^j\del_{\alpha}\delu_1 u\|_{L^2(\Hcal^*_s)} + \|s\del^IL^j\delu_1\del_{\alpha} u\|_{L^2(\Hcal^*_s)}\leq CC_1\vep s^{\delta},
\end{equation}
\begin{equation}\label{eq 3' L2-w-basic-int}
\|t\del^IL^j\delu_1\delu_1 u\|_{L^2(\Hcal^*_s)}  \leq  CC_1\vep s^{\delta}.
\end{equation}

In the same manner, by lemma \ref{lem 2' esti-high-int} and \eqref{eq 1m bootstrap-int},
for $|I|+j\leq N-1$
\begin{equation}\label{eq 1' decay-w-basic-int}
|(s/t)\del_{\alpha}\del^IL^ju| + |\delu_1\del^IL^ju|\leq CC_1\vep t^{-1/2}s^{\delta},
\end{equation}
\begin{equation}\label{eq 1 decay-w-basic-int}
|(s/t)\del^IL^j\del_{\alpha}u| + |\del^IL^j\delu_1u|\leq CC_1\vep t^{-1/2}s^{\delta}.
\end{equation}
For $|I|+j\leq N-2$, by \eqref{eq 1 lem 2' esti-high-int} and \eqref{eq 1m bootstrap-int},
\begin{equation}\label{eq 3 decay-w-basic-int}
|\del^IL^j\del_{\alpha}\delu_1 u| + |\del^IL^j\delu_1\del_{\alpha} u|\leq CC_1\vep t^{-1/2}s^{-1+\delta}.
\end{equation}
By \eqref{eq 0 lem 2' esti-high-int} and \eqref{eq 1m bootstrap-int},
\begin{equation}\label{eq 4 decay-w-basic-int}
|\del^IL^j\delu_1\delu_1u| + |\delu_1\delu_1\del^IL^j u|\leq CC_1\vep t^{-3/2}s^{\delta}.
\end{equation}

We also establish the following bounds on Klein-Gordon component, listed as following:

For $|I|+j\leq N$,
\begin{equation}\label{eq 1m L2-KG-basic-int}
\|(s/t)\del^IL^j\del_{\alpha} v\|_{L^2(\Hcal^*_s)} + \|c\del^IL^j v\|_{L^2(\Hcal^*_s)}\leq CC_1\vep s^{\delta}.
\end{equation}
For $|I|+j\leq N-1$,
\begin{equation}\label{eq 1l L2-KG-basic-int}
\|(s/t)\del^IL^j\del_{\alpha} v\|_{L^2(\Hcal^*_s)} + \|c\del^IL^j v\|_{L^2(\Hcal^*_s)}\leq CC_1\vep .
\end{equation}

Also, for $|I| + j\leq N-1$,
\begin{equation}\label{eq 1m decay-KG-basic-int}
 |c\del^IL^j v|\leq CC_1\vep t^{-1/2}s^{\delta}
\end{equation}
and for $|I|+j\leq N-2$
\begin{equation}\label{eq 1l decay-KG-basic-int}
 |c\del^IL^j v|\leq CC_1\vep t^{-1/2}.
\end{equation}
Also, because of the relation $t^{-1}\delu_1 = L$, we have for $|I|+j\leq N-3$
\begin{equation}\label{eq 1ll decay-KG-basic-int}
 |c\del^IL^j \delu_1v|\leq CC_1\vep t^{-3/2}.
\end{equation}

%

\subsection{Sharp decay bounds on $\del^IL^j\nabla u$}
We start by establishing
\begin{equation}\label{eq 1.85' w-sharp}
|\del_t\del^IL^j u| \leq CC_1\vep (t-r)^{-1/4}, \quad |I|+j\leq N-2.
\end{equation}
This is based on the following decomposition of the wave operator with respect to the semi-hyperboloidal frame (which has been applied in two space dimension in \cite{M3}):
\begin{equation}\label{eq 1 decompo-Box-int}
\aligned
\Box \del^IL^j u =& (t-r)^{-\beta}\left((s/t)^2\del_t + (2x/t)\delu_1 \right)((t-r)^{\beta}\del_t \del^IL^j u)
+ \frac{t-r}{t^2}\left(\frac{t+r}{t}-\beta\right)\del_t \del^IL^j u
\\
&- \delu_1\delu_1 \del^IL^j u.
\endaligned
\end{equation}
and the fact that
\begin{equation}\label{eq 1.95 w-sharp}
|(t-r)^{\beta}\del_t\del^IL^ju|\leq CC_1\vep,\quad |I|+j\leq N-2
\end{equation}
on the cone $\{r = t-1\}$, which is direct form \eqref{eq 1l decay-w-basic-ext}.

More precisely, we write the \eqref{eq 1 decompo-Box-int} into the following form:
\begin{equation}\label{eq 1; decompo-Box-int}
\aligned
\frac{(t-r)^\beta t^2}{t^2+r^2}\big(\Box \del^IL^j u + \delu_1\delu_1\del^IL^ju\big) =& \mathcal{J} U_{\beta} + P U_{\beta}
\endaligned
\end{equation}
with
$$
\mathcal{J} = \del_t + \frac{2tx}{t^2+r^2}\del_x,\quad P = \frac{t-r}{t^2+r^2}\left(\frac{t+r}{t} - \beta\right), \quad U_{\beta} = (t-r)^{\beta}\del_t\del^IL^ju.
$$

When taking the wave equation in \eqref{eq main}, the above identity leads to
\begin{equation}\label{eq 1 proof lem 1 w-sharp}
\mathcal{J} U_{\beta} + P U_{\beta} = \frac{(t-r)^{\beta}t^2}{t^2+r^2}\big( \del^IL^j(v^3) + t^{-1}L\delu_1\del^IL^j u\big).
\end{equation}

Then for the integral curve of $\mathcal{J}$, we have the following description:
\begin{lemma}\label{lem 0 w-sharp}
Let $(t_2,x_2)$ be a interior point of $\Hcal^*_{[2,s_1]}$ and let $\gamma(t;t_2,x_2)$ be the integral curve of $\mathcal{J}$ with
$$
\gamma(t_2;t_2,x_2) = (t_2,x_2).
$$
Then there exists $2\leq t_0<t$ such that $\gamma(t_0;t_2,x_2)\in \Hcal_2^*\cup \del \Kcal_{[2,s_2]}$ and the arc $\{\gamma(t;t_2,x_2), t_0\leq t\leq t_2\}$ is contained in $\Hcal^*_{[2,s_2]}$ with $s^2_2 = t^2_2-x^2_2$. $\del\Kcal_{[2,s_2]}$ is the conical boundary of $\Hcal^*_{[2,s_2]}$ which is $\left\{(t,x)|t=r+1,5/2\leq t\leq (s_2^2+1)/2\right\}$.
\end{lemma}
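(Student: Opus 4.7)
The plan is to integrate the ODE defining the flow of $\mathcal{J}$, show that the Lorentzian radius $s^2(t):=t^2-x(t)^2$ and the cone distance $t-|x(t)|$ are both strictly monotone along the flow, and then use these two monotonicities to conclude that backward tracing from $(t_2,x_2)$ forces the curve to exit $\Hcal^*_{[2,s_2]}$ precisely through $\Hcal_2^*\cup\del\Kcal_{[2,s_2]}$.

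Concretely, I would first write the integral curve as $\gamma(t)=(t,x(t))$ where $\dot x(t)=\frac{2t\,x(t)}{t^2+x(t)^2}$ with $x(t_2)=x_2$. The right-hand side is smooth and bounded (by AM–GM, $|2tx|\le t^2+x^2$, so $|\dot x|\le 1$), hence there is a unique global-in-$t$ solution. Note also that the line $\{x=0\}$ is invariant (since $\dot x=0$ when $x=0$), so the sign of $x(t)$ is constant along the flow. I would then compute the two key derivatives:
\begin{equation*}
\frac{d}{dt}\bigl(t^2-x(t)^2\bigr) \;=\; 2t\Bigl(1-\frac{2x^2}{t^2+x^2}\Bigr) \;=\; \frac{2t\,(t^2-x^2)}{t^2+x^2},
\end{equation*}
\begin{equation*}
\frac{d}{dt}\bigl(t-|x(t)|\bigr) \;=\; 1-\frac{2t|x|}{t^2+x^2} \;=\; \frac{(t-|x|)^2}{t^2+x^2}.
\end{equation*}
Both right-hand sides are strictly positive in the interior region $\{t>|x|\}$, so $s(t)$ and $t-|x(t)|$ are strictly increasing functions of $t$ along $\gamma$. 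In particular, going backward in $t$ from $t_2$, both $s$ and $t-|x|$ strictly decrease from their values $s_2$ and $t_2-|x_2|\ge 1$ at $t=t_2$.

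Now I would define
\begin{equation*}
t_0:=\inf\{\,t\in[0,t_2]\,:\, \gamma(\tau)\in\Hcal^*_{[2,s_2]}\text{ for all }\tau\in[t,t_2]\,\}.
\end{equation*}
Since $(t_2,x_2)$ is an interior point of $\Hcal^*_{[2,s_2]}$, we have $t_0<t_2$ and, by maximality, $\gamma(t_0)$ lies on the topological boundary $\del\Hcal^*_{[2,s_2]}=\Hcal_2^*\cup\Hcal_{s_2}^*\cup\del\Kcal_{[2,s_2]}$. The strict monotonicity of $s$ rules out the top face $\Hcal_{s_2}^*$ (which would require $s(t_0)=s_2=s(t_2)$, contradicting $t_0<t_2$); thus $\gamma(t_0)\in\Hcal_2^*\cup\del\Kcal_{[2,s_2]}$.

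Finally I would verify $t_0\ge 2$: if $t_0<2$, then $\gamma(2)$ lies in $\Hcal^*_{[2,s_2]}$, so $s(2)^2=4-x(2)^2\ge 4$, forcing $x(2)=0$ and $s(2)=2$. But then $\gamma(2)\in\Hcal_2^*\subset\del\Hcal^*_{[2,s_2]}$, so in fact $t_0=2$ by definition of $t_0$, contradicting $t_0<2$. The only subtle point in the argument is avoiding circularity at this last step, and handling the corner $(5/2,\pm 3/2)$ where $\Hcal_2^*$ meets $\del\Kcal_{[2,s_2]}$; both are addressed automatically because the two monotonicities guarantee the curve cannot re-enter the region once it exits, and the corner point is explicitly contained in $\Hcal_2^*\cup\del\Kcal_{[2,s_2]}$. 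The main step is really just the two elementary monotonicity identities above; no deeper analytic tool is needed.
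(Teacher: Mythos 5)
Your proof is correct and takes essentially the same approach as the paper: define $t_0$ as the infimum of starting times keeping the backward arc inside $\Hcal^*_{[2,s_2]}$, observe that $\gamma(t_0)$ lies on the boundary, and use the strict positivity of $\mathcal{J}(s^2)=\frac{2t(t^2-r^2)}{t^2+r^2}$ to rule out the top face $\Hcal^*_{s_2}$. The additional monotonicity $\frac{d}{dt}(t-|x|)>0$, the boundedness of $\dot{x}$, and the sign-invariance of $x$ that you note are all correct supporting observations, but are not used in the paper's argument and are not needed once $t_0$ is chosen via the infimum.
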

\begin{proof}
$\mathcal{J}$ is defined in the region $\{t>0\}$. So all of its integral curve extend to the boundary of this region. We define
$$
t_0 = \inf\{t\in[2,t_2]\ |\ \forall t\leq \tau\leq t_2, \gamma(\tau;t_2,x_2)\in\Hcal^*_{[2,s_2]}\}.
$$
Because $(t_2,x_2)$ is an interior point of $\Hcal^*_{[2,s_2]}$, $t_0<t$. By continuity, $\gamma(t_0;t_2,x_2)$ locates on the boundary of $\Hcal^*_{[2,s_2]}$. We now prove that $\gamma(t_0;t_2,x_2)\notin \Hcal^*_{s_2}$.

To do so, we only need to remark that along $\gamma(\cdot;t_2,x_2)$, $s = \sqrt{t^2-r^2}$ is strictly increasing with respect to $t$. That is because
$$
\mathcal{J} (s^2) = \frac{2t(t^2-r^2)}{t^2+r^2}>0.
$$
\end{proof}

\begin{lemma}\label{lem 1 w-sharp}
Under the bootstrap assumption, \eqref{eq 1.85' w-sharp} holds.
\end{lemma}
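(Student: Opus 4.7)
The plan is to integrate the transport-type identity \eqref{eq 1 proof lem 1 w-sharp} for $U_\beta := (t-r)^\beta \del_t\del^IL^j u$, with the specific choice $\beta = 1/4$, along the integral curves $\gamma$ of the vector field $\mathcal{J}$ described in Lemma \ref{lem 0 w-sharp}. With this choice the zero-th order coefficient $P = \tfrac{t-r}{t^2+r^2}\bigl(\tfrac{t+r}{t} - \beta\bigr)$ is non-negative throughout $\Hcal^*_{[2,s_1]}$ (because $(t+r)/t \geq 1 > \beta$), so multiplying the identity by $\mathrm{sgn}(U_\beta)$ and integrating between $(t_0,x_0) = \gamma(t_0;t_2,x_2)$ and $(t_2,x_2)$ we obtain the comparison estimate
$$
|U_\beta(t_2,x_2)| \;\leq\; |U_\beta(t_0,x_0)| + \int_{t_0}^{t_2} |F(\gamma(\tau))|\,d\tau,\qquad F \;:=\; \tfrac{(t-r)^\beta t^2}{t^2+r^2}\bigl(\del^IL^j(v^3) + t^{-1}L\delu_1\del^IL^j u\bigr).
$$

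For the boundary contribution, Lemma \ref{lem 0 w-sharp} places $(t_0,x_0)$ either on the cone $\del\Kcal_{[2,s_2]}$, where $t_0-r_0 = 1$ so that $(t_0-r_0)^\beta = 1$ and the exterior bound \eqref{eq 1l decay-w-basic-ext} gives $|U_\beta(t_0,x_0)| \leq CC_1\vep$, or on the initial hyperboloid $\Hcal^*_2$, where $t_0$ is of order unity and the initial smallness \eqref{eq initial-smallness} combined with Proposition \ref{prop 1 K-S} provides the same bound. For the source we use two pointwise estimates: from the Klein-Gordon decay \eqref{eq 1l decay-KG-basic-int} applied factor by factor we get $|\del^IL^j(v^3)| \leq C(C_1\vep)^3\tau^{-3/2}$ (since any distribution of at most $N-2$ derivatives over three factors leaves each factor of order $\leq N-2$), and from the elementary identity $\delu_1 = t^{-1}L$ together with \eqref{eq 4 decay-w-basic-int} we get $|t^{-1}L\delu_1\del^IL^j u| = |\delu_1\delu_1\del^IL^j u| \leq CC_1\vep\tau^{-3/2}s^\delta$. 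Hence $|F(\gamma(\tau))| \leq CC_1\vep\,(\tau-r(\tau))^{1/4}\tau^{-3/2}s(\tau)^\delta$.

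The closing step is to show that this source integrates to a uniformly bounded quantity along $\gamma$. Along $\gamma$, both $s$ and $(t-r)$ are monotonically non-decreasing, as one computes $\mathcal{J}(s^2) = 2ts^2/(t^2+r^2) > 0$ and $\mathcal{J}(t-r) = (t-r)^2/(t^2+r^2)\geq 0$. The main technical obstacle is precisely that the naive bounds $(t-r)\leq t_2-r_2$ and $s\leq s_2$ would yield an integral growing with the endpoint; one must instead exploit the sharper quantitative relations $\mathcal{J}(\log s)\in[1/(2\tau),1/\tau]$ and $d(t-r)^{-1}/d\tau = -(\tau^2+r^2)^{-1}$ from \eqref{eq 1; decompo-Box-int} and its derivation, which tie the growth of $(t-r)$ and $s$ to an almost integrable power of $\tau$. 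A careful change of variable (for instance parameterizing by $s$ rather than $\tau$) then reduces the geometric integral to $\int \sigma^{-1-\delta'}d\sigma$ with $\delta' > 0$, bounded independently of the endpoint. Combined with the boundary contribution, this yields $|U_\beta(t_2,x_2)|\leq CC_1\vep$, which is \eqref{eq 1.85' w-sharp}.
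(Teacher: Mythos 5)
Your proposal is correct and follows the same outline as the paper: integrate the first-order transport identity \eqref{eq 1 proof lem 1 w-sharp} along the integral curves of $\mathcal{J}$ furnished by Lemma~\ref{lem 0 w-sharp}, use $P\geq 0$ to drop the dissipative term, read off the datum at the foot point $t_0$ from the exterior decay \eqref{eq 1l decay-w-basic-ext}, and bound the source using \eqref{eq 1l decay-KG-basic-int} for the $v^3$ part and the $\delu_1\delu_1$ decay for the wave part.

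Where you diverge is the closing integrability step, and there you identify an obstacle that does not actually exist. You claim that ``the naive bounds $(t-r)\leq t_2 - r_2$ and $s\leq s_2$ would yield an integral growing with the endpoint'' and that one must therefore reparametrize by $s$ and exploit $\mathcal{J}(\log s)$ and $\mathcal{J}((t-r)^{-1})$. But the relevant naive bounds are not the endpoint bounds: along $\gamma$, at parameter value $\tau$ one has $t=\tau$, hence pointwise both $(t-r)(\tau)\leq \tau$ and $s(\tau)=\sqrt{\tau^2-r^2}\leq \tau$. Substituting these directly into your estimate for $F$ gives $|F(\gamma(\tau))|\leq CC_1\vep\,\tau^{-3/2+\beta+\delta}$, and with $\beta=1/4$ and $\delta<1/200$ the exponent $-5/4+\delta<-1$, so the integral over $[t_0,\infty)$ is bounded by a universal constant. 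This is exactly how the paper concludes (see the bound \eqref{eq 3 proof lem 1 w-sharp}). Your change-of-variable route does arrive at the same place after inserting $\tau-r\leq s$ and $\tau\geq s$, but it is an unnecessarily roundabout way of using what amounts to the same pointwise inequalities, and as written it is only sketched, so you should either carry it out or, better, replace it with the one-line observation above.

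One small addition that you handle and the paper elides: Lemma~\ref{lem 0 w-sharp} also allows the foot point to lie on $\Hcal^*_2$, and you correctly note that the initial smallness \eqref{eq initial-smallness} together with Proposition~\ref{prop 1 K-S} gives the required $CC_1\vep$ bound there.
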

\begin{proof}
For the bound on $\del_t u$, recall the notation in \eqref{eq 1 proof lem 1 w-sharp}, we denote by
$$
\aligned
&u_{t,x,\beta} (\tau) := U_{\beta}|_{\gamma(\tau,t,x)}, \quad p_{t,x}(\tau) := P|_{\gamma(\tau;t,x)},
\\
&R_{t,x}(\tau) := \frac{(t-r)^{\beta}t^2}{t^2+r^2}\big( \del^IL^j(v^3) + t^{-1}L\delu_1\del^IL^j u\big)\bigg|_{\gamma(\tau;t,x)}.
\endaligned
$$
and \eqref{eq 1 proof lem 1 w-sharp} is written as
\begin{equation}\label{eq 2 proof lem 1 w-sharp}
u_{t,x,\beta}'(\tau) + p_{t,x}(\tau) u_{t,x,\beta}(\tau) = R_{t,x}(\tau).
\end{equation}

We remark that for $|I|+j\leq N-2$,
\begin{equation}\label{eq 2.5 proof lem 1 w-sharp}
|\del^IL^j(v^3)|\leq \sum_{I_1+I_2+I_3=I\atop j_1+j_2+j_3=j}|\del^{I_1}L^{j_1}v\cdot \del^{I_2}L^{j_2}v\cdot \del^{I_3}L^{j_3}v|\leq C(C_1\vep)^3t^{-3/2}
\end{equation}
where we applied \eqref{eq 1l decay-KG-basic-int}.

Recall that $p_{t,x}\geq 0$, and by \eqref{eq 2.5 proof lem 1 w-sharp} and \eqref{eq 1 decay-w-basic-int}, for $|I|+j\leq N-2$,
\begin{equation}\label{eq 3 proof lem 1 w-sharp}
|R_{t,x}(\tau)|\leq CC_1\vep \tau^{-3/2+\beta+\delta} .
\end{equation}

Then integrate the above equation on $[t_0,t]$ where $t_0$ is determined by the above lemma \ref{lem 0 w-sharp} and taking into consideration of \eqref{eq 1.95 w-sharp}, we remark that $u_{t,x,\beta}$ is bounded
$CC_1\vep$. Then \eqref{eq 1.85' w-sharp} is established (when fix $\beta = 1/4$).
\end{proof}

Form the relation
$$
\del_x u = \delu_1 u - (x/t)\del_tu
$$
and the bound \eqref{eq 1' decay-w-basic-int} combined with \eqref{eq 1.85' w-sharp} and \eqref{eq 1 lem 2 high-order-int}, we obtain
\begin{equation}\label{eq 3 w-sharp}
|\del_{\alpha}\del^IL^j u| + |\del^IL^j\del_{\alpha}u|\leq CC_1\vep (t-r)^{-1/4},\quad |I|+j\leq N-2.
\end{equation}

Now we are going to establish the following bound:
\begin{equation}\label{eq 6 w-sharp}
|L\del^IL^ju(t,x)|\leq CC_1\vep (t-r)^{3/4}, \quad |I|+j\leq N-3.
\end{equation}
This is by the following observation. From \eqref{eq 3 w-sharp} and lemma \ref{lem 1 high-order-int},
$$
|\frac{x^a}{r}\del_aL\del^IL^ju(t,x)| = |\del_rL\del^IL^ju(t,x)|\leq CC_1\vep (t-r)^{-1/4},\quad |I|+j\leq N-3.
$$
By \eqref{eq L null-frame} applied on the frontier of $\Tcal_s\backslash \Hcal_s^*$, for $|I|+j\leq N-1$,
$$
|L\del^IL^ju(t,t-1)|\leq CC_1\vep.
$$
Then by integration along the radial direction, we obtain \eqref{eq 6 w-sharp}. Then recall the relation $L = t^{-1}\delu_1$, we obtain
\begin{equation}\label{eq 7 w-sharp}
|\delu_1\del^IL^ju(t,x)|\leq CC_1\vep t^{-1}(t-r)^{3/4},\quad |I|+j\leq N-3.
\end{equation}

%

\subsection{Sharp decay bounds on $ \del^IL^j\del_t\del_t u$}

We first establish the following bound:
\begin{equation}\label{eq 1 w-sharp}
(s/t)^2|\del_t\del_t \del^IL^j u|\leq CC_1\vep t^{-1}(t-r)^{-1/4}, \quad |I|+j\leq N-3.
\end{equation}

For this objective we recall the following decomposition:
$$
\Box \del^IL^j u = (s/t)^2\del_t\del_t \del^IL^j u + t^{-1}\big((2x/t)L\del_t\del^IL^j u - L\delu_1\del^IL^j u + (s/t)^2\del_t\del^IL^j u\big)
$$
which leads to
$$
(s/t)^2\del_t\del_t \del^IL^j u = \del^IL^j (v^3) - t^{-1}\big((2x/t)L\del_t\del^IL^j u - L\delu_1\del^IL^j u + (s/t)^2\del_t\del^IL^ju\big).
$$
Then by \eqref{eq 2.5 proof lem 1 w-sharp} and \eqref{eq 1 decay-w-basic-int},
$$
(s/t)^2|\del_t\del_t\del^IL^j u|\leq CC_1\vep t^{-3/2} + Ct^{-1}\big(|L\del_\alpha\del^IL^j u| + |\del_t\del^IL^j u|\big).
$$
So the question reduced to the following bounds:
$$
|\del_t\del^IL^j u| + |L\del_\alpha\del^IL^j u| \leq CC_1\vep (t-r)^{-1/4}, \quad |I|+j\leq N-3,
$$
which is guaranteed by \eqref{eq 3 w-sharp}. Thus \eqref{eq 1 w-sharp} is established (thanks to \eqref{eq 1 lem 2 high-order-int}).

We recall the following relations:
$$
\del_t\del_x u = \del_x\del_t u = t^{-1}L\del_tu - (x/t)\del_t\del_tu,\quad \del_x\del_x u = t^{-1}L\del_xu - (x/t)\del_t\del_xu.
$$
Then combined with \eqref{eq 3 w-sharp} and \eqref{eq 1 w-sharp},
\begin{equation}\label{eq 4 w-sharp}
|(s/t)^2\del_{\alpha}\del_{\beta}\del^IL^j u|\leq CC_1\vep t^{-1}(t-r)^{-1/4},\quad |I|+j\leq N-3.
\end{equation}
Then by \eqref{eq 1 lem 2 high-order-int},
\begin{equation}\label{eq 5 w-sharp}
|(s/t)^2\del^IL^j\del_{\alpha}\del_{\beta}u|\leq CC_1\vep t^{-1}(t-r)^{1/4},\quad |I|+j\leq N-3.
\end{equation}

\subsection{Estimates on source term in interior region}
In this section we are going to establish the following bounds:
\begin{lemma}[Source for wave equation in interior region]\label{lem 1 source-w-int}
Under the bootstrap bound \eqref{eq 1 bootstrap-int}, the following bound holds for $|I|+j\leq N$:
\begin{equation}\label{eq 1 lem 1 source-w-int}
\|\del^IL^j\del_\alpha(v^3)\|_{L^2(\Hcal^*_s)} +
\|\del^IL^j(v^3)\|_{L^2(\Hcal^*_s)}\leq C(C_1\vep)^3s^{-1+\delta}
\end{equation}
and
\begin{equation}\label{eq 2 lem 1 source-w-int}
\|\del^IL^{j+1}(v^3)\|_{L^2(\Hcal^*_s)}\leq C(C_1\vep)^3s^{\delta}.
\end{equation}
\end{lemma}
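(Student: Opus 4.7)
The strategy is to mirror the exterior argument of lemma \ref{lem 1 source-w-ext}: expand $v^3$ by Leibniz, place the highest-order factor in $L^2$, and control the other two pointwise via the interior decay estimates. Writing
$$
\del^IL^j(v^3) = \sum \del^{I_1}L^{j_1}v \cdot \del^{I_2}L^{j_2}v \cdot \del^{I_3}L^{j_3}v
$$
with total order $|I|+j \leq N$, at least two factors (say $k=2,3$) satisfy $|I_k|+j_k \leq \lfloor N/2\rfloor \leq N-2$ since $N \geq 9$. On those I apply \eqref{eq 1l decay-KG-basic-int} to get $|\del^{I_k}L^{j_k}v| \leq CC_1\vep\, t^{-1/2}$ pointwise, so their product contributes $(CC_1\vep)^2 t^{-1}$. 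The remaining factor has order $\leq N$ and is bounded in $L^2$ by $CC_1\vep s^\delta$ via \eqref{eq 1m L2-KG-basic-int} (treating $c>0$ as a fixed constant). Writing $t^{-1} = s^{-1}(s/t)$ on $\Hcal^*_s$ and pairing with the $(s/t)$-weighted form of \eqref{eq 1m L2-KG-basic-int} yields $(CC_1\vep)^3 s^{-1+\delta}$. For $\del^IL^j\del_\alpha(v^3)$ the extra $\del_\alpha$ is routed into the $L^2$ factor, which is exactly the form $\del^{I'}L^{j'}\del_\alpha v$ controlled with $(s/t)$ weight by \eqref{eq 1m L2-KG-basic-int}, so the same bound applies.

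For the second estimate the total order is $|I|+j+1 \leq N+1$ and the Leibniz expansion has a top factor of order $m_1 \leq N+1$. When $m_1 \leq N$, \eqref{eq 1m L2-KG-basic-int} gives $CC_1\vep s^\delta$ and the product is $(CC_1\vep)^3 s^{-1+\delta}$, well within the target $s^\delta$. When $m_1 = N+1$ and $|I_1| \geq 1$, I peel one $\del_\alpha$ off the top factor, so that modulo lower-order commutator corrections the factor becomes $\del^{I'_1}L^{j'_1}\del_\alpha v$ with $|I'_1|+j'_1 = N$; then \eqref{eq 1m L2-KG-basic-int} supplies $\|(s/t)\del^{I'_1}L^{j'_1}\del_\alpha v\|_{L^2} \leq CC_1\vep s^\delta$, and combining with $t^{-1} = s^{-1}(s/t)$ from the other two $v$-factors still leaves $(CC_1\vep)^3 s^{-1+\delta}$.

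The critical case, which is the main obstacle, is $|I|=0$, $j=N$, where the top factor becomes $L^{N+1}v$ with no $\del$ to peel off. Here I exploit the interior identity $L = t\,\delu_1$ to rewrite $L^{N+1}v = t\,\delu_1(L^N v)$, so $t^{-1}L^{N+1}v = \delu_1 L^N v$. The $L^2$ norm of $\delu_1 L^N v$ is controlled directly by the hyperbolic energy $\EH_{N,c}(s,v)^{1/2}$ since the structure of $E^{\text{H}}_c$ displayed in \eqref{eq int-energy-flat} bounds $\|\delu_1(\del^IL^j v)\|_{L^2(\Hcal^*_s)}$ for all $|I|+j \leq N$, giving $\|\delu_1 L^N v\|_{L^2(\Hcal^*_s)} \leq CC_1\vep s^\delta$. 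Combining with $|v|^2 \leq (CC_1\vep)^2 t^{-1}$ on the remaining two factors yields $\|v\cdot v \cdot L^{N+1}v\|_{L^2} \leq (CC_1\vep)^2 \|t^{-1}L^{N+1}v\|_{L^2} \leq (CC_1\vep)^3 s^\delta$, matching the target. Without the identity $L = t\,\delu_1$, the naive accounting for an $(N+1)$-th order $L$-derivative of $v$ loses an extra factor of $s$; recognizing that this top-order object reduces to an $N$-th order object already controlled by the bootstrap energy is the key step.
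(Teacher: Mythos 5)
Your proof is correct and follows essentially the same route as the paper: Leibniz expansion of $v^3$, pointwise control of the two lower-order factors via the $t^{-1/2}$ decay of Klein--Gordon derivatives, $L^2$ control of the top factor through the hyperbolic energy, and absorption of $t^{-1}$ by $s^{-1}(s/t)$ where needed. Your handling of the critical top-order $L^{N+1}v$ via the identity $L = t\,\delu_1$ is exactly the paper's device (it establishes \eqref{eq 1h L2-KG-basic-int}); the only cosmetic difference is that the paper applies this rewrite $\del^{I_1}L^{j_1}Lv = \del^{I_1}(t\,\delu_1 L^{j_1}v)$ uniformly across all $I_1$, whereas you split into $|I_1|\geq 1$ (peel a $\del_\alpha$) and $|I_1|=0$ (invoke $L=t\,\delu_1$), both of which land on the same estimates.
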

\begin{proof}
Remark that
$$
\del^IL^j(v^3) = \sum_{I_1+I_2+I_3=I\atop j_1+j_2+j_3=j}\del^{I_1}L^{j_1}v\cdot \del^{I_2}L^{j_2}v\cdot \del^{I_3}L^{j_3}v.
$$
Then it is quite similar to what we have done in the proof of lemma \ref{lem 1 source-w-ext}. We apply \eqref{eq 1l decay-KG-basic-int} for lower order factor and \eqref{eq 1m L2-KG-basic-int} for higher order factor when $|I|+j\leq N$.

For $|I|+j = N+1$, we remark that
$$
|\del^IL^j Lv| = |\del^I(t\delu_1L^jv)|\leq Ct\sum_{|I'|\leq|I|}|\del^{I'}\delu_1L^jv|.
$$
Then \eqref{eq 1m L2-KG-basic-int} leads to
\begin{equation}\label{eq 1h L2-KG-basic-int}
\|t^{-1}\del^IL^j Lv\|_{L^2(\Hcal_s)}\leq CC_1\vep s^{\delta}
\end{equation}
which is parallel to \eqref{eq 1 pr lem 1 decay-KG-source}.
Now for \eqref{eq 2 lem 1 source-w-int}, we apply \eqref{eq 1l decay-KG-basic-int} and \eqref{eq 1h L2-KG-basic-int}.
\end{proof}

Then we recall lemma \ref{lem 2 Hessian-int}. Combined with \eqref{eq 1m bootstrap-int} and \eqref{eq 1 lem 1 source-w-int},
\begin{equation}\label{eq 1m L2-hessian-int}
\|s(s/t)^2\del^IL^j\del_{\alpha}\del_{\beta}u\|_{L^2(\Hcal^*_s)}\leq CC_1\vep s^{\delta},\quad |I|+j\leq N-1.
\end{equation}
For $|I|+j = N$, we recall \eqref{eq 2.5 L2-w-basic-int}.

Furthermore for $|I|+j\leq N-2$,
\begin{equation}\label{eq 4 lem 1 hessian-w}
|(s/t)^2\del^IL^j\big(\del_{\alpha}\del_{\beta}u\big)|\leq C(C_1\vep)t^{-1/2}s^{-1+\delta}.
\end{equation}


Now we are ready to establish the following bounds:

\begin{lemma}[Source of Klein-Gordon in interior region I]\label{lem 1 source-KG-int}
Under the bootstrap argument, the following bound holds for $|I|+j\leq N$:
\begin{equation}\label{eq 1 lem 0 source-KG-int}
\|\del^IL^j\big(m^{\mu\nu}N^{\alpha\beta}\del_{\mu}\del_{\alpha}u\del_{\nu}\del_{\beta}u\big)\|_{L^2(\Hcal^*_s)}\leq C(C_1\vep)^2s^{-1+\delta}.
\end{equation}
For $|I|+j\leq N-1$
\begin{equation}\label{eq 1 lem 1 source-KG-int}
\|\del^IL^j\big(m^{\mu\nu}N^{\alpha\beta}\del_{\mu}\del_{\alpha}u\del_{\nu}\del_{\beta}u\big)\|_{L^2(\Hcal^*_s)}\leq C(C_1\vep)^2s^{-3/2+2\delta}.
\end{equation}
\end{lemma}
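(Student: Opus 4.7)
The plan is to adapt the standard null-form analysis from \cite{LM1} to the semi-hyperboloidal frame: distribute $\del^IL^j$ via Leibniz, exploit the double-null structure to extract either a $(s/t)^2$ weight or a good derivative $\delu_1$ on each side, and then split each resulting bilinear term into $L^\infty\times L^2$ with the lower-order factor in $L^\infty$.

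First I would distribute $\del^IL^j$ using Leibniz (treating $m^{\mu\nu}, N^{\alpha\beta}$ as constants), reducing the estimate to bounding bilinear expressions
$$
m^{\mu\nu}N^{\alpha\beta}\del^{I_1}L^{j_1}\del_\mu\del_\alpha u\cdot\del^{I_2}L^{j_2}\del_\nu\del_\beta u,\qquad I_1+I_2=I,\; j_1+j_2=j.
$$
Next I would rewrite each summand in the semi-hyperboloidal frame. Using $\del_x=\delu_1-(x/t)\del_t$ one obtains
$$
m^{\mu\nu}\del_\mu X\del_\nu Y=(s/t)^2\del_t X\del_t Y-\delu_1 X\delu_1 Y+(x/t)\bigl(\delu_1 X\del_t Y+\del_t X\delu_1 Y\bigr),
$$
so the worst $\del_t X\cdot\del_t Y$ piece carries an intrinsic $(s/t)^2$ weight while every remaining piece contains at least one good derivative $\delu_1$. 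Applying the null structure of $N^{\alpha\beta}$ to the residual term $N^{\alpha\beta}\del_t\del_\alpha u\cdot\del_t\del_\beta u$ (using $\Nt^{00}=0$ in the null frame together with $\delt_1=\delu_1+(x/r)(t-r)t^{-1}\del_t$ and the bound $(t-r)/t\leq(s/t)^2$) produces either a second $(s/t)^2$ weight or an additional $\delu_1$ factor.

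After these decompositions, each resulting summand is, up to locally constant coefficients, a bilinear product of factors that are either $(s/t)\del^{I_k}L^{j_k}\del_\mu\del_\nu u$, $\del^{I_k}L^{j_k}\delu_1\del_\alpha u$, or $\del^{I_k}L^{j_k}\delu_1\delu_1 u$. For each such product I would place the lower-order factor (of order $\leq[N/2]\leq N-4$, since $N\geq 9$) in $L^\infty$ using the decay bounds \eqref{eq 4 lem 1 hessian-w}, \eqref{eq 3 decay-w-basic-int} and \eqref{eq 4 decay-w-basic-int}, and the higher-order factor in $L^2$ using \eqref{eq 2.5 L2-w-basic-int}, \eqref{eq 2 L2-w-basic-int} and \eqref{eq 3 L2-w-basic-int}. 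A key arithmetic fact is that on $\Hcal^*_s$ one has $t\leq Cs^2$, and therefore $(t/s)t^{-1/2}\leq C$; this is precisely what converts the $(s/t)^2$-weighted pointwise decay into a usable $(s/t)$-weighted $L^\infty$ bound, which multiplied against the $(s/t)$-weighted $L^2$ bound produces the target $C(C_1\vep)^2 s^{-1+\delta}$.

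The main obstacle is the top-order case of \eqref{eq 1 lem 0 source-KG-int} with $|I_2|+j_2=N$, where only the weaker $(s/t)$-weighted $L^2$ bound is available on the high-order factor: the full gain of $s^{-1}$ must then be supplied by the $(s/t)^2$ weight from the double-null decomposition combined with $t\leq Cs^2$, so the null structure is indispensable here. For the sharper bound \eqref{eq 1 lem 1 source-KG-int} at $|I|+j\leq N-1$ the top-order case disappears, and one may substitute the improved $L^2$ bound \eqref{eq 1m L2-hessian-int} (which carries an extra factor $s^{-1}$) for \eqref{eq 2.5 L2-w-basic-int}, yielding the refined decay $s^{-3/2+2\delta}$.
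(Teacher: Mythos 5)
Your proposal has a genuine gap in the treatment of the critical top-order term, and the mechanism you describe for closing it does not actually work.

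After passing to the semi-hyperboloidal frame, the quadratic expression $m^{\mu\nu}N^{\alpha\beta}\del_\mu\del_\alpha u\,\del_\nu\del_\beta u$ decomposes (as you outline) into pieces carrying two $(s/t)^2$ weights, pieces carrying one $(s/t)^2$ and one good derivative $\delu_1$, pieces with two good derivatives, and one cross term of the form
$$
\miu^{01}\Nu^{01}\,\del_t\del_t u\cdot\delu_1\delu_1 u
$$
which carries \emph{no} $(s/t)$ smallness from either factor: both $\miu^{01}=x/t$ and $\Nu^{01}$ are $O(1)$ homogeneous of degree zero. The double-null structure only degenerates $\Nu^{00}$ and $\miu^{00}$; it gives you nothing on $\miu^{01}\Nu^{01}$. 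So your claim that ``the full gain of $s^{-1}$ must then be supplied by the $(s/t)^2$ weight from the double-null decomposition'' fails precisely on this term, which is the critical one at top order $|I_3|+j_3=N$.

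Concretely, if you place the low-order factor $\del_t\del_t u$ in $L^\infty$ using only the basic Hessian decay \eqref{eq 4 lem 1 hessian-w} ($(s/t)^2|\del_t\del_t u|\leq CC_1\vep\,t^{-1/2}s^{-1+\delta}$) and the high-order factor $\del^{I_3}L^{j_3}\delu_1\delu_1 u$ in $L^2$ via \eqref{eq 3 L2-w-basic-int}, then after balancing the $(s/t)$ and $(t/s)$ weights and using $t\leq Cs^2$ you arrive at $C(C_1\vep)^2 s^{-1+2\delta}$, not $s^{-1+\delta}$. The extra $s^\delta$ is fatal: feeding $s^{-1+2\delta}$ into the interior energy inequality produces $\EH_{N,c}(s,v)^{1/2}\lesssim C_0\vep + C(C_1\vep)^2 s^{2\delta}$, which cannot close the bootstrap assumption $\EH_{N,c}(s,v)^{1/2}\leq C_1\vep s^\delta$. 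The paper avoids this loss by invoking the \emph{sharp} pointwise Hessian bound \eqref{eq 1 w-sharp},
$$
(s/t)^2|\del_t\del_t\del^IL^j u|\leq CC_1\vep\,t^{-1}(t-r)^{-1/4},\quad |I|+j\leq N-3,
$$
which is not a consequence of the null structure or the Sobolev inequality but of a separate integration-along-characteristics argument (Lemmas \ref{lem 0 w-sharp}--\ref{lem 1 w-sharp}, via the decomposition \eqref{eq 1 decompo-Box-int}). Your proposal never mentions this ingredient; without it, the $\delta$-loss is unavoidable in your scheme.

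A secondary but real omission: rewriting only $m^{\mu\nu}\del_\mu X\del_\nu Y$ in the semi-hyperboloidal frame and then appealing to $\Nt^{00}=0$ leaves the terms generated by differentiating the transition coefficients (the paper's $T_2,T_3,T_4$) unaccounted for; these require the explicit computation $\del_\alpha(\Psiu_\beta^{\beta'})$ and $\delu_1(\Phiu_1^0)=-t^{-1}(s/t)^2$ as in \eqref{eq 1 proof lem 1 source-KG-int}--\eqref{eq 2 proof lem 1 source-KG-int}, another step missing in your sketch.
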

\begin{proof}
The proof of theses two estimates are quite similar. We need to evoke the null structure.
$$
\del_{\alpha}\del_{\beta}u = \Psiu_{\alpha'}^{\alpha}\delu_{\alpha'}\big(\Psiu_{\beta}^{\beta'}\delu_{\beta'}u\big) = \Psiu_{\alpha}^{\alpha'}\Psiu_{\beta}^{\beta'}\delu_{\alpha'}\delu_{\beta'}u + \del_{\alpha}\big(\Psiu_{\beta}^{\beta'}\big)\delu_{\beta'}u
$$
Then
$$
\aligned
&m^{\mu\nu}N^{\alpha\beta}\del_{\mu}\del_{\alpha}u\del_{\nu}\del_{\beta}u
\\
=& \miu^{\mu\nu}\Nu^{\alpha\beta}\delu_{\mu}\delu_{\alpha}u\delu_{\nu}\delu_{\beta}u
\\
&+ m^{\mu\nu}N^{\alpha\beta}\Psiu_{\alpha}^{\alpha'}\del_{\mu}\delu_{\alpha'}u\cdot \del_{\nu}(\Psiu_{\beta}^{\beta'})\delu_{\beta'}u
 + m^{\mu\nu}N^{\alpha\beta}\Psiu_{\beta}^{\beta'}\del_{\nu}\delu_{\beta'}u\cdot \del_{\mu}(\Psiu_{\alpha}^{\alpha'})\delu_{\alpha'}u
\\
&+ m^{\mu\nu}N^{\alpha\beta}\del_{\mu}(\Psiu_{\alpha}^{\alpha'})\del_{\nu}(\Psiu_{\beta}^{\beta'})\delu_{\alpha'}u\del_{\beta'}u
\\
=:&T_1+T_2+T_3+T_4.
\endaligned
$$

{\bf For the term $T_1$},
$$
\aligned
T_1
 =& \miu^{00}\Nu^{00}\del_t\del_tu\del_t\del_tu + 2\miu^{01}\Nu^{00}\del_t\del_tu \delu_1\del_tu + \miu^{11}\Nu^{00}\delu_1\del_tu\delu_1\del_tu
\\
&+2\miu^{00}\Nu^{01}\del_t\del_tu\del_t\delu_1u + 2\miu^{01}\Nu^{01}\del_t\del_tu\delu_1\delu_1u + 2\miu^{10}\Nu^{01}\delu_1\del_tu\del_t\delu_1u
\\
&+2\miu^{11}\Nu^{01}\delu_1\del_tu\delu_1\delu_1u
\\
&+\miu^{00}\Nu^{11}\del_t\delu_1u\del_t\delu_1u +  2\miu^{01}\Nu^{11}\del_t\delu_1u\delu_1\delu_1u + \miu^{11}\Nu^{11}\delu_1\delu_1u\delu_1\delu_1u.
\endaligned
$$

These ten terms can be classified into four groups (denoted by $T_{11}, T_{12}, T_{13}$ and $T_{14}$). The first term forms the first group. The second group is composed by the second, the third, the forth and the eighth term. The forth group only contains the fifth term.  The rest terms are left in the third group.

When $|I|+j\leq N$, we regard the bound on $T_{11}$.
$$
\del^IL^j\big(\miu^{00}\Nu^{00}\del_t\del_tu\del_t\del_tu\big) = \sum_{I_1+I_2+I_3=I\atop j_1+j_2+j_3=j}\del^{I_1}L^{j_1}\big(\miu^{00}\Nu^{00}\big)\del^{I_2}L^{j_2}\del_t\del_tu\cdot \del^{I_3}L^{j_3}\del_t\del_tu.
$$
In RHD of the above expression, recall the null condition on $m^{\alpha\beta}$ and $N^{\alpha\beta}$, thanks to \eqref{eq-SHF null}
$$
|\del^IL^j(\miu^{00}\Nu^{00})|\leq C(s/t)^4.
$$
Then, by \eqref{eq 2.5 L2-w-basic-int} and \eqref{eq 4 lem 1 hessian-w}, the $L^2$ norm of each term in RHD is controlled by $C(C_1\vep)^2s^{-3/2+2\delta}$.

The $L^2$ norm on $\Hcal^*_s$ of the terms in $T_{12}$ are bounded by $C(C_1\vep)^2s^{-3/2+2\delta}$. There are only one good coefficient $\miu^{00}$ or $\Nu^{00}$. We take the second term in the expression of $T_1$ as an example, the rest terms are bounded in the same manner:
$$
\del^IL^j(\miu^{01}\Nu^{00}\del_t\del_tu \delu_1\del_tu) = \sum_{I_1+I_2+I_3=I\atop j_1+j_2+j_3=j}\del^{I_1}L^{j_1}\big(\miu^{01}\Nu^{00}\big)\del^{I_2}L^{j_2}\del_t\del_tu \del^{I_3}L^{j_3}\delu_1\del_tu
$$
In the RHD of the above expression, the first factor is bounded by $C(s/t)^2$ (thanks to \eqref{eq-SHF null} and the fact that $\miu^{01}$ is homogeneous of degree zero). Then we apply \eqref{eq 2.5 L2-w-basic-int} or \eqref{eq 4 lem 1 hessian-w} on the factor of $\del_t\del_tu$, and \eqref{eq 2 L2-w-basic-int} or \eqref{eq 3 decay-w-basic-int} on the factor of $\delu_1\del_tu$, then the $L^2$ norm of each term in RHD of the above expression is controlled by $C(C_1\vep)^2s^{-3/2+2\delta}$.

The $L^2$ norm on $\Hcal^*_s$ of the terms in $T_{13}$ are bounded by $C(C_1\vep)^2s^{-3/2+2\delta}$. For these term, there are no supplementary decay supplied by the coefficients, however, the terms contain more good derivatives, which supply a sufficient bound. We take the sixth term as an example:
$$
\del^IL^j\big(\miu^{10}\Nu^{01}\delu_1\del_tu\del_t\delu_1u\big)
= \sum_{I_1+I_2+I_3=I\atop j_1+j_2+j_3=j}\del^{I_1}L^{j_1}(\miu^{10}\Nu^{01})\del^{I_2}L^{j_2}\delu_1\del_tu \del^{I_3}L^{j_3}\del_t\delu_1u
$$
Recall that $\miu^{10}\Nu^{01}$ is homogeneous of degree zero, so the first factor is bounded. We apply \eqref{eq 2 L2-w-basic-int} and \eqref{eq 3 decay-w-basic-int} on the last two factors.

$T_{14}$ is critical term. Again,
$$
\del^IL^j\big(\miu^{01}\Nu^{01}\del_t\del_tu\delu_1\delu_1u\big) =  \sum_{I_1+I_2+I_3=I\atop j_1+j_2+j_3 = j}\del^{I_1}L^{j_1}(\miu^{01}\Nu^{01})\del^{I_2}L^{j_2}\del_t\del_tu \del^{I_3}L^{j_3}\delu_1\delu_1u.
$$
In the RHD of the above expression, we remark that $\miu^{01}\Nu^{01}$ is homogeneous of degree zero, thus bounded. For the last two factors, we make the following discussion:
\\
$-$ when $|I_3| + j_3 = N$, $|I_2|+j_2 =0$. We apply \eqref{eq 1 w-sharp} on $\del_t\del_tu$ and \eqref{eq 3 L2-w-basic-int} on $\del^{I_3}L^{j_3}\delu_1\delu_1u$. In this case the $L^2$ norm is bounded by $C(C_1\vep)^2s^{-1+\delta}$.
\\
$-$ when $|I_3| + j_3 \leq N-1$ and $|I_2|+j_2\leq N-1$, we apply \eqref{eq 1m L2-hessian-int} or \eqref{eq 4 lem 1 hessian-w} on $\del_t\del_t u$ and \eqref{eq 3' L2-w-basic-int} or \eqref{eq 4 decay-w-basic-int} on $\delu_1\delu_1 u$. In this case the $L^2$ norm is bounded by $C(C_1\vep)^2s^{-3/2+2\delta}$.
\\
$-$ when $|I_2|+j_2=N$, $|I_3|+j_3=0$, we apply  \eqref{eq 2.5 L2-w-basic-int} on $\del_t\del_t u$ and \eqref{eq 4 decay-w-basic-int} on $\delu_1\delu_1 u$. In this case the $L^2$ norm is bounded by $C(C_1\vep)^2s^{-3/2+2\delta}$.

Now we see that when $|I|+j\leq N-1$, all terms are bounded by $C(C_1\vep)^2s^{-3/2+2\delta}$ while when $|I|+j=N$, there is one term ($\|\miu^{01}\Nu^{01}\del_t\del_tu\del^IL^j\delu_1\delu_1 u\|_{L^2(\Hcal_s)}$) bounded by $C(C_1\vep)^2s^{-1+\delta}$ and all other terms are bounded by $C(C_1\vep)^2s^{-3/2+2\delta}$. So the desired bounds are proved.

{\bf For the terms in $T_2$ or $T_3$}, there is one factor homogeneous of degree $-1$ but this is not sufficient. The key is the following observation:
\begin{equation}\label{eq 1 proof lem 1 source-KG-int}
\del_t\big(\Phiu_1^0\big) = \frac{x}{t^2},\quad \del_x\big(\Phiu_1^0\big) = -t^{-1}.
\end{equation}
The derivatives of other components are zero. Furthermore,
\begin{equation}\label{eq 2 proof lem 1 source-KG-int}
\delu_1\big(\Phiu_1^0\big) = -t^{-1}(s/t)^2.
\end{equation}
Then, we evoke the null structure of $T_2$ and $T_3$:
$$
\aligned
&m^{\mu\nu}N^{\alpha\beta}\Psiu_{\alpha}^{\alpha'}\del_{\mu}\delu_{\alpha'}u\cdot \del_{\nu}(\Psiu_{\beta}^{\beta'})\delu_{\beta'}u
\\
=&\miu^{\mu\nu}N^{\alpha\beta}\Psiu_{\alpha}^{\alpha'}\delu_{\mu}\delu_{\alpha'}u\cdot \delu_{\nu}(\Psiu_{\beta}^{\beta'})\delu_{\beta'}u
\\
=&\miu^{00}N^{\alpha\beta}\Psiu_{\alpha}^{\alpha'}\del_t\delu_{\alpha'}u\cdot \del_t(\Psiu_{\beta}^{\beta'})\delu_{\beta'}u
+\miu^{01}N^{\alpha\beta}\Psiu_{\alpha}^{\alpha'}\del_t\delu_{\alpha'}u\cdot \delu_1(\Psiu_{\beta}^{\beta'})\delu_{\beta'}u
\\
&+\miu^{10}N^{\alpha\beta}\Psiu_{\alpha}^{\alpha'}\delu_1\delu_{\alpha'}u\cdot \del_t(\Psiu_{\beta}^{\beta'})\delu_{\beta'}u
 +\miu^{11}N^{\alpha\beta}\Psiu_{\alpha}^{\alpha'}\delu_1\delu_{\alpha'}u\cdot \delu_1(\Psiu_{\beta}^{\beta'})\delu_{\beta'}u.
\endaligned
$$
Then by a the same argument applied on $T_1$, we see that when $|I|+j = N$ the $L^2$ norm is bounded by $C(C_1\vep)^2s^{-1+\delta}$ and when $|I|+j\leq N-1$, it can be controlled by $C(C_1\vep)^{-3/2+2\delta}$. This proves the desired result.

{\bf For the term $T_4$}, we only need to remark that there are two factors homogeneous of degree $-1$, which provides sufficient decay rate.

\end{proof}

\begin{lemma}\label{lem 2 source-KG-int}
Under the bootstrap assumption, the following bound holds:
\begin{equation}\label{eq 1 lem 2 source-KG-int}
\|\del^IL^j(N^{\alpha\beta}\del_{\alpha}(v^3)\del_{\beta}u)\|_{L^2(\Hcal^*_s)}\leq C(C_1\vep)^4s^{-3/2+3\delta}.
\end{equation}
\end{lemma}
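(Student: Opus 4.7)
The plan is to first reduce the problem to a quartic estimate by writing $\del_\alpha(v^3)=3v^2\del_\alpha v$, so that the source becomes $3N^{\alpha\beta}v^2\del_\alpha v\del_\beta u$. Applying $\del^IL^j$ via Leibniz produces a sum of products
$$
N^{\alpha\beta}\,\del^{I_1}L^{j_1}v\cdot\del^{I_2}L^{j_2}v\cdot\del^{I_3}L^{j_3}\del_\alpha v\cdot\del^{I_4}L^{j_4}\del_\beta u,
$$
with $\sum_k I_k=I$, $\sum_k j_k=j$. After commuting $\del_\alpha$ past $\del^{I_3}L^{j_3}$ (using Lemma~\ref{lem 1 high-order-int} and its ext-analogue, which introduces only lower-order terms of the same structure), we may treat $\del^{I_3}L^{j_3}\del_\alpha v$ as $\del_\alpha$ applied to a field of equal or lower order.

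Next, as in the proof of Lemma~\ref{lem 1 source-KG-int}, I would evoke the null structure of $N^{\alpha\beta}$ on the $(\del_\alpha v,\del_\beta u)$-pair by rewriting in the SHF:
$$
N^{\alpha\beta}\del_\alpha V\cdot\del_\beta U=\Nu^{00}\del_tV\del_tU+\Nu^{01}\del_tV\delu_1U+\Nu^{10}\delu_1V\del_tU+\Nu^{11}\delu_1V\delu_1U,
$$
where $|\Nu^{00}|\leq C(s/t)^2$ (from the null condition and \eqref{eq-SHF null}) and the other $\Nu^{\alpha'\beta'}$ are bounded homogeneous functions of degree zero. Thus each surviving term either carries a gain of $(s/t)^2$, or contains a good derivative $\delu_1$ (acting on $v$ or on $u$).

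I would then run a routine $L^\infty$--$L^2$ split on the four Klein-Gordon/wave factors. Since $|I|+j\leq N-1$, at least three of the four multi-indices have total order $\leq[(N-1)/2]\leq N-5$, so they can be placed in $L^\infty$ via the decay bounds already available: $|c\del^{I'}L^{j'}v|\leq CC_1\vep\,t^{-1/2}$ from \eqref{eq 1l decay-KG-basic-int} (applied to the two bare $v$-factors and to $\del_\alpha v$ after differentiating), $|\del^{I'}L^{j'}\del_\alpha u|\leq CC_1\vep(t-r)^{-1/4}$ from \eqref{eq 3 w-sharp}, and the improved good-derivative bound $|\delu_1\del^{I'}L^{j'}u|\leq CC_1\vep\,t^{-1}(t-r)^{3/4}$ from \eqref{eq 7 w-sharp}. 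The single remaining top-order factor is estimated in $L^2(\Hcal_s^*)$ by \eqref{eq 1m L2-KG-basic-int} or \eqref{eq 1 L2-w-basic-int}, yielding a factor $CC_1\vep\,s^\delta$. Multiplying $(C_1\vep)^4$ by two factors of $t^{-1/2}$ from $v^2$, the null-gain (either $(s/t)^2$ or the sharper $\delu_1$-decay), and $s^\delta$ then gives the claimed $s^{-3/2+3\delta}$ upon integration over $\Hcal^*_s$ (using $t\sim s^2/(s/t)$ in interior coordinates and $\|s/t\|_{L^2(\Hcal_s^*)}\lesssim s^{1/2}$).

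The main obstacle will be the critical top-order case where $\del^IL^j$ falls entirely on $\del_\beta u$ (or on $\del_\alpha v$). In that case only the null structure saves us: either the $\Nu^{00}$-component provides the $(s/t)^2$ gain that exactly compensates the missing decay, or a $\delu_1$ replaces $\del_\beta$ and one invokes \eqref{eq 3 L2-w-basic-int}--\eqref{eq 3' L2-w-basic-int} for $\delu_1\delu_1 u$-type $L^2$ control, paired with the pointwise decay of the three remaining small factors. The precise verification in this critical slot is the only delicate bookkeeping; all other sub-cases are over-determined.
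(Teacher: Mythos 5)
Your strategy is essentially the same as the paper's: expose the null structure by rewriting $N^{\alpha\beta}\del_\alpha v\,\del_\beta u$ in the semi-hyperboloidal frame so that only $\Nu^{00}\sim(s/t)^2$ needs a gain, then run an $L^\infty$--$L^2$ split where the $v$-factors contribute $t^{-1/2}$ decay. The one organizational difference is that the paper works with a \emph{binary} Leibniz decomposition $\del^{I_1}L^{j_1}(v^2)\cdot\del^{I_2}L^{j_2}\bigl(N^{\alpha\beta}\del_\alpha v\,\del_\beta u\bigr)$ and first establishes $t^{1/2}$-weighted $L^2$ and pointwise bounds for each of these two aggregated factors, which makes the top-order dichotomy (does $v^2$ or the null pair carry the highest order?) trivial; your fully-unrolled quaternary split covers the same content but creates more cases, and the critical slot you flag is precisely what the binary grouping dispenses with.

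Two things keep your sketch from being a proof. First, the closing decay audit is not correct: ``two factors of $t^{-1/2}$'' from $v^2$, the $s^\delta$ from the $L^2$-energy of the top-order factor, and ``$\|s/t\|_{L^2(\Hcal^*_s)}\lesssim s^{1/2}$'' do not combine to give $s^{-3/2+3\delta}$ --- and there is in fact no occasion to integrate the weight $s/t$ over $\Hcal^*_s$ here, since one of the four factors is already measured in an energy $L^2$ norm. What actually supplies the missing half power is that the $L^\infty$ factor $\del_\alpha v$ (or $\del_\alpha u$) must be estimated with an $(s/t)$-weight, e.g.\ $|(s/t)\del_\alpha v|\lesssim C_1\vep\,t^{-1/2}s^\delta$, so that the $L^\infty$ product is $\lesssim(C_1\vep)^3\,t^{-3/2}s^\delta\leq(C_1\vep)^3 s^{-3/2+\delta}$ while the other $(s/t)$ from $\Nu^{00}$ is absorbed into $\|(s/t)\del^{I}L^{j}\del_\alpha(\cdot)\|_{L^2}\lesssim C_1\vep\,s^\delta$; the $\delu_1$-components are handled via \eqref{eq 1ll decay-KG-basic-int} and \eqref{eq 7 w-sharp}. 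Second, you defer the top-order critical slot rather than resolving it, which leaves a genuine gap. It does close, by the same mechanism as above, but one must take care: the bound $|\del_\alpha u|\lesssim(t-r)^{-1/4}$ from \eqref{eq 3 w-sharp} is too weak in that slot; you need $|(s/t)\del_\alpha u|\lesssim C_1\vep\,t^{-1/2}s^\delta$ from \eqref{eq 1 decay-w-basic-int}, paired with $|v|^2\lesssim(C_1\vep)^2 t^{-1}$ and the $L^2$-energy of the highest-order factor.
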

\begin{proof}
We remark that
\begin{equation}\label{eq 1 proof lem 2 source-KG-int}
\aligned
\del^IL^j(N^{\alpha\beta}\del_{\alpha}(v^3)\del_{\beta}u)
=& 3\sum_{I_1+I_2=I\atop j_1+j_2=j}\del^{I_1}L^{j_1}(v^2)\cdot \del^{I_2}L^{j_2}\big(N^{\alpha\beta}\del_{\alpha}v\del_{\beta}u\big)
\endaligned
\end{equation}

We first prove that
\begin{equation}\label{eq 2 proof lem 2 source-KG-int}
|\del^IL^j (N^{\alpha\beta}\del_{\alpha}u\del_{\beta}u)|\leq C(C_1\vep)^2t^{-1}s^{\delta}, \quad |I|+j\leq N-3.
\end{equation}
This is because the following calculation:
\begin{equation}\label{eq 3 proof lem 2 source-KG-int}
\aligned
N^{\alpha\beta}\del_{\alpha}v\del_{\beta}u =& \Nu^{\alpha\beta}\delu_{\alpha}v\delu_{\beta}u
\\
=& \Nu^{00}\del_tv\del_tu + \Nu^{10}\delu_1v\del_tu + \Nu^{01}\del_tv\delu_1u + \Nu^{11}\delu_1v\delu_1u.
\endaligned
\end{equation}
Then we apply the null condition ($\del^IL^j(\Nu^{00})\sim(s/t)^2$), \eqref{eq 1 decay-w-basic-int} and \eqref{eq 1ll decay-KG-basic-int}.

Then we establish the following bound:
\begin{equation}\label{eq 4 proof lem 2 source-KG-int}
\|t^{1/2}\del^IL^j (N^{\alpha\beta}\del_{\alpha}v\del_{\beta}u)\|_{L^2(\Hcal^*_s)}\leq C(C_1\vep)^2s^{2\delta}, \quad |I|+j\leq N.
\end{equation}
This is also by \eqref{eq 3 proof lem 2 source-KG-int} and the application of \eqref{eq 1 L2-w-basic-int}, \eqref{eq 3 w-sharp} on $\del u$,  and \eqref{eq 1m L2-KG-basic-int}, \eqref{eq 1ll decay-KG-basic-int} on $\del v$.

We remark the following bounds:
\begin{equation}\label{eq 5 proof lem 2 source-KG-int}
\|t^{1/2}\del^IL^j (v^2)\|_{L^2(\Hcal_s^*)}\leq C(C_1\vep)^2 s^{\delta},\quad |I|+j\leq N
\end{equation}
and
\begin{equation}\label{eq 6 proof lem 2 source-KG-int}
|\del^IL^j(v^2)|\leq C(C_1\vep)^2 t^{-1},\quad |I|+j\leq N-2.
\end{equation}
These are by \eqref{eq 1m L2-KG-basic-int} and \eqref{eq 1l decay-KG-basic-int}.

Now we apply \eqref{eq 1 proof lem 2 source-KG-int}. When $|I_1|+j_1\geq [N/2]+1$, we apply \eqref{eq 5 proof lem 2 source-KG-int} in $v^2$ and \eqref{eq 2 proof lem 2 source-KG-int} on $N^{\alpha\beta}\del_{\alpha}u\del_{\beta}u$. When $|I_2|+j_2\geq[N/2]+1$, we apply \eqref{eq 4 proof lem 2 source-KG-int} on $N^{\alpha\beta}\del_{\alpha}u\del_{\beta}u$ and \eqref{eq 6 proof lem 2 source-KG-int} on $v^2$
\end{proof}
 
%
%

%
%
%
%
%
%
%
%
%
%

\subsection{Refined energy bound in interior region}
Exactly as what we have done in section \ref{subsec energy-refine-ext}, we will establish the following energy bounds:
\begin{subequations}\label{eq 1 refined-int}
\begin{equation}\label{eq 1h refined-int}
\EH_N(s,Lu)^{1/2} \leq C_0\vep +  C(C_1\vep)^2 s^{1+\delta},
\end{equation}
\begin{equation}\label{eq 1m refined-int}
\aligned
\EH_N(s,u)^{1/2} + \EH_N(s,\del_{\alpha} u)^{1/2} + \EH_{N,c}(s,v)^{1/2} &\leq C_0\vep + C(C_1\vep)^2 s^{\delta},
\endaligned
\end{equation}
\begin{equation}\label{eq 1l refined-int}
 \EH_{N-1,c}(s,v)^{1/2}\leq C_0\vep + C(C_1\vep)^2.
\end{equation}
\end{subequations}

For the energy bounds of $u,\ \del_{\alpha}u$ and $Lu$,  we substitute the bounds \eqref{eq 1 lem 1 source-w-int}, \eqref{eq 2 lem 1 source-w-int} into \eqref{eq 1 prop 1 energy} and recall \eqref{eq energy-cone}.

For the bounds of $v$, we recall \eqref{eq main-trans} and the definition of $w$. For the bounds on $w$, we substitute the bounds \eqref{eq 1 lem 0 source-KG-int}, \eqref{eq 1 lem 1 source-KG-int} and \eqref{eq 1 lem 2 source-KG-int} into \eqref{eq 1 prop 1 energy} and recall \eqref{eq energy-cone}. To obtain the bounds of $v$, we need to estimate the energy of the correction term $N^{\alpha\beta}\del_{\alpha}u\del_{\beta}u$. To do so, we establish the following bounds:
\begin{lemma}\label{lem 1 corrcetion-int}
Under the bootstrap assumption, the following bounds hold:
\begin{equation}\label{eq 1 correction-int}
\|\del_{\gamma}\del^IL^j(N^{\alpha\beta}\del_{\alpha}u\del_{\beta}u)\|_{L^2(\Hcal_s^*)} + \|\del^IL^j\big(N^{\alpha\beta}\del_{\alpha}u\del_{\beta}u\big)\|_{L^2(\Hcal^*_s)}\leq
\left\{
\aligned
& C(C_1\vep)^2 s^{\delta},\quad &&|I|+j = N,
\\
& C(C_1\vep)^2 s^{-1/2+2\delta},  &&|I|+j \leq N-1.
\endaligned
\right.
\end{equation}
\end{lemma}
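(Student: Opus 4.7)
The plan is to expand the null form in the semi-hyperboloidal frame and exploit the null structure, mirroring the strategy already used in Lemma \ref{lem 1 source-KG-int}. Writing
$$
N^{\alpha\beta}\del_\alpha u\,\del_\beta u
= \Nu^{00}(\del_t u)^2 + 2\Nu^{01}\,\del_t u\,\delu_1 u + \Nu^{11}(\delu_1 u)^2,
$$
the null condition on $N^{\alpha\beta}$ supplies the decay $|\del^KL^l\Nu^{00}|\leq C(s/t)^2$, while $\Nu^{01}$ and $\Nu^{11}$ are homogeneous of degree zero and hence uniformly bounded together with all their derivatives. The last two summands are harmless since they always carry a good derivative $\delu_1 u$.

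Next I would apply $\del^IL^j$ (and in the first norm also a further $\del_\gamma$) and distribute by Leibniz among the coefficient and the two copies of $\del u$. The factor carrying the largest number of derivatives is placed in $L^2(\Hcal^*_s)$ using \eqref{eq 1 L2-w-basic-int} (first order) or \eqref{eq 2.5 L2-w-basic-int} (Hessian case, needed when the added $\del_\gamma$ lands on an already differentiated $\del u$); the remaining factor is placed pointwise via \eqref{eq 3 w-sharp} at order $\leq N-2$, or via \eqref{eq 1 decay-w-basic-int} at order $\leq N-1$. Because $N\geq 9$, pigeonhole always leaves at least one factor at order $\leq N-2$, so the sharp pointwise bound always applies. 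The $(s/t)^2$ weight from $\Nu^{00}$ matches the $(s/t)$ weight naturally attached to the $L^2$ norm of $\del u$, leaving one harmless factor $(s/t)\leq 1$.

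For the top-order estimate $|I|+j=N$ (so the effective order in the $\del_\gamma$-version is $N+1$), the critical case is $\Nu^{00}(\del_t u)^2$ with all derivatives falling on one copy of $\del_t u$. It is controlled by a product $\|(s/t)\,\del^{I'}L^{j'}\del_t u\|_{L^2(\Hcal^*_s)}\cdot \sup|(s/t)\del_t u|$, where the first factor is bounded by $CC_1\vep\,s^{\delta}$ via \eqref{eq 1 L2-w-basic-int} or \eqref{eq 2.5 L2-w-basic-int}, and the second by $CC_1\vep$ via \eqref{eq 3 w-sharp} (using $(t-r)^{-1/4}\leq 1$ on $\Hcal^*_s$), giving the claimed $s^{\delta}$. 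For $|I|+j\leq N-1$ both copies of $\del_t u$ carry orders $\leq N-1$, so the pointwise factor can instead be bounded by \eqref{eq 1 decay-w-basic-int}, which contributes an extra $t^{-1/2}\leq s^{-1/2}$ and produces $s^{-1/2+2\delta}$. The main obstacle is essentially bookkeeping: systematically splitting derivatives over the three factors plus the extra $\del_\gamma$, and in each split verifying that the $(s/t)^2$ null weight combined with the sharp pointwise bounds integrates on $\Hcal^*_s$. This parallels the discussion of the term $T_{14}$ in the proof of Lemma \ref{lem 1 source-KG-int}, and once the cases are enumerated no new analytical tool is required.
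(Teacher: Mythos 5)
Your overall strategy coincides with the paper's: expand in the semi-hyperboloidal frame, isolate the null weight $|\del^KL^l\Nu^{00}|\leq C(s/t)^2$, distribute $\del_\gamma\del^IL^j$ by Leibniz, and pair an $L^2(\Hcal^*_s)$ factor with an $L^\infty$ factor. The bookkeeping you describe for the $\Nu^{00}(\del_t u)^2$ piece is the paper's, and the distinction you draw between $|I|+j=N$ (use \eqref{eq 3 w-sharp} on the spare factor and accept the bound $s^\delta$) and $|I|+j\leq N-1$ (use the $(s/t)$-weighted decay \eqref{eq 1 decay-w-basic-int} and harvest $t^{-1/2}\leq s^{-1/2}$) is exactly what is needed.

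The gap is the sentence dismissing the $\Nu^{01}$ and $\Nu^{11}$ pieces as ``harmless since they always carry a good derivative $\delu_1 u$.'' Once the extra $\del_\gamma$ lands and Leibniz is distributed, these pieces produce mixed Hessian factors $\del^{I'}L^{j'}\delu_1\del_\alpha u$ (or $\del^{I'}L^{j'}\del_\alpha\delu_1 u$) paired with an \emph{unweighted} $\del_t u$, since $\Nu^{01}$ and $\Nu^{11}$ carry no $(s/t)$ factor. None of the four estimates you invoke --- \eqref{eq 1 L2-w-basic-int}, \eqref{eq 2.5 L2-w-basic-int}, \eqref{eq 3 w-sharp}, \eqref{eq 1 decay-w-basic-int}, which all concern pure partial derivatives or the first-order $\delu_1 u$ --- controls a mixed Hessian, and if you try to put the unweighted $\del_t u$ into $L^2$ you must use the $(s/t)$-weighted bound and the compensating $(t/s)$ on the pointwise $\delu_1$ factor cancels the $t^{-1/2}$ you were counting on. What closes these cases in the paper are precisely the mixed-Hessian estimates \eqref{eq 2 L2-w-basic-int}, \eqref{eq 2' L2-w-basic-int} (the latter carries the crucial extra factor $s^{-1}$) and \eqref{eq 3 decay-w-basic-int}, together with the sharp pointwise bound \eqref{eq 7 w-sharp} on the undifferentiated $\delu_1 u$ for the second norm; all of these appear in the paper's citation list and none in yours. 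So the cross terms are not bookkeeping to be waved away --- they are the part of the lemma that calls on the interior toolbox beyond the pure-$\del$ estimates, and omitting them leaves a genuine hole, particularly for the improved rate $s^{-1/2+2\delta}$ at $|I|+j\leq N-1$.
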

\begin{proof}
For the first term, we make the following calculation:
\begin{equation}\label{eq 1 pr lem 1 correction-int}
\aligned
\del_{\gamma}\del^IL^j\big(N^{\alpha\beta}\del_{\alpha}u\del_{\beta}u\big)
=& \del_{\gamma}\del^IL^j\big(\Nu^{\alpha\beta}\delu_{\alpha}u\delu_{\beta}u\big)
\\
=& \del_{\gamma}\del^IL^j\big(\Nu^{00}\del_tu\del_tu\big)
+ \del_{\gamma}\del^IL^j\big(\Nu^{10}\delu_1\del_{\gamma}u\del_tu \big)
\\
&+ \del_{\gamma}\del^IL^j\big(\Nu^{01}\del_t\del_{\gamma}u\delu_1u\big) + \del_{\gamma}\del^IL^j\big(\Nu^{11}\delu_1\del_\gamma u\delu_1u\big).
\endaligned
\end{equation}
We only give a sketch. First, remark the null structure $\del^IL^jN^{00}\sim (s/t)^2$.

For $|I|+j = N$, we apply the $L^2$ bounds \eqref{eq 2.5 L2-w-basic-int}, \eqref{eq 2 L2-w-basic-int} and decay bounds \eqref{eq 3 decay-w-basic-int}, \eqref{eq 5 w-sharp} on $\del\del u$, and  the $L^2$ bounds \eqref{eq 1 L2-w-basic-int}, the decay bounds \eqref{eq 3 w-sharp} applied on $\del u$.

For $|I|+j\leq N-1$ we apply \eqref{eq 2' L2-w-basic-int} instead of \eqref{eq 2 L2-w-basic-int} and \eqref{eq 1m L2-hessian-int} instead of \eqref{eq 2.5 L2-w-basic-int}.

For the second term in LHS of \eqref{eq 1 correction-int}, we only need to apply the null structure and \eqref{eq 1 L2-w-basic-int}, \eqref{eq 3 w-sharp} and \eqref{eq 7 w-sharp}.
\end{proof}

Based on the above bounds, and thanks to \eqref{eq 1 lem 2 high-order-int}, we can turn the bounds on the norms $\|\del^IL^j\del_{\gamma}\big(N^{\alpha\beta}\del_{\alpha}u\del_{\beta}u\big)\|_{L^2(\Hcal_s^*)}$ into the bounds on the norms $\|\del_{\gamma}\del^IL^j\big(N^{\alpha\beta}\del_{\alpha}u\del_{\beta}u\big)\|_{L^2(\Hcal_s^*)}$. So we conclude that
\begin{equation}
E^H_c\big(s,\del^IL^j\big(N^{\alpha\beta}\del_{\alpha}u\del_{\beta}u\big)\big)^{1/2}\leq
\left\{
\aligned
& C(C_1\vep)^2 s^{\delta},\quad &&|I|+j = N,
\\
& C(C_1\vep)^2 s^{-1/2+2\delta},  &&|I|+j \leq N-1.
\endaligned
\right.
\end{equation}
The by triangle inequality, we obtain the energy bounds on $v$.

\section{Conclusion of the bootstrap argument}\label{sec conclusion}
Based on the bounds \eqref{eq 1 refined-ext} and \eqref{eq 1 refined-int}, we see that we only need to make a choice of $(C_1,\vep)$  with $0< C_1\vep <1$ and
$$
C_0\vep + C(C_1\vep)^2\leq \frac{1}{2}C_1\vep.
$$
To do so, we only need to fix $C_1>2C_0$ and take $\vep <\min\{\frac{C_1-2C_0}{CC_1^2},C^{-1}\}$.

\appendix

\section{Recall of basic calculation}\label{sec basic}
In this section we recall some basic calculation which are valid in $\Fcal_{[s_0,\infty)}$.
We introduce the following family of vector fields:
$$
\mathscr{Z} := \{\del_t,\del_x,L\}
$$
with $Z_0 = \del_t,Z_1=\del_x$ and $Z_2=L$. A {\sl high-order derivative} of $\mathscr{Z}$ is written as
$$
Z^I = Z_{i_1}Z_{i_2}\cdots Z_{i_N}
$$
where $I=(i_1,i_2,\cdots,i_N)$ is a multi-index with $i_j\in\{0,1,2\}$. The {\sl type} of  a high-order derivative $Z^I$ is a couple of integers $(i,j)$, such that $Z^I$ contains $i$ partial derivatives ($\del_t,\del_x$) and $j$ Lorentzian boosts ($L$). For an operator of type $(i,0)$, we write $Z^I = \del^I$.
Then we recall the following result:
\begin{lemma}\label{lem 1 high-order}
Let $u$ be a function defined in $\Fcal_{[s_0,\infty)}$, sufficiently regular. Let $Z^K$ be a $N-$order operator of type $(i,j)$. Then the following bound holds:
\begin{equation}\label{eq 1 lem 1 high-order}
Z^Ku = \sum_{|I|=i\atop j'\leq j}\Theta^K_{Ij'}\del^IL^{j'}u
\end{equation}
with $\Theta_{Ij'}^K$ constants determined by $K$ and $I,j'$.
\end{lemma}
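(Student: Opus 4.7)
The plan is to prove this by induction on the total order $N=i+j$ of the operator $Z^K$, using the elementary commutation relations
\[
[L,\del_t] = -\del_x, \qquad [L,\del_x] = -\del_t,
\]
which follow directly from $L=x\del_t+t\del_x$ and show in particular that commuting $L$ past a partial derivative produces only constant-coefficient terms involving another partial derivative. The $N=0$ case is vacuous and $N=1$ is immediate.

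For the inductive step I would write $Z^K = Z_{i_1} Z^{K'}$ where $Z^{K'}$ has order $N-1$ and type $(i',j')$. By the induction hypothesis,
\[
Z^{K'} u \;=\; \sum_{|I'|=i',\, k\leq j'} \Theta^{K'}_{I'k}\,\del^{I'} L^{k} u
\]
with constant coefficients $\Theta^{K'}_{I'k}$. If $Z_{i_1}=\del_\alpha$ (so $(i,j)=(i'+1,j')$), applying $\del_\alpha$ on the left gives $\del_\alpha\del^{I'}L^{k}u = \del^{I}L^k u$ with $|I|=i'+1=i$ and $k\leq j'\leq j$, which is of the required form.

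If instead $Z_{i_1}=L$ (so $(i,j)=(i',j'+1)$), I need to move $L$ past $\del^{I'}$. To that end I would establish by a secondary induction on $|I'|$ the identity
\[
[L,\del^{I'}] \;=\; \sum_{|I''|=|I'|} c^{I'}_{I''}\,\del^{I''},
\]
with constants $c^{I'}_{I''}$; the base case $|I'|=1$ is the commutator formula above, and the inductive step uses $[L,\del_\alpha\del^{J}] = [L,\del_\alpha]\del^J + \del_\alpha[L,\del^J]$, both of whose terms are, by the inductive hypothesis, constant-coefficient combinations of $|I'|$-th order partial derivatives. Consequently
\[
L\,\del^{I'}L^{k}u \;=\; \del^{I'}L^{k+1}u + \sum_{|I''|=|I'|} c^{I'}_{I''}\,\del^{I''}L^{k}u,
\]
and since $|I''|=|I'|=i'=i$, $k+1\leq j'+1=j$ and $k\leq j$, both pieces have the required form. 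Substituting into the inductive expression for $Z^{K'}u$ and collecting constants yields \eqref{eq 1 lem 1 high-order}.

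The only genuine work is the auxiliary claim about $[L,\del^{I'}]$, and even that is a one-line recursion; the rest is bookkeeping on the type $(i,j)$. There is no real obstacle, but care is needed to verify that the multi-index count of partial derivatives is preserved exactly (rather than merely bounded) in every commutator term, so that in the final sum one has $|I|=i$ as an equality.
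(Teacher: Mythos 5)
Your proof is correct and rests on the same core observation as the paper's: the constant-coefficient commutation relations $[L,\del_t]=-\del_x$, $[L,\del_x]=-\del_t$, promoted by induction to $[L,\del^{I'}]$ (and in the paper to $[L^j,\del^I]$), after which a straightforward induction recombines everything into the stated normal form. The paper organizes the outer induction on the number of alternating blocks in $Z^K = L^{j_1}\del^{I_1}\cdots L^{j_k}\del^{I_k}$ rather than on the word length $N$, but this is merely a bookkeeping difference, not a distinct method.
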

\begin{proof}[Sketch of proof]
This is by induction on $(i,j)$. We remark that $[L,\del_{\alpha}] = \gamma_{\alpha}^{\beta}\del_{\beta}$ with $\gamma_{\alpha}^{\beta}$ constants. Then by induction we establish the decomposition of $[L^j,\del_{\alpha}]$ and $[L^j,\del^I]$:
\begin{equation}\label{eq 1 proof lem 1 high-order}
[L^j,\del^I] = \sum_{|I'|=|I|\atop j'<j}\Theta^{jI}_{I'j'}\del^{I'}L^{j'}.
\end{equation}

Then, we write $Z^K$ into the following form:
$$
Z^K = L^{j_1}\del^{I_1}L^{j_2}\del^{I_2}\cdots L^{j_k}\del^{I_k}.
$$
where $j_1$ and $|I_k|$ might be zero. We make induction on $k$. When $k=1$, it is guaranteed by \eqref{eq 1 proof lem 1 high-order}. When $k>1$,
$$
Z^K = \del^{I_1}Z^{K'} + [L^{j_1},Z^{I_1}]Z^{K''},
$$
with
$$
Z^{K'} := L^{j_1+j_2}\del^{I_2}\cdots L^{j_k}\del^{I_k},\quad K^{''} = L^{j_2}\del^{I_2}\cdots L^{j_k}\del^{I_k}.
$$
Then by assumption of induction applied on $Z^{K'}$ and $Z^{K''}$ combined with \eqref{eq 1 proof lem 1 high-order}, the desired result is established.
\end{proof}

\section{Recall of basic calculus in interior region}\label{sec basic-int}
In this section we list the basic $L^2$ and decay bounds in the interior region $\Hcal^*_{[s_0,\infty)}$. The basic decompositions and estimates of commutators and high-order derivatives hold in the same manner as in \cite{LM1}. For detailed proofs, see \cite{M-RC-V1}. We emphasise that {\bf  all discussions in this suction are in $\Hcal^*_{[s_0,\infty)}$} unless otherwise specified and the functions under discussion are sufficiently regular defined in $\Hcal^*_{[s_0,s_1]}\subset \Hcal^*_{[s_0,\infty)}$.
\subsection{Families of vector fields}
In $\Hcal^*_{[s_,\infty)}$ we introduce the following vector fields:
$$
(s/t)\del_{\alpha},\quad \delu_1 := t^{-1}L = (x/t)\del_t+\del_x.
$$
They are called {\sl adapted partial derivatives} and {\sl hyperbolic derivative}. We denote by
$$
\Zint := \Zscr\cup \{(s/t)\del_t,\ (s/t)\del_x,\ \delu_1\}.
$$
A high-order operator $Z^I$ defined in $\Hcal^*_{[s_0,\infty)}$ takes factors in $\Zint$. The type of $Z^I$ is a quadruple of integers $(i,j,k,l)$ such that $Z^I$ contains $i$ partial derivatives, $j$ Lorentzian boots, $k$ adapted partial derivatives and $l$ hyperbolic derivatives.

\subsection{Basic decomposition of the high-order derivatives}
First we recall the notion of homogeneous functions (in interior region) which is introduced in \cite{LM1}:
\begin{definition}
Let $u$ be a $C^{\infty}$ function defined in $\{t>|x|\}$, satisfying the following properties:
\begin{itemize}
\item For $k\in\RR$,  $u(\lambda t,\lambda x) = \lambda^ku(t,x),\quad \forall \lambda>0$.
\\
\item $\del^Iu(1,x)$ is bounded by a constant $C$ (determined by $|I|$ and $u$) for $|x|< 1$.
\end{itemize}
Then $u$ is said to be {\sl homogeneous of degree $k$}.
\end{definition}
The following properties are immediate (for proof, see for example \cite{M-RC-V1}):
\begin{proposition}\label{prop 1 homo}
Let $u,v$ be homogeneous of degree $k,l$ respectively. Then
\begin{itemize}
\item When $k=l$, $\alpha u + \beta v$ is homogeneous of degree $k$ where $\alpha$ and $\beta$ are constants.
\\
\item $uv$ is homogeneous of degree $k+l$.
\\
\item $\del^IL^j u$ is homogeneous of degree $k-|I|$.
\\
\item There is a positive constant determined by $I,J$ and $u$ such that the following inequality holds in $\Kcal$:
\begin{equation}\label{eq 1 homo}
|\del^IL^ju|\leq Ct^{k-|I|}.
\end{equation}
\end{itemize}
\end{proposition}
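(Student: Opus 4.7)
The proof naturally splits along the four bullets. Bullets one and two are immediate consequences of the scaling identity alone: for $\alpha u + \beta v$ with $u,v$ of common degree $k$, linearity gives $(\alpha u+\beta v)(\lambda t,\lambda x)=\lambda^k (\alpha u+\beta v)(t,x)$ and the bound on $\partial^I(\alpha u+\beta v)(1,x)$ for $|x|<1$ follows line by line; for the product one has $(uv)(\lambda t,\lambda x)=\lambda^{k+l}(uv)(t,x)$ together with a Leibniz expansion of $\partial^I(uv)(1,x)$ into products of hypothesis-bounded factors. No induction is needed here.

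The main step is bullet three, which I would prove by induction on $|I|+j$, treating $\partial_\alpha$ and $L$ separately in the inductive step. For the scaling half, differentiating $u(\lambda t,\lambda x)=\lambda^k u(t,x)$ in $t$ (resp.\ $x$) yields $\partial_\alpha u(\lambda t,\lambda x)=\lambda^{k-1}\partial_\alpha u(t,x)$, i.e.\ each partial derivative reduces the degree by one. For $L=x\partial_t+t\partial_x$ a direct computation shows
\begin{equation*}
Lu(\lambda t,\lambda x)=\lambda x\,\partial_tu(\lambda t,\lambda x)+\lambda t\,\partial_xu(\lambda t,\lambda x)=\lambda x\lambda^{k-1}\partial_tu(t,x)+\lambda t\lambda^{k-1}\partial_xu(t,x)=\lambda^k Lu(t,x),
\end{equation*}
so $L$ preserves the degree. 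The scaling part of bullet three then follows by recurrence.

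For the boundedness half of bullet three, which is what turns scaling into genuine homogeneity in the sense of the definition, I would use the constant-coefficient commutator $[L,\partial_\alpha]=\gamma_\alpha^\beta\partial_\beta$ together with Leibniz to rewrite $\partial^{I'}(\partial^I L^j u)$ as a finite linear combination of terms $p(t,x)\,\partial^{I''}u$ with $|I''|\le|I|+j+|I'|$ and $p$ a polynomial in $(t,x)$ coming from the coefficients of $L^j$. On the slice $\{t=1,\ |x|<1\}$ every such polynomial is bounded and, by hypothesis on $u$, so is every $\partial^{I''}u(1,x)$; this produces the desired uniform bound on $\partial^{I'}(\partial^I L^j u)(1,\cdot)$ for $|x|<1$.

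Bullet four is then a one-line consequence of bullet three: given $(t,x)\in\Kcal$ with $t>|x|$, setting $\lambda=1/t$ gives $(1,x/t)$ with $|x/t|<1$, whence $|\partial^IL^ju(t,x)|=t^{k-|I|}|\partial^IL^ju(1,x/t)|\le C\,t^{k-|I|}$ by the boundedness just established. The only real obstacle is the bookkeeping in bullet three, because the definition of homogeneity bundles scaling and boundedness together and one must verify both at each inductive step; once the two conditions are disentangled — scaling by direct differentiation, boundedness by commutator-plus-Leibniz — everything reduces to routine polynomial estimates on $\{t=1,|x|<1\}$.
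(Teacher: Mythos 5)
Your proposal is correct. The paper supplies no proof for this proposition — it defers to an external reference — but the analogous exterior-region lemma earlier in the paper is proved by precisely your scaling argument ($\del_\alpha$ lowers the degree by one, $L$ preserves it, then induct), with the remaining bullets declared immediate; you go further by explicitly checking the boundedness clause of the homogeneity definition via the constant-coefficient commutator $[L,\del_\alpha]$ plus Leibniz, and by deducing the fourth bullet from the third with $\lambda=1/t$, both details the paper leaves unstated.
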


Then we are ready to state the following results:
\begin{lemma}\label{lem 1 high-order-int}
Let $u$ be a function defined in $\Hcal^*_{[s_0,s_1]}$, sufficiently regular. Let $Z^K$ be a $N-$order operator of type $(j,i,0,l)$ and $N\geq 1$. Then the following identity holds:
\begin{equation}\label{eq 1 lem 2 high-order-int}
Z^Ku = \sum_{|I|\leq i, j'\leq j+l\atop |I|+j'\geq 1}t^{-l-i+|I|}\Delta_{Ij'}^K\del^IL^{j'}u
\end{equation}
with $\Delta_{Ij}^K$ homogeneous functions of degree zero.
\end{lemma}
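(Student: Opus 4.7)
The natural approach is an induction on the total order $N = i + j + l$. Since there are no adapted partial derivatives in an operator of type $(i,j,0,l)$, the only constituent vector fields are $\del_\alpha$, $L$, and $\delu_1 = t^{-1}L$. The base case $N=1$ is the direct verification for each of these three operators; in each case, the claimed decomposition is immediate (with $\Delta \equiv 1$).

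For the induction step, I would write $Z^K = Z_{i_1} Z^{K'}$ where $Z^{K'}$ has order $N-1$, apply the inductive hypothesis to $Z^{K'}$, and then push $Z_{i_1}$ through the sum, handling three cases.

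\textbf{Case 1: $Z_{i_1} = \del_\beta$.} One expands
\[
\del_\beta \bigl(t^{-l'-i'+|I'|}\Delta\bigr) \del^{I'}L^{j''}u + t^{-l'-i'+|I'|}\Delta\, \del_\beta \del^{I'}L^{j''}u.
\]
The second summand produces a legitimate $\del^I L^{j''}$ term with $|I|=|I'|+1$ and coefficient exponent $-l-i+|I| = -l'-(i'+1)+(|I'|+1)$, which matches. The first summand loses one $t$-power but the remaining factor is homogeneous of degree $0$ (since $\del_\beta$ drops homogeneity by one), and the constraint $|I'| \leq i-1 \leq i$ still holds.

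\textbf{Case 2: $Z_{i_1} = L$.} Here the essential tool is that $L$ \emph{preserves} homogeneity degree (a consequence of Proposition~B.2), so $L(t^{-l'-i'+|I'|}\Delta) = t^{-l'-i'+|I'|}\Delta'$ with $\Delta'$ homogeneous of degree $0$. Also, commuting $L$ past $\del^{I'}$ uses the identity \eqref{eq 1 proof lem 1 high-order} from the proof of lemma \ref{lem 1 high-order}, which rewrites $[L,\del^{I'}]$ as a constant-coefficient combination of $\del^{I''}L^{j'''}$ with $|I''|=|I'|$ and $j''' < 1$. Each resulting term has the correct form with $j' \leq j''+1 \leq (j'+l')+1 = j+l$ and $|I| \leq i' = i$.

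\textbf{Case 3: $Z_{i_1} = \delu_1 = t^{-1}L$.} This reduces to Case 2 followed by multiplication by $t^{-1}$, which shifts the exponent by $-1$. Since here $l = l'+1$, the target exponent $-l-i+|I| = -l'-1-i'+|I|$ matches what Case 2 gives after the $t^{-1}$ factor, and the $L$ increases the maximum boost count by one, exactly matching $j+l = j+l'+1$.

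The constraint $|I|+j' \geq 1$ is preserved automatically since each term arising in the induction has at least one derivative (either a $\del^{I'}$ or an $L^{j''}$) from the inductive hypothesis, and the $Z_{i_1}$ step only adds derivatives or keeps the count the same. The bookkeeping of exponents is the main obstacle to get right — particularly in Case 1, where one must observe that differentiating $t^{-l'-i'+|I'|}$ produces exactly one extra power of $t^{-1}$ that is absorbed by incrementing $i$, so the arithmetic $-l-i+|I|$ comes out invariantly. Once this exponent accounting and the key homogeneity-preservation property of $L$ are in place, the induction closes routinely.
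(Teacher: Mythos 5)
The paper does not prove this lemma in-text; the appendix simply states it and defers to \cite{LM1} and \cite{M-RC-V1}, so there is no internal proof to compare against. Your plan is the standard argument and, as far as I can see, it is correct: induction on the order with the leading factor $Z_{i_1}\in\{\del_\beta,L,\delu_1\}$ peeled off, using that degree-zero homogeneous functions form an $L$-stable algebra while $\del_\beta$ sends them to $t^{-1}\cdot(\text{degree-zero homogeneous})$, and handling the commutation past $\del^{I'}$ via \eqref{eq 1 proof lem 1 high-order} specialized to $j=1$ (so $[L,\del^{I'}]$ is a constant combination of $\del^{I''}$ with $|I''|=|I'|$). The exponent bookkeeping in all three cases reproduces $-l-i+|I|$, and the constraint $|I|+j'\ge 1$ is inherited because no step of the induction decreases $|I|+j'$. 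Two small notational cautions worth flagging when you write this up: the paper's lemma labels the type as $(j,i,0,l)$ whereas the appendix's definition writes $(i,j,k,l)$ — you correctly read $i$ as the partial-derivative count (it is what bounds $|I|$), so stick with that reading — and your $j'$ is overloaded, denoting both the boost count in the type of $Z^{K'}$ and the summation index; disambiguating these would make the exponent arithmetic cleaner to follow.
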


We also recall the following bounds which will be applied in many contexts:
\begin{lemma}\label{lem 3 s/t}
In the region $\Kcal$, the following bounds hold for $k,l\in\mathbb{Z}$:
\begin{equation}\label{eq 1 lem 3 s/t}
\big|\del^IL^j\big((s/t)^kt^l\big)\big|\leq
\left\{
\aligned
&C(s/t)^kt^l,\quad &&|I|=0,
\\
&C(s/t)^kt^l(t/s^2),\quad &&|I|\geq 1.
\endaligned
\right.
\end{equation}
\end{lemma}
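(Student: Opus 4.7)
The plan is to exploit the annihilation property $L s = 0$ together with homogeneity, and then track how iterated $\del$-derivatives produce negative powers of $s$. Writing $(s/t)^k t^l = s^k t^{l-k}$ and noting that $L(s^2)=L(t^2-x^2)=2xt-2tx=0$ gives $L(s^k)=0$ for every $k\in\mathbb{Z}$, one obtains
\[
L^j\bigl((s/t)^k t^l\bigr) \;=\; s^k\, L^j(t^{l-k}).
\]
Since $L^j(t^{l-k})$ is a homogeneous function of degree $l-k$ on $\{t>0\}$, Proposition \ref{prop 1 homo} bounds it by $C\, t^{l-k}$ in the interior region, which handles the case $|I|=0$.

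For $|I|\geq 1$, I apply the Leibniz rule to get
\[
\del^I L^j\bigl((s/t)^k t^l\bigr) \;=\; \sum_{I_1+I_2=I} c_{I_1,I_2}\,\del^{I_1}(s^k)\,\del^{I_2}\bigl(L^j(t^{l-k})\bigr),
\]
where each factor $\del^{I_2}(L^j(t^{l-k}))$ is homogeneous of degree $l-k-|I_2|$ and hence bounded by $C\,t^{l-k-|I_2|}$. The key step is an induction on $|I_1|$ establishing the stronger pointwise bound
\[
|\del^{I_1}(s^k)| \;\leq\; C\, s^{k-2|I_1|}\,t^{|I_1|} \;=\; C\,s^k\,(t/s^2)^{|I_1|}.
\]
The base case $|I_1|=1$ follows from $\del_t s = t/s$ and $\del_x s=-x/s$, so $\del_\alpha s^k = k s^{k-1}\del_\alpha s$ is of size $s^{k-2}t$. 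For the inductive step, differentiating a typical term of shape $s^{k-2p}\,q(t,x)$, with $q$ a homogeneous polynomial of degree $2p-(|I_1|-1)$, either increases $p$ by one (producing $s^{k-2p-2}\cdot(t\text{ or }-x)\cdot q$) or differentiates $q$; in both cases the required shape is preserved with $|I_1|$ replaced by $|I_1|+1$.

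Combining the two estimates gives
\[
|\del^I L^j\bigl((s/t)^k t^l\bigr)| \;\leq\; C\,(s/t)^k t^l \sum_{I_1+I_2=I}(t/s^2)^{|I_1|}\,t^{-|I_2|}.
\]
To conclude, I use that in the interior region $\Kcal$ one has $t-r\geq 1$, hence $s^2=(t-r)(t+r)\geq t+r\geq t\geq 1$, so both $t/s^2\leq 1$ and $t^{-1}\leq t/s^2$ hold. Therefore when $|I_1|\geq 1$ the factor $(t/s^2)^{|I_1|}\leq t/s^2$, and when $|I_1|=0$ (so $|I_2|=|I|\geq 1$) the factor $t^{-|I_2|}\leq t^{-1}\leq t/s^2$; every summand is bounded by $C\,t/s^2$, which proves the lemma. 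The main technical obstacle is the induction for $|\del^{I_1}s^k|$: one must ensure the number of $1/s^2$ factors never exceeds $|I_1|$, which is exactly what the two-option shape analysis guarantees. The final collapse of $(t/s^2)^{|I_1|}$ into a single $t/s^2$ (independent of $|I|$) is a genuine loss allowed only because $t/s^2\leq 1$ in $\Kcal$.
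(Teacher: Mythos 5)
Your proof is correct, and it is worth noting that the paper itself gives no argument for this lemma — it only remarks that the result "can be proved by induction" and defers to an external reference, so there is no in-text proof to compare against. Your route is a natural realization of that hint: the rewrite $(s/t)^kt^l = s^k\,t^{l-k}$ together with $L(s^2)=0$ isolates all the $L$-action on the homogeneous factor $t^{l-k}$, and the induction establishing $|\del^{I_1}(s^k)|\leq C\,s^{k-2|I_1|}t^{|I_1|}$ correctly tracks how each partial derivative either lowers the $s$-power by two (picking up a degree-one polynomial factor) or differentiates the polynomial factor. The final step, collapsing every summand $(t/s^2)^{|I_1|}t^{-|I_2|}$ with $|I_1|+|I_2|\geq 1$ to a single factor $t/s^2$, uses precisely the two inequalities $t/s^2\leq 1$ and $t^{-1}\leq t/s^2$, both of which hold in $\Kcal$ since there $s^2=(t-r)(t+r)\geq t$ and $s\leq t$; you have invoked exactly these. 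The argument is complete and supplies the proof the paper omits.
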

This can be proved by induction, for detail, see for example \cite{M-RC-V1}.
\subsection{Null condition in semi-hyperboloidal frame}
\begin{proposition}[Null condition in interior region]\label{prop 1 null-int}
Let $T$, $Q$ be null forms of two and three contravariant type respectively. Suppose that in $\{|x|\leq t-1\}$,
$$
T^{\alpha\beta}, Q^{\alpha\beta\gamma},\quad \text{are constants}.
$$
Then
\begin{equation}\label{eq-SHF null}
|\del^IL^j\Qu^{000}| + |\del^IL^j \Tu^{00}|
\leq
\left\{
\aligned
&C(s/t)^2,\quad &&|I|=0,
\\
&Ct^{-|I|}\leq C(s/t)^2t^{-|I|+1},\quad&&|I|>0.
\endaligned
\right.
\end{equation}
\end{proposition}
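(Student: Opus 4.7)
The plan is to carry out the algebraic reduction of $\Tu^{00}$ and $\Qu^{000}$ in the semi-hyperboloidal frame, extract an explicit factor of $(s/t)^2$ using the null condition, and then handle the higher-order derivatives via homogeneity together with the hyperbolic region identity $s^2 \geq t$.

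First I would compute the components explicitly. Using $\Psiu_0^0=1,\ \Psiu_1^0=-x/t,\ \Psiu_0^1=0,\ \Psiu_1^1=1$, a direct expansion of $\Tu^{00}=\Psiu_\alpha^0\Psiu_\beta^0 T^{\alpha\beta}$ gives
$$
\Tu^{00} = T^{00} - (x/t)(T^{01}+T^{10}) + (x/t)^2 T^{11},
$$
and analogously $\Qu^{000}$ expands as a cubic polynomial in $(x/t)$ whose coefficients are the ``symmetric sums'' $Q^{000},\ Q^{001}+Q^{010}+Q^{100},\ Q^{011}+Q^{101}+Q^{110},\ Q^{111}$. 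Testing the null conditions $T^{\alpha\beta}\xi_\alpha\xi_\beta=0$ and $Q^{\alpha\beta\gamma}\xi_\alpha\xi_\beta\xi_\gamma=0$ on the two null covectors $\xi=(1,\pm 1)\in\RR^{1+1}$ gives the linear relations $T^{00}+T^{11}=0,\ T^{01}+T^{10}=0$ and, for $Q$, $Q^{000}+(Q^{011}+Q^{101}+Q^{110})=0$ together with $(Q^{001}+Q^{010}+Q^{100})+Q^{111}=0$. Substituting these relations yields the factorizations
$$
\Tu^{00}=(1-(x/t)^2)T^{00} = (s/t)^2\, T^{00},
$$
$$
\Qu^{000}=(1-(x/t)^2)\bigl(Q^{000}-(x/t)(Q^{001}+Q^{010}+Q^{100})\bigr) = (s/t)^2\, F(x/t),
$$
where $F$ is a (universal) polynomial in $x/t$ with constant coefficients. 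This already settles the case $|I|=0$ since $|x/t|\leq 1$ in $\Hcal^*$.

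Next I would argue the case $|I|\geq 1$ using homogeneity. Since $T^{\alpha\beta}$ and $Q^{\alpha\beta\gamma}$ are constants in $\{|x|\leq t-1\}$, both $\Tu^{00}$ and $\Qu^{000}$ are smooth functions of the single variable $x/t$, i.e., homogeneous of degree $0$ in the sense of the paper. By the last bullet of Proposition \ref{prop 1 homo} (or rather its interior-region analogue), $\del^IL^j$ applied to a function homogeneous of degree $0$ produces a function homogeneous of degree $-|I|$, and hence
$$
|\del^IL^j \Tu^{00}| + |\del^IL^j \Qu^{000}| \leq C\, t^{-|I|}, \qquad |I|\geq 1,
$$
with $C$ depending only on $I$, $j$, and the components of $T, Q$.

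Finally, the claimed inequality $Ct^{-|I|}\leq C(s/t)^2 t^{-|I|+1}$ is equivalent to $t\leq s^2$. Since $(t,x)\in\Hcal^*$ satisfies $t\geq r+1$, we compute
$$
s^2 = t^2 - r^2 = (t-r)(t+r) \geq 1\cdot(t+r) \geq t,
$$
which gives $(s/t)^2\geq t^{-1}$ and closes the bound. The only small obstacle is bookkeeping the algebraic step cleanly when $T$ is not assumed symmetric; this is handled transparently by using the null-condition identity $T^{01}+T^{10}=0$ rather than $T^{01}=0$, and the analogous symmetric-sum identities for $Q$.
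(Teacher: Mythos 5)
The paper states this proposition in its appendix without giving a proof (it recalls results from \cite{LM1}, \cite{M-RC-V1}), so there is no in-text argument to compare against; your proposal must be assessed on its own. It is correct. Your explicit expansion of $\Tu^{00}$ and $\Qu^{000}$ as polynomials in $x/t$, the evaluation of the null condition on the two null covectors $(1,\pm1)$ to obtain the symmetric-sum identities $T^{00}+T^{11}=0$, $T^{01}+T^{10}=0$ and their cubic analogues, and the resulting factorizations $\Tu^{00}=(s/t)^2 T^{00}$ and $\Qu^{000}=(s/t)^2\bigl(Q^{000}-(x/t)\sum Q^{\text{sym}}\bigr)$ are all accurate. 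The handling of $|I|\geq 1$ via degree-zero homogeneity and Proposition \ref{prop 1 homo}, followed by the observation $s^2=(t-r)(t+r)\geq t$ inside $\{|x|\leq t-1\}$ to upgrade $Ct^{-|I|}$ to $C(s/t)^2 t^{-|I|+1}$, is exactly the right argument. One point that deserves to be made explicit: for the case $|I|=0,\ j\geq 1$, the claim is not settled by the factorization alone — you also need that $L^j$ does not damage the $(s/t)^2$ prefactor. This follows from $L(x/t)=(s/t)^2$, so $L^j$ maps the factor $(s/t)^2 F(x/t)$ back into $C(s/t)^2$; equivalently, invoke the paper's Lemma \ref{lem 3 s/t} with $k=2$, $l=0$. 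Adding one sentence to that effect would close the only slightly terse step in the write-up.
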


\subsection{Basic $L^2$ and $L^{\infty}$ bounds}\label{subsec basic-interior}
Based on the above decompositions, we can prove the following bounds (for proof, see \cite{LM1} or \cite{M-RC-V1}).
\begin{lemma}\label{lem 1 est-high-int}
Let $u$ be a function defined in $\Kcal_{[s_0,s_1]}$, sufficiently regular. Let $Z^K$ be an operator of type $(j,i,0,l)$, and let $|K|=N+1\geq 1$. Then the following bounds hold:
\begin{equation}\label{eq 0 lem 1 est-high-int}
 \|t^{l-1}Z^K u\|_{L^2(\Hcal_s)}\leq C\Ecal^H_N(s,u)^{1/2},\quad i=0,
\end{equation}
\begin{equation}\label{eq 1 lem 1 est-high-int}
\|(s/t)t^l Z^Ku\|_{L^2(\Hcal_s)}\leq C\Ecal^H_N(s,u)^{1/2},\quad i\geq 1,
\end{equation}
When $c>0$, the following bound holds for $|K|\leq N$:
\begin{equation}\label{eq 3 lem 1 est-high-int}
\|ct^l Z^K u\|_{L^2(\Hcal_s)}\leq C\Ecal^H_{N,c}(s,u)^{1/2}.
\end{equation}
\end{lemma}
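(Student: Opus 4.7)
The plan is to invoke the decomposition provided by Lemma \ref{lem 1 high-order-int} and then match each resulting term against the explicit form of the hyperbolic energy in \eqref{eq int-energy-flat}. Writing
\begin{equation*}
Z^K u \;=\; \sum_{\substack{|I|\leq i,\ j'\leq j+l \\ |I|+j'\geq 1}} t^{-l-i+|I|}\,\Delta^K_{I,j'}\,\del^I L^{j'} u,
\end{equation*}
two elementary observations will be in force throughout: the coefficients $\Delta^K_{I,j'}$ are homogeneous of degree zero, hence uniformly bounded on $\Hcal^*_{[s_0,s_1]}$; and $t\geq s_0\geq 1$ in this region, so $t^{|I|-i}\leq 1$ whenever $|I|\leq i$.

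For \eqref{eq 0 lem 1 est-high-int} ($i=0$), the sum collapses to $|I|=0$ terms and $t^{l-1}Z^K u$ becomes a bounded combination of $t^{-1}L^{j'}u$ with $1\leq j'\leq N+1$. I will then use the key identity $t^{-1}L=\delu_1$ to rewrite each such term as $\Delta^K_{0,j'}\,\delu_1 L^{j'-1}u$; since $j'-1\leq N$, this is controlled in $L^2(\Hcal_s)$ by $E^H(s,L^{j'-1}u)^{1/2}\leq \EH_N(s,u)^{1/2}$, the $|\delu_1 w|^2$ summand of $E^H$ in \eqref{eq int-energy-flat} being the one that kicks in.

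For \eqref{eq 1 lem 1 est-high-int} ($i\geq 1$), multiplying the decomposition through by $(s/t)t^l$ gives $\sum (s/t)t^{|I|-i}\Delta^K_{I,j'}\del^I L^{j'}u$, which I will split into two cases. When $|I|\geq 1$, I factor $\del^I=\del_\alpha\del^{I-e_\alpha}$ and observe that $|I-e_\alpha|+j'\leq i-1+j+l=N$, so $(s/t)\del_\alpha\del^{I-e_\alpha}L^{j'}u$ is exactly one of the quantities controlled in $L^2(\Hcal_s)$ by $E^H(s,\del^{I-e_\alpha}L^{j'}u)^{1/2}\leq\EH_N(s,u)^{1/2}$. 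When $|I|=0$ and $j'\geq 1$, a single application of $t^{-1}L=\delu_1$ rewrites $t^{-i}L^{j'}u$ as $t^{-(i-1)}\delu_1 L^{j'-1}u$; since $(s/t)^2 t^{-2(i-1)}\leq 1$ uniformly in $i\geq 1$, the $L^2$-norm is then bounded by $\|\delu_1 L^{j'-1}u\|_{L^2(\Hcal_s)}$, and $j'-1\leq j+l-1\leq N-1$ places this inside $\EH_N(s,u)$.

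For \eqref{eq 3 lem 1 est-high-int}, the hypothesis $|K|\leq N$ makes things simplest: every term in the expansion has $|I|+j'\leq |K|\leq N$, and the pointwise bound $|c t^{|I|-i}\Delta^K_{I,j'}\del^IL^{j'}u|\leq C|c\,\del^IL^{j'}u|$ combined with the $c^2 w^2$ contribution in $E^H_c$ yields $\|c\,\del^IL^{j'}u\|_{L^2(\Hcal_s)}\leq \EH_{N,c}(s,u)^{1/2}$. The only conceptually delicate step is the $|I|=0$, $j'\geq 1$, $i\geq 2$ subcase of \eqref{eq 1 lem 1 est-high-int}: naively one might fear needing several applications of $t^{-1}L=\delu_1$ and their attendant commutator corrections $[\delu_1,L]=(x/t)\delu_1$. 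The point that avoids all of this is that the extra weight $(s/t)t^{-(i-1)}$ is already bounded by $1$ for every $i\geq 1$, so a single application of the identity already produces a quantity controlled by the hyperbolic energy.
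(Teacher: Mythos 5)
The paper itself contains no proof of this lemma: the surrounding text says only ``(for proof, see \cite{LM1} or \cite{M-RC-V1})'', so there is nothing in-text to compare against. Assessed on its own merits, your argument is correct and self-contained. You invoke exactly the right tool, the decomposition of Lemma \ref{lem 1 high-order-int} into $t^{-l-i+|I|}\Delta^K_{I,j'}\del^I L^{j'}$, and then exhaust the cases by matching each surviving factor against one of the three $L^2$-controlled quantities encoded in \eqref{eq int-energy-flat}: $\|\delu_1 w\|_{L^2(\Hcal_s)}$, $\|(s/t)\del_\alpha w\|_{L^2(\Hcal_s)}$, and $\|cw\|_{L^2(\Hcal_s)}$. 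Your two elementary bookkeeping observations, that $\Delta^K_{I,j'}$ is bounded by homogeneity and that $t^{|I|-i}\le1$ once $|I|\le i$ and $t\ge1$, are exactly what closes the estimate, and your closing remark that a \emph{single} application of $t^{-1}L=\delu_1$ suffices (because the residual weight $(s/t)\,t^{-(i-1)}$ is already bounded by $1$ for $i\ge1$) correctly identifies and defuses the one spot where a reader might otherwise worry about iterated commutators of $\delu_1$ with $L$. The only tiny omission is in \eqref{eq 3 lem 1 est-high-int}: since Lemma \ref{lem 1 high-order-int} is stated for $N\ge1$ and its sum omits the $|I|+j'=0$ term, the case $|K|=0$ is not literally covered by the decomposition and must be noted as trivial ($Z^Ku=u$, $\|cu\|_{L^2(\Hcal_s)}\le E^{\text{H}}_c(s,u)^{1/2}$). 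This does not affect the substance of the argument.
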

The following bounds are to be combined with proposition \ref{prop 1 K-S}. It can be seen as a special case of lemma 4.5 of \cite{M-RC-V1}, or can be proved directly by lemma \ref{lem 1 est-high-int} combined with lemma \ref{lem 1 high-order-int} together with \eqref{eq 1 lem 3 s/t}.
\begin{lemma}\label{lem 2 esti-high-int}
Let $u$ be a function defined in $\Kcal_{[s_0,s_1]}$, sufficiently regular. Then the following bounds hold for $Z^K$ of type $(j,i,0,l)$ with $|K|\leq N$:
\begin{equation}\label{eq 0 lem 2 esti-high-int}
\big\|L\big(t^{l-1}Z^Ku\big)\big\|_{L^2(\Hcal_s)}\leq C\Ecal^H_N(s,u)^{1/2},\quad i=0
\end{equation}
\begin{equation}\label{eq 1 lem 2 esti-high-int}
\big\|L\big(t^l(s/t)Z^Ku\big)\big\|_{L^2(\Hcal_s)}\leq C\Ecal^H_N(s,t)^{1/2},\quad i\geq 1.
\end{equation}
When $c>0$ and $|K|\leq N - 1$,
\begin{equation}\label{eq 3 lem 2 esti-high-int}
\big\|cL\big(t^lZ^Ku\big)\big\|_{L^2(\Hcal_s)}\leq C\Ecal^H_{N,c}(s,t)^{1/2}.
\end{equation}
\end{lemma}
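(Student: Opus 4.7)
The plan is to reduce each inequality to the corresponding basic $L^2$ bound in Lemma \ref{lem 1 est-high-int} by a single application of the Leibniz rule, exploiting the fact that $L$ increases the Lorentzian-boost count by one while leaving the number of partial derivatives $i$ unchanged. In particular, for any $Z^K$ of type $(j,i,0,l)$ the operator $LZ^K$ is of type $(j+1,i,0,l)$ and of order $|K|+1$, so the $i=0$ versus $i\ge 1$ dichotomy in Lemma \ref{lem 1 est-high-int} is preserved.

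For \eqref{eq 0 lem 2 esti-high-int}, I would write
$$
L\bigl(t^{l-1}Z^K u\bigr) \,=\, (Lt^{l-1})\,Z^K u \,+\, t^{l-1}\bigl(LZ^K\bigr)u,
$$
and use that $L = x\del_t + t\del_x$ together with $|x|\le t$ in $\Hcal^*_{[s_0,\infty)}$ to see $|Lt^{l-1}|=|(l-1)xt^{l-2}|\le Ct^{l-1}$. Then \eqref{eq 0 lem 1 est-high-int} (with $i=0$) applied to both $Z^K$ of order $\le N$ and $LZ^K$ of order $\le N+1$ yields the desired bound.

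For \eqref{eq 1 lem 2 esti-high-int}, the key computation is $L(s^2)=2xt-2tx=0$, hence $Ls=0$ and $L(1/t)=-x/t^2$, so
$$
L\bigl(t^l(s/t)\bigr) \,=\, (Lt^l)(s/t) + t^l\,L(s/t) \,=\, lxt^{l-1}(s/t) - t^l\,sx/t^2 \,=\, (l-1)xt^{l-1}(s/t),
$$
which is bounded by $Ct^l(s/t)$. The Leibniz expansion then gives one term $L(t^l(s/t))\cdot Z^K u$ and one term $t^l(s/t)LZ^K u$, both of which are controlled via \eqref{eq 1 lem 1 est-high-int} (note that $LZ^K$ still has $i\ge 1$). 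For \eqref{eq 3 lem 2 esti-high-int} the same argument applies with $|Lt^l|\le Ct^l$ and Lemma \ref{lem 1 est-high-int} \eqref{eq 3 lem 1 est-high-int}, using the constraint $|K|\le N-1$ to guarantee $|LZ^K|\le N$ so that the $c$-energy $\Ecal^H_{N,c}$ suffices.

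There is essentially no analytic obstacle, only the bookkeeping of types: one must check that applying $L$ on the left does not spoil the hypothesis $i=0$ (respectively $i\ge 1$) of the clause of Lemma \ref{lem 1 est-high-int} being invoked, and that the resulting order $|LZ^K|$ stays within the range permitted by the appropriate clause ($\le N+1$ for the wave case and $\le N$ for the Klein--Gordon case). Both are immediate from the definition of type. No commutator relations are needed because $L$ is placed outermost; the decomposition \eqref{eq 1 lem 2 high-order-int} is not invoked directly, as it has already been absorbed into Lemma \ref{lem 1 est-high-int}.
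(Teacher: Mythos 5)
Your proof is correct, and it follows the route the paper hints at (apply Lemma~\ref{lem 1 est-high-int}), but streamlines it. The paper suggests combining Lemma~\ref{lem 1 est-high-int} with the full high-order decomposition Lemma~\ref{lem 1 high-order-int} and the general weight estimate Lemma~\ref{lem 3 s/t}; you bypass both by a single Leibniz expansion, observing that $L$ placed outermost does not change the count $i$ of partial derivatives (so $LZ^K$ is of type $(j{+}1,i,0,l)$ and the correct clause of Lemma~\ref{lem 1 est-high-int} remains applicable), and then computing $L(t^{l-1})$, $L(t^l(s/t))$ by hand via $Ls=0$. These hand computations are exactly the $|I|=0$, $j=1$ instance of \eqref{eq 1 lem 3 s/t}, so you are using a special case of it rather than the lemma in general. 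The paper's phrasing covers broader situations (e.g.\ higher powers of $L$ or $\del$ hitting the weight), whereas here only one $L$ is applied, so your shortcut is legitimate and a bit cleaner. The bookkeeping you flag — that $i=0$ vs.\ $i\ge 1$ is preserved, and that $|LZ^K|\le N+1$ (resp.\ $\le N$ in the Klein--Gordon case) stays within the range where Lemma~\ref{lem 1 est-high-int} gives $\Ecal^H_N$ (resp.\ $\Ecal^H_{N,c}$) — is precisely the right thing to check and you check it correctly.
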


Then we combine lemma \ref{lem 2 esti-high-int} and proposition \ref{prop 1 K-S} and obtain the following bounds:
\begin{lemma}\label{lem 2' esti-high-int}
Let $u$ be a function defined in $\Kcal_{[s_0,s_1]}$, sufficiently regular. Then the following bounds hold for $Z^K$ of type $(j,i,0,l)$ with $1\leq |K|\leq N$:
\begin{equation}\label{eq 0 lem 2' esti-high-int}
\big\|t^{l-1/2}Z^Ku\big\|_{L^\infty(\Hcal_s)}\leq C\Ecal^H_N(s,u)^{1/2},\quad i=0
\end{equation}
\begin{equation}\label{eq 1 lem 2' esti-high-int}
\big\|t^{l+1/2}(s/t)Z^Ku\big\|_{L^\infty(\Hcal_s)}\leq C\Ecal^H_N(s,t)^{1/2},\quad i\geq 1.
\end{equation}
When $c>0$ and $|K|\leq N - 1$,
\begin{equation}\label{eq 3 lem 2' esti-high-int}
\big\|ct^{l+1/2}Z^Ku\big\|_{L^\infty(\Hcal_s)}\leq C\Ecal_{\gamma,c}^N(s,t)^{1/2}.
\end{equation}
\end{lemma}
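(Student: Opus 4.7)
The plan is to deduce these pointwise bounds by applying the global Sobolev-type inequality of Proposition \ref{prop 1 K-S} to the appropriately weighted quantities $t^{l-1}Z^K u$, $t^l(s/t)Z^K u$, and $ct^l Z^K u$, and then invoking both the $L^2$ bounds of Lemma \ref{lem 1 est-high-int} (for the function itself) and Lemma \ref{lem 2 esti-high-int} (for its image under $L$). The Sobolev inequality says roughly that $t^{1/2}\|f\|_{L^\infty(\Hcal_s^*)} \lesssim \|f\|_{L^2(\Hcal_s)} + \|Lf\|_{L^2(\Hcal_s)}$, so each of the three desired estimates reduces to controlling these two $L^2$ norms for the right choice of $f$.

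For the first case $i=0$, I would set $f = t^{l-1}Z^K u$. Applying Proposition \ref{prop 1 K-S} to $f$ gives $t|f|^2 \leq C(\|f\|_{L^2(\Hcal_s)}^2 + \|Lf\|_{L^2(\Hcal_s)}^2)$. The term $\|f\|_{L^2(\Hcal_s)}$ is controlled by $C\Ecal_N^H(s,u)^{1/2}$ via \eqref{eq 0 lem 1 est-high-int}, while $\|Lf\|_{L^2(\Hcal_s)}$ is controlled by the same quantity via \eqref{eq 0 lem 2 esti-high-int}. Combining these yields $t^{2l-1}|Z^K u|^2 \leq C\Ecal_N^H(s,u)$, which rearranges to \eqref{eq 0 lem 2' esti-high-int}.

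For the second case $i \geq 1$, I would set $f = t^l(s/t)Z^K u$ and apply Proposition \ref{prop 1 K-S} in the same way, this time invoking \eqref{eq 1 lem 1 est-high-int} for the $L^2$ norm of $f$ and \eqref{eq 1 lem 2 esti-high-int} for the $L^2$ norm of $Lf$. Both give $C\Ecal_N^H(s,u)^{1/2}$, so the Sobolev inequality yields $t \cdot (t^l(s/t))^2 |Z^Ku|^2 \leq C\Ecal_N^H(s,u)$, i.e. the bound \eqref{eq 1 lem 2' esti-high-int}. The Klein-Gordon case \eqref{eq 3 lem 2' esti-high-int} is treated identically with $f = ct^l Z^K u$, invoking \eqref{eq 3 lem 1 est-high-int} and \eqref{eq 3 lem 2 esti-high-int} (the loss of one order, $|K| \leq N-1$, propagates from Lemma \ref{lem 2 esti-high-int}).

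There is essentially no main obstacle here, since everything has been prepared in the two preceding lemmas; the proof is a one-line application of Proposition \ref{prop 1 K-S} in each of the three cases. The only thing to verify is bookkeeping of the weight $t^{1/2}$ from the Sobolev inequality with the pre-existing weights $t^{l-1}$, $t^l(s/t)$, $ct^l$, which combine to produce exactly the $L^\infty$ weights $t^{l-1/2}$, $t^{l+1/2}(s/t)$, $ct^{l+1/2}$ in the statement.
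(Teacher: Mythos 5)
Your proof is correct and coincides with the paper's intended argument: right before the lemma, the paper states it is obtained by combining Lemma~\ref{lem 2 esti-high-int} with Proposition~\ref{prop 1 K-S}, which is exactly what you do (together with the companion $L^2$ bounds of Lemma~\ref{lem 1 est-high-int}, which you correctly identify as needed to handle the $\|f\|_{L^2}$ term in the Sobolev inequality). The weight bookkeeping in all three cases is accurate.
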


\subsection{Bounds on Hessian form}
In this subsection we recall the bounds on the following terms:
$$
\del_{\alpha}\del_{\beta}Z^Ku,\quad Z^K\del_{\alpha}\del_{\beta}u.
$$
The following result is firstly established in \cite{LM1}. The following version is from \cite{M-RC-V1} proposition 4.7 combined with proposition \ref{prop 1 K-S}:
\begin{lemma}\label{lem 2 Hessian-int}
Let $u$ be a function defined in $\Hcal^*_{[s_0,s_1]}$, sufficiently regular. Suppose that $Z^K$ is of type $(j,i,0,0)$. Then the following bounds hold for $|K|\leq N-1$:
\begin{equation}\label{eq 1 lem 2 Hessian-int}
\big\|t(s/t)^3\del_{\alpha}\del_{\beta}Z^Ku\big\|_{L^2(\Hcal^*_s)} + \big\|t(s/t)^3Z^K\del_{\alpha}\del_{\beta}u\big\|_{\Hcal^*_s}\leq C\Ecal^H_N(s,u)^{1/2} + C\sum_{|I|+j\leq |K|}\|s\del^IL^j\Box u\|_{L^2(\Hcal^*_s)}.
\end{equation}
Furthermore,  for all $Z^{K'}$ of type $(j,i,0,0)$ with $|K'|\leq N-2$,
\begin{equation}\label{eq 2 lem 2 Hessian-int}
\aligned
\big\|t^{3/2}(s/t)^3\del_{\alpha}\del_{\beta}Z^{K'}u\big\|_{L^\infty(\Hcal^*_s)}
+& \big\|t^{3/2}(s/t)^3Z^{K'}\del_{\alpha}\del_{\beta}u\big\|_{L^\infty(\Hcal^*_s)}
\\
\leq& C\Ecal^H_N(s,u)^{1/2} + C\sum_{|I|+j\leq |K'|+1}\|s\del^IL^j\Box u\|_{L^2(\Hcal^*_s)}.
\endaligned
\end{equation}
\end{lemma}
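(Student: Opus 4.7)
The plan is to reduce both bounds in \eqref{eq 1 lem 2 Hessian-int} and \eqref{eq 2 lem 2 Hessian-int} to the wave equation by means of the semi-hyperboloidal decomposition of $\Box$,
$$
\Box u = (s/t)^2\del_t\del_t u + t^{-1}\!\left((2x/t) L\del_t u - L\delu_1 u + (s/t)^2\del_t u\right),
$$
which, after solving for $(s/t)^2\del_t\del_t u$ and multiplying by $s$ (using the key algebraic identity $t(s/t)^3 = s\,(s/t)^2$), gives
$$
t(s/t)^3\del_t\del_t u = s\,\Box u - (s/t)R[u],\qquad R[u]:=(2x/t)L\del_t u - L\delu_1 u + (s/t)^2\del_t u.
$$
The remaining Hessian components in $1+1$ dimensions are then recovered from $\del_t\del_x u = t^{-1}L\del_t u - (x/t)\del_t\del_t u$ and $\del_x\del_x u = \del_t\del_t u - \Box u$, so it suffices to estimate the $\del_t\del_t$ component and a first-derivative remainder of the same bounded type.

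First I would apply lemma \ref{lem 1 high-order-int} to write $Z^K u = \sum_{|I|\leq i,\,j'\leq j}\Delta^K_{Ij'}\,\del^I L^{j'}u$ with $\Delta^K_{Ij'}$ homogeneous of degree $|I|-i\leq 0$, so by Leibniz the quantity $\del_\alpha\del_\beta Z^K u$ decomposes into the principal term $\Delta^K_{Ij'}\del_\alpha\del_\beta \del^I L^{j'} u$ plus mixed terms in which at least one derivative falls on $\Delta^K_{Ij'}$; the latter produce coefficients homogeneous of degree $\leq -1$ times first- or zeroth-order derivatives of $u$, and after multiplication by $t(s/t)^3$ they are controlled in $L^2$ by $\EH_N(s,u)^{1/2}$ through \eqref{eq 1 lem 1 est-high-int}. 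A symmetric reduction treats $Z^K\del_\alpha\del_\beta u$ by commuting $\del_\alpha\del_\beta$ past $\del^I L^{j'}$ using $[L,\del_\alpha]=\gamma^\beta_\alpha\del_\beta$ (constants), which preserves the total order and produces only second-derivative terms already handled.

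Next I would apply the $\Box$-decomposition displayed above to $\del^I L^{j'}u$ in place of $u$, after first commuting $\del^I L^{j'}$ through $\Box$: since $[\del^I L^{j'},\Box]$ is a lower-order differential operator of the same form, this produces a main term $s\,\del^I L^{j'}\Box u$ contributing to the sum on the right-hand side of \eqref{eq 1 lem 2 Hessian-int}, plus the first-order remainder $(s/t)R[\del^I L^{j'}u]$. For this remainder one uses $\delu_1=t^{-1}L$ to rewrite $L\delu_1 = t^{-1}L^2 - (x/t^2)L$ and then invokes \eqref{eq 0 lem 1 est-high-int} and \eqref{eq 1 lem 1 est-high-int} to absorb each piece into $\EH_N(s,u)^{1/2}$; the analogous $\del_t\del_x$ and $\del_x\del_x$ terms are handled by the algebraic identities above. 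The main technical obstacle is the careful commutator accounting so as not to exceed the order $N+1$ allowed by the energy, but since every commutator with $L$ reduces the $L$-count by one while preserving the partial-derivative count, the bookkeeping closes for $|K|\leq N-1$.

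For the pointwise bound \eqref{eq 2 lem 2 Hessian-int}, set $v:=t(s/t)^3\del_\alpha\del_\beta Z^{K'}u$ (and the analogous $v':=t(s/t)^3 Z^{K'}\del_\alpha\del_\beta u$) and apply the global Sobolev inequality \eqref{eq 1 prop 1 K-S} in the form $t^{1/2}|v|\leq C(\|v\|_{L^2(\Hcal_s)}+\|Lv\|_{L^2(\Hcal_s)})$, which immediately delivers the factor $t^{3/2}(s/t)^3$ on the left. The $L^2$ norm of $v$ is already controlled by \eqref{eq 1 lem 2 Hessian-int} applied to $Z^{K'}$, and the $L^2$ norm of $Lv$ is, after using $Ls=0$ and $Lt=x$ (so that $L$ acting on the coefficient $s^3/t^2$ produces a bounded multiple of $s^3/t^3\leq 1$), bounded by \eqref{eq 1 lem 2 Hessian-int} applied to the order-$(|K'|+1)\leq N-1$ operator $LZ^{K'}$; this explains why the sup-norm estimate loses one order compared to the $L^2$ estimate.
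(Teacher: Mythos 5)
The paper itself does not prove this lemma: it is stated with a citation to the prior works \cite{LM1} (Proposition~4.7 there) and \cite{M-RC-V1}, and is combined with Proposition~\ref{prop 1 K-S}. So there is no in-text proof to compare against; I can only assess your argument on its own merits.

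Your argument is correct, and it is the natural hyperboloidal-foliation proof that those references use. The key identity
$$
\Box u = (s/t)^2\del_t\del_t u + t^{-1}\big((2x/t)L\del_t u - L\delu_1 u + (s/t)^2\del_t u\big),
$$
which you then algebraically invert using $t(s/t)^3 = s(s/t)^2$, is exactly the decomposition the author uses elsewhere in the paper (cf.\ the sharp-decay section). Recovering $\del_t\del_x$ from $t^{-1}L\del_t - (x/t)\del_t\del_t$ and $\del_x\del_x$ from $\del_t\del_t-\Box$ (the paper's $\Box=\del_t^2-\del_x^2$ convention makes this exact) is the right way to close the estimate over all Hessian components, and the bookkeeping $|K|\le N-1$ followed by adding two derivatives stays within order $N+1$, so the terms are absorbed by the energy via \eqref{eq 0 lem 1 est-high-int} and \eqref{eq 1 lem 1 est-high-int}. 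For the pointwise bound, applying Proposition~\ref{prop 1 K-S} to $v=t(s/t)^3\del_\alpha\del_\beta Z^{K'}u$, using $Ls=0$ and $|L(s^3/t^2)|\lesssim (s/t)^3\le 1$, and invoking the $L^2$ bound at order $|K'|+1\le N-1$ correctly carries the loss of one order and produces the sum $\sum_{|I|+j\le|K'|+1}$ on the right.

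One small but worth-noting redundancy: since $Z^K$ is of type $(j,i,0,0)$ — built only from $\del_t,\del_x,L$ with no adapted or hyperbolic derivatives — Lemma~\ref{lem 1 high-order} (Appendix~\ref{sec basic}) already gives $Z^Ku = \sum\Theta^K_{Ij'}\del^IL^{j'}u$ with \emph{constant} coefficients, not merely degree-zero homogeneous ones. Consequently the ``mixed terms where at least one derivative lands on $\Delta^K_{Ij'}$'' that you carefully argue away in the first step do not actually arise. This does no harm, but you could skip that discussion entirely; the only genuine commutator content is in passing $\del_\alpha\del_\beta$ across $L$'s, which you also handle correctly via $[L,\del_\alpha]=\gamma_\alpha^\beta\del_\beta$.
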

%

\bibliographystyle{siam}
\bibliography{WKGm-bibtex}
\end{document}